\newtheorem{theorem}{Theorem}[section]
\newtheorem{proposition}[theorem]{Proposition}
\newtheorem{lemma}[theorem]{Lemma}
\newtheorem{corollary}[theorem]{Corollary}
\newtheorem{assumption}[theorem]{Assumption}
\newtheorem{remark}{Remark}
\theoremstyle{remark}
\newtheorem{definition}{Definition}
\def\supp{\operatorname{{supp}}}
\numberwithin{equation}{section}
\begin{document}
	
	\title[Critical NLS: Stabilization result]{Stabilization of a perturbed quintic defocusing Schr\"odinger equation in $\mathbb{R}^{3}$}
	\author[Braz e Silva]{Pablo Braz e Silva}
	\address{Departamento de Matem\'atica, Universidade Federal de Pernambuco, S/N Cidade Universit\'aria, 50740-545, Recife (PE), Brazil}
	\email{pablo.braz@ufpe.br}
	\author[Capistrano--Filho]{Roberto de A. Capistrano--Filho}
	\email{roberto.capistranofilho@ufpe.br}
	\author[Carvalho]{Jackellyny Dassy do Nascimento Carvalho}
	\email{jackellyny.dassy@ufpe.br}
	\author[Dos Santos Ferreira]{David dos Santos Ferreira*}
	\address{Institut \'Elie Cartan de Lorraine, UMR CNRS 7502, \'equipe SPHINX, INRIA, Universit\'e de Lorraine, F-54506 Vandoeuvre-les-Nancy Cedex, France.}
	\email{ddsf@math.cnrs.fr}
	\subjclass[2020]{93B05, 93B07, 35Q55, 49K40}
	\keywords{Quintic defocusing NLS, Stabilization, Profile decomposition, Propagation of singularities}
		\thanks{$^*$Corresponding author: ddsf@math.cnrs.fr}
	\thanks{Braz e Silva was partially supported by CAPES/PRINT 88887.311962/2018-00, CAPES/COFECUB 88887.879175/2023-00, CNPq (Brazil) 421573/2023-6, 305233/2021-1, 308758/2018-8, 432387/2018-8. Capistrano--Filho was partially supported by CAPES/PRINT 88887.311962/2018-00, CAPES/COFECUB 88887.879175/2023-00, CNPq (Brazil) 421573/2023-6, 307808/2021-1 and  401003/2022-1 and Propesqi (UFPE). Carvalho was partially supported by CNPq,  CAPES-MATHAMSUD 8881.520205/2020-01 and FACEPE BFD-0014-1.01/23.}
	
	\begin{abstract}		
This article addresses the stabilizability of a perturbed quintic defocusing Schrödinger equation in $\mathbb{R}^{3}$ at the $H^1$--energy level, considering the influence of a damping mechanism. More specifically, we establish a profile decomposition for both linear and nonlinear systems and use them to show that, under certain conditions, the sequence of nonlinear solutions can be effectively linearized. Lastly, through microlocal analysis techniques, we prove the local exponential stabilization of the solution to the perturbed Schrödinger equation in $\mathbb{R}^{3}$ showing an observability inequality for the solution of the system under consideration, which is the key result of this work.
	\end{abstract}
	
	\date{\today}
	\maketitle
	
	\tableofcontents
	
	\thispagestyle{empty}
	
	\section{Introduction}
	\subsection{Addressed issue}
	This article is devoted to the stabilization properties of the quintic defocusing Schr\"odinger equation in $\mathbb{R}^{3+1}$ 
		\begin{equation}\label{eq1aa}
		\left\{
		\begin{array}{ll}
			i\partial_{t} u + \Delta u  = |u|^{4}u ,& (t,x) \in [0,+\infty)\times  \mathbb{R}^{3},\\
			u(x,0) = u_{0} (x) \in {H}^{1}(\mathbb{R}^{3}),& x\in  \mathbb{R}^{3},
		\end{array}
		\right.
	\end{equation}
	where $u(t, x)$ is a complex-valued field in spacetime $[0,+\infty)\times  \mathbb{R}^{3}$ and the subscripts denote the corresponding partial derivatives.  Semilinear Schrödinger equations - with and without potentials, and with various nonlinearities - arise as models for diverse physical phenomena, including Bose-Einstein condensates \cite{Gross,Pitaevski} and as a description of the envelope dynamics of a general dispersive wave in a weakly nonlinear medium\footnote{For details, see the survey \cite[Chapter 1]{SuSu}.}.
	
	Equation \eqref{eq1aa} has a Hamiltonian structure, namely
		\begin{equation}\label{eq1aaa}
E(u(t)):=\int \frac{1}{2}|\nabla u(t, x)|^2dx+\frac{1}{6}|u(t, x)|^6 d x,
	\end{equation}
which is preserved by the flow \eqref{eq1aa}. We shall often refer to it as the energy and write $E(u)$ for $E(u(t))$. Our interest here in the defocusing quintic equation \eqref{eq1aaa} is motivated mainly by the fact that the problem concerning the energy norm is critical. 

To be precise, we are interested in internal stabilization for the perturbed defocusing critical nonlinear Schr\"odinger equation (C-NLS) on $\mathbb{R}^3$
	\begin{equation}\label{eq1a}
		\left\{
		\begin{array}{ll}
			i\partial_{t} u + \Delta u  - u = |u|^{4}u ,& (t,x) \in [0,+\infty)\times  \mathbb{R}^{3},\\
			u(x,0) = u_{0} (x) \in {H}^{1}(\mathbb{R}^{3}),& x\in  \mathbb{R}^{3},
		\end{array}
		\right.
	\end{equation}
	where $u = u(t, x)$ is a complex-valued function of two variables $x \in\mathbb{R}^3$ and $t\in[0,+\infty)$. We are mainly concerned with the following stabilizability problem for system \eqref{eq1a}.
	
	\vspace{0.2cm}
	\noindent\textbf{Stabilization problem:} Can one find a feedback control law $f(x,t)=\mathcal{K} u$ so that the resulting closedloop system
$$
i\partial_{t} u + \Delta u  - u - |u|^{4}u =\mathcal{K} u , (t,x) \in [0,+\infty)\times  \mathbb{R}^{3}
$$
is asymptotically stable as $t \rightarrow+\infty$ ?

	\vspace{0.2cm}
	
	Note that, similarly to the system \eqref{eq1aa}, system \eqref{eq1a} preserves the $L^{2}$--mass, defined as $\|u(t)\|^{2}_{L^{2}}$, and the $H^{1}$--Hamiltonian (energy) given by
$$
		E(u)= \frac{1}{2} \int_{\mathbb{R}^{3}}|u(t)|^{2} \ dx + \frac{1}{2}\int_{\mathbb{R}^{3}} |\nabla u(t)|^{2} \ dx + \frac{1}{6} \int_{\mathbb{R}^{3}} |u(t)|^{6} \ dx.
$$
Thus, to answer the previous question appropriately, we need to present an operator $\mathcal{K}$ that transforms the energy $E(u)$ into a decreasing function.  For this, consider a non-negative function $a \in C^{\infty}(\mathbb{R}^{3};[0,1])$ satisfying, almost everywhere, 
	\begin{equation}\label{eq 3}
		a(x) =	\left\{
		\begin{array}{ll}
			0, & \mbox{ if }|x| \leq R,  \\
			1, & \mbox{ if } |x| \geq R+ 1,
		\end{array}
		\right.
	\end{equation}
	for some $R>0$ and $\eta >0$ such that
	\begin{equation*}
		a(x)\geq \eta >0 \ \ \ \mbox{ for } \ \ |x| \geq R.
	\end{equation*}

From now on, the stabilization system in consideration is
\begin{equation} \label{eq 4}
	\left\{
	\begin{array}{ll}
		i\partial_{t}u  +\Delta u -u - |u|^{4}u -a(x)(1-\Delta)^{-1}a(x)\partial_{t}u=0	,& (t,x) \in [0,+\infty)\times  \mathbb{R}^{3},\\
	u(x,0) = u_{0} (x) \in {H}^{1}(\mathbb{R}^{3}),& x\in  \mathbb{R}^{3},
	\end{array}
	\right.
\end{equation} 
where $a(x)$ is given by \eqref{eq 3} and the solution $u=u(t,x)$ of the system satisfies the energy identity
$$
		E(u)(t_{2}) - E(u)(t_{1}) = - 2 \int_{t_{1}}^{t_{2}} \Big\|(1-\Delta)^{-\frac{1}{2}}a(x)\partial_{t}u\Big\|^{2}_{L^{2}} \ dt,
$$
where $E(u)(t)$ is now decreasing and, therefore, system \eqref{eq 4} is dissipative. Observe that, as noted by Laurent \cite{Laurent1}, the connection to the original equation \eqref{eq1aa} with damping can be established through the change of variable \( z = e^{-it}u \). Additionally, a more physically relevant damping term would be \( ia(x)u \), as used in the one-dimensional case in \cite{Laurent} and in a compact Riemannian manifold $(M,g)$ in \cite{BurqMoyano}. However, the damping term in \eqref{eq 4} is particularly suited to the \( H^1 \)-energy, which aligns with the regularity level at which we solve the equation.

Before presenting the contributions of this work, let us give a brief state of the art concerning the system and problems under consideration.

\subsection{Literature review} The Cauchy problem associated with system \eqref{eq1aa} has been extensively investigated, see for instance, \cite{cazenave,Gri,bourgain,bourgain1,GiVe,Kato}. It has been established \cite{CaWe,cazenave} that when the initial data $u_0(x)$ possesses finite energy, the Cauchy problem is locally well-posed. This implies the existence of a local-in-time solution to \eqref{eq1aa} belonging to the space $C_t^0 \dot{H}_x^1 \cap L_{t, x}^{10}$, and such a solution is unique within this class. Moreover, the mapping taking initial data to its corresponding solution exhibits local Lipschitz continuity in these norms. In cases where the energy is small, the solution exists globally in time and scatters to a solution $u_{ \pm}(t)$ of the free Schrödinger equation $\left(i \partial_t+\Delta\right) u_{ \pm}=0$. This scattering behavior is characterized by $\left\|u(t)-u_{ \pm}(t)\right\|_{\dot{H}^1\left(\mathbb{R}^3\right)} \rightarrow 0$ as $t \rightarrow \pm \infty$. However, for large initial data, the arguments presented in \cite{CaWe,cazenave}  fail to establish global well-posedness, even with the conservation of the energy \eqref{eq1aaa}. This limitation arises because the duration of existence predicted by the local theory depends on the data profile and the energy. This is in contrast to sub-critical equations like the cubic equation 
\begin{equation}\label{aaaaaa}
iu_t + \Delta u = |u|^2u,
\end{equation} 
where local well-posedness theory ensures global well-posedness and scattering even for large energy data, as discussed in \cite{Give1, cazenave_book}.

For large finite energy data, particularly for those assumed to be radially symmetric, Bourgain \cite{bourgain} demonstrated global existence and scattering for \eqref{eq1aa} in $\dot{H}^1\left(\mathbb{R}^3\right)$. Subsequently, Grillakis \cite{Gri} presented an alternative argument that partially recovered the results of \cite{bourgain}, focusing on global existence from smooth, radial, finite energy data. Recently, Colliander \textit{et al.} \cite{Co} obtained global well-posedness, scattering, and global $L^{10}$ space-time bounds for energy-class solutions to the quintic defocusing Schrödinger equation in $\mathbb{R}^{1+3}$, which is energy-critical. Notably, they established the global existence of classical solutions\footnote{For details about global well-posedness, scattering, and blow-up for the nonlinear Schr\"{o}dinger equation in the radial case, see \cite{KeMe}.}.

While the well-posedness theory for system \eqref{eq1aa} has been extensively explored, the study of control properties concerning the quintic critical defocusing Schrödinger equation in $\mathbb{R}^{3+1}$ is less advanced. Most research efforts have concentrated on the cubic Schrödinger equation \eqref{aaaaaa}, which has been a focal point in the past few decades. For instance, concerning control aspects, relevant literature includes \cite{Miller,Phung,RaTaTeTu} and related works. In terms of Carleman estimates and their applications to inverse problems, references such as \cite{BaPu,CaGa,CaCriGa,LaTriZhang, RoMe, YuYa} are noteworthy, along with their respective bibliographies. A comprehensive overview of contributions up to 2003 can be found in \cite{Zuazua}.

Concerning the stabilization problem, there are several results considering the equation \eqref{aaaaaa}.   Some similar results were obtained in dimension $2$ in the article of Dehman \textit{et al.} \cite{DeGeLe}, where the stabilization in $H^1$ is proved for the defocusing equation \eqref{aaaaaa} on compact surfaces considering the feedback law as $\mathcal{K}=a(x)(1-\Delta)^{-1}a(x)\partial_{t}$. Employing the same techniques for a one-dimensional case, Laurent \cite{Laurent} showed global internal controllability in large time for the system \eqref{aaaaaa} in an interval, however, in this case, with a physically relevant damping term $\mathcal{K}=ia(x)$. The strategy combines stabilization and local controllability near $0$. More recently, in a very nice article in \cite{Laurent1}, the same author gave contributions to the stabilization problem for the equation \eqref{aaaaaa} on some compact manifolds of dimension $3$. It is important to point out that in both works \cite{DeGeLe,Laurent1}, the main ingredients to achieve the results are some geometrical assumptions: geometric control and unique continuation. These are necessary due to the characteristics of the function $a(x)$. For more details about these questions, see \cite{Laurent3}.  The authors also suggest the following two references \cite{CaCaFuNa} for the $2-D$ case of the defocusing Schr\"odinger equation with locally distributed damping and \cite{BoCa} for the case of noncompact Riemannian manifolds and exterior domains. 

We also mention that  Rosier and Zhang  \cite{RoZhaMMM} (see also \cite{RoZhaSIAM}) considered the equation \eqref{aaaaaa} in the $\mathcal{R}=(0,l_1)\times\cdot\cdot\cdot\times(0,l_n)$ and investigated the control properties of the semi-linear Schr\"{o}dinger equation 
\[
i\partial_tu+\Delta u+\lambda|u|^{\alpha}u=ia(x)h(x,t), \text{  }\text{  }x\in\mathbb{T}^n\text{  }\text{  }t\in(0,T),
\]
where $\lambda\in\mathbb{R}$ and $\alpha\in 2\mathbb{N}^{\ast}$ by combining new linear controllability results in the spaces $H^s(\mathcal{R})$ with Bourgain analysis. In this case, the geometric control condition is not required (see \cite{RoZhaMMM} for more details).

Finally,  another recent work \cite{CaPa} extended the results from \cite{DeGeLe}. Therein, the authors studied global controllability and stabilization properties for the fractional Schr\"odinger equation on $d-$dimensional compact Riemannian manifolds without boundary $(M, g)$.  Using microlocal analysis, they showed the propagation of regularity, which, together with the geometric control condition and a unique continuation property, allowed them to prove global control results.

	\subsection{Main result and heuristics}  Our main theorem states that we can obtain an exponential decay for the energy of this system with a perturbation term for some solutions that are bounded in the energy space but small in a lower norm. The local stabilization result is the following.
	
	\begin{theorem}\label{mainteo}Let $\lambda_{0} > 0$. There exist constants $C,\gamma > 0$ and $\delta > 0$ such that for any $u_{0}$ in $H^{1}(\mathbb{R}^{3})$, with $\|u_{0}\|_{H^{1}(\mathbb{R}^{3})} \leq \lambda_{0}$ and $\|u_{0}\|_{H^{-1}(\mathbb{R}^{3})}\leq \delta,$ the unique strong solution of problem \eqref{eq 4} satisfies
		\begin{equation}\label{Eet}
			E(u)(t) \leq Ce^{-\gamma t}E(u)(0), \ \ \ \forall \, t\geq 0.
		\end{equation}
	\end{theorem}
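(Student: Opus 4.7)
The plan is to reduce \eqref{Eet} to an observability inequality of the form
\begin{equation*}
E(u)(0) \leq C \int_{0}^{T} \bigl\|(1-\Delta)^{-\frac{1}{2}} a(x)\,\partial_{t} u\bigr\|_{L^{2}}^{2}\, dt
\end{equation*}
for some fixed $T>0$ and every solution $u$ of \eqref{eq 4} whose initial data lies in the ball $\{\|u_{0}\|_{H^{1}} \leq \lambda_{0},\ \|u_{0}\|_{H^{-1}} \leq \delta\}$. Once such an inequality is established, the energy identity yields $E(u)(T) \leq (1 - \tfrac{1}{2C})E(u)(0)$, and iterating on intervals $[nT,(n+1)T]$ gives the desired exponential decay, since the $H^{1}$-norm is controlled by the energy, which is decreasing, and since the $H^{-1}$-norm of the solution remains small on $[0,T]$ by a standard continuity argument on \eqref{eq 4} (the damping being dissipative and the nonlinearity being harmless at the $H^{-1}$ level for initial data small in $H^{-1}$).

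For the observability inequality itself, I would argue by contradiction. Assume it fails; then one can build a sequence of solutions $(u_{n})$ of \eqref{eq 4} with $\|u_{0,n}\|_{H^{1}} \leq \lambda_{0}$, $\|u_{0,n}\|_{H^{-1}} \to 0$,
\begin{equation*}
\alpha_{n}^{2} := E(u_{n})(0) \geq c > 0, \qquad \beta_{n}^{2} := \int_{0}^{T} \bigl\|(1-\Delta)^{-\frac{1}{2}} a\,\partial_{t} u_{n}\bigr\|_{L^{2}}^{2}\, dt, \qquad \beta_{n}/\alpha_{n} \to 0.
\end{equation*}
Apply the nonlinear profile decomposition established in the paper to $(u_{n})$. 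Because $\|u_{0,n}\|_{H^{-1}}\to 0$, the linearization result announced in the abstract ensures that, up to a small remainder in the Strichartz/energy norms, each $u_{n}$ is asymptotically a superposition of rescaled and translated free solutions $v^{(j)}$ of the linear equation $(i\partial_{t}+\Delta-1)v=0$. Passing to the limit in the observation term, orthogonality of the profiles and the fact that $\beta_{n}\to 0$ force each profile $v^{(j)}$ to satisfy $a(x)\,\partial_{t} v^{(j)} \equiv 0$ on the relevant time window.

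Each profile must then be shown to vanish, which is the heart of the matter. Profiles whose concentration centers escape to infinity in space eventually live in the region $\{|x|\geq R+1\}$ where $a\equiv 1$, so they are killed by direct integration. For profiles concentrating at bounded locations and at small scales, I would invoke propagation of singularities for the semiclassical Schrödinger operator combined with the geometric control region $\{|x|\geq R\}$ (where $a\geq \eta>0$): every bicharacteristic of the free Schrödinger flow, when semiclassical wavelengths are considered, escapes to $|x|\geq R$ in finite time, hence the wavefront set must be empty. For the low-frequency or non-concentrated profile, one reduces to showing that a finite-energy solution of the linear equation with $a(x)\partial_{t}v=0$ on $[0,T]\times\mathbb{R}^{3}$ vanishes, which follows from unique continuation (Holmgren-type arguments for Schrödinger, as in Dehman--Gérard--Lebeau and Laurent). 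Summing contributions contradicts $\alpha_{n}\not\to 0$.

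The main obstacle is the critical nature of the problem on the non-compact manifold $\mathbb{R}^{3}$. Unlike the compact setting of Dehman--Gérard--Lebeau or Laurent, profiles can escape to spatial infinity and concentrate at arbitrarily small scales simultaneously, so the profile decomposition must track both translation and scaling cores, and the linearization step must tolerate both. The smallness in $H^{-1}$ is precisely what allows the nonlinear flow to be approximated by the linear one uniformly across all such profiles; without it, the quintic nonlinearity at $H^{1}$-critical level could generate concentrated bubbles that resist damping. Coupling this linearization with the microlocal propagation and unique continuation in an unbounded geometry, while ensuring that the damping term $a(x)(1-\Delta)^{-1}a(x)\partial_{t}u$ (natural at the $H^{1}$ level but delicate to handle microlocally because of its non-local factor) cooperates with the bicharacteristic flow, is the most technical part of the argument.
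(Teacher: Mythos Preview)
Your overall plan---reduce exponential decay to an observability inequality, prove the latter by contradiction via profile decomposition and microlocal propagation, then iterate---is the paper's strategy. But there is a genuine gap in the contradiction step: you assume the violating sequence satisfies $\alpha_n^2 = E(u_n)(0) \geq c > 0$, and this is not justified. Negating observability only gives $\beta_n/\alpha_n \to 0$ and $\|u_{0,n}\|_{H^{-1}}\to 0$; nothing prevents $\alpha_n \to 0$. The paper deals with this by proving instead a \emph{weak} observability estimate,
\[
E(u)(0) \;\leq\; C\Bigl(\int_0^T \|(1-\Delta)^{-1/2}a\,\partial_t u\|_{L^2}^2\,dt \;+\; \|u_0\|_{H^{-1}}\,E(u)(0)\Bigr),
\]
and splitting the contradiction into two cases. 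When $\alpha_n\to\alpha>0$, the argument is essentially yours (linearize through the profile decomposition, use the energy-density decomposition to see each profile's microlocal defect measure vanishes on $\omega$, then propagate along geodesics and invoke geometric control). When $\alpha_n\to 0$, however, the profile decomposition of $u_n$ is vacuous; the paper rescales $w_n=u_n/\alpha_n$, so the nonlinearity picks up a factor $\alpha_n^4$ and becomes negligible. The weak limit $w$ of $w_n$ then solves the linear equation $i\partial_t w + \Delta w - w = 0$ with $\partial_t w = 0$ on $\omega$, and a unique-continuation argument (applied to $\partial_t w$) forces $w\equiv 0$; a further microlocal-defect-measure step upgrades this to strong convergence and gives the contradiction with $\|w_n(0)\|_{H^1}\approx 1$. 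Your sketch does not address this branch at all.

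A secondary point: your iteration claim that ``the $H^{-1}$-norm of the solution remains small by a standard continuity argument'' is too quick. The flow does not preserve an $H^{-1}$ ball uniformly in time. The paper first fixes $N$ so that $(1-B)^N C(\lambda_0)\leq \varepsilon^2$, then chooses $\delta$ small enough---via an $L^2$ propagation lemma (multiply the equation by $\bar u$, use that the damping and nonlinearity are harmless at $L^2$ level) and interpolation $\|u\|_{L^2}\leq \|u\|_{H^{-1}}^{1/2}\|u\|_{H^1}^{1/2}$---to guarantee $\|u(nT)\|_{H^{-1}}\leq \varepsilon$ for $0\leq n\leq N$. Only after those $N$ steps does the energy itself, now below $\varepsilon^2$, control $\|u(t)\|_{H^{-1}}$ for all later times.
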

	
	 Let us give a brief general idea of how we obtain our results, which provide a (local) answer to the stabilization problem.   Initially, it is important to acknowledge that the primary concern is to establish the stabilization of the energy linked with \eqref{eq1aa}. However, due to technical challenges, more specifically, due to the difficulty in  identifying suitable embeddings between nonhomogeneous Sobolev spaces, 
	 we perturb this system, transforming it in the system \eqref{eq 4}. Our inspiration for this approach comes from a result concerning the Klein-Gordon equation on a 3-dimensional compact manifold obtained by Laurent \cite{Laurent2}.
	
	To obtain Theorem \ref{mainteo}, we use a profile decomposition to describe how linear and nonlinear solutions approach each other in some sense, applying the same methodology used for the Klein-Gordon equation in the three-dimensional case. Precisely, to show that the energy of the system \eqref{eq 4} decays exponentially (even locally), it is necessary to show the \textit{observability inequality}
\begin{equation}\label{oi-ra}
E(u)(0) \leq C \int_{0}^{T}\int_{\mathbb{R}^{3}}|(1-\Delta)^{-\frac{1}{2}}a\partial_{t}u|^{2} \ dx dt,
\end{equation}	
which is obtained through propagation results for the microlocal defect measure through the strategy used in \cite{DeLeZu}. Before that, we need to prove that solutions for the nonlinear system \eqref{eq1a} behave similarly to the solutions for the linear system associated with system \eqref{eq1a}. To this end, we introduce a decomposition into profiles for both linear and nonlinear solutions as carried out by Keraani \cite{keraani}.
	
Note that, even with the addition of a perturbation term, our approach will not undergo any significant modification. Indeed, the unknown $w=e^{it}u$ is a solution of
\begin{equation*}
	\left\{
	\begin{array}{ll}
		i\partial_{t}w +\Delta w = |w|^{4}w, & (t,x) \in \mathbb{R} \times \mathbb{R}^{3}, \\
		w(x,0) = u_{0} \in \dot{H}^{1}(\mathbb{R}^{3}),& x\in \mathbb{R}^{3},
	\end{array}
	\right.
\end{equation*}
which is the original system.  Therefore, it is possible to use, in our new system, the entire profile decomposition theory developed by Keraani in \cite{keraani} as well as the scattering property.

Finally, with this decomposition of profiles in hand combined with the propagation results, which involves arguments from
microlocal analysis, we show the observability inequality \eqref{oi-ra}, ensuring the decay of the energy in the sense of estimate \eqref{Eet}.

\begin{remark} The following observations are worth mentioning:
\begin{itemize}
\item[i.] Theorem \ref{mainteo} completes the analysis begun in \cite{BrCaCaSa}, where local controllability was shown. 
\item[ii.] Our result here gives a first step to understanding the stabilization properties of system \eqref{eq 4}. Since our result is local, it is necessary to prove global stabilization (see, for instance, \cite{DeGeLe,Laurent1}) to get global controllability. In conclusion, global controllability reduces to proving that system \eqref{eq 4} is globally exponentially stabilizable, which remains an open problem (see Figure \ref{figura1}).
\begin{figure}[h!]
\tikzset{every picture/.style={line width=0.75pt}} 
\begin{tikzpicture}[x=0.70pt,y=0.70pt,yscale=-1,xscale=1]
\draw    (79.01,10) -- (79.99,263) ;
\draw [shift={(80,266)}, rotate = 269.78] [fill={rgb, 255:red, 0; green, 0; blue, 0 }  ][line width=0.08]  [draw opacity=0] (8.93,-4.29) -- (0,0) -- (8.93,4.29) -- cycle    ;
\draw [shift={(79,7)}, rotate = 89.78] [fill={rgb, 255:red, 0; green, 0; blue, 0 }  ][line width=0.08]  [draw opacity=0] (8.93,-4.29) -- (0,0) -- (8.93,4.29) -- cycle    ;
\draw    (376,229) -- (34,229) ;
\draw [shift={(31,229)}, rotate = 360] [fill={rgb, 255:red, 0; green, 0; blue, 0 }  ][line width=0.08]  [draw opacity=0] (8.93,-4.29) -- (0,0) -- (8.93,4.29) -- cycle    ;
\draw [shift={(379,229)}, rotate = 180] [fill={rgb, 255:red, 0; green, 0; blue, 0 }  ][line width=0.08]  [draw opacity=0] (8.93,-4.29) -- (0,0) -- (8.93,4.29) -- cycle    ;
\draw  [line width=3] [line join = round][line cap = round] (79,137.5) .. controls (79,137.5) and (79,137.5) .. (79,137.5) ;
\draw  [line width=3] [line join = round][line cap = round] (79,68) .. controls (79,68) and (79,68) .. (79,68) ;
\draw  [line width=3] [line join = round][line cap = round] (80,197.5) .. controls (80,197.5) and (80,197.5) .. (80,197.5) ;
\draw  [line width=3] [line join = round][line cap = round] (80,217.5) .. controls (80,217.5) and (80,217.5) .. (80,217.5) ;
\draw [color={rgb, 255:red, 208; green, 2; blue, 27 }  ,draw opacity=1 ]   (79.5,136.5) .. controls (119.5,106.5) and (129,227.5) .. (169,197.5) ;
\draw  [line width=3] [line join = round][line cap = round] (169,197.5) .. controls (169,197.5) and (169,197.5) .. (169,197.5) ;
\draw [color={rgb, 255:red, 208; green, 2; blue, 27 }  ,draw opacity=1 ]   (259,217.5) .. controls (299,187.5) and (298,98.5) .. (338,68.5) ;
\draw  [line width=3] [line join = round][line cap = round] (260,217.5) .. controls (260,217.5) and (260,217.5) .. (260,217.5) ;
\draw  [line width=3] [line join = round][line cap = round] (338,68.5) .. controls (338,68.5) and (338,68.5) .. (338,68.5) ;
\draw  [color={rgb, 255:red, 144; green, 19; blue, 254 }  ,draw opacity=1 ] (169.27,196.59) .. controls (172.43,200.29) and (175.46,203.81) .. (179.81,205.05) .. controls (184.17,206.28) and (188.64,204.89) .. (193.32,203.42) .. controls (197.99,201.95) and (202.46,200.56) .. (206.82,201.8) .. controls (211.17,203.03) and (214.2,206.56) .. (217.36,210.25) .. controls (220.53,213.94) and (223.56,217.47) .. (227.91,218.7) .. controls (232.26,219.94) and (236.74,218.55) .. (241.41,217.08) .. controls (246.09,215.61) and (250.56,214.22) .. (254.91,215.45) .. controls (256.64,215.94) and (258.16,216.79) .. (259.56,217.87) ;
\draw  [dash pattern={on 0.84pt off 2.51pt}]  (79.83,197.83) -- (169,197.5) ;
\draw  [dash pattern={on 0.84pt off 2.51pt}]  (80.17,217.5) -- (259,217.5) ;
\draw  [dash pattern={on 0.84pt off 2.51pt}]  (79.17,68) -- (338,68.5) ;
\draw  [dash pattern={on 0.84pt off 2.51pt}]  (169,197.5) -- (169.5,228.83) ;
\draw  [dash pattern={on 0.84pt off 2.51pt}]  (259,217.5) -- (259.17,228.83) ;
\draw  [dash pattern={on 0.84pt off 2.51pt}]  (338,68.5) -- (339.5,228.5) ;
\draw [color={rgb, 255:red, 155; green, 155; blue, 155 }  ,draw opacity=1 ]   (202,40) -- (125.16,147.87) ;
\draw [shift={(124,149.5)}, rotate = 305.46] [color={rgb, 255:red, 155; green, 155; blue, 155 }  ,draw opacity=1 ][line width=0.75]    (10.93,-3.29) .. controls (6.95,-1.4) and (3.31,-0.3) .. (0,0) .. controls (3.31,0.3) and (6.95,1.4) .. (10.93,3.29)   ;
\draw [color={rgb, 255:red, 155; green, 155; blue, 155 }  ,draw opacity=1 ]   (202,40) -- (291.57,128.1) ;
\draw [shift={(293,129.5)}, rotate = 224.52] [color={rgb, 255:red, 155; green, 155; blue, 155 }  ,draw opacity=1 ][line width=0.75]    (10.93,-3.29) .. controls (6.95,-1.4) and (3.31,-0.3) .. (0,0) .. controls (3.31,0.3) and (6.95,1.4) .. (10.93,3.29)   ;
\draw [color={rgb, 255:red, 155; green, 155; blue, 155 }  ,draw opacity=1 ]   (244,256.5) -- (206.24,208.57) ;
\draw [shift={(205,207)}, rotate = 51.77] [color={rgb, 255:red, 155; green, 155; blue, 155 }  ,draw opacity=1 ][line width=0.75]    (10.93,-3.29) .. controls (6.95,-1.4) and (3.31,-0.3) .. (0,0) .. controls (3.31,0.3) and (6.95,1.4) .. (10.93,3.29)   ;

\draw (10,12.4) node [anchor=north west][inner sep=0.75pt]  [xscale=0.95,yscale=0.95]  {$H^1(\mathbb{R}^3)$};
\draw (378,238.4) node [anchor=north west][inner sep=0.75pt]  [xscale=0.95,yscale=0.95]  {$t$};
\draw (62,131.4) node [anchor=north west][inner sep=0.75pt]  [font=\scriptsize,xscale=0.95,yscale=0.95]  {$\phi $};
\draw (59,61.4) node [anchor=north west][inner sep=0.75pt]  [font=\scriptsize,xscale=0.95,yscale=0.95]  {$\phi _{T}$};
\draw (64,186.4) node [anchor=north west][inner sep=0.75pt]  [font=\scriptsize,xscale=0.95,yscale=0.95]  {$\tilde{\phi }$};
\draw (63,206.4) node [anchor=north west][inner sep=0.75pt]  [font=\scriptsize,xscale=0.95,yscale=0.95]  {$\tilde{\phi }_{T}$};
\draw (160,232.4) node [anchor=north west][inner sep=0.75pt]  [font=\scriptsize,xscale=0.95,yscale=0.95]  {$T_{1}$};
\draw (252,231.4) node [anchor=north west][inner sep=0.75pt]  [font=\scriptsize,xscale=0.95,yscale=0.95]  {$T_{2}$};
\draw (332,231.4) node [anchor=north west][inner sep=0.75pt]  [font=\scriptsize,xscale=0.95,yscale=0.95]  {$T$};
\draw  [fill={rgb, 255:red, 255; green, 214; blue, 214 }  ,fill opacity=1 ]  (136,18) -- (272,18) -- (272,43) -- (136,43) -- cycle  ;
\draw (139,22) node [anchor=north west][inner sep=0.75pt]  [xscale=0.95,yscale=0.95] [align=left] {\textit{Global Stabilization}};
\draw  [fill={rgb, 255:red, 234; green, 215; blue, 255 }  ,fill opacity=1 ]  (188,252) -- (326,252) -- (326,277) -- (188,277) -- cycle  ;
\draw (191,256) node [anchor=north west][inner sep=0.75pt]  [xscale=0.95,yscale=0.95] [align=left] {\textit{Local Controllability}};
\end{tikzpicture}
\caption{Global controllability result}\label{figura1}
\end{figure}
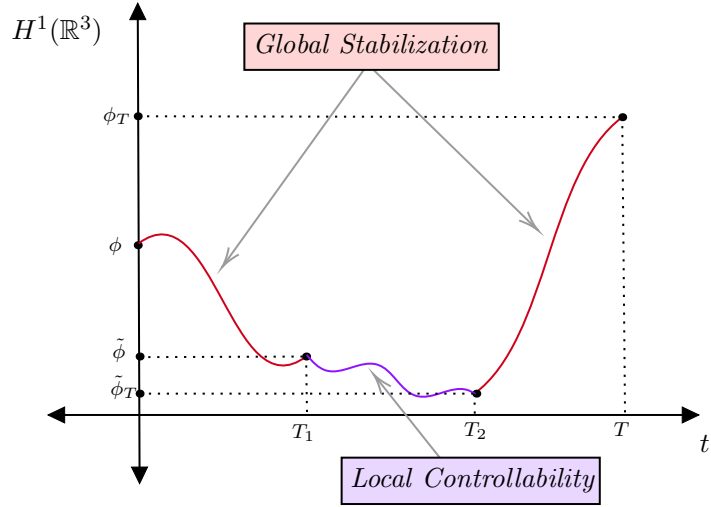
\item[iii.] Note that  $a \in C^{\infty}(\mathbb{R}^{3})$ satisfying \eqref{eq 3} act in $\omega : = \big(\mathbb{R}^{3} \backslash B_{R}(0)\big)$. Thus, as opposed to \cite{Laurent2}, the function $\omega$ satisfies a unique geometrical assumption: \ \ There exists $T_{0} > 0$ such that every geodesic travelling at speed 1 meets $\omega$ in a time $t < T_{0}$, for some $T_{0}>0.$
\item[iv.] As mentioned in \cite{Laurent1}, the most physically relevant damping term for system \eqref{eq 4} would be $ia(x)u$ instead of $a(x)(1-\Delta)^{-1}a(x)\partial_{t}u$, as used in the one-dimensional case \cite{Laurent}. For this damping term, the analysis remains open. 
\end{itemize}
\end{remark}

	\subsection{Structure of the work} We conclude our introduction by providing an outline of this work. In Section \ref{sec2}, we introduce the profile decomposition of the $H^{1}$-critical Schrödinger equation in three spatial dimensions. The nonlinear profile decomposition is detailed in Section \ref{sec3}, following Keraani's approach in \cite{keraani}. Additionally, we present a result ensuring that sequences of solutions for the nonlinear system behave similarly to sequences of solutions for the linear system, following ideas from \cite{Laurent2}. Section \ref{sec4} is dedicated to proving the observability inequality associated with the solutions of system \eqref{eq 4}, thereby providing the proof of Theorem \ref{mainteo}. Finally, two appendices are included: Appendix \ref{apA} reviews the Cauchy problem \eqref{eq 4}, while Appendix \ref{apB} compiles some results on the propagation of solutions of the linear Schrödinger equation, based on the ideas from \cite{DeGeLe}.

	 
	 \section{Profile decomposition}\label{sec2}
 In this section, we will consider the $H^{1}$-critical Schrödinger equation in three spatial dimensions
		\begin{equation} \label{eq 15}
		\left\{
		\begin{array}{ll}
			i\partial_{t} u + \Delta u -|u|^{4}u= 0, \ \ & (t,x) \in [0,T] \times \mathbb{R}^{3} \\
			u(0,x) =\varphi(x), \ \ &x \in \mathbb{R}^{3}.
		\end{array}
		\right.
	\end{equation}		
Considering $\varphi \in \dot{H}^{1}(\mathbb{R}^{3})$, the solution of the linear system associated with \eqref{eq 15} is given explicitly by $v=e^{it\Delta}\varphi$, which belongs to the class $ C (\mathbb{R}_{t},\dot{H}^{1}(\mathbb{R}_{x}^{3}))$, and satisfies the conservation law
$$
		E_{0}(v)(t) := \int_{\mathbb{R}^{3}} |\nabla v(t)|^{2} \ dx = E_{0}(\varphi).
$$
The small data theory explored by \cite{cazenave} ensures that there exists $\lambda > 0$ such that, if  
\begin{equation}\label{eq 16}
	\|\varphi\|_{\dot{H}^{1}(\mathbb{R}^{3})} \leq \lambda,
\end{equation} then there exists a unique maximal solution $u(t,x)$ of system \eqref{eq 15} satisfying
$$
	u \in C(\mathbb{R};\dot{H}^{1}(\mathbb{R}^{3})), \ \ u \in L^{10}(\mathbb{R}^{4}), \ \ \nabla u \in L^{\frac{10}{3}}(\mathbb{R}^{4}).
$$

Our first goal in this section is to prove that every sequence of solutions associated with the linear Schrödinger equation \eqref{eq 15} with bounded data in $\dot{H}^{1}(\mathbb{R}^{3})$ can be written, up to a subsequence, as an almost orthogonal sum of sequences of the type
$$h_{n}^{-\frac{1}{2}}\varphi\Big(\frac{t-t_{n}}{h_{n}^{2}},\frac{x-x_{n}}{h_{n}}\Big),$$ where $\varphi$ is a solution of the linear Schrödinger equation with a small remainder term in Strichartz norms. Using this decomposition, we show 
a similar one for system \eqref{eq 15}, assuming that the initial data belong to a ball in the energy space where the equation is solvable. This implies, in particular, the existence of an a priori estimate for the Strichartz norms in terms of energy. Let us begin with the following definition.

	\begin{definition}\label{def3}
	Let $\lambda_{0}$ be the supremum of all $\lambda$ in \eqref{eq 16} for such that one has 
	global existence of a maximal solution $u$ for \eqref{eq 15}, with $u \in C(\mathbb{R};\dot{H}^{1}(\mathbb{R}^{3})\cap L^{10}(\mathbb{R}^{4})$ and $\nabla u \in L^{\frac{10}{3}}(\mathbb{R}^{4})$.
\end{definition}  

\begin{remark}
If $\|\varphi\|_{\dot{H}^{1}(\mathbb{R}^{3})} < \lambda_{0}$, then system $\eqref{eq 15}$ admits a complete scattering theory concerning its associated linear problem. However, it is an open problem to prove that $\lambda_{0}=\infty$, i.e., to prove global well-posedness of the IVP \eqref{eq 15} for any initial data in $\dot{H}^{1}(\mathbb{R}^{3})$\footnote{Bourgain solved this problem in the particular case of radially symmetric data \cite{bourgain}.}.
\end{remark}

The following definition will be useful in the first part of the proof of the linear profile decomposition, which consists of the extraction of the scales of oscillation $h_n$.
\begin{definition}\label{def4}
\end{definition}
\begin{itemize}
	\item[i)] We call \textbf{scale} every sequence $\underline{h}=(h_{n})_{n\geq 0}$ of positive numbers and \textbf{core} every sequence $[\underline{x},\underline{t}]=(x_{n},t_{n})_{n\geq0}\subset \mathbb{R}^{3}\times \mathbb{R}$. We denote a  \textbf{scale-core} by $[\underline{h},\underline{x},\underline{t}]$.
	\item[ii)] We say that two sequences of scale-core $[\underline{h}^{(1)},\underline{x}^{(1)},\underline{t}^{(1)}]$ and $[\underline{h}^{(2)},\underline{x}^{(2)},\underline{t}^{(2)}]$ are orthogonal if either
$$
		\dfrac{h_{n}^{(1)}}{h_{n}^{(2)}} + \dfrac{h_{n}^{(2)}}{h_{n}^{(1)}}\longrightarrow + \infty, \quad \text{as} \quad n \rightarrow \infty,
$$
	or, $h_{n}^{(1)} = h_{n}^{(2)} = h_{n}$ and 
$$
		\Bigg| \dfrac{t_{n}^{(1)} - t_{n}^{(2)}}{h_{n}^{2}} \Bigg| + \Bigg| \dfrac{x_{n}^{(1)} - x_{n}^{(2)}}{h_{n}} \Bigg| \longrightarrow + \infty,\quad \text{as} \quad n \rightarrow \infty.
$$
	In each respective case above, we denote either $[\underline{h}^{(1)},\underline{x}^{(1)},\underline{t}^{(1)}] \perp [\underline{h}^{(2)},\underline{x}^{(2)},\underline{t}^{(2)}]$ or $(\underline{x}^{(1)},\underline{t}^{(1)}) \perp_{h_{n}} (\underline{x}^{(2)},\underline{t}^{(2)}).$ 
\end{itemize}
\subsection{Concentrating solutions} Now, we will introduce the concept of concentration solution, which will be extremely important for the study of the asymptotic behavior of our system.
	\begin{definition} \label{def5}
\end{definition}
\begin{itemize}
	\item [i)]	Let $f \in L^{\infty}(\mathbb{R};\dot{H}^{1}(\mathbb{R}^{3}))$,  $\underline{h} = h_{n} \in \mathbb{R}^{*}_{+}$, $\underline{x} = x_{n} \in  \mathbb{R}^{3}$ and $ \underline{t} = t_{n} \in \mathbb{R}$ such that $\lim_{n} (h_{n},x_{n},t_{n}) =(0,x_{\infty},t_{\infty})$. 
	A linear concentrating solution associated to $[f,\underline{h},\underline{x},\underline{t}]$ is a sequence $(v_{n})_{n \in \mathbb{N}}$ of solutions to
$$
		i\partial_{t} v_{n} + \Delta v_{n}  = 0, \ \ (t,x) \in \mathbb{R} \times \mathbb{R}^{3},
$$
	of the form 
$$
		v_{n}(t,x) = \frac{1}{\sqrt{h_{n}}}f \Big(\dfrac{t-t_{n}}{h_{n}^{2}},\dfrac{x-x_{n}}{h_{n}}\Big);
$$
	\item[ii)]	The associated nonlinear concentrating solution is a sequence $(u_{n})_{n \in \mathbb{N}}$ of solutions to  
$$
		\left\{
		\begin{array}{ll}
			i\partial_{t} u_{n} + \Delta u_{n} -|u_{n}|^{4}u_{n} = 0, \ \ & (t,x) \in \mathbb{R} \times \mathbb{R}^{3},\\
			u_{n}(0) = v_{n}(0), \ \ & x \in \mathbb{R}^{3} ,
		\end{array}
		\right.
$$
of the form
	\begin{equation*}
		u_{n}(t,x) = \frac{1}{\sqrt{h_{n}}}\overline{f} \Big(\dfrac{t-t_{n}}{h_{n}^{2}},\dfrac{x-x_{n}}{h_{n}}\Big),
	\end{equation*}
	where $\overline{f}(-t_{n}/h_{n}^{2}) = f(-t_{n}/h_{n}^{2})$.
\end{itemize}

The next definition is the tool that will be used to “track back” the concentrations.

	\begin{definition} \label{def6}
	Let $x_{\infty} \in \mathbb{R}^{3}$, $t_{\infty} \in \mathbb{R}$, $\underline{h} = h_{n} \in \mathbb{R}^{*}_{+}$, $\underline{x} = x_{n} \in  \mathbb{R}^{3}$ and $f \in L^{\infty}(\mathbb{R};\dot{H}^{1}(\mathbb{R}^{3}))$ 
	such that $\lim_{n} (h_{n},x_{n},t_{n}) =(0,x_{\infty},t_{\infty})$. Given a bounded sequence $(f_{n})_{n\in \mathbb{N}}$ in $L^{\infty}(\mathbb{R};\dot{H}^{1}(\mathbb{R}^{3}))$, we write $D_{h_{n}}f_{n} \rightharpoonup f$ if $h_{n}^{\frac{1}{2}}f_{n}(t_{n} + h_{n}^{2}t,x_{n} + h_{n}x) \rightharpoonup f(t,x)$ weakly in $\dot{H}^{1}(\mathbb{R}^{3}),$ for all $t \in \mathbb{R}$.
\end{definition}
Of course, this definition depends on the core of concentration $h_{n}, x_{n}$ and $t_{n}$.  When several rates of concentration $[\underline{h}^{(j)},\underline{x}^{(j)},\underline{t}^{(j)}]$,
$j \in \mathbb{N}$, are used in a proof, we use the notation $D^{(j)}_{h}$ to distinguish them.
\begin{lemma}\label{lemma3.2}
 If $f_{n}$ is a linear concentrating solution associated to $[f,\underline{h},\underline{x},\underline{t}]$, then
	$D_{h_{n}}f_{n} \rightharpoonup f.$
\end{lemma}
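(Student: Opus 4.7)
The proof is a direct computation unpacking the definitions, and there is no substantive obstacle. The plan is as follows.

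First, I would recall the explicit form of a linear concentrating solution: by Definition \ref{def5}(i),
\[
f_n(s,y) = \frac{1}{\sqrt{h_n}}\,f\!\left(\frac{s-t_n}{h_n^{2}},\,\frac{y-x_n}{h_n}\right)
\]
for all $(s,y)\in\mathbb{R}\times\mathbb{R}^{3}$. According to Definition \ref{def6}, to check that $D_{h_n}f_n \rightharpoonup f$ I must verify that, for every fixed $t\in\mathbb{R}$, the sequence $x\mapsto h_n^{1/2}\,f_n(t_n+h_n^{2}t,\,x_n+h_n x)$ converges weakly in $\dot H^{1}(\mathbb{R}^{3})$ to $x\mapsto f(t,x)$.

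Next I would substitute $(s,y)=(t_n+h_n^{2}t,\,x_n+h_n x)$ into the formula for $f_n$. The translations in the arguments are exactly cancelled by the subtractions inside $f$, and the dilations are cancelled by the factors $h_n^{2}$ and $h_n$ in the denominators; moreover the prefactor $h_n^{-1/2}$ is cancelled by the multiplicative factor $h_n^{1/2}$ coming from the operator $D_{h_n}$. The upshot is the pointwise identity
\[
h_n^{1/2}\,f_n\bigl(t_n+h_n^{2}t,\,x_n+h_n x\bigr)=f(t,x),
\]
valid for every $n$ and every $(t,x)\in\mathbb{R}\times\mathbb{R}^{3}$.

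Finally, since the rescaled-and-translated sequence is the constant sequence equal to $f(t,\cdot)\in\dot H^{1}(\mathbb{R}^{3})$ (using the assumption $f\in L^{\infty}(\mathbb{R};\dot H^{1}(\mathbb{R}^{3}))$), convergence is trivial: it is even a strong equality, hence in particular a weak convergence in $\dot H^{1}(\mathbb{R}^{3})$ for each $t\in\mathbb{R}$. This is exactly the statement $D_{h_n}f_n \rightharpoonup f$, so the lemma follows. The lemma is essentially a consistency check confirming that the ``track-back'' procedure of Definition \ref{def6} recovers the profile $f$ from the concentrating sequence built from $[f,\underline{h},\underline{x},\underline{t}]$; the limits of $(h_n,x_n,t_n)$ to $(0,x_\infty,t_\infty)$ play no role in this particular statement.
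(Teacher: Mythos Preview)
Your proof is correct and takes essentially the same approach as the paper: both simply substitute to obtain the identity $h_n^{1/2}f_n(t_n+h_n^2 t,\,x_n+h_n x)=f(t,x)$, from which the weak convergence is immediate. The paper spells this out by pairing against a test function in $\dot H^1$, whereas you (equivalently, and perhaps more cleanly) observe that the rescaled sequence is constant.
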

\begin{proof}
 Since $f_{n}$ has the form 
	$$f_{n}(t,x)= \frac{1}{\sqrt{h_{n}}} f\Big(\frac{t-t_{n}}{h_{n}^{2}},\frac{x-x_{n}}{h_{n}}\Big),$$
the change of variables
	$$\sqrt{h_{n}}f_{n}(t_{n} + h_{n}^{2}s,x_{n} + h_{n}y) =  f(s,y)$$
yields that
$$
		L_{n}  =  \sqrt{h_{n}}\int_{\mathbb{R}^{3}} \nabla_{y} f_{n}(t_{n} + h_{n}^{2}s,x_{n} + h_{n}y) \cdot \nabla_{y} \varphi(s,y) \ dx  =  \int_{\mathbb{R}^{3}} \nabla_{y} f(s) \cdot \nabla_{y} \varphi(s) \ dy .
$$
	Thus, 
	$$\int_{\mathbb{R}^{3}} \nabla_{x} f_{n}(t_{n} + h_{n}^{2}s) \cdot \nabla_{x} u_{n}(t_{n} + h_{n}^{2}s) \ dx = \int_{\mathbb{R}^{3}} \nabla_{y} f(s) \cdot \nabla_{y} \varphi(s) \ dy,$$
which gives $D_{h_{n}} f_{n} \rightharpoonup f$. 
\end{proof}
	\begin{lemma} \label{lemma3.3}
 If $u_{n}$ is a linear concentrating solution associated to $[\varphi,\underline{h}, \underline{x},\underline{t}]$, then
$$
		\|u_{n}\|_{L^{\infty}\dot{H}^{1}} = \|\varphi\|_{L^{\infty}\dot{H}^{1}}, \ \ \|u_{n}\|_{L^{10}_{t}L^{10}_{x}} = \|\varphi\|_{L^{10}_{t}L^{10}_{x}} \ \mbox{ and } \  \|\nabla u_{n}\|_{L^{\frac{10}{3}}_{t}L^{\frac{10}{3}}_{x}} = \|\nabla \varphi\|_{L^{\frac{10}{3}}_{t}L^{\frac{10}{3}}_{x}}.
$$

\end{lemma}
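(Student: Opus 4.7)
The plan is to verify each of the three norm identities by a single change of variables, exploiting the fact that the scaling
\[
u(t,x) \;\longmapsto\; h_n^{-1/2}\, u\!\left(\tfrac{t-t_n}{h_n^2},\tfrac{x-x_n}{h_n}\right)
\]
is precisely the symmetry group of the energy-critical equation: it leaves the $\dot H^1$ norm invariant and is the critical scaling for the Strichartz pair $(10/3,10/3)$ applied to $\nabla u$ and the scattering norm $L^{10}_{t,x}$.

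Concretely, I would first observe that the chain rule gives $\nabla_x u_n(t,x) = h_n^{-3/2}\,(\nabla_y \varphi)\!\left(\tfrac{t-t_n}{h_n^2},\tfrac{x-x_n}{h_n}\right)$. Then, for fixed $t$, performing the substitution $y=(x-x_n)/h_n$ with Jacobian $dx = h_n^{3}\,dy$ shows that the power $h_n^{-3}$ coming from $|\nabla_x u_n|^2$ cancels exactly with the Jacobian, yielding
\[
\|\nabla_x u_n(t,\cdot)\|_{L^2}^2 = \|\nabla_y \varphi(s,\cdot)\|_{L^2}^2, \qquad s=\tfrac{t-t_n}{h_n^2},
\]
and taking the supremum over $t$ gives the first identity (the map $t \mapsto s$ is a bijection of $\mathbb{R}$). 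For the space–time norms, I would combine the spatial substitution with the temporal one $s=(t-t_n)/h_n^2$, $dt = h_n^2\, ds$: for the $L^{10}_{t,x}$ norm, the prefactor contributes $h_n^{-5}$ while the Jacobians contribute $h_n^{3}\cdot h_n^{2}=h_n^{5}$; and for the $L^{10/3}_{t,x}$ norm of $\nabla u_n$, the prefactor contributes $h_n^{-3/2\cdot 10/3} = h_n^{-5}$, again cancelled by $h_n^{5}$. In each case the powers of $h_n$ balance exactly and the claimed equalities follow.

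There is essentially no obstacle here: the lemma is a dimension check verifying that the three norms in question are exactly the scale-invariant ones for the energy-critical NLS in three dimensions. The only mild care required is to ensure that the change of variables in $t$ is carried out globally (the substitution $s=(t-t_n)/h_n^2$ is an affine bijection of $\mathbb{R}$, so no boundary issues arise), and to remember that the factor $h_n^{-1/2}$ in the definition of a linear concentrating solution is chosen precisely so that the $\dot H^1$-norm is preserved — from which the Strichartz-norm identities then follow by the same algebra.
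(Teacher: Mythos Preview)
Your proposal is correct and takes essentially the same approach as the paper: a direct change of variables $y=(x-x_n)/h_n$, $s=(t-t_n)/h_n^2$ together with the chain rule to verify that the powers of $h_n$ cancel in each of the three scale-invariant norms. The paper only spells out the $L^\infty\dot H^1$ case and declares the other two analogous, whereas you have given the explicit power count for all three, which is if anything more complete.
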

\begin{proof}
We prove only the first equality since the other two are similarly obtained.  
Using Definition \ref{def5} and 
the change of variables $\frac{t-t_{n}}{h_{n}^{2}} = s$ and $\frac{x-x_{n}}{h_{n}} = y$, we get
	\begin{eqnarray*}
		\|\nabla u_{n}(t)\|_{L^{2}} 
		 =  \frac{1}{\sqrt{h_{n}}}\Bigg(\int_{\mathbb{R}^{3}} \Big|\nabla_{x} \varphi\Big(\frac{t-t_{n}}{h_{n}^{2}},\frac{x-x_{n}}{h_{n}}\Big)\Big|^{2} \ dx\Bigg)^{\frac{1}{2}}
		& = &  \frac{1}{\sqrt{h_{n}}} \Bigg(\int_{\mathbb{R}^{3}} |\nabla_{x}\varphi(s,y)|^{2} \ h_{n}^{3}dy\Bigg)^{\frac{1}{2}}\\
		& = &		\|\nabla \varphi(s)\|_{L^{2}}.
	\end{eqnarray*}
\end{proof}
\subsection{Scales}
	On the Hilbert space $H^{1}(\mathbb{R}^{3})$, we define the self-adjoint operator $A$ by $Au = (-\Delta)^\frac{1}{2}u,$ with domain $D(A) = H^{2}(\mathbb{R}^{3})$.  The next definition is from \cite{GalGe}.
	\begin{definition}
	Let A be a self-adjoint (unbounded) operator on a Hilbert space $H$. Let $h_{n}$ be a sequence of positive numbers converging to 0. A bounded sequence $(u_{n})$ in $H$ is said to be $h_{n}$-oscillatory with respect to A if
\begin{equation}\label{eq 23}
		\limsup_{n \rightarrow \infty} \Big\|1_{|A| \geq \frac{R}{h_{n}}} u_{n}\Big\|_{H} \longrightarrow 0, \ \  R \rightarrow \infty,
\end{equation}
and strictly $h_{n}$-oscillatory with respect to $A$ if it satisfies \eqref{eq 23} and
$$
		\limsup_{n \rightarrow \infty} \Big\|1_{|A| \leq \frac{\epsilon}{h_{n}}} u_{n}\Big\|_{H} \longrightarrow 0, \ \  \epsilon \rightarrow 0.
$$
Moreover, $(u_{n})$ is said to be $h_{n}$-singular with respect to $A$ if
$$
		\Big\|1_{\frac{a}{h_{n}}\leq |A| \leq \frac{b}{h_{n}}} u_{n}\Big\|_{H} \longrightarrow 0, \ \  n \rightarrow \infty, \ \ \mbox{for all } a,b>0.
$$
\end{definition}

The next result ensures that the Schr\"odinger equation conserves $h_{n}$-oscillation.
	\begin{proposition}\label{prop3.4}
	Let $T > 0$. Let $\varphi_{n}$ be a bounded sequence in $ {H}^{1}(\mathbb{R}^{3})$ that is (strictly) $h_{n}$- oscillatory with respect to A. If $u_{n}$ is the solution of
	\begin{equation} \label{eq 26}
		\left\{
		\begin{array}{ll}
			i\partial_{t} u_{n} + \Delta u_{n}  = 0, \ \ &(t,x) \in [0,T] \times \mathbb{R}^{3} \\
			u_{n}(0) = \varphi_{n}, \ \ &x \in \mathbb{R}^{3} ,
		\end{array}
		\right.
	\end{equation}
	then, $(u_{n}(t))$ is (strictly) $h_{n}$-oscillatory with respect to A, uniformly on $[0, T]$. If $(\varphi_{n})$ is $h_{n}$-singular with respect to A, then $(u_{n}(t))$ is $h_{n}$-singular with respect to A, uniformly on $[0, T]$.
\end{proposition}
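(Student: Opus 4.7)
The plan rests on a single observation: the Schr\"odinger propagator $e^{it\Delta}$ commutes, through the functional calculus of $-\Delta$, with every spectral projection of the operator $A = (-\Delta)^{1/2}$, and it is an isometry of $H^{1}(\mathbb{R}^{3})$. All three cases (oscillatory, strictly oscillatory, singular) will therefore reduce to an identity of the form
\begin{equation*}
\|P_{n} u_{n}(t)\|_{H^{1}} = \|P_{n} \varphi_{n}\|_{H^{1}},
\end{equation*}
where $P_{n}$ denotes the relevant spectral projection; in particular, the right-hand side is \emph{independent of $t$}, so uniformity on $[0,T]$ will be automatic.

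Concretely, I would first write the solution of \eqref{eq 26} as $u_{n}(t) = e^{it\Delta}\varphi_{n}$. Since $A = (-\Delta)^{1/2}$ and $e^{it\Delta}$ are both Borel functions of the same self-adjoint operator $-\Delta$, they commute; hence the spectral projectors $1_{|A|\geq R/h_{n}}$, $1_{|A|\leq \epsilon/h_{n}}$, and $1_{a/h_{n}\leq |A|\leq b/h_{n}}$ (all defined via the functional calculus of $A$) commute with $e^{it\Delta}$. Second, the $H^{1}$-norm admits the spectral expression $\|f\|_{H^{1}}^{2} = \|(1-\Delta)^{1/2} f\|_{L^{2}}^{2}$, and $(1-\Delta)^{1/2}$ commutes with $e^{it\Delta}$; combined with the $L^{2}$-unitarity of the free propagator, this gives $\|e^{it\Delta} f\|_{H^{1}} = \|f\|_{H^{1}}$ for every $f\in H^{1}(\mathbb{R}^{3})$.

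Combining these two facts yields
\begin{equation*}
\|1_{|A|\geq R/h_{n}}\, u_{n}(t)\|_{H^{1}} = \|e^{it\Delta}\, 1_{|A|\geq R/h_{n}}\, \varphi_{n}\|_{H^{1}} = \|1_{|A|\geq R/h_{n}}\, \varphi_{n}\|_{H^{1}},
\end{equation*}
and analogously for the other two projections. Taking $\limsup_{n\to\infty}$ and then letting $R\to\infty$ (respectively $\epsilon\to 0$, or keeping $a,b>0$ fixed and sending $n\to\infty$) transfers the hypothesis from $\varphi_{n}$ directly to $u_{n}(t)$. Uniformity on $[0,T]$ is trivial because each quantity is in fact a \emph{constant} function of $t$.

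The main, and only genuine, point is to recognise that $h_{n}$-oscillation (strict or not) and $h_{n}$-singularity are purely spectral notions attached to $-\Delta$, while the free Schr\"odinger flow is itself defined spectrally from $-\Delta$; there is therefore no real obstacle to overcome, and the proof amounts to one line per case once this commutation is observed. The only small verification to write out carefully is the $H^{1}$-isometry of $e^{it\Delta}$, which I would state as a short preliminary remark before treating the three cases.
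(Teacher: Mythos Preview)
Your proof is correct and follows essentially the same approach as the paper: both exploit that spectral cutoffs in $-\Delta$ commute with the propagator $e^{it\Delta}$ and that the relevant norm is conserved by the flow, so the oscillation/singularity quantities are constant in $t$. The only cosmetic difference is that the paper passes through an equivalent smooth cutoff $\chi(h_n^2\Delta/R^2)$ and tracks the $\dot H^1$ seminorm, whereas you work directly with the sharp spectral projectors $1_{|A|\ge R/h_n}$ and the full $H^1$ norm; both are fine here.
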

\begin{proof}
Consider the cut-off function $\chi \in C_{0}^{\infty}(\mathbb{R})$ such that $0 \leq \chi(s) \leq 1$ and $\chi (s) = 1$ for $|s| \leq 1$. The $h_{n}$-oscillation
	(respectively strict oscillation) is equivalent to
	$$\limsup_{n \rightarrow \infty} \Big\|\nabla (1-\chi)(\frac{h_{n}^{2}\Delta}{R^{2}})u_{n}\Big\|_{{L}^{2}} \longrightarrow 0, \ R \rightarrow \infty$$ (respectively $\limsup_{n \rightarrow \infty} \|\nabla \chi(R^{2}h_{n}^{2}\Delta)u_{n}\|_{{L}^{2}} \longrightarrow 0, \  R \rightarrow \infty$). Note that $v_{n} = (1-\chi)(\frac{h_{n}^{2}\Delta}{R^{2}})u_{n}$ is a solution of 
	\begin{equation*} 
		\left\{
		\begin{array}{lr}
			i\partial_{t} v_{n} + \Delta v_{n} = 0 , \\
			v_{n}(0) = (1-\chi)\Big(\frac{h_{n}^{2}\Delta}{R^{2}}\Big)\varphi_{n}, 
		\end{array}
		\right.
	\end{equation*}
	and the conservation of the energy gives $$\|\nabla v_{n}(t)\|_{{L}^{2}} =  \|\nabla v_{n}(0)\|_{L^{2}} \\
		 = \Big\|\nabla (1-\chi)\left(\frac{h_{n}^{2}\Delta}{R^{2}}\right)\varphi_{n}\Big\|_{{L}^{2}}.$$
	Therefore, taking the $\limsup$\ in $n$, we get the expected result uniformly in $0 \leq t \leq T$. Strict oscillation and singularity follow analogously.
\end{proof}

The following result gives us an estimation of Besov spaces. 

\begin{proposition} \label{prop3.5}
For every bounded sequence $(\varphi_{n})$ in ${H}^{1}(\mathbb{R}^{3})$, there exists $C_{T}>0$ such that 	\begin{equation*}
		\limsup_{n \rightarrow \infty} \|\nabla u_{n}\|_{L^{\infty}([0,T];\dot B^{0}_{2,\infty}(\mathbb{R}^{3}))} \leq C_{T} 	\limsup_{n \rightarrow \infty} \|\nabla \varphi_{n}\|_{\dot B^{0}_{2,\infty}(\mathbb{R}^{3})},
	\end{equation*}
	where $u_{n}$ is the solution of system \eqref{eq 26}.  Here, $\dot B^{0}_{2,\infty}(\mathbb{R}^{3})$ denotes the Besov space defined by
	$$\dot B^{0}_{2,\infty}(\mathbb{R}^{3}) =\Big\{u=u(x): \|u\|^{2}_{\dot B^{0}_{2,\infty}(\mathbb{R}^{3})} = \sup_{k \in \mathbb{Z}}\int_{2^{k} \leq |\xi|\leq 2^{k+1}} |\widehat{u}(\xi)|^{2} \ d\xi < +\infty\Big\}.$$
\end{proposition}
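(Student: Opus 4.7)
The plan is to decompose $u_n$ by Duhamel's formula into a linear Schrödinger evolution plus a nonlinear remainder, show that the $\dot B^0_{2,\infty}$ seminorm is exactly preserved by the linear flow, and control the nonlinear remainder in a stronger norm by exploiting the $H^1$-boundedness through a Strichartz estimate. Concretely, I would write
\[
u_n(t) = e^{it\Delta}\varphi_n + v_n(t),\qquad v_n(t) = -i\int_0^t e^{i(t-s)\Delta}\bigl(|u_n|^4 u_n\bigr)(s)\,ds.
\]

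For the linear piece, since $e^{it\Delta}$ is a unitary Fourier multiplier of modulus one, it commutes with every Littlewood--Paley projector $P_k$ onto the dyadic annulus $\{2^k\le|\xi|\le 2^{k+1}\}$ and preserves the $L^2_x$ norm of each frequency block pointwise in time. Consequently $\|\nabla e^{it\Delta}\varphi_n\|_{\dot B^0_{2,\infty}} = \|\nabla \varphi_n\|_{\dot B^0_{2,\infty}}$ for every $t\in[0,T]$.

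For the nonlinear remainder $v_n$, I would combine the dual Strichartz estimate at the endpoint $(\infty,2)$ with the energy-critical admissible Strichartz pair $(10/3,10/3)$, together with Hölder's inequality, to get
\[
\|\nabla v_n\|_{L^\infty_t L^2_x} \le C\,\|\nabla(|u_n|^4 u_n)\|_{L^{10/7}_{t,x}} \le C\,\|u_n\|_{L^{10}_{t,x}}^4\,\|\nabla u_n\|_{L^{10/3}_{t,x}}.
\]
The Colliander--Keel--Staffilani--Takaoka--Tao global well-posedness and scattering theorem \cite{Co} for the defocusing quintic NLS on $\mathbb{R}^3$, applied to the $H^1$-bounded sequence $(\varphi_n)$, provides uniform-in-$n$ spacetime bounds on $\|u_n\|_{L^{10}_{t,x}([0,T]\times\mathbb{R}^3)}$ and $\|\nabla u_n\|_{L^{10/3}_{t,x}([0,T]\times\mathbb{R}^3)}$ depending only on $T$ and the $H^1$ bound. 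Because $\|f\|_{\dot B^0_{2,\infty}}\le\|f\|_{L^2}$ (the supremum over dyadic annuli of the Plancherel integral is dominated by the full $L^2$ integral), this yields $\|\nabla v_n\|_{L^\infty_t \dot B^0_{2,\infty}}\le M(T)$ uniformly in $n$.

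Combining the two pieces by the triangle inequality gives $\|\nabla u_n\|_{L^\infty_t \dot B^0_{2,\infty}}\le \|\nabla \varphi_n\|_{\dot B^0_{2,\infty}}+M(T)$, and a $\limsup$ in $n$ produces the claimed inequality up to an additive term. The main obstacle is upgrading this additive bound into a genuinely multiplicative one with constant $C_T$: here one invokes a refined Strichartz estimate in the spirit of Bahouri--Gérard and Keraani \cite{keraani}, which converts the smallness of $\|\nabla \varphi_n\|_{\dot B^0_{2,\infty}}$ (relative to the $H^1$ bound) into smallness of $\|u_n\|_{L^{10}_{t,x}}$ via a Strichartz bootstrap of the fixed-point iteration. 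This forces the Duhamel contribution to scale (up to a positive power) with $\|\nabla \varphi_n\|_{\dot B^0_{2,\infty}}$ rather than appearing as an absolute constant, yielding $\limsup_n \|\nabla u_n\|_{L^\infty_t \dot B^0_{2,\infty}} \le C_T \limsup_n \|\nabla \varphi_n\|_{\dot B^0_{2,\infty}}$ after a final $\limsup$ in $n$.
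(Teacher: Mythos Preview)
You have misread the equation. System \eqref{eq 26} is the \emph{linear} Schr\"odinger equation $i\partial_t u_n + \Delta u_n = 0$, not the nonlinear one, so there is no Duhamel term at all: $u_n(t) = e^{it\Delta}\varphi_n$ and your $v_n$ is identically zero. Everything you wrote about Strichartz estimates, the Colliander--Keel--Staffilani--Takaoka--Tao spacetime bounds, and the refined Strichartz/Bahouri--G\'erard argument to upgrade an additive bound to a multiplicative one is addressing a problem that is not being posed.

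The actual proof is already contained in your second paragraph: because $e^{it\Delta}$ is a unitary Fourier multiplier, it commutes with each dyadic projector $P_k$ and preserves the $L^2$ norm of every frequency block, whence $\|\nabla u_n(t)\|_{\dot B^0_{2,\infty}} = \|\nabla \varphi_n\|_{\dot B^0_{2,\infty}}$ for all $t$. This is precisely the paper's argument, and in fact gives the inequality with $C_T = 1$ (indeed with equality). Had the proposition genuinely concerned the nonlinear flow, your last step---turning the additive constant $M(T)$ into a multiplicative $C_T$ via a vague appeal to refined Strichartz---would be the real gap, since it is not clear that smallness of $\|\nabla\varphi_n\|_{\dot B^0_{2,\infty}}$ alone forces the Duhamel contribution to be small in $\dot B^0_{2,\infty}$ uniformly along a sequence that is merely bounded in $H^1$.
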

\begin{proof}
	Since  $u_{n}$ is the solution of system \eqref{eq 26}, the function $\sigma_{k}(D)u_{n}$ is also a solution to the same system, where $\sigma_{k}(\xi)=\textbf{1}_{2^{k}\leq |\xi|\leq 2^{k+1}}$. The conservation law for all $\sigma_{k}(D)u_{n}(t)$ gives
	$$\|\nabla u_{n}(t)\|_{\dot B^{0}_{2,\infty}(\mathbb{R}^{3})} = \|\nabla u_{n}(0)\|_{\dot B^{0}_{2,\infty}(\mathbb{R}^{3})} =  \|\nabla \varphi_{n}\|_{\dot B^{0}_{2,\infty}(\mathbb{R}^{3})}, \quad \text{for} \quad k \in \mathbb{Z},$$
	showing the result.
\end{proof}
	\subsection{Linear profile decomposition}
	The main result of this section is a combination of theories developed by Bahouri and Gerard \cite{bahouri}, Keraani \cite{keraani}, and Laurent \cite{Laurent2} and is given by the following theorem.
		\begin{theorem} \label{lineardecomp}
		Let $(v_{n})$ be a sequence of solutions to the  Schr\"{o}dinger  equation  \eqref{eq 26} on $[0,T]$ with an initial data $\varphi_{n}$, at time $t = 0$, bounded in $ H^{1}(\mathbb{R}^{3})$ and such that $\limsup_{n\rightarrow \infty} \|\varphi_{n}\|_{H^{1}} <\lambda_{0}$, where $\lambda_{0}$ was given in Definition \ref{def3}. Then, up to extraction, there exists a sequence of linear concentrating solutions $(\underline{p}^{(j)})$ associated
		to $[\varphi^{(j)},\underline{h}^{(j)},\underline{x}^{(j)},\underline{t}^{(j)}]$ such that, for any $l \in \mathbb{N}^{*}$,
$v_{n}(t,x) =  \sum_{j=1}^{l} p_{n}^{(j)}(t,x) + w_{n}^{(l)}(t,x)$ satisfies 
		\begin{equation} \label{eq 28}
 \ \limsup_{n \rightarrow \infty} \|w_{n}^{(l)}\|_{L^{\infty}_{t}L^{6}_{x}\cap L^{10}_{t}L^{10}_{x}} \longrightarrow0, \ l \rightarrow \infty,
		\end{equation}
for all $T>0,$ and
		\begin{equation} \label{eq 29}
			\|\nabla v_{n}\|_{L^{2}}^{2} =  \sum_{j=1}^{l} \|\nabla p_{n}^{(j)}\|_{L^{2}}^{2} + \|\nabla w_{n}^{(l)}\|_{L^{2}} ^{2} + o(1), \ \ n\rightarrow \infty.
		\end{equation}
		Moreover, we have $(\underline{h}^{(j)},\underline{x}^{(j)},\underline{t}^{(j)}) \perp (\underline{h}^{(k)},\underline{x}^{(k)},\underline{t}^{(k)})$  for any $j \neq k$. 
	\end{theorem}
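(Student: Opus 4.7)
The plan is to extract the profiles in two stages — first the scales $\underline{h}^{(j)}$ and then, within each scale, the cores $(\underline{x}^{(j)},\underline{t}^{(j)})$ — following the Bahouri–Gérard/Keraani scheme adapted to $H^{1}(\mathbb{R}^{3})$ by using Propositions \ref{prop3.4} and \ref{prop3.5} as the two main structural tools. The starting observation is that, since the linear Schrödinger flow is an isometry on every Sobolev/Besov space, it suffices to decompose the initial data $(\varphi_{n})$; the corresponding solutions $v_{n}=e^{it\Delta}\varphi_{n}$ will inherit the decomposition, and Proposition \ref{prop3.4} guarantees that $h_{n}^{(j)}$-oscillation is preserved in time, so the pieces of $v_{n}$ really behave like concentrating solutions of the stated form.

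For the scale extraction, I would follow the argument of \cite{bahouri} (in the wave setting) transposed to Schrödinger: using Littlewood–Paley truncations $\sigma_{k}(D)$ together with Proposition \ref{prop3.5}, one identifies a countable family of \emph{dyadic scales} $\underline{h}^{(j)}$ (with ratios $h_{n}^{(j)}/h_{n}^{(k)}\to 0$ or $\infty$ when $j\neq k$) such that each $\varphi_{n}$ splits into components strictly $h_{n}^{(j)}$-oscillatory with respect to $A=(-\Delta)^{1/2}$, plus a piece whose Besov seminorm $\|\nabla\cdot\|_{\dot B^{0}_{2,\infty}}$ is as small as desired. By a diagonal extraction, two scales extracted at different indices are automatically orthogonal in the sense of Definition \ref{def4}, and the $\dot H^{1}$ norms decouple up to $o(1)$ thanks to almost-orthogonality of the Littlewood–Paley projectors.

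Once the scale is fixed, I extract the cores $(x_{n}^{(j)},t_{n}^{(j)})$ by a concentration argument. On the scale $h_{n}^{(j)}$ the rescaled sequence $h_{n}^{1/2}v_{n}(t_{n}+h_{n}^{2}s,x_{n}+h_{n}y)$ is bounded in $L^{\infty}(\mathbb{R};\dot H^{1}(\mathbb{R}^{3}))$; if the $L^{10}_{t,x}\cap L^{\infty}_{t}L^{6}_{x}$ norm of the current remainder is not $o(1)$, then a refined Strichartz/Sobolev inequality (of the type $\|u\|_{L^{10}_{t,x}}\lesssim \|\nabla u\|_{L^{10/3}_{t,x}\dot B^{0}_{2,\infty}}^{\theta}\|\nabla u\|_{L^{\infty}_{t}L^{2}_{x}}^{1-\theta}$) forces a nontrivial frequency-localized bubble, and one can choose $(x_{n},t_{n})$ realising it. Passing to a weak limit along the group action $D_{h_{n}}$ produces a profile $\varphi^{(j)}\in\dot H^{1}$ satisfying $D^{(j)}_{h}(v_{n}-\sum_{i<j} p_{n}^{(i)})\rightharpoonup \varphi^{(j)}$ (cf.\ Definition \ref{def6} and Lemma \ref{lemma3.2}), with a quantitative lower bound on $\|\nabla\varphi^{(j)}\|_{L^{2}}$ in terms of the remaining Strichartz norm. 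Orthogonality between different cores extracted at the same scale is enforced at this step: if two candidate cores were not $\perp_{h_{n}}$, their weak limits would coincide and one would be subsumed in the other.

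To iterate and conclude, I set $w_{n}^{(l)}=v_{n}-\sum_{j=1}^{l}p_{n}^{(j)}$, and the orthogonality of the scale-cores yields the Pythagorean identity \eqref{eq 29}: the cross terms $\langle \nabla p_{n}^{(j)},\nabla p_{n}^{(k)}\rangle$ tend to $0$ as $n\to\infty$ by a change-of-variables argument exploiting Definition \ref{def4}, and likewise $\langle \nabla w_{n}^{(l)},\nabla p_{n}^{(j)}\rangle\to 0$ by the weak-convergence characterisation of Definition \ref{def6}. Because $\sum_{j}\|\nabla\varphi^{(j)}\|_{L^{2}}^{2}\le \limsup\|\nabla\varphi_{n}\|_{L^{2}}^{2}<\infty$, the extracted amplitudes tend to $0$, and by the refined Strichartz inequality one gets $\|w_{n}^{(l)}\|_{L^{10}_{t,x}\cap L^{\infty}_{t}L^{6}_{x}}\to 0$ as $l\to\infty$, which is \eqref{eq 28}. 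The condition $\limsup\|\varphi_{n}\|_{H^{1}}<\lambda_{0}$ is used to guarantee that each extracted profile lies in the small-data regime of Definition \ref{def3}, so the subsequent nonlinear theory (Section \ref{sec3}) will apply to them. The main technical obstacle I expect is the refined Strichartz inequality linking the weak norm $\dot B^{0}_{2,\infty}$ to $L^{10}_{t,x}$: proving that \emph{absence of concentration} (no nontrivial weak limit after rescaling) implies smallness in the Strichartz norm is what makes the extraction procedure actually terminate, and it is the quantitative ingredient that converts the essentially qualitative scale/core extraction into the convergence statement \eqref{eq 28}.
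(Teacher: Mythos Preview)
Your proposal is correct and follows essentially the same Bahouri--G\'erard/Keraani two-stage scheme as the paper: first extract orthogonal scales via the Besov-norm decomposition (the paper's Propositions~\ref{prop3.7}--\ref{prop3.8}, using exactly the refined Sobolev inequality $\|f\|_{L^{6}}\lesssim\|\nabla f\|_{L^{2}}^{1/3}\|\nabla f\|_{\dot B^{0}_{2,\infty}}^{2/3}$ and interpolation to reach $L^{10}_{t,x}$), then within each fixed scale extract cores greedily by maximising $\|\nabla\varphi\|_{L^{2}}$ over weak limits $D_{h_{n}}w_{n}^{(l)}\rightharpoonup\varphi$ (the paper's Proposition~\ref{prop3.11}). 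The only cosmetic difference is that the paper carries out the ``termination'' step---showing $\delta(\underline{w}^{(l)})\to 0$ implies $\|w_{n}^{(l)}\|_{L^{\infty}_{t}L^{6}_{x}}\to 0$---by an explicit time-space convolution with cutoffs $\chi_{R}$ adapted to the frequency shell $\{R^{-1}\le h_{n}|\xi|\le R\}$, rather than by invoking a single refined Strichartz inequality of the form you sketched; the content is the same.
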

	We split the proof of this theorem into four steps as follows. 
	\begin{proof}
	\vspace{0.2cm}
	\noindent\textit{\bf{Step 1. Extraction of scales.}} In this first part, we present the determination of the family of scales. The next result is paramount for our analysis and can be found in \cite[Proposition 3.4]{bahouri}.
	
	\begin{proposition} \label{prop3.8}
	If $(f_{n})$ is a bounded sequence in $L^{2}(\mathbb{R}^{3})$, then, up to a subsequence, there exists a family $(h_{n}^{j}) $ of pairwise orthogonal scales and a family $(g_{n}^{j})$ of bounded sequences in $L^{2}(\mathbb{R}^{3})$ such that
	\begin{enumerate}
		\item [i)] for every j, $g_{n}^{j}$ is $h_{n}^{j}$-oscillatory;
		\item[ii)] for every $l \geq 1$ and $x \in \mathbb{R}^{3}$,
		\begin{equation*}
			f_{n}(x) =  \sum_{j=1}^{l} g_{n}^{j}(x) + R_{n}^{l},
		\end{equation*}
		where $(R_{n}^{j})$ is $h_{n}^{j}$-singular for every $j \in {1, ..., l},$ and
		\begin{equation*}
			\limsup_{n \rightarrow \infty} \|R_{n}^{l}\|_{\dot{B}_{2,\infty}^{0}} \longrightarrow0, \ \ \ l \rightarrow \infty;
		\end{equation*}
		\item[iii)] for every $l \geq 1$,
		\begin{equation*}
			\|f_{n}\|_{L^{2}} =  \sum_{j=1}^{l} \|g_{n}^{j}\|^{2}_{L^{2}} + \|R_{n}^{l}\|^{2}_{L^{2}} + o(1), \ \ \ n\rightarrow \infty.
		\end{equation*}
	\end{enumerate}
\end{proposition}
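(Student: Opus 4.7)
The strategy is Gérard's classical iterated extraction of dominant dyadic frequency scales, exploiting the fact that the $\dot B^0_{2,\infty}$ norm is realized on a single dyadic frequency shell, so the shell of maximal $L^2$ mass can be stripped off and the procedure repeated.

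Initialize with $R_n^0 = f_n$ and $\mu^{(1)} = \limsup_n \|R_n^0\|_{\dot B^0_{2,\infty}}$. If $\mu^{(1)} = 0$ the decomposition is trivial. Otherwise, by the definition of the Besov norm, up to a subsequence one can pick $k_n^{(1)} \in \mathbb{Z}$ with $\|\sigma_{k_n^{(1)}}(D) R_n^0\|_{L^2} \geq \mu^{(1)}/2$, and set $h_n^{(1)} = 2^{-k_n^{(1)}}$. Fix a smooth cut-off $\widetilde\psi$ supported in $\{1/2 \leq |\xi| \leq 4\}$ and equal to $1$ on $\{1 \leq |\xi| \leq 2\}$, and define
\begin{equation*}
g_n^{(1)} = \widetilde\psi\bigl(h_n^{(1)} D\bigr) R_n^0, \qquad R_n^1 = R_n^0 - g_n^{(1)}.
\end{equation*}
Then $g_n^{(1)}$ is $h_n^{(1)}$-oscillatory by construction, and Plancherel yields the Pythagorean identity $\|R_n^0\|_{L^2}^2 = \|g_n^{(1)}\|_{L^2}^2 + \|R_n^1\|_{L^2}^2 + o(1)$, with the $o(1)$ absorbing the overlap of $\widetilde\psi$ with neighboring dyadic shells.

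Iterate the construction on $R_n^l$ to produce $\mu^{(l+1)}$, $h_n^{(l+1)}$, $g_n^{(l+1)}$, extracting a diagonal subsequence at each stage. Two properties must be verified along the way. First, the pairwise orthogonality $h_n^{(i)}/h_n^{(j)} + h_n^{(j)}/h_n^{(i)} \to \infty$: if instead $h_n^{(l+1)}/h_n^{(j)}$ remained bounded for some $j \leq l$, then the cut-off $\widetilde\psi(h_n^{(l+1)} D)$ would essentially select the same annulus as the already extracted $g_n^{(j)}$, which cannot carry mass of size $\mu^{(l+1)}/2$ in $R_n^l$ since $g_n^{(j)}$ has been removed. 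Widening the cut-off via dilations $\widetilde\psi(\cdot/R)$ with $R \to \infty$, combined with a Cantor diagonal extraction, upgrades this separation to the orthogonality required in the statement. Second, $\mu^{(l)} \to 0$: since $\|g_n^{(l)}\|_{L^2} \gtrsim \mu^{(l)}$ and the iterated Pythagorean identity yields $\sum_{l \geq 1} \|g_n^{(l)}\|_{L^2}^2 \leq \limsup_n \|f_n\|_{L^2}^2 < \infty$, we deduce $\sum_l (\mu^{(l)})^2 < \infty$, hence $\mu^{(l)} \to 0$. This is precisely the Besov decay in ii), and the $h_n^{(j)}$-singularity of $R_n^l$ for $j \leq l$ is then immediate from the subtraction of $g_n^{(j)}$ once the cut-offs have been widened.

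The main obstacle is orchestrating the subsequence extractions so that all three properties — pairwise orthogonality of the scales, $h_n^{(j)}$-oscillation of each $g_n^{(j)}$, and $h_n^{(j)}$-singularity of every later remainder — hold simultaneously. This is handled by a Cantor diagonal procedure over both the extraction index and the cut-off widening parameter $R$, as in the original argument of Bahouri and Gérard \cite{bahouri} from which this proposition is quoted.
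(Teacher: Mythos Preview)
The paper does not prove this proposition; it quotes it verbatim from Bahouri--G\'erard \cite[Proposition~3.4]{bahouri} and moves on. Your sketch is precisely the iterated scale-extraction argument of that reference, so you are reproducing what the paper defers to the literature.

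One imprecision is worth flagging. With the \emph{smooth} cut-off $\widetilde\psi$ you introduce, the cross term
\[
2\int_{\mathbb{R}^3} \widetilde\psi\bigl(h_n^{(1)}\xi\bigr)\bigl(1-\widetilde\psi(h_n^{(1)}\xi)\bigr)\,|\widehat{R_n^0}(\xi)|^2\,d\xi
\]
is not $o(1)$ as $n\to\infty$ at any fixed stage of the iteration: it is a genuine nonnegative contribution living on the transition annulus, and nothing forces it to vanish. The clean fix is either to use sharp dyadic indicators (Plancherel then gives exact orthogonality, and smoothness is irrelevant since everything stays in $L^2$), or to derive the almost-orthogonality in item~iii) \emph{a posteriori} from the oscillation/singularity duality once the widening $R\to\infty$ has been carried out --- which is how Bahouri--G\'erard actually argue. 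Your final paragraph does invoke the widening and the diagonal extraction, so the overall logical skeleton is sound; just defer or reformulate the Pythagorean claim in the first step so it does not rest on a false $o(1)$.
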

	
	With this result in mind, let us present the following proposition. 
	\begin{proposition} \label{prop3.7}
		Let $T > 0$. Let $(\varphi_{n})$ be a bounded sequence in $ H^{1}(\mathbb{R}^{3})$ and $(v_{n})$ be the solution of system \eqref{eq 26}.  Then, up to an extraction, $v_{n}$ can be decomposed in the following way: for any $l \in \mathbb{N}^{*}$
		\begin{equation} \label{eq 31}
			v_{n}(t,x) =  \sum_{j=1}^{l} v_{n}^{(j)}(t,x) + \rho_{n}^{(l)}(t,x),
		\end{equation}
		where $v_{n}^{(j)}$ is a strictly $(h_{n}^{(j)})$-oscillatory solution of the linear Schrödinger equation \eqref{eq 26} on $\mathbb{R}^{3}$.
		The scales $h_{n}^{(j)}$ satisfy $h_{n}^{(j)} \rightarrow 0,$ $n\rightarrow \infty$, and are pairwise orthogonal. Additionally, we have
		\begin{equation} \label{eq 32}
			\limsup_{n \rightarrow \infty} \|\rho_{n}^{(l)}\|_{L^{\infty}([0,T];L^{6}(\mathbb{R}^{3}))\cap L^{10}([0,T];L^{10}(\mathbb{R}^{3}))} \longrightarrow 0, \ l\rightarrow \infty
		\end{equation}
		and
		\begin{equation}\label{eq 33}
			\|\nabla v_{n}(t)\|_{L^{2}}^{2} =  \sum_{j=1}^{l} \|\nabla v_{n}^{(j)}(t)\|_{L^{2}}^{2} + \|\nabla \rho_{n}^{(l)}(t)\|_{L^{2}} ^{2} + o(1), \ n\rightarrow \infty.
		\end{equation}
	\end{proposition}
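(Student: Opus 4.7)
The plan is to reduce to the scalar extraction of Proposition \ref{prop3.8} applied to $(-\Delta)^{1/2}\varphi_{n}$, propagate the resulting oscillation and smallness through the linear Schr\"odinger flow via Propositions \ref{prop3.4} and \ref{prop3.5}, and finally upgrade the Besov smallness of the remainder to the Lebesgue smallness \eqref{eq 32} via a refined Sobolev/Strichartz inequality.

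\textbf{Step 1 (Extraction at the initial time).} I would first apply Proposition \ref{prop3.8} to the bounded $L^{2}(\mathbb{R}^{3})$ sequence $A\varphi_{n}:=(-\Delta)^{1/2}\varphi_{n}$. Up to a subsequence, this yields pairwise orthogonal scales $(h_{n}^{(j)})$, $h_{n}^{(j)}$-oscillatory $L^{2}$-bounded sequences $(g_{n}^{(j)})$ and an $L^{2}$-remainder $R_{n}^{(l)}$ which is $h_{n}^{(j)}$-singular for each $j\leq l$, small in $\dot{B}^{0}_{2,\infty}$ as $l\to\infty$, and almost orthogonal to the $g_{n}^{(j)}$ in $L^{2}$. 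Setting
$$
\varphi_{n}^{(j)} := A^{-1}g_{n}^{(j)}, \qquad \psi_{n}^{(l)} := A^{-1}R_{n}^{(l)},
$$
produces the initial-data decomposition $\varphi_{n}=\sum_{j=1}^{l}\varphi_{n}^{(j)}+\psi_{n}^{(l)}$, in which each $\varphi_{n}^{(j)}$ is strictly $h_{n}^{(j)}$-oscillatory with respect to $A$ (the ``strict'' part coming from the $h_{n}^{(j)}$-singularity of $R_n^{(l)}$) and \eqref{eq 33} holds at $t=0$. Scales remaining bounded away from $0$ are absorbed into the remainder by a diagonal extraction, so one may assume $h_{n}^{(j)}\to 0$ for every $j$.

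\textbf{Step 2 (Propagation through the flow).} Setting $v_{n}^{(j)}(t):=e^{it\Delta}\varphi_{n}^{(j)}$ and $\rho_{n}^{(l)}(t):=e^{it\Delta}\psi_{n}^{(l)}$ gives \eqref{eq 31} by linearity. Proposition \ref{prop3.4} then ensures that each $v_{n}^{(j)}$ remains strictly $h_{n}^{(j)}$-oscillatory uniformly on $[0,T]$; conservation of the $\dot{H}^{1}$-norm under $e^{it\Delta}$ combined with Step~1 produces \eqref{eq 33} for every $t$; and Proposition \ref{prop3.5} propagates the Besov smallness, so that
$$
\limsup_{n\to\infty}\|\nabla\rho_{n}^{(l)}\|_{L^{\infty}([0,T];\dot{B}^{0}_{2,\infty})} \longrightarrow 0,\quad l\to\infty.
$$

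\textbf{Step 3 (Lebesgue smallness of the remainder).} The main obstacle is \eqref{eq 32}, since the sharp embedding $\dot{H}^{1}(\mathbb{R}^{3})\hookrightarrow L^{6}(\mathbb{R}^{3})$ does not convert Besov smallness into Lebesgue smallness directly. The tool is a refined Sobolev embedding of G\'erard-Meyer-Oru type,
$$
\|u\|_{L^{6}(\mathbb{R}^{3})} \lesssim \|\nabla u\|_{L^{2}}^{1/3}\,\|u\|_{\dot{B}^{-1/2}_{\infty,\infty}}^{2/3},
$$
combined with the Bernstein-type control $\|u\|_{\dot{B}^{-1/2}_{\infty,\infty}}\lesssim \|\nabla u\|_{\dot{B}^{0}_{2,\infty}}$. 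Applied to $\rho_{n}^{(l)}(t)$ with the uniform $L^{\infty}_{t}L^{2}$ bound on $\nabla\rho_{n}^{(l)}$ coming from \eqref{eq 33}, this yields the $L^{\infty}_{t}L^{6}_{x}$ smallness. The $L^{10}_{t,x}$ part follows analogously from a refined Strichartz inequality bounding $\|e^{it\Delta}\psi\|_{L^{10}_{t,x}}$ by a geometric mean between $\|\nabla\psi\|_{L^{2}}$ and a weaker Besov norm controlled by the Besov smallness of $\nabla\psi_{n}^{(l)}$. This is the most delicate step: everything else is essentially a translation of Proposition \ref{prop3.8} through the linear flow, whereas the quantitative form of these refined inequalities and their interplay with Littlewood-Paley decompositions around each scale $h_{n}^{(j)}$ is where the subtlety lies.
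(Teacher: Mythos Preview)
Your approach is essentially the same as the paper's: apply Proposition~\ref{prop3.8} at the level of the $L^{2}$-bounded gradient data, propagate oscillation, singularity and Besov smallness through the linear flow via Propositions~\ref{prop3.4} and~\ref{prop3.5}, deduce the almost-orthogonality \eqref{eq 33} from the oscillation/singularity dichotomy, and upgrade the $\dot B^{0}_{2,\infty}$ smallness of $\nabla\rho_{n}^{(l)}$ to $L^{\infty}_{t}L^{6}_{x}$ smallness via a refined Sobolev inequality. The paper quotes this last inequality directly from \cite[Lemma~3.5]{bahouri} in the form $\|u\|_{L^{6}}\lesssim \|\nabla u\|_{L^{2}}^{1/3}\|\nabla u\|_{\dot B^{0}_{2,\infty}}^{2/3}$, which is precisely the concatenation of your G\'erard--Meyer--Oru inequality with the Bernstein step you wrote down.

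The one place where the paper argues differently---and more elementarily---is the $L^{10}_{t,x}$ bound. Rather than appealing to a refined Strichartz inequality, the paper simply interpolates
\[
\|\rho_{n}^{(l)}\|_{L^{10}_{t,x}} \leq \|\rho_{n}^{(l)}\|_{L^{\infty}_{t}L^{6}_{x}}^{\alpha}\,\|\rho_{n}^{(l)}\|_{L^{7}_{t}L^{14}_{x}}^{\beta},
\]
and controls the second factor by Sobolev embedding $\dot W^{1,42/17}(\mathbb{R}^{3})\hookrightarrow L^{14}(\mathbb{R}^{3})$ together with the ordinary Strichartz estimate for the $L^{2}$-admissible pair $(7,42/17)$, giving $\|\rho_{n}^{(l)}\|_{L^{7}_{t}L^{14}_{x}}\lesssim \|\nabla\Phi_{n}^{(l)}\|_{L^{2}}\leq C$. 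Since the first factor was already shown to vanish, this closes \eqref{eq 32} without any refined dispersive estimate. Your route via a refined Strichartz inequality would also work, but the interpolation argument is shorter and avoids what you correctly flagged as ``the most delicate step''.
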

\begin{proof}

	Applying Proposition \ref{prop3.8} to the sequence $(\nabla\varphi_{n})$, we obtain a family of scales $h_{n}^{(j)}$ and a family  $(\varphi_{n}^{(j)})$  of bounded sequences in $\dot{H}^{1}(\mathbb{R}^{3})$, such that
$
	\varphi_{n}(x) = \sum_{j=1}^{l} \varphi _{n}^{(j)}(x) + \Phi_{n}^{(l)}(x),
$
where $ \varphi_{n}^{(j)}$ is $h_{n}^{j}$-oscillatory with respect to $A$ for every $j \geq 1$. Moreover, $ \Phi_{n}^{(l)}$ is $h_{n}^{(j)}$-singular with respect to $A$ for every
$j \in {1, 2, ..., l},$ and
\begin{equation} \label{eq 34}
	\limsup_{n \rightarrow \infty} \|\nabla \Phi_{n}^{l}\|_{\dot{B}_{2,\infty}^{0}}\longrightarrow0, \ \ \ l\rightarrow \infty.
\end{equation}
Furthermore, the following almost orthogonality identity
\begin{equation*}
	\|\nabla \varphi_{n}\|^{2}_{L^{2}}  =  \sum_{j=1}^{l}\|\nabla \varphi_{n}^{(j)}\|^{2}_{L^{2}}  + \|\nabla \Phi_{n}^{(l)}\|^{2}_{L^{2}} + o(1), 
\end{equation*}
holds for all $ l \geq 1$, and the $h_{n}^{(j)}$ are pairwise orthogonal. This decomposition for the initial data can be extended to the solution
$
	v_{n}(t,x) =  \sum_{j=1}^{l} v_{n}^{(j)}(t,x) + \rho_{n}^{(l)}(t,x),
$
where each $v_{n}^{(j)}$ is a solution of 
\begin{equation*} 
	\left\{
	\begin{array}{ll}
		i\partial_{t} v_{n}^{(j)} + \Delta v_{n}^{(j)}  = 0,& \ (t,x) \in [0,T] \times \mathbb{R}^{3},\\
		v_{n}^{(j)}(0) = \varphi_{n}^{(j)}, &
	\end{array}
	\right.
\end{equation*}
and $\rho_{n}^{(l)}$ is a solution to the same system with initial data $\Phi_{n}^{(l)}$. 

Due to Proposition \ref{prop3.4}, each $v_{n}^{(j)}(t)$ is strictly $h_{n}^{(j)}$-oscillatory and $\rho_{n}^{(l)}(t)$ is $h_{n}^{(j)}$-singular for $1\leq j\leq l.$ So, $$\langle \nabla \rho_{n}^{(l)}(t),\nabla v_{n}^{(j)}(t) \rangle_{L^{2}} \longrightarrow 0,$$ as $n \rightarrow \infty$, uniformly in $[0,T]$. This is also true for the product between $v_{n}^{(j)}$ and $v_{n}^{k}$, $j \neq k$, by the orthogonality of the scales, i.e., $$\langle \nabla v_{n}^{(j)}(t),\nabla v_{n}^{(k)}(t) \rangle_{L^{2}} \longrightarrow 0, \quad n \rightarrow \infty.$$  Then, we get
\begin{equation*}
	\|\nabla v_{n}(t)\|_{L^{2}}^{2} =  \sum_{j=1}^{l} \|\nabla v_{n}^{(j)}(t)\|_{L^{2}}^{2} + \|\nabla \rho_{n}^{(l)}(t)\|_{L^{2}} ^{2} + o(1),
\end{equation*}
which is the desired equation \eqref{eq 33}. 

Let us now show \eqref{eq 32}. First of all, 
note that the convergence \eqref{eq 34} gives the convergence
to zero of $\nabla \rho_{n}^{(l)}(0) = \nabla \Phi_{n}^{(l)}$ in $\dot B^{0}_{2,\infty}$. We extend this convergence for all time using Proposition \ref{prop3.5} to get
$$\sup_{t \in [0,T]} \limsup_{n \rightarrow \infty} \| \nabla \rho_{n}^{(l)}(t)\|_{\dot B^{0}_{2,\infty}} \longrightarrow 0, \ l\rightarrow \infty.$$
Using \cite[Lemma 3.5]{bahouri}, we have
$$\limsup_{n \rightarrow \infty} \|\rho_{n}^{(l)}(t)\|_{L^{6}} \leq C \limsup_{n \rightarrow \infty} \|\nabla \rho_{n}^{(l)}(t)\|^{\frac{1}{3}}_{L^{2}} \limsup_{n \rightarrow \infty} \|\nabla \rho_{n}^{(l)}(t)\|^{\frac{2}{3}}_{\dot B^{0}_{2,\infty}}.$$
Observe that 
$$\|\nabla \rho_{n}^{(l)}(t)\|^{2}_{L^{2}} \leq \|\nabla v_{n}(t)\|^{2} _{L^{2}} \leq \|\nabla \varphi_{n}\|^{2} _{L^{2}} \leq C.$$
Therefore,
$$\limsup_{n \rightarrow \infty} \|\rho_{n}^{(l)}\|_{L^{\infty}_{t}L^{6}_{x}} \longrightarrow 0, \ \ l\rightarrow \infty.$$
Now, by an interpolation inequality, we obtain
$$\|\rho_{n}^{(l)}\|_{L^{10}_{t}L^{10}_{x}} \leq \|\rho_{n}^{(l)}\|_{L^{\infty}_{t}L^{6}_{x}} ^{\alpha} \|\rho_{n}^{(l)}\|_{L^{7}_{t}L^{14}_{x}} ^{\beta}.$$ 
Since $\Big(7,\frac{42}{17}\Big)$ is a $L^{2}$-admissible pair and by Sobolev's embedding, one has
$$
	\|\rho_{n}^{(l)}\|_{L^{7}_{t}L^{14}_{x}}  \leq 	\|\nabla\rho_{n}^{(l)}\|_{L^{7}_{t}L^{\frac{42}{17}}_{x}}  \leq 	\|\nabla e^{it\Delta} \Phi_{n}^{(l)}\|_{L^{7}_{t}L^{\frac{42}{17}}_{x}}  \leq \|\nabla \Phi_{n}^{(l)}\|_{L^{7}_{t}L^{\frac{42}{17}}_{x}} \leq  C \|\nabla \Phi_{n}^{(l)}\|_{L^{2}} ,
$$
which means
$$\limsup_{n \rightarrow \infty} \|\rho_{n}^{(l)}\|_{L^{10}_{t}L^{10}_{x}} \longrightarrow 0, \ \ l\rightarrow \infty.$$
This shows \eqref{eq 32} and completes the proof of Proposition \ref{prop3.7}.
\end{proof}

\noindent\textit{\bf{Step 2. Description of concentrating solutions.}} Now, we describe the “non-reconcentration” property for linear concentrating solutions. The main result can be read as follows.
	
	\begin{lemma}\label{lemma3.10} Let $\underline{v} = [\varphi,\underline{h},\underline{x},\underline{t}]$ a  linear concentrating solution and consider the interval $I = [-T,T]$ of $\mathbb{R}$ containing $t_{\infty}$.
	Set $I_{n}^{1,\Lambda} =[-T, t_{n} - \Lambda h_{n}]$ and $I_{n}^{3,\Lambda} =(t_{n} + \Lambda h_{n}, T]$.
	One has
	\begin{equation} \label{eq 35}
		\limsup_{n \rightarrow \infty}\| v_{n}\|_{L^{\infty}(I_{n}^{1,\Lambda}\cup I_{n}^{3,\Lambda},L^{6}(\mathbb{R}^{3}))} \longrightarrow 0, \mbox{  } \Lambda\rightarrow  \infty,
	\end{equation}
	and
	\begin{equation} \label{eq 36}
		\limsup_{n \rightarrow \infty}\| v_{n}\|_{L^{10}(I_{n}^{1,\Lambda}\cup I_{n}^{3,\Lambda},L^{10}(\mathbb{R}^{3}))} \longrightarrow 0, \mbox{  } \Lambda\rightarrow  \infty.
	\end{equation}
\end{lemma}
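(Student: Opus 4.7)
The strategy is to reduce both inequalities to decay properties of the fixed profile $\varphi$ via the change of variables $s=(t-t_n)/h_n^2$, $y=(x-x_n)/h_n$. The prefactor $h_n^{-1/2}$ in the definition of the concentrating solution is precisely what makes the $\dot{H}^1$ norm invariant, and both $L^6_x$ (via the critical Sobolev embedding in dimension three) and $L^{10}_{t,x}$ are invariant under the Schr\"odinger scaling. Hence a direct computation (of the same type as in Lemma \ref{lemma3.3}) gives
\[
\|v_n(t,\cdot)\|_{L^6(\mathbb{R}^3)}=\|\varphi(s,\cdot)\|_{L^6(\mathbb{R}^3)},\qquad \|v_n\|_{L^{10}(J;L^{10})}^{10}=\int_{\{s\,:\,t_n+h_n^2 s\,\in\,J\}}\|\varphi(s,\cdot)\|_{L^{10}}^{10}\,ds
\]
for any measurable $J\subset[-T,T]$. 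The condition $t\in I_n^{1,\Lambda}\cup I_n^{3,\Lambda}$ translates, in the rescaled time variable, to $|s|\ge\Lambda/h_n$, and this threshold tends to $+\infty$ as $n\to\infty$ for each fixed $\Lambda>0$. Everything thus reduces to controlling the tails of $\|\varphi(s,\cdot)\|_{L^6}$ and of $s\mapsto\|\varphi(s,\cdot)\|_{L^{10}}^{10}$ at $|s|=\infty$.

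For \eqref{eq 36} the argument is short. Since $\varphi$ solves the free Schr\"odinger equation with data $\varphi(0,\cdot)\in\dot{H}^1(\mathbb{R}^3)$, the $\dot{H}^1$-admissible Strichartz pair $(10,10)$ (which satisfies $\tfrac{2}{10}+\tfrac{3}{10}=\tfrac12$) yields $\varphi\in L^{10}_tL^{10}_x(\mathbb{R}\times\mathbb{R}^3)$. Consequently $\int_{|s|\ge\Lambda/h_n}\|\varphi(s,\cdot)\|_{L^{10}}^{10}\,ds$ is the tail of a convergent integral and vanishes as $n\to\infty$.

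For \eqref{eq 35} one needs the pointwise decay $\|\varphi(s,\cdot)\|_{L^6}\to 0$ as $|s|\to\infty$. The critical Sobolev embedding only delivers a uniform bound, so I would argue by density. Pick $\psi\in\mathcal{S}(\mathbb{R}^3)$ with $\|\varphi(0,\cdot)-\psi\|_{\dot{H}^1}<\varepsilon$; using the unitarity of $e^{is\Delta}$ on $\dot{H}^1$ and Sobolev, $\|e^{is\Delta}(\varphi(0,\cdot)-\psi)\|_{L^6}\le C\varepsilon$ uniformly in $s$, while the dispersive estimate $\|e^{is\Delta}\psi\|_{L^6}\le C|s|^{-1}\|\psi\|_{L^{6/5}}$ makes the remaining piece small for $|s|$ large. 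Sending $|s|\to\infty$ and then $\varepsilon\to 0$ yields the required decay, whence $\sup_{|s|\ge\Lambda/h_n}\|\varphi(s,\cdot)\|_{L^6}\to 0$ as $n\to\infty$.

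The only delicate point in the whole argument is this final density step, which is needed precisely because the dispersive estimate does not apply directly to $\dot{H}^1$ data. The remainder of the proof is book-keeping with the explicit self-similar form of $v_n$ and the scale invariance of the norms involved.
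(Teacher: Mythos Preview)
Your argument is correct and takes a cleaner route than the paper's. The paper proves \eqref{eq 35} by contradiction: it extracts ``bad'' times at which the $L^6$ norm stays bounded below, splits into two cases according to whether these times accumulate at $t_\infty$ or away from it, and in each case carries out an explicit Schr\"odinger-kernel computation to force a contradiction; \eqref{eq 36} is then obtained from \eqref{eq 35} by interpolation with a Strichartz norm. You instead pass directly to the fixed profile $\varphi$ via the scale invariance of Lemma \ref{lemma3.3}, reducing both statements to tail decay of a single function: $\sup_{|s|\ge R}\|\varphi(s,\cdot)\|_{L^6}\to 0$ (your density-plus-dispersion step) and $\int_{|s|\ge R}\|\varphi(s,\cdot)\|_{L^{10}}^{10}\,ds\to 0$ (immediate from the $\dot H^1$-admissible Strichartz pair $(10,10)$). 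This avoids the contradiction setup and the case split, makes the density argument explicit (the paper's kernel bound implicitly needs $\hat\varphi\in L^6$, which is not automatic for $\dot H^1$ data and would itself require a density step), and handles \eqref{eq 36} on its own rather than by interpolation. Your observation that the rescaled threshold $|s|\ge\Lambda/h_n$ already diverges as $n\to\infty$ is correct for the intervals as written in the lemma; with the normalization $\Lambda h_n^2$ used later in Theorem \ref{teo4.1} the threshold becomes $|s|\ge\Lambda$ and the limit in $\Lambda$ is genuinely needed, but your proof handles that case without change.
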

\begin{proof} Convergence \eqref{eq 36} follows directy from \eqref{eq 35} by interpolation.
To prove \eqref{eq 35}, we argue by contradiction: 
Suppose that \eqref{eq 35}  is not valid. In this case, there exists a constant $C>0$, a real subsequence $(\Lambda_{j})_{j}$ tending to $+ \infty$, and a subsequence $(t_{n_{j}})_{j}$ of $(t_{n})_{n}$ convergent to $\tau$ such that 
	\begin{equation}\label{eq 37}
		| t_{n_{j}} - t_{\infty}| > \Lambda_{j} h_{n_{j}} \mbox{ and } \lim_{j} \|v_{n_{j}}(t_{n_{j}},.)\|_{L^{6}(\mathbb{R}^{3})} \longrightarrow C.
	\end{equation}
Let us consider separately the cases $\tau \neq t_\infty$ and $\tau = t_\infty$. In case $\tau \neq t_{\infty}$, we have
	\begin{equation*}
		\left\{
		\begin{array}{ll}
			i\partial_{t} v_{n_{j}} + \Delta v_{n_{j}} = 0, \\
			v_{n_{j}}(t_{\infty}) =\frac{1}{ \sqrt{h_{n_{j}}}} \varphi\Big(\frac{x}{h_{n_{j}}}\Big).
		\end{array}
		\right.
	\end{equation*}
	Then,
	$$v_{n_{j}}(t,x) = e^{i(t-t_{\infty})\Delta} \frac{1}{ \sqrt{h_{n_{j}}}} \varphi\Big(\frac{x}{h_{n_{j}}}\Big),$$
	and so
	$$v_{n_{j}}(t_{n_{j}},x) = e^{i(t_{n_{j}}-t_{\infty})\Delta} \frac{1}{ \sqrt{h_{n_{j}}}} \varphi\Big(\frac{x}{h_{n_{j}}}\Big).$$
	By Definition \ref{def5}, we have 
	\begin{eqnarray*}
		\|v_{n_{j}}(t_{n_{j}},x) \|_{L^{6}} & \leq & C \Bigg(\int_{\mathbb{R}^{3}} \Bigg| (t_{n_{j}}-t_{\infty})^{-\frac{3}{2}} h_{n_{j}}^{\frac{5}{2}} \int_{\mathbb{R}^{3}} e^{i\frac{h_{n_{j}}^{2}|z|^{2}}{2(t_{n_{j}}-t_{\infty})}} \cdot e^{\frac{-i h_{n_{j}}\langle z,x \rangle}{(t_{n_{j}}-t_{\infty})}}  \varphi(z) \ dz \Bigg|^{6} \ dx\Bigg)^{\frac{1}{6}} \\
		& \leq &  (t_{n_{j}}-t_{\infty})^{-1} h_{n_{j}}^{2}   \Bigg(\int_{\mathbb{R}^{3}} \Bigg| \int_{\mathbb{R}^{3}} e^{i\frac{\tilde{h}_{j}^{2}|z|^{2}}{2(t_{n_{j}}-t_{\infty})}} \cdot e^{-i\langle z, w \rangle} \varphi(z) \ dz \Bigg|^{6} \ dw \Bigg)^{\frac{1}{6}} \\
		& \leq & (t_{n_{j}}-t_{\infty})^{-1} h_{n_{j}}^{2}   \Bigg(\int_{\mathbb{R}^{3}} |\hat{\varphi}(w)|^{6} \ dw \Bigg)^{\frac{1}{6}} \rightarrow 0, \ j \rightarrow \infty,
	\end{eqnarray*}
i.e., the right-hand side of this inequality converges to 0 as $j$ goes to $\infty$, which contradicts \eqref{eq 37}.
	
	Now, in case $\tau = t_{\infty}$, let $\varepsilon^{2}_{j} = |t_{\infty} - t_{n_{j}}|,$ $\tilde{h}_{j} = \frac{h_{n_{j}}}{\varepsilon_{j}}$, and define the sequence 
	$
	\tilde{f}_{j}(s,y) = \varepsilon_{j}^{\frac{1}{2}} v_{n_{j}}(t_{\infty}+ \varepsilon_{j}^{2}s, \varepsilon_{j} y).
	$
	Since $  |t_{\infty} - t_{n_{j}}| \geq \Lambda_{j} h_{n_{j}}$ and $ \lim_{j} \Lambda_{j} = +\infty$, one has $\lim_{j} \tilde{h}_{j} = 0.$ Moreover, the sequence $(\tilde{f}_{j})$ is the solution of
	\begin{equation*}
		\left\{
		\begin{array}{lr}
			i\partial_{s} \tilde{f}_{j} + \Delta_{y} \tilde{f}_{j} = 0, \\
			\tilde{f}_{j}(0) =\frac{1}{ \sqrt{\tilde{h}_{j}}} \varphi\Big(\frac{y}{\tilde{h}_{j}}\Big),
		\end{array}
		\right.
	\end{equation*}
Note that $\tilde{f}_{j}(1,y) $ is bounded, since
	\begin{equation*}
	\begin{split}
		\tilde{f}_{j}(1,y) =& e^{i\Delta} \frac{1}{ \sqrt{\tilde{h}_{j}}} \varphi\Big(\frac{y}{\tilde{h}_{j}}\Big) =\frac{1}{ \sqrt{\tilde{h}_{j}}} \int_{\mathbb{R}^{3}} e^{i\frac{|y-x|^{2}}{2}} \varphi\Big(\frac{x}{\tilde{h}_{j}}\Big) \ dx =  \tilde{h}_{j}^{\frac{5}{2}} \int_{\mathbb{R}^{3}} e^{i\frac{|\tilde{h}_{j}z-y|^{2}}{2}} \varphi(z) \ dz\\ =& \tilde{h}_{j}^{\frac{5}{2}} \int_{\mathbb{R}^{3}} e^{i\frac{\tilde{h}_{j}^{2}|z|^{2}}{2}} \cdot e^{-i\tilde{h}_{j}\langle z,y \rangle} \cdot e^\frac{i|y|^{2}}{2}\varphi(z) \ dz  \leq  C \Bigg| \tilde{h}_{j}^{\frac{5}{2}} \int_{\mathbb{R}^{3}} e^{i\frac{\tilde{h}_{j}^{2}|z|^{2}}{2}} \cdot e^{-i\tilde{h}_{j}\langle z,y \rangle} \varphi(z) \ dz \Bigg| . 
		\end{split}
	\end{equation*}
Therefore, 
	\begin{eqnarray*}
		\|\tilde{f}_{j}(1,y) \|_{L^{6}} & \leq & \Bigg(\int_{\mathbb{R}^{3}} \Bigg| \tilde{h}_{j}^{\frac{5}{2}} \int_{\mathbb{R}^{3}} e^{i\frac{\tilde{h}_{j}^{2}|z|^{2}}{2}} \cdot e^{-i\tilde{h}_{j}\langle z,y \rangle} \varphi(z) \ dz \Bigg|^{6} \ dy \Bigg)^{\frac{1}{6}} \\
		& \leq &  \tilde{h}_{j}^{2}   \Bigg(\int_{\mathbb{R}^{3}} \Bigg| \int_{\mathbb{R}^{3}} e^{i\frac{\tilde{h}_{j}^{2}|z|^{2}}{2}} \cdot e^{-i\langle z,x \rangle} \varphi(z) \ dz \Bigg|^{6} \ dx \Bigg)^{\frac{1}{6}} \\
		& \simeq & \tilde{h}_{j}^{2}   \Bigg(\int_{\mathbb{R}^{3}} |\hat{\varphi}(x)|^{6} \ dx \Bigg)^{\frac{1}{6}} \rightarrow 0, \ j \rightarrow \infty.
	\end{eqnarray*}
	Hence,
	$\|\tilde{f}_{j}(1,y)\|_{L^{6}} \rightarrow 0$, as $j\rightarrow \infty.$
	Therefore, since $	\|\tilde{f}_{j}(1,y) \|_{L^{6}}  = 	\|v_{nj}(t_{nj},.) \|_{L^{6}} ,$ 
	this contradicts \eqref{eq 37}, which finishes the proof of step 2. 
\end{proof}

\noindent\textit{\bf{Step 3. Extraction of times and cores of concentration.}}  Let $h_{n}$ be a fixed sequence in $\mathbb{R}^{*}_{+}$ converging to 0. 

Before presenting the main result of this step, we state and prove two auxiliary lemmas.
	\begin{lemma} \label{lemma3.13}
	Let $(\underline{x}^{(1)},\underline{t}^{(1)}) \not\perp_{h_{n}} (\underline{x}^{(2)},\underline{t}^{(2)}).$ Let $v_{n}$ be an (strictly) $h_{n}$-oscillatory sequence of solutions to the linear Schr\"{o}dinger equation such that
$
		D^{(1)}_{h_{n}} v_{n} \rightharpoonup \varphi^{(1)}$ as $ \ n\rightarrow \infty .
$
	There exists $\varphi^{(2)}$ such that
$
		D^{(2)}_{h_{n}} v_{n} \rightharpoonup \varphi^{(2)}$ as $\ n\rightarrow \infty .
$
	Moreover,
$
		\|\varphi^{(1)}\|_{L^{\infty}\dot{H}^{1}} = 	\|\varphi^{(2)}\|_{L^{\infty}\dot{H}^{1}}.
$
\end{lemma}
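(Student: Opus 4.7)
The plan is to exploit the shared scale $h_n^{(1)}=h_n^{(2)}=h_n$ together with the non-orthogonality hypothesis to rewrite the second rescaled sequence as a pure translation (in space and time) of the first, and then to pass to the weak limit using strong continuity of the free Schr\"odinger group and of translations on $\dot{H}^{1}$. I will set
$$
w_n^{(j)}(t,x):=h_n^{1/2}\,v_n\bigl(t_n^{(j)}+h_n^2\,t,\; x_n^{(j)}+h_n\,x\bigr), \quad s_n:=\frac{t_n^{(2)}-t_n^{(1)}}{h_n^2}, \quad y_n:=\frac{x_n^{(2)}-x_n^{(1)}}{h_n},
$$
so that a direct change of variables yields $w_n^{(2)}(t,x)=w_n^{(1)}(t+s_n,\, x+y_n)$. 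The non-orthogonality assumption forces $(s_n,y_n)$ to be bounded; extracting a subsequence, I may assume $s_n\to s_0$ and $y_n\to y_0$. The natural candidate is then
$$
\varphi^{(2)}(t,x):=\varphi^{(1)}(t+s_0,\,x+y_0).
$$

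Next I fix $t\in\mathbb{R}$ and verify that $w_n^{(2)}(t,\cdot)\rightharpoonup\varphi^{(2)}(t,\cdot)$ in $\dot{H}^{1}(\mathbb{R}^{3})$. Since each $v_n$ is a free Schr\"odinger solution, so is each $w_n^{(j)}$, and hence
$$
w_n^{(2)}(t,\cdot)=\tau_{y_n}\,e^{is_n\Delta}\,w_n^{(1)}(t,\cdot),\qquad \tau_y f:=f(\cdot+y).
$$
For any $\phi\in\dot{H}^{1}(\mathbb{R}^{3})$, the $\dot{H}^{1}$ pairing expands via unitarity of $\tau_y$ and $e^{is\Delta}$ on $L^{2}$ as
$$
\bigl\langle \nabla w_n^{(2)}(t,\cdot),\nabla\phi\bigr\rangle_{L^{2}}=\bigl\langle\nabla w_n^{(1)}(t,\cdot),\;e^{-is_n\Delta}\tau_{-y_n}\nabla\phi\bigr\rangle_{L^{2}}.
$$
By strong continuity of the Schr\"odinger group and of translations on $L^{2}$, the test vector $e^{-is_n\Delta}\tau_{-y_n}\nabla\phi$ converges strongly to $e^{-is_0\Delta}\tau_{-y_0}\nabla\phi$. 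Combined with the weak convergence $\nabla w_n^{(1)}(t,\cdot)\rightharpoonup\nabla\varphi^{(1)}(t,\cdot)$ in $L^{2}$ and the uniform $L^{2}$-bound, the standard weak-strong pairing principle produces convergence to $\bigl\langle\nabla\varphi^{(1)}(t,\cdot),\,e^{-is_0\Delta}\tau_{-y_0}\nabla\phi\bigr\rangle_{L^{2}}=\bigl\langle\nabla\varphi^{(2)}(t,\cdot),\nabla\phi\bigr\rangle_{L^{2}}$, which is the desired identity.

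Finally, for the norm equality I use that spatial translation is an isometry of $\dot{H}^{1}(\mathbb{R}^{3})$ and that a shift in the time variable preserves the supremum, so that
$$
\|\varphi^{(2)}\|_{L^{\infty}\dot{H}^{1}}=\sup_{t}\|\varphi^{(1)}(t+s_0,\cdot+y_0)\|_{\dot{H}^{1}}=\sup_{t'}\|\varphi^{(1)}(t',\cdot)\|_{\dot{H}^{1}}=\|\varphi^{(1)}\|_{L^{\infty}\dot{H}^{1}}.
$$
I expect the main obstacle to lie in the middle step: one must pass to the limit simultaneously in the two parameters $s_n\to s_0$ and $y_n\to y_0$ acting on a merely weakly convergent sequence, and this is precisely where the strong continuity of both the Schr\"odinger group and of translations on $L^{2}$ becomes essential.
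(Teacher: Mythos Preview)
Your proof is correct and follows essentially the same approach as the paper: both write the second rescaled sequence as a space–time translate of the first with shift parameters $(s_n,y_n)$ that are bounded by non-orthogonality, extract a convergent subsequence $(s_n,y_n)\to(s_0,y_0)$, and set $\varphi^{(2)}(t,x)=\varphi^{(1)}(t+s_0,x+y_0)$. Your version is in fact more careful than the paper's, which simply drops the $o(1)$ perturbations without comment, whereas you justify this passage via the strong continuity of $e^{is\Delta}$ and of translations on $L^2$ together with the weak–strong pairing; the only point left implicit is that $\varphi^{(1)}$ is itself a free Schr\"odinger solution (needed for $\tau_{y_0}e^{is_0\Delta}\varphi^{(1)}(t,\cdot)=\varphi^{(2)}(t,\cdot)$), which follows since weak $\dot H^1$-limits of free solutions remain free solutions.
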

\begin{proof}
	Let $x_{n}^{(2)} = x_{n}^{(1)} + (\overrightarrow{D} + o(1))h_{n}$ and $t_{n}^{(2)} = t_{n}^{(1)} + (\overrightarrow{C} + o(1))h_{n}^{2}$, where $\overrightarrow{D} \in \mathbb{R}^{3}$, $\overrightarrow{C} \in \mathbb{R}$ are constants. We have
	$\sqrt{h}v_{n}(t_{n}^{(1)} + h_{n}^{2}s,x_{n}^{(1)}+ h_{n}y) \rightharpoonup \varphi^{(1)}(s,y), \ s \in \mathbb{R}.$
	Then,
	\begin{equation*}
	\begin{split}
		\sqrt{h_{n}}v_{n}(t_{n}^{(2)} + h_{n}^{2}s,x_{n}^{(2)}+ h_{n}y)  = & \sqrt{h_{n}} v_{n}(t_{n}^{(1)} + (\overrightarrow{C}+ o(1))h_{n}^{2} + h_{n}^{2}s,x_{n}^{(1)}\\ &+ (\overrightarrow{D} + o(1))h_{n} + h_{n}y) \\
		 = & \sqrt{h_{n}} v_{n}(t_{n}^{(1)} + (\overrightarrow{C} + s)h_{n}^{2},x_{n}^{(1)}+ (\overrightarrow{D} + y)h_{n}) \\ 
		 & \mbox{ } \rightharpoonup \varphi^{(1)}(\overrightarrow{C}+ s,\overrightarrow{D} + y), \ (s+ \overrightarrow{C}) \in \mathbb{R}.
	\end{split}
	\end{equation*}
	Taking $\varphi^{(1)}(\overrightarrow{C}+ s,\overrightarrow{D} + y) = \varphi^{(2)}(s,y),$ we have 
	$$D_{h_{n}}^{(2)}v_{n} \rightharpoonup \varphi^{(2)}, \ \ s \in \mathbb{R}.$$
	Moreover,
		$$\|\nabla \varphi^{(2)}(s)\|_{L^{2}} = \|\nabla \varphi^{(1)}(s + \overrightarrow{C})\|_{L^{2}} \leq \sup_{s' \in \mathbb{R}}\| \nabla \varphi^{(1)}(s')\|_{L^{2}} = \|\nabla \varphi^{(1)}(s)\|_{L^{\infty}L^{2}}, $$
and
$$
			\|\nabla \varphi^{(1)}(s+ \overrightarrow{C})\|_{L^{2}} =  \|\nabla \varphi^{(2)}(s)\|_{L^{2}}  \leq  \sup_{s \in \mathbb{R}}\|\nabla \varphi^{(2)}(s)\|_{L^{2}} =  \|\nabla \varphi^{(2)}(s)\|_{L^{\infty}L^{2}},
$$
		showing the lemma.
\end{proof}

The second lemma is the following one, where we keep
the notation of the construction that allowed us to extract the scales and cores.

\begin{lemma} \label{lemma3.14}
	Let $\{j,j'\} \in \{1,...,K\}^{2}$ be such that $$(\underline{x}^{(j)},\underline{t}^{(j)}) \not \perp_{h_{n}} (\underline{x}^{(K+1)},\underline{t}^{(K+1)}) \ \mbox{     and    } \ (\underline{x}^{(j)},\underline{t}^{(j)}) \perp_{h_{n}} (\underline{x}^{(j')},\underline{t}^{(j')}).$$
	If $D_{h_{n}}^{(K+1)}w_{n}^{(K+1)}\rightharpoonup 0$, then $D_{h_{n}}^{(j)}w_{n}^{(K+1)} \rightharpoonup 0.$ Moreover, $D_{h_{n}}^{(j)}p_{n}^{(j')} \rightharpoonup 0$ for any concentrating solution $p_{n}^{(j')}$ associated with $[\varphi^{(j')},\underline{h},\underline{x}^{(j')},\underline{t}^{(j')}]$.
\end{lemma}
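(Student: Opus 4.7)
The plan is to reduce both claims to careful bookkeeping of what the sameness of scale $h_n$, combined with the (non)orthogonality of cores, implies for the rescaled profiles, and then close the argument using continuity of the linear Schr\"odinger flow on $\dot H^1(\mathbb{R}^3)$ together with its dispersive weak decay at large times.

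For the first claim, since $(\underline{x}^{(j)},\underline{t}^{(j)})$ and $(\underline{x}^{(K+1)},\underline{t}^{(K+1)})$ share the scale $h_n$ and are not $\perp_{h_n}$, the normalized differences
\[ \xi_n := \frac{x_n^{(K+1)}-x_n^{(j)}}{h_n}, \qquad \tau_n := \frac{t_n^{(K+1)}-t_n^{(j)}}{h_n^2} \]
are bounded, so up to a subsequence $\xi_n \to \vec D \in \mathbb{R}^3$ and $\tau_n \to \vec C \in \mathbb{R}$. A direct change of variables yields
\[ \sqrt{h_n}\, w_n^{(K+1)}\bigl(t_n^{(j)} + h_n^2 s,\, x_n^{(j)} + h_n y\bigr) = \sqrt{h_n}\, w_n^{(K+1)}\bigl(t_n^{(K+1)} + h_n^2(s-\tau_n),\, x_n^{(K+1)} + h_n(y-\xi_n)\bigr). \]
I would test this against $\phi \in \dot H^{-1}(\mathbb{R}^3)$ and move both the spatial shift by $-\xi_n$ and the time shift (viewed as $e^{-i\tau_n\Delta}$ acting on the rescaled time slice) onto $\phi$. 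Using $\phi(\cdot-\xi_n) \to \phi(\cdot-\vec D)$ and $e^{-i\tau_n\Delta}\phi \to e^{-i\vec C\Delta}\phi$ strongly in $\dot H^{-1}$, the hypothesis $D^{(K+1)}_{h_n} w_n^{(K+1)} \rightharpoonup 0$ transfers to $D^{(j)}_{h_n} w_n^{(K+1)} \rightharpoonup 0$.

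For the second claim, Definition \ref{def5} gives directly
\[ \sqrt{h_n}\, p_n^{(j')}(t_n^{(j)} + h_n^2 s,\, x_n^{(j)} + h_n y) = \varphi^{(j')}(\sigma_n + s,\, \eta_n + y), \]
with $\sigma_n := (t_n^{(j)}-t_n^{(j')})/h_n^2$ and $\eta_n := (x_n^{(j)}-x_n^{(j')})/h_n$. The orthogonality $(\underline{x}^{(j)},\underline{t}^{(j)}) \perp_{h_n} (\underline{x}^{(j')},\underline{t}^{(j')})$ forces $|\sigma_n|+|\eta_n| \to \infty$, so after extracting a subsequence either $|\eta_n| \to \infty$ with $\sigma_n$ bounded, or $|\sigma_n| \to \infty$. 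In the first alternative, continuity of the Schr\"odinger flow on $\dot H^1$ lets me pass to a strong limit $\varphi^{(j')}(\sigma_n+s,\cdot) \to \varphi^{(j')}(\sigma_\infty+s,\cdot)$ in $\dot H^1$, after which translating a fixed $\dot H^1$ function by $\eta_n \to \infty$ produces weak convergence to $0$ (first for $C_c^\infty$ elements and then by density). In the second alternative I would invoke the dispersive weak decay $e^{i\tau\Delta}\psi \rightharpoonup 0$ in $\dot H^1$ as $|\tau|\to\infty$, obtained by testing against smooth compactly supported $\phi$, rewriting the pairing as $\langle\varphi^{(j')}(0,\cdot),\, e^{-i(\sigma_n+s)\Delta}(-\Delta)\phi\rangle$, and applying the standard $L^6$--$L^{6/5}$ dispersive estimate; a bounded translation by $\eta_n$ preserves this weak limit.

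I expect the genuine difficulty to lie in this last dispersive step: unlike pure spatial translation, the decay in time of the free Schr\"odinger group in the \emph{weak} $\dot H^1$ topology does not follow from a soft compactness-at-infinity argument and must be quantified through the dispersive $L^p$ estimates. A related, smaller subtlety in the first claim is that the $o(1)$ remainders hidden in $\tau_n-\vec C$ and $\xi_n-\vec D$ cannot be absorbed by operator-norm continuity of $e^{it\Delta}$ (which fails on $\dot H^{\pm 1}$), but only by strong continuity $t\mapsto e^{it\Delta}\phi$ at $t=0$ for each fixed $\phi\in\dot H^{-1}$.
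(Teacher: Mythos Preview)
Your proposal is correct. For the first claim you argue exactly as the paper does: it is a direct instance of Lemma~\ref{lemma3.13}, and your extra remarks about handling the $o(1)$ remainders via strong continuity of $t\mapsto e^{it\Delta}\phi$ on $\dot H^{-1}$ are the right way to make that reduction precise.

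For the second claim the approaches diverge. The paper simply writes the rescaled pairing, declares ``without loss of generality $\varphi^{(j')}$ is continuous and compactly supported'' in $(s,y)$, and concludes by separation of supports. This shortcut is formally delicate: since $\varphi^{(j')}$ is a nonzero solution of the free Schr\"odinger equation it has constant $\dot H^1$ norm in time and therefore cannot be approximated in $L^\infty_s\dot H^1_y$ by functions compactly supported in spacetime, which is the norm needed to control the error uniformly in $n$. Your route---splitting according to whether $|\sigma_n|$ or $|\eta_n|$ diverges and invoking the dispersive $L^{p'}\to L^p$ decay of $e^{i\tau\Delta}$ in the time case---avoids this issue and is the more robust argument. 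Two small points to tighten: in your Case~2 the shift $\eta_n$ need not be bounded, but your conclusion still holds because the dispersive estimate is translation invariant (so ``bounded translation'' should read ``translation, possibly unbounded''); and since $\varphi^{(j')}(0,\cdot)$ lies only in $\dot H^1\subset L^6$, the $L^6$--$L^{6/5}$ pairing does not close directly---you should first approximate $\nabla\varphi^{(j')}(0,\cdot)$ in $L^2$ by a $C_c^\infty$ function and then pair its $L^1$ norm against the $L^\infty$ dispersive decay of $e^{-i(\sigma_n+s)\Delta}\nabla\phi$.
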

\begin{proof}
	The first part of this lemma is a particular case of Lemma \ref{lemma3.13}. So, it remains to show that  $D_{h_{n}}^{(j)}p_{n}^{(j')} \rightharpoonup 0$
	or, equivalently, 
	$$\sqrt{h_{n}} p_{n}^{(j')}(t_{n}^{(j)}+ h_{n}^{2}s,x_{n}^{(j)} + h_{n}y) \rightharpoonup 0 \ \mbox{ in } \dot{H}^{1}(\mathbb{R}^{3}).$$
Since $p_{n}^{(j')}$ is a concentrating solution associated to $[\varphi ^{(j')},\underline{h},\underline{x}^{(j')},\underline{t}^{(j')}]$, we have
	$$p_{n}^{(j')}(t,x) = \frac{1}{\sqrt{h_{n}}} \varphi^{(j')} \Big(\frac{t-t_{n}^{(j')}}{h_{n}^{2}} ,\frac{x-x_{n}^{(j')}}{h_{n}}\Big),$$
	and 
	$$\sqrt{h_{n}} p_{n}^{(j')} (h_{n}^{2}s,x_{n}^{(j)} + h_{n}y) = \varphi^{(j')}\Big(\frac{t_{n}^{(j)}-t_{n}^{(j')}}{h_{n}^{2}} + s,\frac{x_{n}^{(j)}-x_{n}^{(j')}}{h_{n}} + y \Big).$$
Assuming $(\underline{x}^{(j)},\underline{t}^{(j)}) \perp_{h_{n}}  (\underline{x}^{(j')},\underline{t}^{(j')})$, without loss of generality, let us assume that $\varphi^{(j')}$ is continuous and compactly supported. Thus,
	\begin{equation*}
	\begin{split}
		\int_{\mathbb{R}^{3}}& \nabla \sqrt{h_{n}} p_{n}^{(j')}(t_{n}^{(j)}+ h_{n}^{2}s,x_{n}^{(j)} + h_{n}y) \cdot \nabla \psi(y) \ dy =\\ 	&\int_{\mathbb{R}^{3}} \nabla \varphi^{(j')}\Big(\frac{t_{n}^{(j)}-t_{n}^{(j')}}{h_{n}^{2}} + s,\frac{x_{n}^{(j)}-x_{n}^{(j')}}{h_{n}} + y \Big)\cdot \nabla \psi(y) \ dy,
		\end{split}
	\end{equation*}
	which tends to 0 as $n$ tends to $\infty$ if $\Big|\frac{t_{n}^{(j)} - t_{n}^{(j')}}{h_{n}^{2}}\Big| \rightarrow \infty$ or  $\Big|\frac{x_{n}^{(j)}- x_{n}^{(j')}}{h_{n}^{2}}\Big| \rightarrow \infty$, since $\varphi^{(j')}$ is compactly supported. This proves the lemma.
\end{proof}

Now, we prove the main result of this step.  Precisely, the following proposition will ensure the profile decomposition for $h_{n}$-oscillatory sequences. 

	\begin{proposition} \label{prop3.11} 
	Let $(v_{n})_{n \in \mathbb{N}}$ be an (strictly) $h_{n}$-oscillatory sequence of solutions to the linear Schr\"{o}dinger equation \eqref{eq 26}. Then, up a subsequence, there exist linear concentrating solutions $p_{n}^{k}$, as defined in Definition \ref{def5}, associated to $[\varphi^{(k)},\underline{h},\underline{x}^{(k)},\underline{t}^{(k)}]$ such that for any $l \in \mathbb{N}^{*}$, one has
	\begin{equation} \label{eq 38} 
		v_{n}(t,x) = \sum_{j=1}^{l} p_{n}^{(j)}(t,x) + w_{n}^{(l)}(t,x),
	\end{equation}
	\begin{equation} \label{eq 39}
  \limsup_{n \rightarrow \infty} \|w_{n}^{(l)}\|_{L^{\infty}([0,T];L^{6}(\mathbb{R}^{3})} \longrightarrow0, \ l\rightarrow \infty,
	\end{equation}
for all $T>0$, and
	\begin{equation}\label{eq 40}
		\|\nabla v_{n}(t)\|_{L^{2}}^{2} = \sum_{j=1}^{l} \|\nabla p_{n}(t)^{(j)}\|_{L^{2}}^{2} + \|\nabla w_{n}(t)^{(l)}\|_{L^{2}} ^{2} + o(1), \ n\rightarrow \infty,
	\end{equation}
	for $t \in [0, T ]$.
	Moreover, for any $j \neq k$, we have $(\underline{x}^{(k)},\underline{t}^{(k)}) \perp (\underline{x}^{(j)},\underline{t}^{(j)})$.
\end{proposition}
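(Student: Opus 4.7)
The plan is to extract the profiles $p_n^{(j)}$ one at a time through the iterative algorithm of Bahouri--Gérard and Keraani, adapted to the prescribed scale $\underline{h}$. Set $w_n^{(0)} := v_n$, and suppose $w_n^{(l-1)}$ has been built; define the defect
\begin{equation*}
\nu^{(l)} := \limsup_{n\to\infty} \|w_n^{(l-1)}\|_{L^{\infty}([0,T];L^{6}(\mathbb{R}^{3}))}.
\end{equation*}
If $\nu^{(l)} = 0$ the construction stops and \eqref{eq 39} is immediate. Otherwise we need to produce a core $[\underline{x}^{(l)}, \underline{t}^{(l)}]$ and a profile $\varphi^{(l)} \in L^{\infty}(\mathbb{R};\dot{H}^{1})$ with $D_{h_{n}}^{(l)} w_n^{(l-1)} \rightharpoonup \varphi^{(l)}$ and a quantitative lower bound of the form $\|\nabla \varphi^{(l)}\|_{L^{\infty}\dot{H}^{1}} \gtrsim (\nu^{(l)})^{\alpha}$ for some $\alpha > 0$. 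We then set $p_n^{(l)}$ to be the linear concentrating solution associated to $[\varphi^{(l)},\underline{h},\underline{x}^{(l)},\underline{t}^{(l)}]$ (so that $D_{h_{n}}^{(l)} p_n^{(l)} \rightharpoonup \varphi^{(l)}$ by Lemma \ref{lemma3.2}) and put $w_n^{(l)} := w_n^{(l-1)} - p_n^{(l)}$, iterating the procedure.

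The extraction of a non-trivial weak limit at each step is the hardest part, and it is where the $h_n$-oscillation hypothesis plays its decisive role. The idea is to combine the refined Sobolev inequality
\begin{equation*}
\|f\|_{L^{6}(\mathbb{R}^{3})} \leq C\,\|\nabla f\|_{L^{2}}^{1/3}\,\|\nabla f\|_{\dot{B}^{0}_{2,\infty}}^{2/3},
\end{equation*}
already used in the proof of Proposition \ref{prop3.7}, with Proposition \ref{prop3.5}, which propagates the $\dot{B}^{0}_{2,\infty}$ norm through the linear Schrödinger flow, and with the $h_n$-oscillation inherited by $w_n^{(l-1)}$ from $v_n$ (via Proposition \ref{prop3.4} and the fact that each $p_n^{(j)}$ is $h_n$-oscillatory by construction). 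Together these reduce the assumption $\nu^{(l)} > 0$ to a lower bound on $\|\nabla w_n^{(l-1)}(t_n^{(l)})\|_{\dot{B}^{0}_{2,\infty}}$ at an appropriate time $t_n^{(l)} \in [0,T]$, concentrated in a dyadic shell $|\xi| \sim h_n^{-1}$; a standard bubble-extraction then selects a point $x_n^{(l)}$ such that $h_n^{1/2} w_n^{(l-1)}(t_n^{(l)} + h_n^{2}\,\cdot\,, x_n^{(l)} + h_n\,\cdot\,)$ has a non-zero weak limit in $\dot{H}^{1}$, yielding $\varphi^{(l)}$. Without $h_n$-oscillation the dyadic piece creating the $L^{6}$ concentration could live at a different scale, and no weak limit at the prescribed scale would emerge.

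Pairwise orthogonality $(\underline{x}^{(j)},\underline{t}^{(j)}) \perp_{h_n} (\underline{x}^{(l)},\underline{t}^{(l)})$ is automatic: if some newly extracted core failed to be orthogonal to a previous one, Lemma \ref{lemma3.13} combined with Lemma \ref{lemma3.14} would produce a non-trivial weak limit of $D_{h_{n}}^{(j)} w_n^{(l-1)}$, contradicting the vanishing $D_{h_{n}}^{(j)} w_n^{(j)} \rightharpoonup 0$ at the end of stage $j$. The Pythagorean identity \eqref{eq 40} then follows by expanding
\begin{equation*}
\|\nabla w_n^{(l-1)}(t)\|_{L^{2}}^{2} = \|\nabla p_n^{(l)}(t)\|_{L^{2}}^{2} + \|\nabla w_n^{(l)}(t)\|_{L^{2}}^{2} + 2\,\mathrm{Re}\,\langle \nabla p_n^{(l)}(t),\,\nabla w_n^{(l)}(t) \rangle_{L^{2}},
\end{equation*}
and showing that the cross term tends to zero via the change of variables $(s,y) = ((t-t_n^{(l)})/h_n^{2}, (x-x_n^{(l)})/h_n)$ together with the weak convergence $D_{h_{n}}^{(l)} w_n^{(l)} \rightharpoonup 0$, then iterating on $l$. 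Finally, the uniform bound $\|\nabla v_n\|_{L^{2}} \leq C$ combined with \eqref{eq 40} forces $\sum_{j} (\nu^{(j)})^{2\alpha} < \infty$, so $\nu^{(l)} \to 0$ as $l \to \infty$, which is exactly \eqref{eq 39}.
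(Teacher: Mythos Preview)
Your scheme is correct but organizes the argument differently from the paper. The paper does not drive the extraction by the $L^{\infty}L^{6}$ defect $\nu^{(l)}$; instead it introduces the functional
\[
\delta(\underline{w}) = \sup_{(t_n,x_n)}\bigl\{\|\nabla\varphi(0)\|_{L^2}^{2} : D_{h_n}\widetilde{w}_n \rightharpoonup \varphi\bigr\},
\]
picks $\varphi^{(l+1)}$ so that $\|\nabla\varphi^{(l+1)}(0)\|_{L^2}^{2}\ge\tfrac12\,\delta(\underline{w}^{(l)})$, and lets the Pythagorean identity force $\delta(\underline{w}^{(l)})\to0$. The convergence \eqref{eq 39} is then proved \emph{a posteriori} via a convolution splitting $w_n^{(l)} = \chi_R*w_n^{(l)} + (\delta-\chi_R)*w_n^{(l)}$, with an explicit Fourier-side computation giving $\limsup_n\|\chi_R*w_n^{(l)}\|_{L^\infty_{t,x}}\le C_2(R)\,\delta(w_n^{(l)})$ and the second piece controlled by strict $h_n$-oscillation. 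Your route reverses this: you make \eqref{eq 39} automatic by extracting against $\nu^{(l)}$ directly, but the price is that your ``standard bubble-extraction'' --- passing from a Besov lower bound at the shell $|\xi|\sim h_n^{-1}$ to a rescaled weak limit with the quantitative bound $\|\nabla\varphi^{(l)}\|\gtrsim(\nu^{(l)})^{\alpha}$ --- is exactly the contrapositive of the paper's $\chi_R$ estimate, and you leave it unwritten. Either organization works; the paper's has the advantage that the extraction step is trivial (just take a near-maximizer of $\delta$) and all the analysis is concentrated in one explicit inequality. The orthogonality proof also differs: you verify it inductively during the construction, whereas the paper extracts all profiles first and only then argues by contradiction via the maximal index $j_K$ with $(\underline{x}^{(j_K)},\underline{t}^{(j_K)})\not\perp_{h_n}(\underline{x}^{(K+1)},\underline{t}^{(K+1)})$.
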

\begin{proof} Using the notation of Definition \ref{def6}, if $v_{n} \in L^{\infty}([0,T],\dot{H}^{1}(\mathbb{R}^{3}))$, consider $\widetilde{v}_{n}$ its extension in $\mathbb{R}$ by zero outside $[0,T]$ and denote
	$$ \delta(\underline{v}) = \sup_{(t_{n}, x_{n})} \Big\{\|\nabla \varphi(0)\|^{2}_{L^{2}}; D_{h_{n}}\widetilde{v}_{n} \rightharpoonup \varphi, \mbox{up to a subsequence},  \ \varphi \in L^{\infty}(\mathbb{R};\dot{H}^{1}(\mathbb{R}^{3})) \Big\},$$
	where $(t_{n},x_{n})$ are sequences in $[0,T] \times \mathbb{R}^{3}$ and this means that $h_{n}^{\frac{1}{2}}\widetilde{v}_{n}(t_{n} + h_{n}^{2}t,x_{n} + h_{n}x) \rightharpoonup \varphi(t,x)$ in $\dot{H}(\mathbb{R}^{3})$. 
	
	So, in this scenario, we consider $\varphi$ some type of weak limit of the translated sequence $\widetilde{v}_{n}$. Let
	$p_n$ be a linear concentrating solution associated to $\varphi$ $$p_{n}(t,x) = \frac{1}{\sqrt{h_{n}}} \varphi \Big(\frac{t-t_{n}}{h_{n}^{2}},\frac{x-x_{n}}{h_{n}}\Big)$$ and $\widetilde{p}_{n}$ be its extension in $\mathbb{R}$ by zero outside $[0,T]$. 
	Let $\mathcal{V}(v_{n})$ be the set of such functions $\varphi$. 	If $\delta(\underline{v})=0$, we take $p_{n}^{(j)}= \widetilde{p}_{n}^{(j)}=0$, for all $j$. 
	If $\delta(\underline{v})>0$, we choose $\varphi^{(1)} \in \mathcal{V}(v_{n})$ such that 
	$$\|\nabla \varphi^{(1)}(0)\|_{L^{2}} \geq \frac{1}{2} \delta(\underline{v}) >0.$$
	This means that there exists $(\underline{x}^{(1)},\underline{t}^{(1)}) \in [0,T] \times \mathbb{R}^{3} \rightarrow (x_{\infty}^{(1)},t_{\infty}^{(1)})$ satisfying $D_{h_{n}}\widetilde{v}_{n} \rightharpoonup \varphi^{(1)}$ as $n\rightarrow \infty.$ Equivalently 
		$$\sqrt{h_{n}} \widetilde{v}_{n}(t_{n}^{(1)} + h_{n}^{2}s,x_{n}^{(1)}+ h_{n}y) \rightharpoonup \varphi^{(1)}(s,y), \  s \in \mathbb{R} , \mbox{as}\ n\rightarrow \infty.$$
	Now, choose $p_{n}^{(1)}$ as the linear concentrating solution associated with $[\varphi^{(1)}, \underline{h},\underline{x}^{(1)},\underline{t}^{(1)}]$ and let $\widetilde{p}_{n}^{(1)}$ be its the extension to $\mathbb{R}$ by zero outside $[0,T]$. Note that the assumption $t_{n}^{(1)} \in  [0, T ]$ ensures $t_{\infty}^{(1)} \in  [0, T ]$, which will always be the case for all the concentrating solutions we consider. 
	
	To proceed, we first state a lemma that will be used for the orthogonality of energies.
	\begin{lemma}\label{lemma3.12}
		Let $w_{n}^{(1)} = \widetilde{v}_{n} - \widetilde{p}_{n}^{(1)}$. One has
		$$
		\|\nabla \widetilde{v}_{n}(t)\|^{2}_{L^{2}} = \|\nabla \widetilde{p}_{n}^{(1)}(t)\|^{2}_{L^{2}} + \|\nabla w_{n}^{(1)}(t)\|^{2}_{L^{2}} + o(1) \mbox{as} \ n \rightarrow \infty.
		$$
	\end{lemma}
	\begin{proof}
		Observe that
		\begin{equation*}
			\begin{split}
				\sqrt{h_{n}} w_{n}^{(1)}(t_{n}^{(1)} + h_{n}^{2}s,x_{n}^{(1)}+ h_{n}y)  = & \sqrt{h_{n}} \widetilde{v}_{n}(t_{n}^{(1)} + h_{n}^{2}s,x_{n}^{(1)}+ h_{n}y) \\ &- \sqrt{h_{n}}\widetilde{p}_{n}^{(1)}(t_{n}^{(1)} + h_{n}^{2}s,x_{n}^{(1)}+ h_{n}y) \\
				= & \sqrt{h_{n}} \widetilde{v}_{n}(t_{n}^{(1)} + h_{n}^{2}s,x_{n}^{(1)}+ h_{n}y) - \varphi^{(1)}(s,y) \rightharpoonup 0, \ n\rightarrow\infty,
			\end{split}
		\end{equation*}
		which means that $D_{h_{n}} w_{n}^{(1)} \rightharpoonup 0.$ Then,
		$$\|\nabla \widetilde{v}_{n}(t)\|^{2}_{L^{2}} = \|\nabla w_{n}^{(1)}(t)\|^{2}_{L^{2}} + 2\langle \nabla  w_{n}^{(1)}(t),\nabla \widetilde{p}_{n}^{(1)}(t) \rangle + \|\nabla \widetilde{p}_{n}^{(1)}(t)\|^{2}_{L^{2}}.$$
		A change of variables yields
		\begin{eqnarray*}
			\langle \nabla w_{n}^{(1)}(t),\nabla \widetilde{p}_{n}^{(1)}(t) \rangle  & = & \int_{\mathbb{R}^{3}} \nabla_{x} w_{n}^{(1)}(t,x) \cdot \nabla_{x} \widetilde{p}_{n}^{(1)}(t,x) \ dx \\
			& = & \int_{\mathbb{R}^{3}} \nabla_{x} w_{n}^{(1)}(t,x) \cdot \nabla_{x} \frac{1}{\sqrt{h_{n}}} \varphi^{(1)} \Big(\frac{t-t_{n}^{(1)}}{h_{n}^{2}},\frac{x-x_{n}^{(1)}}{h_{n}}\Big) \ dx \\
			& = & 	\int_{\mathbb{R}^{3}} \nabla_{x} w_{n}^{(1)}(t_{n}^{(1)} + h_{n}^{2}s,x_{n}^{(1)}+ h_{n}y) \cdot \nabla_{x} \frac{1}{\sqrt{h_{n}}} \varphi^{(1)} (s,y) \ h_{n}^{3} dy \\
			& = & 	\int_{\mathbb{R}^{3}} \nabla_{y} \sqrt{h_{n}}w_{n}^{(1)}(t_{n}^{(1)} + h_{n}^{2}s,x_{n}^{(1)}+ h_{n}y) \cdot \nabla_{y}  \varphi^{(1)} (s,y) \ dy,
		\end{eqnarray*}
		which goes to $ 0$, as  $ n\rightarrow\infty$, proving Lemma \ref{lemma3.12}.
	\end{proof}

	The previous lemma ensures that we can get the expansion of $v_{n}$ announced in Proposition \ref{prop3.11} by induction iterating the same process. To this end, let us assume that
	\begin{equation*}
		\widetilde{v}_{n}(t,x) = \sum_{j=1}^{l} \widetilde{p}_{n}^{(j)}(t,x) + w_{n}^{(l)}(t,x).
	\end{equation*}
Hence,
	\begin{equation*}
	v_{n}(t,x) = \sum_{j=1}^{l} p_{n}^{(j)}(t,x) + w_{n}^{(l)}(t,x),
\end{equation*}
and
$$
		\|\nabla v_{n}(t)\|_{L^{2}}^{2} = \sum_{j=1}^{l} \|\nabla p_{n}^{(j)}(t)\|_{L^{2}}^{2} + \|\nabla w_{n}^{(l)}(t)\|_{L^{2}}^{2} + o(1), \ n\rightarrow \infty,
$$
	where $p_{n}^{(j)}$ is a linear concentrating solution associated with $[\varphi^{(j)},\underline{h},\underline{x}^{(j)},\underline{t}^{(j)}]$, which are mutually orthogonal due to Lemma \ref{lemma3.12}. We now argue as before:  If $\delta(\underline{w}^{(l)}) = 0$, we just choose $p_n^{(l+1)} = 0$. If  $\delta(\underline{w}^{(l)}) >0$, choose $[\varphi^{(l+1)},\underline{x}^{(l+1)},\underline{t}^{(l+1)}]$ such that
	\begin{equation} \label{eq 42}
		\|\nabla \varphi^{(l+1)}(0)\|^{2}_{L^{2}} \geq \frac{1}{2} \delta(\underline{w}^{(l)}),
	\end{equation}
	and
	$$D_{h_{n}}w_{n}^{(l)} \rightharpoonup \varphi^{(l+1)}, \mbox{as}\ n\rightarrow \infty.$$
	Define $p_{n}^{(l+1)}$ as a linear concentrating solution associated to $[\varphi^{(l+1)},\underline{h},\underline{x}^{(l+1)},\underline{t}^{(l+1)}]$. Again, Lemma \ref{lemma3.12} applied to $w_{n}^{(l)}$ and $\widetilde{p}_{n}^{(l+1)}$ gives  \eqref{eq 40} with $w_{n}^{(l+1)} = w_{n}^{(l)} - \widetilde{p}_{n}^{(l+1)}$.
	
	Let us now show the convergence \eqref{eq 39}. Using Lemma \ref{lemma3.3} and energy estimates, we have
	$$\|\nabla \varphi^{(j)}(0)\|^{2}_{L^{2}} = \|\nabla p_{n}^{(j)}(t_{n}^{(j)})\|^{2}_{L^{2}} = \|\nabla p_{n}^{(j)}(0)\|^{2}_{L^{2}}.$$
	Using \eqref{eq 40}, we have that, for some $C(T)=C > 0$,
	\begin{eqnarray*}
		\sum_{j=1}^{l} \|\nabla \varphi^{(j)}(0)\|^{2}_{L^{2}} & = &  \sum_{j=1}^{l} \|\nabla p_{n}^{(j)}(0)\|^{2}_{L^{2}} \leq  \limsup_{n \rightarrow \infty} \|\nabla v_{n}(0)\|^{2}_{L^{2}} \leq  C.
	\end{eqnarray*}
	So, the series with general term $\|\nabla \varphi^{(j)}(0)\|^{2}_{L^{2}}$ converges and, therefore, 
	$$\|\nabla \varphi^{(j)}(0)\|^{2}_{L^{2}} \rightarrow 0, \mbox{as} \ l \rightarrow \infty.$$
	Using estimate \eqref{eq 42}, one obtains
	$$\delta(\underline{w}^{(l)}) \rightarrow 0, \mbox{as} \ l\rightarrow \infty.$$

	To show that
$$\limsup_{n\rightarrow \infty} \|w_{n}^{(l)}\|_{L^{\infty}_{t}L^{6}_{x}} \longrightarrow 0, \ l\rightarrow \infty, $$ 
introduce a family of functions $\chi_{R}(t, x)= \chi^{1}_{R}(t) \cdot \chi^{2}_{R}(x) \in C_{0}^{\infty}(\mathbb{R} \times \mathbb{R}^{3})$ satisfying the following properties:
\begin{equation*}
	\left\{
	\begin{array}{ll}
		|\widetilde{\chi_{R}^{1}}| + |\widehat{\chi_{R}^{2}}| \leq 2, \quad \text{with}\quad \supp(\widehat{\chi_{R}^{2}}) \subset \Big\{\frac{1}{2Rh_{n}} \leq |\xi| \leq \frac{2R}{h_{n}}\Big\}; \\
		\widehat{\chi_{R}^{2}}(\xi) \equiv 1,\quad \mbox{ for } \Big\{\frac{1}{Rh_{n}} \leq |\xi| \leq \frac{R}{h_{n}}\Big\}; \\
		\widetilde{\chi_{R}^{1}} \Big(|\xi|^{2}\Big) = 1,\quad \mbox{ on } 	\supp(\widehat{\chi_{R}^{2}}); \\
		\supp(\chi^{1}_{R}) \subset [-T,0],	 
	\end{array}
	\right.
\end{equation*}
where \ $\widetilde{   }$ \ and \ $\widehat{   }$ \ denote de Fourier transform in time and space, respectively. One has 
\begin{equation} \label{eq 43}
	\|w_{n}^{(l)}\|_{L^{\infty}([0,T];L^{6}(\mathbb{R}^{3}))} \leq \| \chi_{R} \ast w_{n}^{(l)}\|_{L^{\infty}([0,T];L^{6}(\mathbb{R}^{3}))} + \|(\delta - \chi_{R}) \ast w_{n}^{(l)}\|_{L^{\infty}([0,T];L^{6}(\mathbb{R}^{3}))}
\end{equation}
where $\ast$ denotes the convolution in $(t, x)$ and $\delta$ denotes the Dirac distribution. Let us bound each term on the right-hand side of inequality \eqref{eq 43}. 

\vspace{0.1cm}
\noindent\textit{1. Bound for $\| \chi_{R} \ast w_{n}^{(l)}\|_{L^{\infty}([0,T];L^{6}(\mathbb{R}^{3}))} $}.
\vspace{0.1cm}

Note that
$$
	\| \chi_{R} \ast w_{n}^{(l)}\|_{L^{\infty}([0,T];L^{6}(\mathbb{R}^{3}))} \leq \| \chi_{R} \ast w_{n}^{(l)}\|^{\frac{1}{3}}_{L^{\infty}([0,T];L^{2}(\mathbb{R}^{3}))} \cdot \| \chi_{R} \ast w_{n}^{(l)}\|^{\frac{2}{3}}_{L^{\infty}([0,T] \times \mathbb{R}^{3})}.
$$
The function $\chi_{R} \ast w_{n}^{(l)}$ is a solution to the linear Schr\"odinger equation \eqref{eq 26} on $\mathbb{R}$ and, in particular, the $L^{2}$-conservation law gives
\begin{equation} \label{eq 45}
\begin{split}
	\| \chi_{R} \ast w_{n}^{(l)}\|^{2}_{L^{\infty}([0,T];L^{2}(\mathbb{R}^{3}))}  = & \| (\chi_{R} \ast w_{n}^{(l)})(0)\|^{2}_{L^{2}_{x}} = \frac{1}{(2 \pi)^{3} } \|\mathfrak{F}_{x \rightarrow \xi}((\chi_{R} \ast w_{n}^{(l)})(0))(\xi)\|^{2}_{L^{2}_{\xi}}.
	\end{split}
\end{equation}
On the other hand, we write
\begin{equation*}
	(\chi_{R} \ast w_{n}^{(l)})(0,x) = \int_{\mathbb{R}} \chi_{R}^{1}(-s) \int_{\mathbb{R}^{3}}  \chi_{R}^{2}(x-y) w_{n}^{(l)}(s,y) \ dyds.
\end{equation*}
By the Plancherel inversion formula, we get
\begin{eqnarray*}
	(\chi_{R} \ast w_{n}^{(l)})(0,x)  & = & \frac{1}{(2 \pi)^{3} } \int_{\mathbb{R}} \chi_{R}^{1}(-s) \int_{\mathbb{R}^{3}} \chi_{R}^{2}(x-y) \int_{\mathbb{R}^{3}} e^{iy\xi}\widehat{w_{n}^{(l)}(s)}(\xi) e^{-ix\xi}e^{ix\xi} \ d\xi dyds\\
	& = & \frac{1}{(2 \pi)^{3} } \int_{\mathbb{R}} \chi_{R}^{1}(-s) \int_{\mathbb{R}^{3}} e^{-i(x-y)\xi}\chi_{R}^{2}(x-y) \int_{\mathbb{R}^{3}} \widehat{w_{n}^{(l)}(s)}(\xi) e^{ix\xi} \ d\xi dyds\\
	& = & \frac{1}{(2 \pi)^{3} } \int_{\mathbb{R}} \chi_{R}^{1}(-s) \int_{\mathbb{R}^{3}} \widehat{\chi^{2}_{R}}(\xi)\widehat{w_{n}^{(l)}(s)}(\xi)  e^{ix\xi} \ d\xi ds.
\end{eqnarray*}
Since $\widehat{w_{n}^{(l)}(s)}(\xi) = e^{is|\xi|^{2}} \widehat{w_{n}^{(l)}(0)}(\xi)$, we obtain
\begin{eqnarray*}
	(\chi_{R} \ast w_{n}^{(l)})(0,x) & = &  \frac{1}{(2 \pi)^{3} } \int_{\mathbb{R}} \chi_{R}^{1}(-s) \int_{\mathbb{R}^{3}} \widehat{\chi^{2}_{R}}(\xi)\widehat{w_{n}^{(l)}(0)}(\xi) e^{is|\xi|^{2}} e^{ix\xi} \ d\xi ds \\
	& = & \frac{1}{(2 \pi)^{3} }  \int_{\mathbb{R}^{3}} \widetilde{\chi_{R}^{1}}\Big(|\xi|^{2}\Big) \widehat{\chi^{2}_{R}}(\xi)\widehat{w_{n}^{(l)}(0)}(\xi)  e^{ix\xi} \ d\xi \\
	& = & \mathfrak{F}^{-1}_{\xi \rightarrow x} \Big[\widetilde{\chi_{R}^{1}}\Big(|\xi|^{2}\Big) \widehat{\chi^{2}_{R}}(\xi)\widehat{w_{n}^{(l)}(0)}(\xi)\Big](x).
\end{eqnarray*}
Consequently
\begin{equation} \label{eq 46}
	\mathfrak{F}_{x \rightarrow \xi}((\chi_{R} \ast w_{n}^{(l)})(0))(\xi) = \widetilde{\chi_{R}^{1}}\Big(|\xi|^{2}\Big) \widehat{\chi^{2}_{R}}(\xi)\widehat{w_{n}^{(l)}(0)}(\xi).
\end{equation}
Using the properties of 
\eqref{eq 45} and \eqref{eq 46}, we get
\begin{equation*} 
\begin{split}
	\| \chi_{R} \ast w_{n}^{(l)}\|^{2}_{L^{\infty}([0,T];L^{2}(\mathbb{R}^{3}))}   = & \frac{1}{(2 \pi)^{3} } \Big\|\widetilde{\chi_{R}^{1}}\Big(|\xi|^{2}\Big) \widehat{\chi^{2}_{R}}(\xi)\widehat{w_{n}^{(l)}(0)}(\xi)\Big\|^{2}_{L^{2}_{\xi}}  \\
	\leq &  C\frac{1}{(2 \pi)^{3} } \int_{\frac{1}{2Rh_{n}} \leq |\xi| \leq \frac{2R}{h_{n}}} |\widehat{\chi^{2}_{R}}(\xi)\widehat{w_{n}^{(l)}(0)}(\xi)|^{2} \ d\xi  \\
	 \leq & C_{1}(R)h_{n}^{2}\|\xi \widehat{w_{n}^{(l)}(0)}\|^{2}_{L^{2}} \\ 
	 \leq & C_{1}(R) h_{n}^{2} \|\nabla w_{n}^{(l)}(0)\|^{2}_{L^{2}_{x}},
	 \end{split}
\end{equation*}
where $C_{1}$ is an $R$-dependent constant. Now, observe that
$$\limsup_{n\rightarrow \infty} \| \chi_{R} \ast w_{n}^{(l)}\|_{L^{\infty}([0,T] \times \mathbb{R}^{3})} = \sup_{(t_{n},x_{n})} \limsup_{n \rightarrow \infty}\Big|(\chi_{R}\ast w_{n}^{(l)})(t_{n},x_{n})\Big|.$$
Let $\varphi \in \mathcal{V}(w_{n}^{(l)})$ such that
$\sqrt{h_{n}}w_{n}^{(l)}(t_{n} + h_{n}^{2}s, x_{n} + h_{n}y) \rightharpoonup \varphi(s,y)$ and $\widetilde{p}_{n}$ be the rescaled function $\widetilde{p_{n}}(t,x) = \frac{1}{\sqrt{h_{n}}} \varphi \Big(\frac{t}{h_{n}^{2}},\frac{x}{h_{n}}\Big)$. We have that $\widetilde{p}_{n}$ satisfies the linear Schr\"{o}dinger equation  and 
$$w_{n}^{(l)}(t_{n} + t, x_{n} + x) \rightharpoonup \widetilde{p}_{n}(t,x).$$
Hence,
$$(\chi_{R}\ast w_{n}^{(l)})(t_{n}+t,x_{n} + x) \rightharpoonup(\chi_{R} \ast \tilde{p}_{n})(t,x)$$
and
$$(\chi_{R}\ast w_{n}^{(l)})(t_{n},x_{n}) \rightharpoonup(\chi_{R} \ast \tilde{p}_{n})(0,0).$$
Thus, 
\begin{eqnarray*}
	\limsup_{n\rightarrow \infty} \| \chi_{R} \ast w_{n}^{(l)}\|_{L^{\infty}([0,T] \times \mathbb{R}^{3})} 
	& \leq & \sup \Bigg\{ \Bigg|\int_{\mathbb{R}} \int_{\mathbb{R}^{3}} \chi_{R}(-t,-x)\widetilde{p}_{n}(t,x) \ dxdt\Bigg| \Bigg\}. \\ 
\end{eqnarray*}
Therefore, by H\"{o}lder's inequality, it follows that
\begin{equation*}
	\limsup_{n\rightarrow \infty} \| \chi_{R} \ast w_{n}^{(l)}\|_{L^{\infty}([0,T] \times \mathbb{R}^{3})}  \leq C_{2}(R) \sup \Big\{\|\widetilde{p}_{n}\|_{L^{\infty}_{t}L^{6}_{x}}\Big\},
\end{equation*}
where $C_{2}(R) = \|\chi_{R}\|_{L^{1}([0,T];L^{{\frac{6}{5}}}(\mathbb{R}^{3}))}$. Since
$$\|\widetilde{p}_{n}(t)\|_{L^{6}_{x}} \leq \|\widetilde{p}_{n}(t)\|_{\dot{H}^{1}_{x}}  = \|\widetilde{p}_{n}(0)\|_{\dot{H}^{1}_{x}} =  \|\varphi(0)\|_{\dot{H}^{1}_{x}}  \leq C \delta(w_{n}^{(l)}),$$
it follows that
\begin{equation*}
	\| \chi_{R} \ast w_{n}^{(l)}\|_{L^{\infty}([0,T] \times \mathbb{R}^{3})}   \leq C_{2}(R) \delta(w_{n}^{(l)})
\end{equation*}
for every $l \geq 1$.
Putting  these estimates together, we conclude that
\begin{equation} \label{eq 48}
\begin{split}
	\| \chi_{R} \ast w_{n}^{(l)}\|_{L^{\infty}([0,T];L^{6}(\mathbb{R}^{3}))} \leq &  C_{1}(R) h_{n}^{\frac{1}{3}}\|\nabla w_{n}^{(l)}\|_{L^{2}}^{\frac{1}{3}} \cdot C_{2}(R) \delta(w_{n}^{(l)})^{\frac{2}{3}}\leq  C(R) h_{n}^{\frac{1}{3}}\delta(w_{n}^{(l)})^{\frac{2}{3}},
	\end{split}
\end{equation}
which is the desired bound.

\vspace{0.2cm}

\noindent\textit{2. Bound for $\|(\delta - \chi_{R}) \ast w_{n}^{(l)}\|_{L^{\infty}([0,T];L^{6}(\mathbb{R}^{3}))}$}.
\vspace{0.1cm}

The function $(\delta - \chi_{R}) \ast w_{n}^{(l)}$ is a solution to the linear Schr\"{o}dinger equation in $\mathbb{R}$. Therefore, 
$$\|(\delta - \chi_{R}) \ast w_{n}^{(l)}\|^{2}_{L^{\infty}([0,T];L^{6}(\mathbb{R}^{3}))} \leq C \|\nabla (\delta - \chi_{R}) \ast w_{n}^{(l)}(t)\|^{2}_{L^{2}} \leq C \|\nabla (\delta - \chi_{R}) \ast w_{n}^{(l)}(0)\|^{2}_{L^{2}}.$$
By Plancherel's theorem and identity \eqref{eq 46}, one has
$$\|\nabla (\delta - \chi_{R}) \ast w_{n}^{(l)}(0)\|_{L^{2}} = \frac{1}{(2\pi)^{3}} \int_{\mathbb{R}^{3}} |\xi|^{2} \Big|\widehat{w_{n}^{(l)}(0)}(\xi)\Big[1-\widetilde{\chi_{R}^{1}}\Big(|\xi|^{2}\Big) \widehat{\chi^{2}_{R}}(\xi)\Big]\Big|^{2} \ d\xi .$$
Observe that $$
\Big[1- \widetilde{\chi_{R}^{1}}\Big(|\xi|^{2}\Big) \widehat{\chi^{2}_{R}}(\xi)\Big]=0, \quad \text{for} \frac{1}{h_{n}R} \leq |\xi| \leq \frac{R}{h_{n}},
$$ and, consequently,
\begin{equation} \label{eq 49}
	\limsup_{n \rightarrow \infty} \|(\delta - \chi_{R}) \ast w_{n}^{(l)}\|^{2}_{L^{\infty}([0,T];L^{6}(\mathbb{R}^{3}))} \leq C \limsup_{n\rightarrow \infty} \int_{\{|\xi| \leq \frac{1}{h_{n}R}\} \cup \{|\xi| \geq \frac{R}{h_{n}}\}} |\xi|^{2} |\widehat{w_{n}^{(l)}(0)}|^{2} \ d\xi , 
\end{equation}
which is the desired bound for the second term on the right-hand side of inequality \eqref{eq 43}.

With these bounds in hand, let us analyze \eqref{eq 43}. From estimates \eqref{eq 48} and \eqref{eq 49}, one has
\begin{equation*}
	\limsup_{n \rightarrow \infty}\|w_{n}^{(l)}\|_{L^{\infty}([0,T]; L^{6}(\mathbb{R}^{3}))} \leq C(R) \limsup_{n \rightarrow \infty} \Bigg[h_{n}^{\frac{1}{3}} \delta(w_{n}^{(l)})^{\frac{2}{3}} +  \int_{\{|\xi| \leq \frac{1}{h_{n}R}\} \cup \{|\xi| \geq \frac{R}{h_{n}}\}} |\xi|^{2} |\widehat{w_{n}^{(l)}(0)}|^{2} \ d\xi\Bigg].
\end{equation*}
So, taking  $l,R\to\infty$, using that $\delta(w_{h}^{(l)})\longrightarrow 0$ and  $w_{n}^{(l)}$ is (strictly) $h_{n}$-oscillatory (Remark \ref{remark2}), it follows that 
$$\limsup_{n \rightarrow \infty}\|w_{n}^{(l)}\|_{L^{\infty}([0,T]; L^{6}(\mathbb{R}^{3}))}  \longrightarrow 0  \mbox{ as }\l\rightarrow \infty.$$
Therefore, by interpolation, one gets 
$$\limsup_{n \rightarrow \infty}\|w_{n}^{(l)}\|_{L^{10}([0,T]; L^{10}(\mathbb{R}^{3}))}  \longrightarrow 0 \mbox{ as } l\rightarrow \infty,$$
since $\|w_{n}^{(l)}\|_{L^{7}_{t}L^{14}_{x}} \leq C \|w_{n}(0)\|_{\dot{H}^{1}}.$
This completes the proof of the first part of Proposition \ref{prop3.11}. It remains only to show the orthogonality of cores. We show it by contradiction. 
To this end, assume that the index
$$j_{K} = \max \big\{j \in \{1,...,K\}; (t_{n}^{(j)},x_{n}^{(j)}) \not \perp_{h_{n}}(t_{n}^{(K+1)},x_{n}^{(K+1)}) \big\}$$
exists. 
The following are consequences of the construction at the beginning of the demonstration of Proposition \ref{prop3.11}:
\begin{equation} \label{eq 54}
	D_{h_{n}}^{(l+1)}w_{n}^{(l)} \rightharpoonup \varphi ^{(l+1)} \ \mbox{with } \varphi^{(l+1)} \neq 0 \mbox{ if } l\leq K,
\end{equation}
\begin{equation} \label{eq 55}
	w_{n}^{(l)} = p_{n}^{(l+1)} + w_{n}^{(l+1)},
\end{equation}
and
\begin{equation} \label{eq 56}
	w_{n}^{(j_{K})} = \sum_{j=j_{K} +1}^{K+1} p_{n}^{(j)} + w_{n}^{(K+1)}.
\end{equation}
Moreover, the definition of $p_{n}^{(l)}$ and Lemma \ref{lemma3.2} implies 
$D_{h_{n}}^{(l)}p_{n}^{(l)} \rightharpoonup \varphi^{(l)}.$
Then, we get, from \eqref{eq 54} and \eqref{eq 55}, that  
$D_{h_{n}}^{(l+1)}w_{n}^{(l+1)}\rightharpoonup 0.$ Applying this to $l + 1 = j_{K}$ gives us  $D_{h_{n}}^{(K+1)}w_{n}^{(j_{K})}\rightharpoonup 0,$
due to the first part of Lemma \ref{lemma3.14} and the definition of $j_{K}$, since $(t_{n}^{(j_{K})},x_{n}^{(j_{K})}) \not \perp_{h_{n}}(t_{n}^{(K+1)},x_{n}^{(K+1)})$.

The definition of $j_{K}$ and the second part of Lemma \ref{lemma3.14} give
$D_{h_{n}}^{(K+1)}p_{n}^{(l)}\rightharpoonup 0$ for $j_{K} + 1 \leq l \leq K.$
``Applying" $D_{h_{n}}^{(K+1)}$ to equality \eqref{eq 56}, one gets 
\begin{eqnarray*}
	D_{h_{n}}^{(K+1)} w_{n}^{(j_{K})} & = &   \sum_{j=j_{K} +1}^{K+1} D_{h_{n}}^{(K+1)} p_{n}^{(j)} + D_{h_{n}}^{(K+1)} w_{n}^{(K+1)} \\
	& = & \sum_{j=j_{K} +1}^{K} D_{h_{n}}^{(K+1)} p_{n}^{(j)} + D_{h_{n}}^{(K+1)} p_{n}^{(K+1)} +  D_{h_{n}}^{(K+1)} w_{n}^{(K+1)} .
\end{eqnarray*}
Therefore, 
$$D_{h_{n}}^{(K+1)}w_{n}^{(j_{K})}\rightharpoonup \varphi^{K+1} \neq 0, $$
which is a contradiction since we have already proven that
$$D_{h_{n}}^{(K+1)}w_{n}^{(j_{K})}\rightharpoonup 0.$$
This completes the proof of the Proposition \ref{prop3.11}.
\end{proof}

	\begin{remark}\label{remark2}
	Observe that $w_{n}^{(l)}$ is (strictly) $h_{n}$-oscillatory. 
	
	Indeed, being $w_{n}^{(1)} = \widetilde{v}_{n} - \widetilde{p}_{n}^{(1)}$ for $l=1$, we apply the operator $\sigma_{R}(D)$ to equation \eqref{eq 43}, where $\sigma_{R} = \textbf{1}_{\{h_{n}|\xi| \leq \frac{1}{R}\} \cup \{h_{n}|\xi| \geq R\}} $, $R>0$. We get
	$$\|\nabla \sigma_{R}(D)\widetilde{v}_{n}\|_{L^{2}}^{2}= \|\nabla \sigma_{R}(D)\widetilde{p}_{n}^{(1)}\|_{L^{2}}^{2} + \|\nabla \sigma_{R}(D)w_{n}^{(1)}\|_{L^{2}}^{2} + o(1).$$
	Iterating, we obtain
	$$\| \nabla \sigma_{R}(D)\widetilde{v}_{n}\|_{L^{2}}^{2}=  \sum_{j=1}^{l} \|\nabla \sigma_{R}(D)\widetilde{p}_{n}^{(j)}\|_{L^{2}}^{2} + \|\nabla \sigma_{R}(D)w_{n}^{(l)}\|_{L^{2}}^{2} + o(1), $$
	which means 
	$$\limsup_{n \rightarrow \infty} \int_{\{h_{n}|\xi| \leq \frac{1}{R}\} \cup \{h_{n}|\xi| \geq R\}} |\xi|^{2} |\hat{w}_{n}^{(l)}(.,\xi)|^{2} \ d\xi \leq \limsup_{n \rightarrow \infty} \int_{\{h_{n}|\xi| \leq \frac{1}{R}\} \cup \{h_{n}|\xi| \geq R\}} |\xi|^{2} |\hat{v}_{n}(.,\xi)|^{2} \ d\xi.$$
		Since $\widetilde{v}_{n}$ is a (strictly) $h_{n}$-oscillatory sequence, so it is $w^{(l)}_{n}$.
\end{remark}

Before presenting the proof of Theorem \ref{lineardecomp}, let us present a result from \cite[Lemma 2.7 and Remark 2.8]{keraani}, which will be used.
 
	\begin{lemma}
		Let $(\underline{h}^{(j)}, \underline{x}^{(j)},\underline{t}^{(j)})$ be a family of pairwise orthogonal scales-cores and $(V^{(j)})$ a family of functions in $L^{10}(\mathbb{R},L^{10}(\mathbb{R}^{3}))$. For every $l \geq 1$, we have
\begin{equation}\label{lemma3.15}
\Bigg\| \sum_{j=1}^{l} \frac{1}{\sqrt{h_{n}^{(j)}}} V^{(j)} \Big(\frac{.-t_{n}^{(j)}}{{h_{n}^{(j)}}^{2}},\frac{.-x_{n}^{(j)}}{h_{n}^{(j)}}\Big) \Bigg\|^{10}_{L^{10}_{t}L^{10}_{x}} \longrightarrow \sum_{j=1}^{l}\|V^{(j)}\|_{L^{10}_{t}L^{10}_{x}}^{10}, \mbox{ as }  n\rightarrow \infty.
\end{equation}
		Additionally, 
\begin{equation}\label{obs7}
\Bigg\| \nabla \Big(\sum_{j=1}^{l} \frac{1}{\sqrt{h_{n}^{(j)}}} V^{(j)} \Big(\frac{.-t_{n}^{(j)}}{{h_{n}^{(j)}}^{2}},\frac{.-x_{n}^{(j)}}{h_{n}^{(j)}}\Big) \Big) \Bigg\|^{\frac{10}{3}}_{L^{\frac{10}{3}}_{t}L^{\frac{10}{3}}_{x}} \longrightarrow \sum_{j=1}^{l}\|\nabla V^{(j)}\|_{L^{\frac{10}{3}}_{t}L^{\frac{10}{3}}_{x}}^{\frac{10}{3}}, \mbox{ as }  n\rightarrow \infty.
\end{equation}
	\end{lemma}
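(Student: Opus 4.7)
The plan is to prove both \eqref{lemma3.15} and \eqref{obs7} by the same scheme: reduce to profiles with extra regularity, exploit the fact that the scaling in front of each $V^{(j)}$ is exactly critical for the norm involved, and then show that all interaction (``cross'') terms in the expansion of the power of the sum vanish in the limit thanks to the orthogonality of scales-cores.

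First I would argue by density: it suffices to prove the statements under the additional assumption that each $V^{(j)}$ belongs to $C_{c}^{\infty}(\mathbb{R}\times \mathbb{R}^{3})$ (resp.\ $\nabla V^{(j)}\in C_{c}^{\infty}$), since truncation errors in $L^{10}_{t,x}$ (resp.\ $L^{10/3}_{t,x}$) can be made uniformly small in $n$ via the change of variables $s=(t-t_n^{(j)})/(h_n^{(j)})^2$, $y=(x-x_n^{(j)})/h_n^{(j)}$, whose Jacobian $(h_n^{(j)})^{5}$ exactly compensates the prefactor $(h_n^{(j)})^{-5}$ coming from $(h_n^{(j)})^{-1/2}$ raised to the tenth power (and analogously $(h_n^{(j)})^{-5}$ compensates $(h_n^{(j)})^{-3/2}$ raised to the $10/3$-th power after the gradient falls on the rescaled argument, producing an extra $(h_n^{(j)})^{-1}$). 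This critical scaling also gives the equality of each individual term on the left-hand side with $\|V^{(j)}\|_{L^{10}_{t,x}}^{10}$, respectively $\|\nabla V^{(j)}\|_{L^{10/3}_{t,x}}^{10/3}$, for every $n$.

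Next, I would expand the tenth (resp.\ $10/3$-th, after first writing $|\cdot|^{10/3}=(|\cdot|^2)^{5/3}$ and bounding the sum pointwise via multilinear/Hölder arguments, or equivalently via the inequality $||a+b|^p-|a|^p-|b|^p|\leq C_p(|a|^{p-1}|b|+|a||b|^{p-1})$) power of the sum, and isolate the diagonal contributions $\|V_n^{(j)}\|^{10}$ (resp.\ $\|\nabla V_n^{(j)}\|^{10/3}$). All remaining terms are products of the form
\begin{equation*}
\int \prod_{r=1}^{10} V_n^{(j_r)}(t,x)\, dt\, dx
\end{equation*}
(with appropriate complex conjugations/derivatives), in which at least two distinct indices appear. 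For such a term I would change variables to normalize the profile of, say, index $j_1$, and write the remaining factors as $V^{(j_r)}$ evaluated at
\begin{equation*}
\Bigl(\tfrac{h_n^{(j_1)}}{h_n^{(j_r)}}\bigr)^{2}s+\tfrac{t_n^{(j_1)}-t_n^{(j_r)}}{(h_n^{(j_r)})^{2}},\ \tfrac{h_n^{(j_1)}}{h_n^{(j_r)}}y+\tfrac{x_n^{(j_1)}-x_n^{(j_r)}}{h_n^{(j_r)}}\Bigr).
\end{equation*}
The orthogonality hypothesis then splits into two cases: either $h_n^{(j_1)}/h_n^{(j_r)}+h_n^{(j_r)}/h_n^{(j_1)}\to\infty$, in which case the rescaled argument of $V^{(j_r)}$ either concentrates to a point or spreads to infinity (and compact support plus dominated convergence kills the integral); or the scales coincide and the translated argument $(s+\alpha_n, y+\beta_n)$ with $|\alpha_n|+|\beta_n|\to\infty$ leaves every compact set, again killing the integral.

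The main obstacle is a bookkeeping one rather than a conceptual one: in the gradient identity \eqref{obs7} the exponent $10/3$ is not an integer, so the clean multinomial expansion is unavailable; I would handle this by applying the elementary inequality
\begin{equation*}
\Bigl|\bigl|{\textstyle\sum}_{j} a_j\bigr|^{p}-{\textstyle\sum}_{j}|a_j|^{p}\Bigr|\leq C_{p,l}\sum_{j\neq k}|a_j|^{p-1}|a_k|
\end{equation*}
with $p=10/3$, $a_j=\nabla V_n^{(j)}$, integrating in $(t,x)$, and using Hölder to bound each cross integral by $\|\nabla V_n^{(j)}\|_{L^{10/3}}^{p-1}\,\|\nabla V_n^{(k)}\cdot\mathbf{1}_{\mathrm{supp}\,\nabla V_n^{(j)}}\|_{L^{10/3}}$, which tends to zero as $n\to\infty$ by the orthogonality/disjoint support argument above. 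Together with the density reduction and the exact critical scaling, this yields both \eqref{lemma3.15} and \eqref{obs7}.
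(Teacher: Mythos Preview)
Your sketch is essentially correct and follows the standard argument of Keraani \cite[Lemma 2.7 and Remark 2.8]{keraani}, which is exactly what the paper does: it does not give a proof of this lemma but simply cites Keraani. Your density reduction, critical-scaling computation, and case analysis for the orthogonality of scales-cores are the right ingredients.

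One small point worth tightening: in the $L^{10/3}$ cross-term estimate you bound by $\|\nabla V_n^{(j)}\|_{L^{10/3}}^{p-1}\,\|\nabla V_n^{(k)}\cdot\mathbf{1}_{\mathrm{supp}\,\nabla V_n^{(j)}}\|_{L^{10/3}}$ and claim the last factor tends to zero. This works when $h_n^{(j)}/h_n^{(k)}\to 0$ or when the scales coincide and the cores separate, but if $h_n^{(j)}/h_n^{(k)}\to\infty$ the support of $\nabla V_n^{(j)}$ eventually contains that of $\nabla V_n^{(k)}$ and the indicator gives nothing. The fix is immediate: either restrict to whichever of the two supports has the smaller scale, or simply estimate $\int |\nabla V_n^{(j)}|^{7/3}|\nabla V_n^{(k)}|$ directly by bounding the less concentrated factor in $L^\infty$ on the support of the more concentrated one, which yields $(h_n^{(j)}/h_n^{(k)})^{3/2}$ in one case and $(h_n^{(k)}/h_n^{(j)})^{7/2}$ in the other. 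Either way the cross terms vanish and your argument goes through.
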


Now, we have all the tools to prove Theorem \ref{lineardecomp}.

\noindent\textit{\bf{Step 4. Proof of Theorem \ref{lineardecomp}}.} Denote by $v_{n}^{(j)}$ (and the rest ($\rho_{n}^{(l)}$)) the $h_{n}^{(j)}$-oscillatory component obtained by decomposition \eqref{eq 31} and $p_{n}^{(j,\alpha)}$ the concentrating solutions obtained from decomposition \eqref{eq 38} (and the rest $w_{n}^{(j,A_{j})})$. Summing everything up, one has
	$$v_{n}(t,x) = \sum_{j=1}^{l} \Big(\sum_{\alpha =1}^{A_{j}} p_{n}^{(j,\alpha)}(t,x) + w_{n}^{(j,A_{j})}(t,x)\Big) + \rho_{n}^{(l)}(t,x).$$
	Rewrite this equation as 
	$$v_{n}(t,x) = \sum_{j=1}^{l} \Big(\sum_{\alpha =1}^{A_{j}} p_{n}^{(j,\alpha)}(t,x) \Big) + w_{n}^{(l,A_{1},...,A_{l})}(t,x),$$
	where
	$$w_{n}^{(l,A_{1},...,A_{l})}(t,x) = \sum_{j=1}^{l} w_{n}^{(j,A_{j})}(t,x) + \rho_{n}^{(l)}(t,x)$$
	for $l$ and $A_{j}$ fixed, $1 \leq j \leq l$. We enumerate this pairs by the bijection $\sigma: \mathbb{N}^{2}\rightarrow \mathbb{N}$ defined by
	$$\sigma(j,\alpha) < \sigma(k,\beta) \ \ \mbox{if } j+ \alpha < k+ \beta \ \mbox{or } j+\alpha = k + \beta \ \mbox{and } j<k.$$ 	The almost orthogonality identity \eqref{eq 29} is satisfied. Indeed, combining \eqref{eq 33} and \eqref{eq 40}, we obtain
	\begin{eqnarray*}
		\|\nabla v_{n}\|^{2}_{L^{2}} & = & \sum_{j=1}^{l} \|\nabla v_{n}^{(j)}\|^{2}_{L^{2}} + \|\nabla \rho_{n}^{(l)}\|^{2}_{L^{2}} + o(1)\\
		& = & \sum_{j=1}^{l} \Big( \sum_{\alpha =1}^{A_{j}} \| \nabla p_{n}^{(j,\alpha)}\|^{2}_{L^{2}}  + \|\nabla w_{n}^{(j,A_{j})}\|^{2}_{L^{2}} \Big) +  \|\nabla \rho_{n}^{(l)}\|^{2}_{L^{2}} + o(1)\\
		& = & \sum_{j=1}^{l} \Big( \sum_{\alpha =1}^{A_{j}} \| \nabla p_{n}^{(j,\alpha)}\|^{2}_{L^{2}}  \Big) + \sum_{j=1}^{l} \|\nabla w_{n}^{(j,A_{j})}\|^{2}_{L^{2}}  +  \|\nabla \rho_{n}^{(l)}\|^{2}_{L^{2}} + o(1),
	\end{eqnarray*}
	but
	\begin{eqnarray*}
		\|\nabla w_{n}^{(l,A_{1},...,A_{l})}\|^{2}_{L^{2}} & = & \Bigg\| \nabla \Bigg( \sum_{j=1}^{l} w_{n}^{(j,A_{j})} + \rho_{n}^{(l)}\Bigg) \Bigg\|^{2}_{L^{2}} \\
		& = &  \sum_{j=1}^{l}  \|\nabla w_{n}^{(j,A_{j})}\|^{2}_{L^{2}}  + \|\nabla \rho_{n}^{(l)}\|^{2}_{L^{2}} 
	\end{eqnarray*}
	since $w_{n}^{(j,A_{j})}$ is $h_{n}^{j}$-oscillatory and $\rho_{n}^{(l)}$ is $h_{n}^{j}$-singular for all $1\leq j\leq l$.
	Therefore,
	\begin{equation} \label{eq 57}
		\|\nabla v_{n}\|^{2}_{L^{2}} =  \sum_{j=1}^{l} \sum_{\alpha =1}^{A_{j}} \| \nabla p_{n}^{(j,\alpha)}\|^{2}_{L^{2}} + 	\|\nabla w_{n}^{(l,A_{1},...,A_{l})}\|^{2}_{L^{2}} + o(1) \mbox{ as } n\rightarrow \infty.
	\end{equation}
	The last point that remains to be checked is the convergence of the remainder $w_{n}^{(l, A_{1}, ..., A_{l})}$ to zero in the Strichartz norm. To this end, let $\varepsilon > 0$ be a small arbitrary number. To get the result, it suffices to prove that for $l_{0}$ large enough, 
$$
		\|w_{n}^{(l,A_{1},...,A_{l})}\|_{L^{\infty}_{t}L^{6}_{x}} \leq \varepsilon   
$$
	for all $(l,A_{1},...,A_{l})$ satisfying $l \geq l_{0}$ and $\sigma(j,A_{j}) \geq \sigma(l_{0},1)$. To prove this, 
	first choose $l_{0}$ such that, for every $l \geq l_{0}$,
$$
		\limsup_{n \rightarrow \infty} \|\rho_{n}^{(l)}\|_{L^{\infty}_{t}L^{6}_{x}} \leq \varepsilon,
$$
Note that the existence of such $l_0$ is ensured by \eqref{eq 32}.
	Moreover, by \eqref{eq 39}, for every $l \geq l_{0}$, there exists $B_{l}$, such that $A_{j} \geq B_{l}$ for every $j \in \{1,..., l\}$ and 
$$
		\limsup_{n \rightarrow \infty} \|w_{n}^{(j,A_{j})}\|_{L^{\infty}_{t}L^{6}_{x}} \leq \frac{\varepsilon}{l}.
$$
	Moreover, the expression 
	\eqref{eq 57} implies that the series with general term $\sum_{(j,\alpha)} \limsup_{n \rightarrow \infty} \|\nabla p_{n}^{(j,\alpha)}(0)\|^{2}_{L^{2}}$ is convergent. In particular, we may also assume, increasing $l_0$ if necessary,  that $l_{0}$ is such that
	\begin{equation} \label{eq 61}
		\sum_{\sigma(j,\alpha) > \sigma(l_{0},1)} \limsup_{n \rightarrow \infty} \|\nabla p_{n}^{(j,\alpha)}(0)\|^{2}_{L^{2}} \leq \varepsilon.
	\end{equation}
	Now, rewrite the remainder $w_{n}^{(l, A_{1}, ..., A_{l})}$ as 
	\begin{equation*}
		w_{n}^{(l,A_{1},...,A_{l})} = \rho_{n}^{(l)} + \sum_{1\leq j\leq l} w_{n}^{(j, \max(A_{j},B_{l}))} + S_{n}^{(l,A_{1},...,A_{l})},
	\end{equation*}
	where
	\begin{equation*}
		S_{n}^{(l,A_{1},...,A_{l})} = \sum_{1\leq j\leq l, A_{j} < B_{l}} w_{n}^{(j,A_{j})} - w_{n}^{(j,B_{l})}.
	\end{equation*}
	One has
$$
		w_{n}^{(j,A_{j})} - w_{n}^{(j,B_{l})}  =  \sum_{\alpha=1}^{B_{l}} p_{n}^{(j,\alpha)} - \sum_{\alpha=1}^{A_{j}} p_{n}^{(j,\alpha)} =   \sum_{A_{j} < \alpha \leq B_{l}} p_{n}^{(j,\alpha)}.
$$
	Hence,
	\begin{equation*}
		S_{n}^{(l,A_{1},...,A_{l})} = \sum_{1\leq j\leq l, A_{j} < B_{l}} \sum_{A_{j} < \alpha \leq B_{l}} p_{n}^{(j,\alpha)}.
	\end{equation*}
	Therefore, 
	\begin{eqnarray*}
		\limsup_{n \rightarrow \infty} \|w_{n}^{(l,A_{1},...,A_{l})} \|_{L^{\infty}_{t}L^{6}_{x}} & \leq & \limsup_{n \rightarrow \infty} \|\rho_{n}^{(l)}\|_{L^{\infty}_{t}L^{6}_{x}} + \limsup_{n \rightarrow \infty} \sum_{j=1}^{l} \|w_{n}^{(j, \max(A_{j},B_{l}))}\|_{L^{\infty}_{t}L^{6}_{x}} \\
		& & \mbox{  } + \limsup_{n \rightarrow \infty} \|S_{n}^{(l,A_{1},...,A_{l})}\|_{L^{\infty}_{t}L^{6}_{x}} \\
		& \leq & {2 \varepsilon} +  \limsup_{n \rightarrow \infty} \|S_{n}^{(l,A_{1},...,A_{l})}\|_{L^{\infty}_{t}L^{6}_{x}}.
	\end{eqnarray*}
	Since $S_{n}^{(l,A_{1},...,A_{l})}$ is a solution of the linear Schr\"{o}dinger equation, we have
	\begin{eqnarray*}
		\|S_{n}^{(l,A_{1},...,A_{l})}\|_{L^{\infty}_{t}L^{6}_{x}}  \leq  C \|\nabla S_{n}^{(l,A_{1},...,A_{l})}\|_{L^{2}} 
		& \leq& C \|\nabla S_{n}^{(l,A_{1},...,A_{l})}(0)\|_{L^{2}}\\
		&\leq&  C \sum_{1\leq j\leq l, A_{j} < B_{l}} \sum_{A_{j} < \alpha \leq B_{l}} \|\nabla p_{n}^{(j,\alpha)}(0)\|_{L^{2}} \\
		& \leq & C \varepsilon,
	\end{eqnarray*}
	because the sum is restricted to some $\sigma(j,\alpha)$ satisfying $\sigma(j,\alpha) > \sigma(j, \alpha_{j}) > \sigma(l_{0},1)$ and is indeed smaller than $C\varepsilon$ due to inequality \eqref{eq 61}.
	Therefore,
	$\limsup_{n \rightarrow \infty} \|w_{n}^{(l,A_{1},...,A_{l})} \|_{L^{\infty}_{t}L^{6}_{x}}$ is smaller than $(2+C) \varepsilon$ for all $(l,A_{1},...,A_{l})$ satisfying $l \geq l_{0}$ and $\sigma(j,A_{j}) \geq \sigma(l_{0},1)$. Through the same procedure, we get the same estimates
	for the $L^{10}(L^{10})$ norm, that is, 
	\begin{eqnarray*}
		\limsup_{n \rightarrow \infty} \|w_{n}^{(l,A_{1},...,A_{l})} \|_{L^{10}_{t}L^{10}_{x}} & \leq & \limsup_{n \rightarrow \infty} \|\rho_{n}^{(l)}\|_{L^{10}_{t}L^{10}_{x}} + \limsup_{n \rightarrow \infty} \sum_{j=1}^{l} \|w_{n}^{(j, \max(A_{j},B_{l}))}\|_{L^{10}_{t}L^{10}_{x}} \\
		& & \mbox{  } + \limsup_{n \rightarrow \infty} \|S_{n}^{(l,A_{1},...,A_{l})}\|_{L^{10}_{t}L^{10}_{x}} \\
		& \leq & {2 \varepsilon}+  \limsup_{n \rightarrow \infty} \|S_{n}^{(l,A_{1},...,A_{l})}\|_{L^{10}_{t}L^{10}_{x}}.
	\end{eqnarray*}
Moreover, 
	$$\limsup_{n \rightarrow \infty} \|S_{n}^{(l,A_{1},...,A_{l})}\|_{L^{10}_{t}L^{10}_{x}}^{10} = \limsup_{n \rightarrow \infty} \Big\| \sum_{(j,\alpha)} p_{n}^{(j,\alpha)} \Big\|_{L^{10}_{t}L^{10}_{x}}^{10}$$
	and rescaling,
	$$p_{n}^{(j,\alpha)}(t,x) = \frac{1}{\sqrt{h_{n}}} \psi^{(j,\alpha)} \Big(\frac{t-t_{n}^{(j,\alpha)}}{h_{n}^{2}}, \frac{x-x_{n}^{(j,\alpha)}}{h_{n}}\Big),$$
	where $\psi^{(j,\alpha)} \in L^{\infty}(\mathbb{R};\dot{H}^{1}(\mathbb{R}^{3}))$. 
	So, by convergente \eqref{lemma3.15},
	$$\limsup_{n \rightarrow \infty} \Big\| \sum_{(j,\alpha)} p_{h}^{(j,\alpha)} \Big\|_{L^{10}_{t}L^{10}_{x}}^{10} = \sum_{(j,\alpha)} \|\psi^{(j,\alpha)}\|^{10}_{L^{10}_{t}L^{10}_{x}}. $$
So, through Strichartz estimates and Lemma \ref{lemma3.3}, one gets
	\begin{eqnarray}\label{eq 62}
		\sum_{(j,\alpha)} 	\|\psi^{(j,\alpha)}\|_{L^{10}_{t}L^{10}_{x}} ^{10}  =  	\sum_{(j,\alpha)} 	\|p_{n}^{(j,\alpha)}\|_{L^{10}_{t}L^{10}_{x}} ^{10}
	\leq  C\sum_{(j,\alpha)}  \Big( \|\nabla p_{n}^{(j,\alpha)}(0)\|_{L^{2}}^{2}\Big)^{5}.
	\end{eqnarray}
	On the other hand, by \eqref{eq 57} one has that $\sum_{(j,\alpha)} \|\nabla p_{n}^{(j,\alpha)}(0)\|_{L^{2}}^{2}$ is convergent, and so the right-hand side of \eqref{eq 62} is finite.
	Thus
	$$\Big(\sum_{\sigma(j,\alpha) > \sigma(l_{0},1)} 	\|\psi^{(j,\alpha)}\|_{L^{10}_{t}L^{10}_{x}} ^{10}  \Big)^\frac{1}{10} \leq {\varepsilon}.$$
	Hence,
	\begin{eqnarray*}
		\limsup_{n \rightarrow \infty} \|w_{n}^{(l,A_{1},...,A_{l})} \|_{L^{10}_{t}L^{10}_{x}} & \leq & {2 \varepsilon} +  \limsup_{n \rightarrow \infty} \|S_{n}^{(l,A_{1},...,A_{l})}\|_{L^{10}_{t}L^{10}_{x}} \\
		& \leq & {2 \varepsilon} +  \Big(\sum_{(j,\alpha)}	\|\psi^{(j,\alpha)}\|_{L^{10}_{t}L^{10}_{x}} ^{10}  \Big)^\frac{1}{10}  =  3\varepsilon.
	\end{eqnarray*}
	Since $\varepsilon$ is an arbitrary small number, we conclude that
$$
		\limsup_{n \rightarrow \infty}\|w_{n}^{(l,A_{1},...,A_{l})}\|_{L^{10}_{t}L^{10}_{x}} \longrightarrow 0,
$$
which proves Theorem \ref{lineardecomp}.
\end{proof}

To finish this section, we present the next result, which is a consequence of the construction carried out in the proof of Proposition \ref{prop3.11}. 
\begin{lemma} \label{lemma3.16} Consider the notations and the assumptions of Theorem \ref{lineardecomp}. 	For any $l\in \mathbb{N}$ and $1\leq j \leq l$, we have 
	$D_{h_{n}}^{(j)} w_{n}^{(l)}\rightharpoonup 0.$
\end{lemma}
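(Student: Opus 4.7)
The plan is to proceed by induction on $l$, exploiting the iterative construction of the profiles carried out in the proofs of Propositions \ref{prop3.7} and \ref{prop3.11}. The key point is that at every stage, both the weak limit producing $\varphi^{(l)}$ and the concentrating solution $p_n^{(l)}$ have the \emph{same} asymptotic profile under $D_{h_n}^{(l)}$, while all other profiles are ``invisible'' to $D_{h_n}^{(j)}$ because of the pairwise orthogonality of the scale-cores.

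For the base case $l=1$, I would invoke the very definition of the first extraction: one chooses $\varphi^{(1)}$ so that $D_{h_n}^{(1)} v_n \rightharpoonup \varphi^{(1)}$, and Lemma \ref{lemma3.2} yields $D_{h_n}^{(1)} p_n^{(1)} \rightharpoonup \varphi^{(1)}$. Since $D_{h_n}^{(1)}$ is linear and the weak limit is unique, subtracting gives
\[
D_{h_n}^{(1)} w_n^{(1)} = D_{h_n}^{(1)}\bigl(v_n - p_n^{(1)}\bigr) \rightharpoonup 0.
\]

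For the inductive step, assume $D_{h_n}^{(k)} w_n^{(l-1)} \rightharpoonup 0$ for every $1 \le k \le l-1$, and use the relation $w_n^{(l)} = w_n^{(l-1)} - p_n^{(l)}$. When $j = l$, the construction of $\varphi^{(l)}$ in Proposition \ref{prop3.11} is precisely $D_{h_n}^{(l)} w_n^{(l-1)} \rightharpoonup \varphi^{(l)}$; combining this with Lemma \ref{lemma3.2} applied to $p_n^{(l)}$ and then subtracting closes this case. When $j < l$, I would split
\[
D_{h_n}^{(j)} w_n^{(l)} = D_{h_n}^{(j)} w_n^{(l-1)} \;-\; D_{h_n}^{(j)} p_n^{(l)},
\]
with the first term vanishing weakly by the induction hypothesis. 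The second term vanishes by the pairwise orthogonality of the scale-cores guaranteed in Theorem \ref{lineardecomp}: if $h_n^{(j)} = h_n^{(l)}$ and the cores are orthogonal modulo this scale, this is exactly the second assertion of Lemma \ref{lemma3.14} applied with $j' = l$; while if the scales themselves are orthogonal in the sense $h_n^{(j)}/h_n^{(l)} + h_n^{(l)}/h_n^{(j)} \to \infty$, one reads off directly from the explicit form
\[
\sqrt{h_n^{(j)}}\, p_n^{(l)}\bigl(t_n^{(j)} + h_n^{(j)2} s,\; x_n^{(j)} + h_n^{(j)} y\bigr) = \sqrt{\tfrac{h_n^{(j)}}{h_n^{(l)}}}\,\varphi^{(l)}\!\left(\tfrac{t_n^{(j)} - t_n^{(l)}}{h_n^{(l)2}} + \tfrac{h_n^{(j)2}}{h_n^{(l)2}}s,\; \tfrac{x_n^{(j)} - x_n^{(l)}}{h_n^{(l)}} + \tfrac{h_n^{(j)}}{h_n^{(l)}}y\right)
\]
that the right-hand side converges weakly to zero in $\dot H^1$, upon approximating $\varphi^{(l)}$ by a smooth compactly supported function and testing against $\psi \in C_c^\infty$.

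The main obstacle is precisely this last scaling computation for orthogonal scales, which is not covered directly by Lemma \ref{lemma3.14} (which is formulated in the fixed-scale regime of Proposition \ref{prop3.11}). The extension is routine but requires paying careful attention to two cases: when $h_n^{(j)}/h_n^{(l)} \to 0$, the rescaled profile becomes increasingly concentrated so it tests weakly against any fixed $\dot H^1$ function to zero by absorbing a power of $h_n^{(j)}/h_n^{(l)}$; when $h_n^{(j)}/h_n^{(l)} \to \infty$, it spreads out and decays to zero pointwise at any fixed $(s,y)$. In either sub-case a density argument allows one to reduce to continuous compactly supported $\varphi^{(l)}$, after which dominated convergence yields weak convergence to $0$, and the induction closes.
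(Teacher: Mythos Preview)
Your backward induction is clean and would be complete in the single-scale setting of Proposition \ref{prop3.11}, but Lemma \ref{lemma3.16} is stated for the full decomposition of Theorem \ref{lineardecomp}, where profiles coming from \emph{different} scales $h_n^{(j)}$ are interlaced via the enumeration $\sigma$. After this re-indexing, the identity you invoke for the case $j=l$ --- that ``the construction of $\varphi^{(l)}$ in Proposition \ref{prop3.11} is precisely $D_{h_n}^{(l)} w_n^{(l-1)} \rightharpoonup \varphi^{(l)}$'' --- is not what the construction actually produces. Proposition \ref{prop3.11} is applied separately to each $h_n^{(j_0)}$-oscillatory component $v_n^{(j_0)}$ and yields $D_{h_n^{(j_0)}}^{(j_0,\alpha_0)}\, w_n^{(j_0,\alpha_0-1)} \rightharpoonup \varphi^{(j_0,\alpha_0)}$, where $w_n^{(j_0,\alpha_0-1)}$ is the \emph{single-scale} remainder, not the multi-scale $w_n^{(l-1)}$ you are working with. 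To bridge the two you would need to show that $D_{h_n}^{(l)}$ annihilates (weakly in $\dot H^1$) the $h_n^{(j)}$-oscillatory pieces $v_n^{(j)}$ for $j\neq j_0$ as well as the $h_n^{(j_0)}$-singular remainder $\rho_n$; this is plausible via a Fourier-localization argument, but it is a genuine extra step your proposal does not supply. The same issue already appears in your base case: the construction gives $D_{h_n}^{(1)} v_n^{(1)} \rightharpoonup \varphi^{(1)}$, not $D_{h_n}^{(1)} v_n \rightharpoonup \varphi^{(1)}$.

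The paper avoids this by running the decomposition \emph{forward} rather than backward: one writes $w_n^{(l)} = \sum_{i=l+1}^{L} p_n^{(i)} + w_n^{(L)}$, uses exactly your orthogonality argument (both the scale-orthogonal computation and Lemma \ref{lemma3.14}) to obtain $D_{h_n}^{(j)} p_n^{(i)} \rightharpoonup 0$ for every $i>l$, and then concludes from the remainder estimate $\limsup_n \|w_n^{(L)}\|_{L^\infty_t L^6_x}\to 0$ of Theorem \ref{lineardecomp} that the weak $\dot H^1$-limit of $D_{h_n}^{(j)} w_n^{(L)}$ --- and hence of $D_{h_n}^{(j)} w_n^{(l)}$ --- must vanish (the $L^6$ norm is invariant under the rescaling in $D_{h_n}$). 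This forward argument never touches the internal structure of the construction and is therefore insensitive to the re-enumeration by $\sigma$, which is exactly the feature your inductive approach lacks.
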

\begin{proof}
	Assuming that $D_{h_{n}}^{(j)}w_{n}^{(l)} \rightharpoonup \varphi$, we can directly use the decomposition from Theorem \ref{lineardecomp} to write 
	$$w_{n}^{(l)} = \sum_{i=l+1}^{L} p_{n}^{(i)} + w_{n}^{(L)},$$
for $L > l$. 
In case of scale orthogonality of $h_{n}^{(j)}$ and $h_{n}^{(i)},$ for $l+1 \leq i\leq L$, we have $D_{h_{n}}^{(j)} p_{n}^{(i)} \rightharpoonup 0.$ Indeed, by hypothesis, $p_{n}^{(i)}$ is a concentrating solution and so 
	
	$$p_{n}^{(i)}(t,x) = \frac{1}{\sqrt{h_{n}^{(i)}}} \varphi^{(i)}\Big(\frac{t-t_{n}^{(i)} }{{h_{n}^{(i)}}^{2}}, \frac{x-x_{n}^{(i)} }{{h_{n}^{(i)}}}\Big),$$
	which means that
	$$\sqrt{h_{n}^{(j)}}p_{n}^{(i)}(t_{n}^{(j)}+ (h_{n}^{(j)})^{2}s ,x_{n}^{(j)} + {h_{n}^{(j)}}y) = \dfrac{\sqrt{h_{n}^{(j)}}}{\sqrt{h_{n}^{(i)}}} \varphi^{(i)}\Big(\frac{t_{n}^{(j)}-t_{n}^{(i)}}{{h_{n}^{(i)}}^{2}}+ s\Big(\frac{h_{n}^{(j)}}{h_{n}^{(i)}}\Big)^2,\frac{x_{n}^{(j)}-x_{n}^{(i)}}{h_{n}^{(i)}} + y\frac{h_{n}^{(j)}}{h_{n}^{(i)}}\Big).$$
	Without loss of generality, we may assume $\varphi^{(i)}$ to be continuous and compactly supported. Thus, for  $\psi$ to be a compactly supported function, one has
	\begin{equation*}
		\int_{\mathbb{R}^{3}} \nabla \sqrt{h_{n}^{(j)}}p_{n}^{(i)}(t_{n}^{(j)}+ (h_{n}^{(j)})^{2}s ,x_{n}^{(j)} + {h_{n}^{(j)}}y)  \cdot \nabla \psi(y) \ dy \end{equation*}
	\begin{equation*}
		= 	\dfrac{\sqrt{h_{n}^{(j)}}}{\sqrt{h_{n}^{(i)}}}\int_{\mathbb{R}^{3}}  \nabla \varphi^{(i)}\Big(\frac{t_{n}^{(j)}-t_{n}^{(i)}}{{h_{n}^{(i)}}^{2}}+ s\Big(\frac{h_{n}^{(j)}}{h_{n}^{(i)}}\Big)^2,\frac{x_{n}^{(j)}-x_{n}^{(i)}}{h_{n}^{(i)}} + y\frac{h_{n}^{(j)}}{h_{n}^{(i)}}\Big) \cdot \nabla \psi(y) \ dy
	\end{equation*}
	and the orthogonality of $h_{n}^{(j)}$ and $h_{n}^{(i)}$ means that 
	$$\frac{h_{n}^{(j)}}{h_{n}^{(i)}} + \frac{h_{n}^{(i)}}{h_{n}^{(j)}} \longrightarrow +\infty.$$
	If $\frac{h_{n}^{(j)}}{h_{n}^{(i)}} \longrightarrow +\infty$, we have 
	\begin{equation*}
		\begin{split}
			\dfrac{\sqrt{h_{n}^{(j)}}}{\sqrt{h_{n}^{(i)}}}&\int_{\mathbb{R}^{3}}  \nabla \varphi^{(i)}\Big(\frac{t_{n}^{(j)}-t_{n}^{(i)}}{{h_{n}^{(i)}}^{2}}+ s\Big(\frac{h_{n}^{(j)}}{h_{n}^{(i)}}\Big)^2,\frac{x_{n}^{(j)}-x_{n}^{(i)}}{h_{n}^{(i)}} + y\frac{h_{n}^{(j)}}{h_{n}^{(i)}}\Big) \cdot \nabla \psi(y) \ dy = O\Bigg(\Big(\frac{h_{n}^{(j)}}{h_{n}^{(i)}}\Big)^{\frac{1}{2}}\Bigg) \rightarrow 0,
		\end{split}
	\end{equation*}
	as done in \cite[Lemma 2.7]{keraani}.  If $\frac{h_{n}^{(i)}}{h_{n}^{(j)}} \longrightarrow +\infty$, we make the change of variables  $$\frac{x_{n}^{(j)}-x_{n}^{(i)}}{h_{n}^{(i)}} + y\frac{h_{n}^{(j)}}{h_{n}^{(i)}} = x,$$ to get  
	\begin{equation*}
		\begin{split}
			\dfrac{\sqrt{h_{n}^{(j)}}}{\sqrt{h_{n}^{(i)}}}&\int_{\mathbb{R}^{3}}  \nabla \varphi^{(i)}\Big(\frac{t_{n}^{(j)}-t_{n}^{(i)}}{{h_{n}^{(i)}}^{2}}+ s\Big(\frac{h_{n}^{(j)}}{h_{n}^{(i)}}\Big)^2,x\Big) \cdot \nabla \psi\Big(\frac{h_{n}^{(i)}}{h_{n}^{(j)}}x - \frac{x_{n}^{(j)}-x_{n}^{(i)}}{h_{n}^{(i)}}\Big) \frac{h_{n}^{(i)}}{h_{n}^{(j)}} \ dx.\\
			&= O\Bigg(\Big(\frac{h_{n}^{(i)}}{h_{n}^{(j)}}\Big)^{\frac{1}{2}}\Bigg) \rightarrow 0,
		\end{split}
	\end{equation*}
	since $\psi$ is assumed to be compactly supported, which gives the desired result $D_{h_{n}}^{(j)} p_{n}^{(i)} \rightharpoonup 0 $.
	
	Now, in case $h_{n}^{(j)} = h_{n}^{(i)}$ and $(\underline{x}^{(j)},\underline{t}^{(j)})\perp_{h_{n}} (\underline{x}^{(i)},\underline{t}^{(i)})$, the second part of Lemma \ref{lemma3.14} gives the same convergence. Therefore, in both cases one has $$D_{h_{n}}^{(j)}w_{n}^{(L)}\rightharpoonup \varphi.$$
	Since, by Theorem \ref{lineardecomp},
	$\limsup_{n \rightarrow \infty}\|w_{n}^{(L)}\|_{L^{\infty}_{t}L^{6}_{x}}\rightarrow 0$, we have $\varphi=0$, proving the lemma. 
	\color{black}
\end{proof}

	
\section{Nonlinear profile decomposition}\label{sec3}
  In this section, we establish a decomposition into profiles similar to the one carried out in the previous section, but, this time, for a sequence of solutions to the nonlinear equation \eqref{eq 15}.

\begin{theorem} \label{nonlineardec}
	Let $u_{n}$ be the sequence of solutions to nonlinear Schr\"{o}dinger equation \eqref{eq 15} with initial data $\varphi_{n}$ bounded in $\dot{H}^{1}(\mathbb{R}^{3})$ and satisfying $\limsup_{n \rightarrow \infty}\|\varphi_{n}\|_{\dot{H}^{1}} < \lambda_{0}$. Let $p_{n}^{(j)}$ be the linear concentrating solutions given by Theorem \ref{lineardecomp} and $q_{n}^{(j)}$ the associated nonlinear concentrating solutions. Then, up to a subsequence, we have
$$
		u_{n}(t,x) = \sum_{j=1}^{l} q_{n}^{(j)}(t,x) + w_{n}^{(l)}(t,x) + r_{n}^{(l)}(t,x),
$$
and
	\begin{equation} \label{eq 68}
		\begin{split}
			\limsup_{n \rightarrow \infty} &\left(\|\nabla r_{n}^{(l)}\|_{L^{\frac{10}{3}}([0,T];L^{\frac{10}{3}}(\mathbb{R}^{3}))} \right.\\ &\left.+ \|r_{n}^{(l)}\|_{L^{10}([0,T];L^{10}(\mathbb{R}^{3}))} + \|r_{n}^{(l)}\|_{L^{\infty}([0,T];\dot{H}^{1}(\mathbb{R}^{3}))} \right) \longrightarrow 0 \mbox{ as } l\rightarrow \infty.
		\end{split}
	\end{equation}
\end{theorem}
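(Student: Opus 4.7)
The plan is to follow Keraani's strategy \cite{keraani}: build an approximate nonlinear solution by replacing each linear profile by its nonlinear counterpart, show that the resulting equation error vanishes thanks to the orthogonality of scale-cores, and conclude with a long-time perturbation (stability) theorem for the energy-critical defocusing NLS on $\mathbb{R}^{3}$. Concretely, I would set
\[
U_n^{(l)} := \sum_{j=1}^{l} q_n^{(j)} + w_n^{(l)}, \qquad r_n^{(l)} := u_n - U_n^{(l)},
\]
where $w_n^{(l)}$ is the linear remainder of Theorem~\ref{lineardecomp}. Since $q_n^{(j)}(0) = p_n^{(j)}(0)$ by Definition~\ref{def5}, the linear profile decomposition gives $r_n^{(l)}(0) = 0$, and a direct computation shows that
\[
(i\partial_t + \Delta) r_n^{(l)} = \bigl(|u_n|^{4} u_n - |U_n^{(l)}|^{4} U_n^{(l)}\bigr) + e_n^{(l)}, \qquad e_n^{(l)} := |U_n^{(l)}|^{4} U_n^{(l)} - \sum_{j=1}^{l} |q_n^{(j)}|^{4} q_n^{(j)}.
\]

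The first task is a uniform Strichartz bound on the approximation. Because $\limsup_n \|\varphi_n\|_{\dot H^1} < \lambda_0$, every profile satisfies $\|\varphi^{(j)}\|_{\dot H^1} < \lambda_0$, so by Definition~\ref{def3} each $q_n^{(j)}$ exists globally with $\|q_n^{(j)}\|_{L^{10}_{t,x}} + \|\nabla q_n^{(j)}\|_{L^{10/3}_{t,x}}$ controlled by $\|\nabla \varphi^{(j)}\|_{L^2}$ through the small-data scattering theory of \cite{cazenave,Co}. Combining the almost-orthogonality identities \eqref{lemma3.15}--\eqref{obs7} applied to the nonlinear profiles (which share the same scale-cores as their linear counterparts) with the energy orthogonality \eqref{eq 29} and the summability $\sum_j \|\nabla \varphi^{(j)}\|_{L^2}^{2} < \infty$ that follows from it, one obtains, uniformly in $n$ and $l$,
\[
\|U_n^{(l)}\|_{L^{10}([0,T]\times\mathbb{R}^3)} + \|\nabla U_n^{(l)}\|_{L^{10/3}([0,T]\times\mathbb{R}^3)} \le M,
\]
where $M$ depends only on $T$, $\lambda_0$ and the $\dot H^1$-bound of $(\varphi_n)$.

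The technical heart of the argument is to show $\|e_n^{(l)}\|_{N} \to 0$ in a suitable dual Strichartz norm (for instance $L^{10/7}_t \dot W^{1,10/7}_x$), first as $n \to \infty$ and then as $l \to \infty$. Expanding the quintic expression yields three types of terms after the diagonal cancellation: mixed monomials $|q_n^{(j)}|^{a}|q_n^{(k)}|^{b}$ with $j \neq k$ and $a+b=5$, terms containing at least one factor of $w_n^{(l)}$, and pure powers of $w_n^{(l)}$. For the mixed monomials, the orthogonality $(\underline{h}^{(j)},\underline{x}^{(j)},\underline{t}^{(j)}) \perp (\underline{h}^{(k)},\underline{x}^{(k)},\underline{t}^{(k)})$ localises the spacetime supports of $q_n^{(j)}$ and $q_n^{(k)}$ in asymptotically disjoint regions at the relevant scale; after the rescaling used in Lemma~\ref{lemma3.3} this forces the corresponding $L^p_{t,x}$ integrals to vanish as $n \to \infty$, exactly as in \cite[Lemma~2.7]{keraani}. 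The $w_n^{(l)}$-terms are handled by H\"older combined with $\|w_n^{(l)}\|_{L^{10}_{t,x}} + \|w_n^{(l)}\|_{L^{\infty}_t L^{6}_x} \to 0$ from \eqref{eq 28} and the uniform bound on $\sum_j q_n^{(j)}$ obtained above.

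With these two ingredients in hand, one concludes by invoking a long-time perturbation lemma for the quintic defocusing NLS on $\mathbb{R}^3$ \cite{Co}, in the form: given an approximate solution $\tilde u$ on $[0,T]$ with Strichartz norm bounded by $M$ and equation error controlled in the dual Strichartz norm by $\varepsilon$, matching the true solution at $t=0$, the difference $u_n - \tilde u$ is controlled in the Strichartz norms of \eqref{eq 68} by a quantity tending to $0$ with $\varepsilon$. Applying this with $\tilde u = U_n^{(l)}$ yields \eqref{eq 68}. The main obstacle I anticipate is the joint control needed in Step 3: converting the abstract orthogonality of scale-cores into quantitative $L^p_{t,x}$ smallness of the mixed nonlinear terms \emph{uniformly in} $l$, while simultaneously keeping the total Strichartz mass of $U_n^{(l)}$ bounded as $l \to \infty$; the summability of the profile energies coming from \eqref{eq 29} is the linchpin that makes both simultaneously possible.
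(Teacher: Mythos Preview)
Your overall strategy matches the paper's: define $r_n^{(l)} = u_n - \sum_{j\le l} q_n^{(j)} - w_n^{(l)}$, show the equation error decouples thanks to scale-core orthogonality, and close by a perturbation argument. The paper carries out that perturbation argument by hand (Proposition~\ref{prop37} partitions $[0,T]$ into intervals on which $\|W_n^{(l)}+w_n^{(l)}\|_{L^{10}}$ is small, then Lemma~\ref{lemma4.7} is iterated), whereas you outsource it to a long-time stability lemma from \cite{Co}; these are equivalent and either is fine.

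There is, however, a genuine gap in your treatment of the error $e_n^{(l)}$. You claim that every term containing at least one factor of $w_n^{(l)}$ is handled by H\"older together with $\|w_n^{(l)}\|_{L^{10}_{t,x}}+\|w_n^{(l)}\|_{L^\infty_t L^6_x}\to 0$. This fails for the terms in $\nabla e_n^{(l)}$ where the gradient lands on $w_n^{(l)}$: for instance $|q_n^{(j)}|^4\,\nabla w_n^{(l)}$ in $L^{10/7}_{t,x}$ is bounded by $\|q_n^{(j)}\|_{L^{10}}^4\|\nabla w_n^{(l)}\|_{L^{10/3}}$, and $\|\nabla w_n^{(l)}\|_{L^{10/3}}$ is only bounded, not small, as $l\to\infty$. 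The smallness of $w_n^{(l)}$ in $L^{10}$ or $L^\infty L^6$ says nothing about $\nabla w_n^{(l)}$. The paper isolates exactly this term as $\|W_n^{(l)}\nabla w_n^{(l)}\|_{L^{5/2}}$ in Proposition~\ref{prop4.9}, Part~2, and disposes of it by a compactness argument (Lemma~\ref{lemma4.10}) combined with the weak convergence $D_{h_n}^{(j)} w_n^{(l)}\rightharpoonup 0$ of Lemma~\ref{lemma3.16}: after rescaling to the $j$-th profile's coordinates, $\nabla \tilde w_n^{(l)}$ is tested against a compactly supported $\psi^{(j)}$, and one trades the local $L^{10/3}$ norm for a local $L^2$ norm which does go to zero. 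You should either reproduce this step or, if you want to stay closer to a black-box stability lemma, verify explicitly that your dual Strichartz control of $e_n^{(l)}$ covers these gradient-on-remainder cross terms.
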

The following notations will be used in this section: $\beta(z) = |z|^{4}z,$ $W_{n}^{(l)} = \sum_{j=1}^{l} q_{n}^{(j)},$ and
$$f_{n}^{(l)} = \sum_{j=1}^{l} \beta(q_{n}^{(j)}) - \beta \Big( \sum_{j=1}^{l} q_{n}^{(j)} + w_{n}^{(l)} + r_{n}^{(l)}\Big).$$
Before proving the decomposition result, we first show that nonlinear concentration solutions behave similarly to linear concentration solutions, at least in a specific type of interval.
			
\subsection{Behavior of nonlinear concentrating solutions} To give the behavior of nonlinear concentration solutions, we will use the following two lemmas from Keraani in \cite{keraani}.

\begin{lemma}\label{prop4.4}
		Let $I=[a,b]$. The solution $v \in C([a,b];\dot{H}^{1}(\mathbb{R}^{3}))$ of the equation
		$$i\partial_{t} v + \Delta v = f,\quad I\times\mathbb{R}^3,$$
		with $\nabla f \in L^{\frac{10}{7}}(I\times \mathbb{R}^{3})$, satisfies
		$$|||v|||_{I} + \sup_{t \in I} \|\nabla v(t)\|_{L^{2}} \leq C \Big(\|\nabla v(a)\|_{L^{2}} + \|\nabla f\|_{L^{\frac{10}{7}}(I\times \mathbb{R}^{3})}\Big).$$
	\end{lemma}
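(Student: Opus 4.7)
The plan is to reduce the claim to standard inhomogeneous Strichartz estimates applied to the equation satisfied by $\nabla v$. Differentiating the equation and writing Duhamel's formula gives
\begin{equation*}
\nabla v(t) = e^{i(t-a)\Delta}\nabla v(a) - i\int_a^t e^{i(t-s)\Delta}\nabla f(s)\,ds,
\end{equation*}
and the desired bounds will follow by controlling three Strichartz norms of $\nabla v$ with the source term $\nabla f$ consistently estimated in $L^{10/7}(I\times\mathbb{R}^3)$, which is the dual norm to that of the self-dual Strichartz-admissible pair $(10/3,10/3)$ in three space dimensions (where the admissibility condition reads $\frac{2}{q}+\frac{3}{r}=\frac{3}{2}$).

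Interpreting the triple bar norm as $|||v|||_I=\|v\|_{L^{10}(I;L^{10})}+\|\nabla v\|_{L^{10/3}(I;L^{10/3})}$, which is the natural Strichartz space for the $H^1$-critical quintic nonlinearity $|u|^4u$, I would invoke Keel--Tao's inhomogeneous Strichartz inequalities successively with three admissible pairs. First, the energy pair $(\infty,2)$ yields $\sup_{t\in I}\|\nabla v(t)\|_{L^2}$. Second, the self-dual pair $(10/3,10/3)$ controls $\|\nabla v\|_{L^{10/3}(I;L^{10/3})}$ directly. Third, the pair $(10,30/13)$, which is admissible since $\tfrac{2}{10}+\tfrac{3}{30/13}=\tfrac{3}{2}$, controls $\|\nabla v\|_{L^{10}(I;L^{30/13})}$. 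The sharp Sobolev embedding $\dot W^{1,30/13}(\mathbb{R}^3)\hookrightarrow L^{10}(\mathbb{R}^3)$, valid because $\tfrac{1}{10}=\tfrac{13}{30}-\tfrac{1}{3}$, then upgrades this last estimate to $\|v\|_{L^{10}(I;L^{10})}\lesssim\|\nabla v\|_{L^{10}(I;L^{30/13})}$. Summing the three contributions yields the asserted inequality with $\nabla f$ in its natural dual Strichartz space $L^{10/7}(I\times\mathbb{R}^3)$.

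The main subtlety, which is only mildly technical, lies in the choice of the auxiliary admissible pair $(10,30/13)$ needed to recover the $L^{10}_{t,x}$ bound on $v$ itself: since $\dot W^{1,10/3}(\mathbb{R}^3)$ lies strictly above the Sobolev threshold in three dimensions, one cannot extract any $L^q_x$ bound on $v$ directly from a bound on $\nabla v$ in $L^{10/3}_x$, and the self-dual pair alone therefore fails to capture the full norm $|||v|||_I$. The pair $(10,30/13)$ precisely bridges this gap through the critical Sobolev embedding. Once this ingredient is identified, the entire proof reduces to a direct assembly of standard Strichartz inequalities and the Sobolev embedding, with no further delicate issue.
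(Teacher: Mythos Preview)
Your proof is correct and is exactly the standard argument. Note that the paper does not actually prove this lemma: it is stated without proof and attributed to Keraani \cite{keraani}, so there is no ``paper's own proof'' to compare against. Your approach---Duhamel for $\nabla v$, Strichartz with the pairs $(\infty,2)$, $(10/3,10/3)$, $(10,30/13)$, and the critical Sobolev embedding $\dot W^{1,30/13}(\mathbb{R}^3)\hookrightarrow L^{10}(\mathbb{R}^3)$ to recover $\|v\|_{L^{10}_{t,x}}$---is precisely the intended route and matches how the paper uses the triple-bar norm $|||g|||_I=\|g\|_{L^{10}(I\times\mathbb{R}^3)}+\|\nabla g\|_{L^{10/3}(I\times\mathbb{R}^3)}$ elsewhere.
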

			\begin{lemma}\label{lemma4.7}
				Let $M=M(t)$ be a positive continuous function on $[0,T]$ such that $M(0)=0$ and, for every $t \in [0, T]$, 
				$$M(t) \leq c\Big(a + \sum_{\alpha =2}^{5} M^{\alpha}(t)\Big),$$
				with $0 < a < a_{0} = a_{0}(c)$. One has
				$$M(t) \leq 2ca, \quad \forall t \in [0, T].$$
			\end{lemma}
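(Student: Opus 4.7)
The plan is to prove this by a standard bootstrap (continuity) argument that exploits the polynomial form of the right-hand side together with the smallness of $a$. The key computation I would first carry out is to evaluate the right-hand side of the hypothesis at the candidate bound $M=2ca$: a direct factorization gives
$$c\Big(a+\sum_{\alpha=2}^{5}(2ca)^{\alpha}\Big) = ca\Big(1+\sum_{\alpha=2}^{5}(2c)^{\alpha}a^{\alpha-1}\Big) = ca(1+\varepsilon(a)),$$
where $\varepsilon(a)=\sum_{\alpha=2}^{5}(2c)^{\alpha}a^{\alpha-1}\to 0$ as $a\to 0^{+}$. I would then define $a_{0}=a_{0}(c)>0$ to be any threshold small enough that $\varepsilon(a)<1$ for every $0<a<a_{0}$; this is the quantitative meaning of the dependence $a_{0}=a_{0}(c)$ in the statement.

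With $a_{0}$ fixed in this way, I would run the bootstrap itself. Since $M$ is continuous on $[0,T]$ and $M(0)=0<2ca$, the set $U=\{t\in[0,T]:M(t)<2ca\}$ is a non-empty relatively open subset of $[0,T]$ containing $0$. Suppose for contradiction that $U\neq[0,T]$, and set $t^{\star}=\inf\{t\in[0,T]:M(t)\geq 2ca\}$. By continuity of $M$ and the intermediate value theorem, $t^{\star}\in(0,T]$ and $M(t^{\star})=2ca$. Plugging this equality into the hypothesis at $t=t^{\star}$ together with the first-step computation gives
$$2ca \;=\; M(t^{\star}) \;\leq\; c\Big(a+\sum_{\alpha=2}^{5}M^{\alpha}(t^{\star})\Big) \;=\; ca(1+\varepsilon(a)),$$
which forces $\varepsilon(a)\geq 1$, contradicting the choice of $a_{0}$. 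Hence $U=[0,T]$, and by continuity (taking the closure) $M(t)\leq 2ca$ for every $t\in[0,T]$.

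I do not expect a serious obstacle: the whole proof is a textbook continuity/bootstrap template, and the polynomial nature of the nonlinearity (powers from $2$ to $5$ only) makes the smallness threshold $a_{0}$ completely explicit. The only delicate point is to correctly invoke continuity at $t^{\star}$ to get the equality $M(t^{\star})=2ca$ (rather than merely an inequality), which is why the continuity assumption on $M$ is essential and why the argument would fail for a mere upper semi-continuous $M$.
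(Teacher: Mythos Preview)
Your bootstrap argument is correct and is precisely the standard continuity method one expects for this kind of lemma. Note that the paper itself does not prove Lemma~\ref{lemma4.7}: it is stated without proof as one of two lemmas borrowed from Keraani~\cite{keraani}, so there is no ``paper's own proof'' to compare against. Your approach is the natural one and would be what Keraani's original argument amounts to.
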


Now, we examine the behavior of the nonlinear concentrating solutions. As already seen, 
the evolution problem \eqref{eq 15} admits a complete scattering theory concerning linear problems
in the ball $\|u_{0}\|_{\dot{H}^{1}(\mathbb{R}^{3})} < \lambda_{0}$. The next theorem is a consequence of this scattering property.
\begin{theorem} \label{teo4.1}
	Let $u_{n}$ be a nonlinear concentrating solution. There exist two linear concentrating solutions $[\varphi_{i},\underline{h},\underline{x},\underline{t}]$, $i=1,2$, such that for all interval $[-T,T ]$ containing $t_{\infty}$, one has
	\begin{equation} \label{eq 64}
		\limsup_{n \rightarrow \infty} \Big(\|u_{n} - [\varphi_{1},\underline{h},\underline{x},\underline{t}]\|_{L^{10}(I_{n}^{1,\Lambda} \times \mathbb{R}^{3})} + \|u_{n} - [\varphi_{1},\underline{h},\underline{x},\underline{t}]\|_{L^{\infty}(I_{n}^{1,\Lambda};\dot{H}^{1}(\mathbb{R}^{3}))} \Big) \longrightarrow 0, 
	\end{equation}
	and 
	\begin{equation}\label{eq 65}
		\limsup_{n \rightarrow \infty} \Big(\|u_{n} - [\varphi_{2},\underline{h},\underline{x},\underline{t}]\|_{L^{10}(I_{n}^{3,\Lambda} \times \mathbb{R}^{3})} + \|u_{n} - [\varphi_{2},\underline{h},\underline{x},\underline{t}]\|_{L^{\infty}(I_{n}^{3,\Lambda};\dot{H}^{1}(\mathbb{R}^{3}))} \Big) \longrightarrow 0,
	\end{equation}
as $\Lambda \rightarrow \infty$. Here, $I_{n}^{1,\Lambda} =[-T, t_{n} - \Lambda h_{n}^{2}]$ and $I_{n}^{3,\Lambda} =(t_{n} + \Lambda h_{n}^{2}, T]$.
\end{theorem}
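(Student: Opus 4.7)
The plan is to exploit the scale-invariance of the equation together with the subthreshold scattering theory recalled after Definition \ref{def3}. By construction $\bar{f}$ agrees with the linear profile $f$ at the time $s_n = -t_n/h_n^{2}$, so $\|\bar{f}(s_n)\|_{\dot{H}^{1}} = \|\varphi\|_{\dot{H}^{1}}$, and under the assumptions inherited from Theorem \ref{lineardecomp} this quantity lies strictly below $\lambda_0$. Hence $\bar{f}$ is a global nonlinear solution in $C(\mathbb{R};\dot{H}^{1}) \cap L^{10}(\mathbb{R}^{4})$, and the scattering property furnishes two asymptotic linear profiles $\varphi_1, \varphi_2 \in \dot{H}^{1}(\mathbb{R}^{3})$ satisfying
\[
\bigl\|\bar{f}(s) - e^{is\Delta}\varphi_1\bigr\|_{\dot{H}^{1}} \longrightarrow 0 \text{ as } s \to -\infty, \quad \bigl\|\bar{f}(s) - e^{is\Delta}\varphi_2\bigr\|_{\dot{H}^{1}} \longrightarrow 0 \text{ as } s \to +\infty,
\]
together with matching decay of the $L^{10}_{s,y}$-norm of $\bar{f} - e^{is\Delta}\varphi_i$ on the receding half-lines $(-\infty,-\Lambda]$ and $[\Lambda,+\infty)$ as $\Lambda\to\infty$.

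I then introduce the two linear concentrating solutions attached to $[\varphi_1,\underline{h},\underline{x},\underline{t}]$ and $[\varphi_2,\underline{h},\underline{x},\underline{t}]$ via Definition \ref{def5}. The change of variables $s = (t-t_n)/h_n^{2}$, $y = (x-x_n)/h_n$ is the critical rescaling, and the computation performed in Lemma \ref{lemma3.3} shows that both the $L^{10}_{t,x}$ and the $L^{\infty}_{t}\dot{H}^{1}_{x}$ norms are preserved under it. Consequently the left-hand side of \eqref{eq 64} rewrites as
\[
\bigl\|\bar{f} - e^{is\Delta}\varphi_1\bigr\|_{L^{10}(\tilde{I}_n^{1,\Lambda}\times\mathbb{R}^{3})} + \sup_{s\in\tilde{I}_n^{1,\Lambda}}\bigl\|\bar{f}(s) - e^{is\Delta}\varphi_1\bigr\|_{\dot{H}^{1}},
\]
where $\tilde{I}_n^{1,\Lambda} = \bigl[(-T-t_n)/h_n^{2},\, -\Lambda\bigr]$. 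Since $h_n \to 0$ and $t_n \to t_\infty$ with $t_\infty$ in the interior of $[-T,T]$, the rescaled interval $\tilde{I}_n^{1,\Lambda}$ is eventually contained in $(-\infty,-\Lambda]$, and the scattering bounds of the first step drive both terms to zero in the iterated limit $n\to\infty$, then $\Lambda\to\infty$. This delivers \eqref{eq 64}.

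The estimate \eqref{eq 65} follows from the mirror argument on $I_n^{3,\Lambda}$ using the forward-scattering profile $\varphi_2$ and the rescaled interval $\tilde{I}_n^{3,\Lambda} \subset [\Lambda,+\infty)$. The main obstacle is securing the scattering claim for $\bar{f}$: the existence of the wave operators $\bar f\mapsto \varphi_1,\varphi_2$ at the energy-critical level is precisely what the subthreshold global well-posedness and scattering theory for the quintic defocusing equation (as recalled just after Definition \ref{def3}) provide. Once $\varphi_1,\varphi_2$ are produced, everything else is an essentially algebraic rescaling calculation in the spirit of Lemma \ref{lemma3.3}, supplemented by the non-reconcentration principle of Lemma \ref{lemma3.10} applied to the linear asymptotes $e^{is\Delta}\varphi_i$ to control their behaviour outside the concentration window.
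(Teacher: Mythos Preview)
Your argument is correct and rests on the same idea as the paper's proof --- scattering for the nonlinear profile $\bar f$ plus scale invariance --- but the mechanism you use to transfer the scattering statement to the concentrating sequence is different. The paper works at the level of the rescaled functions: it writes $w_n = u_n - v_n$, applies the Strichartz estimate of Lemma~\ref{prop4.4} to $w_n$, uses Lemma~\ref{lemma3.10} to make $\|v_n\|_{L^{10}(I_n^{1,\Lambda})}$ small, and then closes with the bootstrap Lemma~\ref{lemma4.7}. You instead observe (via the computation of Lemma~\ref{lemma3.3}) that the $L^{10}_{t,x}$ and $L^\infty_t\dot H^1_x$ norms are invariant under the critical rescaling, so the quantity in \eqref{eq 64} is literally equal to the profile-level quantity on $\tilde I_n^{1,\Lambda}\subset(-\infty,-\Lambda]$, which is then controlled directly by scattering (for the $\dot H^1$ part) and by the finiteness of $\|\bar f\|_{L^{10}(\mathbb{R}^4)}$ and $\|e^{is\Delta}\varphi_1\|_{L^{10}(\mathbb{R}^4)}$ (for the $L^{10}$ tails). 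Your route is shorter and avoids the perturbative machinery; the paper's route has the advantage of making the Strichartz constants explicit, but by scale invariance it is in effect re-deriving the scattering tail bounds at the rescaled level. Two minor remarks: your appeal to Lemma~\ref{lemma3.10} at the end is not strictly needed once you have global $L^{10}$ finiteness of both $\bar f$ and $e^{is\Delta}\varphi_i$; and the phrase ``inherited from Theorem~\ref{lineardecomp}'' should rather be ``inherited from the standing hypothesis $\|\cdot\|_{\dot H^1}<\lambda_0$'', since Theorem~\ref{teo4.1} is stated for a generic nonlinear concentrating solution.
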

\begin{proof}
	We consider the case $\frac{t_{n}}{h_{n}}\rightarrow \infty$. The other cases are followed analogously. 
	
	Let us show \eqref{eq 64}. 
For the sake of simplicity, we take $I_{n}^{1,\Lambda} =[0, t_{\infty} - \Lambda h_{n}^{2}]$. We know that $u_{n}$ is a solution to 
	\begin{equation*}
		\left\{
		\begin{array}{ll}
			i\partial_{t} u_{n} + \Delta u_{n} -|u_{n}|^{4}u_{n}  =0,& \ \mbox{ on } [0,T] \times \mathbb{R}^{3},\\
			u_{n}(0)= \varphi \ \in \dot{H}^{1}(\mathbb{R}^{3}).
		\end{array}
		\right.
	\end{equation*}
	Since  $u_{n}(t,x)$ is a nonlinear concentrating solution, one has 
	$$u_{n}(t,x)  = \frac{1}{\sqrt{h_{n}}}u\Big(\dfrac{t-t_{n}}{h_{n}^{2}},\dfrac{x-x_{n}}{h_{n}}\Big),$$ where $u$ satisfies			
	\begin{equation*}
		i\partial_{s} u+ \Delta u-|u|^{4}u = 0  \ \mbox{ on } [0,T] \times \mathbb{R}^{3}.
	\end{equation*}
	By the scattering theory of Proposition \ref{prop2.5}, there exists $v$, solution of the system 
	\begin{equation*}
		\left\{
		\begin{array}{ll}
			i\partial_{s} v+ \Delta v =0, & \ \mbox{ on } [0,T] \times \mathbb{R}^{3},\\
			v(0)= \varphi^{1},
		\end{array}
		\right.
	\end{equation*}
	such that 
	$$\|\nabla u(s) - \nabla v(s)\|_{L^{2}} \longrightarrow 0, \ \ \mbox{ as } s\rightarrow - \infty.$$
Let  $$v_{n}(t,x)= \frac{1}{\sqrt{h_{n}}}v\Big(\frac{t-t_{n}}{h_{n}^{2}},\frac{x-x_{n}}{h_{n}}\Big) $$ satisfying 
	\begin{equation*}
		\left\{
		\begin{array}{lr}
			i\partial_{t}v_{n} + \Delta v_{n}   =0 \ \mbox{ on } [0,T] \times \mathbb{R}^{3} , \\
			{v}_{n}(t_{n})= \frac{1}{\sqrt{h_{n}}}\varphi^{1}. 
		\end{array}
		\right.
	\end{equation*}
We should prove that 
	\begin{equation} \label{v3}	\limsup_{n \rightarrow \infty} \Big(\|u_{n} - v_{n}\|_{L^{10}(I_{n}^{1,\Lambda} \times \mathbb{R}^{3})} + \|u_{n} - v_{n}\|_{L^{\infty}(I_{n}^{1,\Lambda};\dot{H}^{1}(\mathbb{R}^{3}))} \Big) \longrightarrow 0 
		\end{equation}
when $\Lambda \rightarrow \infty$. To this end, define $w_{n} := u_{n} - v_{n}$. Thus, $w_{n}$ satisfies the system
	\begin{equation*}
		\left\{
		\begin{array}{ll}
			i\partial_{t} w_{n} + \Delta w_{n} = |w_{n}+v_{n}|^{4}(w_{n}+v_{n}), \\
			w_{n}(0)= u_{n}(0)-v_{n}(0). 
		\end{array}
		\right.
	\end{equation*}
	Using Lemma \ref{prop4.4},  and denoting $$|||.|||_{I_{n}^{1,\Lambda}}:= \|.\|_{L^{10}(I_{n}^{1,\Lambda} \times \mathbb{R}^{3})} + \|\nabla .\|_{L^{\frac{10}{3}}(I_{n}^{1,\Lambda} \times \mathbb{R}^{3})},$$ one has
	\begin{eqnarray*}
		|||w_{n}|||_{I_{n}^{1,\Lambda}} + \| \nabla w_{n}\|_{L^{\infty}(I_{n}^{1,\Lambda};L^{2}(\mathbb{R}^{3}))} & \leq & c\Big(\|\nabla w_{n}(0)\|_{L^{2}} +  \|\nabla(w_{n}+v_{n})^{4}(w_{n} + v_{n})\|_{L^{\frac{10}{7}}(I_{n}^{1,\Lambda} \times \mathbb{R}^{3})}\Big).
	\end{eqnarray*}
	On the other hand, one has
$$
		\|\nabla w_{n}(0)\|_{L^{2}} 
		 =  \Big\|\nabla u\Big(-\frac{t_{n}}{h_{n}^{2}}\Big)- \nabla v\Big(-\frac{t_{n}}{h_{n}^{2}}\Big)\Big\|_{L^{2}} \mbox{ } \longrightarrow 0 \ \ \mbox{ as } n\rightarrow \infty.
		 $$
	Therefore
	\begin{eqnarray*}
		|||w_{n}|||_{I_{n}^{1,\Lambda}} + \| \nabla w_{n}\|_{L^{\infty}(I_{n}^{1,\Lambda};L^{2}(\mathbb{R}^{3}))} & \leq & 
		c\Big( \|\nabla w_{n}(0)\|_{L^{2}} \\ &    & \mbox{    } +\left. \| w_{n} + v_{n}\|^{4}_{L^{10}(I_{n}^{1,\Lambda} \times \mathbb{R}^{3})} \|\nabla (w_{n}+v_{n})\|_{L^{\frac{10}{3}}(I_{n}^{1,\Lambda} \times \mathbb{R}^{3})} \right)\\
		& \leq & c\Big( \|\nabla w_{n}(0)\|_{L^{2}} + \|w_{n}\|_{L^{10}(I_{n}^{1,\Lambda} \times \mathbb{R}^{3})}^{4}\|\nabla w_{n}\|_{L^{\frac{10}{3}}(I_{n}^{1,\Lambda} \times \mathbb{R}^{3})} \\
		&    & \mbox{    } + \|w_{n}\|_{L^{10}(I_{n}^{1,\Lambda} \times \mathbb{R}^{3})}^{4}\|\nabla v_{n}\|_{L^{\frac{10}{3}}(I_{n}^{1,\Lambda} \times \mathbb{R}^{3})} \\
		&    & \mbox{   } + \|v_{n}\|_{L^{10}(I_{n}^{1,\Lambda} \times \mathbb{R}^{3})}^{4}\|\nabla w_{n}\|_{L^{\frac{10}{3}}(I_{n}^{1,\Lambda} \times \mathbb{R}^{3})} \\
		&     & \mbox{   } + \|v_{n}\|_{L^{10}(I_{n}^{1,\Lambda} \times \mathbb{R}^{3})}^{4} \|\nabla v_{n}\|_{L^{\frac{10}{3}}(I_{n}^{1,\Lambda} \times \mathbb{R}^{3})} \Big).		
	\end{eqnarray*}
Using Lemma \ref{lemma3.10}, one gets
	$$\limsup_{n \rightarrow \infty}\|v_{n}\|_{L^{10}(I_{n}^{1,\Lambda} \times \mathbb{R}^{3})} \longrightarrow 0 \mbox{ as }\Lambda \rightarrow \infty.$$
	Hence, 
	$$\limsup_{n \rightarrow \infty}\Big(\|\nabla w_{n}(0)\|_{L^{2}}+ \|v_{n}\|_{L^{10}(I_{n}^{1,\Lambda} \times \mathbb{R}^{3})}\Big) \longrightarrow 0 \mbox{ as } \Lambda \rightarrow \infty.$$
	Given $\delta > 0$, there exists $\Lambda_{0}$ and and integer $n_{0}(\Lambda)$ such that for all  $\Lambda \geq \Lambda_{0}$ and for any integer $n \geq n_{0}(\Lambda)$, one has $\|\nabla w_{n}(0)\|_{L^{2}}+ \|v_{n}\|_{L^{10}(I_{n}^{1,\Lambda} \times \mathbb{R}^{3})} < \delta$. Therefore, choosing $\delta$ such that $\delta^{4} < \frac{1}{2c}$, one has
	\begin{eqnarray*}
		|||w_{n}|||_{I_{n}^{1,\Lambda}} + \| \nabla w_{n}\|_{L^{\infty}(I_{n}^{1,\Lambda};L^{2}(\mathbb{R}^{3}))} 
		& \leq & C\left( \|\nabla w_{n}(0)\|_{L^{2}} + |||w_{n}|||_{I_{n}^{1,\Lambda} }^{5} \right.\\ && \left.+ |||w_{n}|||_{I_{n}^{1,\Lambda} }^{4}  + \|v_{n}\|_{L^{10}(I_{n}^{1,\Lambda} \times \mathbb{R}^{3})}^{4}  \right).
	\end{eqnarray*}
Define $M_{n}(t)$ by 
	$$M_{n}(t) := 	\|w_{n}\|_{L^{10}([0,t] \times \mathbb{R}^{3})} + \|\nabla w_{n}\|_{L^{\frac{10}{3}}([0,t] \times \mathbb{R}^{3})}+ \| \nabla w_{n}\|_{L^{\infty}([0,t];L^{2}(\mathbb{R}^{3}))}.$$
	Then
	\begin{eqnarray*}
		M_{n}(t) 
		& \leq & C\Big( \|\nabla w_{n}(0)\|_{L^{2}} + \|v_{n}\|_{L^{10}(I_{n}^{1,\Lambda} \times \mathbb{R}^{3})}^{4} + \sum_{\alpha =2}^{5}M_{n}(t)^{\alpha}    \Big),
	\end{eqnarray*}
	for all $t \in I_{n}^{1,\Lambda}$.
Lemma \ref{lemma4.7} implies that  
	$$M_{n}(t) \le2C\Big(\|\nabla w_{n}(0)\|_{L^{2}} + \|v_{n}\|_{L^{10}(I_{n}^{1,\Lambda} \times \mathbb{R}^{3})}^{4}\Big) \longrightarrow 0, \ \ \ \forall t \in I_{n}^{1,\Lambda},$$
	for any $\Lambda \geq \Lambda_{0}$ as $n\rightarrow \infty$. Hence,
	$$
	\limsup_{n \rightarrow \infty}\Big(\|u_{n} - v_{n}\|_{L^{10}(I_{n}^{1,\Lambda} \times \mathbb{R}^{3})} + \| \nabla (u_{n}-v_{n})\|_{L^{\infty}(I_{n}^{1,\Lambda};L^{2}(\mathbb{R}^{3}))}\Big) \longrightarrow0 \mbox{ as }\Lambda \rightarrow \infty,$$
showing the convergence \eqref{v3}.	 The proof of \eqref{eq 65} is analogous.  
\end{proof}
\subsection{Auxiliary results}  We state and prove some technical results which will be used in the proof of Theorem 
ref{nonlineardec}. 
		\begin{lemma} \label{lemma4.6}
			There exists $\delta_{0} > 0$ such that if v is a solution of the linear Schr\"{o}dinger equation satisfying
			$|||v|||_{\mathbb{R}} \leq \delta_{0}$ and
			 $u$ is a solution  of the nonlinear Schr\"{o}dinger equation satisfying $v(T,x) = u(T,x)$ for some $T\in
			[-\infty,+\infty]$, then
			$|||u|||_{\mathbb{R}} \leq 2 |||v|||_{\mathbb{R}}.$
		\end{lemma}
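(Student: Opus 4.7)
The plan is to combine the Duhamel formulation of the nonlinear problem with the Strichartz-type estimate of Lemma~\ref{prop4.4} and then close a standard bootstrap/continuity argument with respect to the time variable. Write $w = u-v$, so that $w(T)=0$ and
\[
	i\partial_t w + \Delta w = |u|^{4}u, \qquad \text{on }\mathbb{R}\times\mathbb{R}^3.
\]
Lemma~\ref{prop4.4} applied on any subinterval $J\subset\mathbb{R}$ containing $T$ (using the fact that $w$ vanishes at $T$, with the obvious modification if $T=\pm\infty$, in which case one interprets $v(T)=u(T)$ as the scattering limit and applies Duhamel between $T$ and a finite time) yields
\[
	|||w|||_{J} + \|\nabla w\|_{L^\infty(J;L^2)} \leq C\,\|\nabla(|u|^{4}u)\|_{L^{10/7}(J\times\mathbb{R}^3)}.
\]

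Next, since $|\nabla(|u|^4 u)|\lesssim |u|^4|\nabla u|$, Hölder's inequality with $\tfrac{4}{10}+\tfrac{3}{10}=\tfrac{7}{10}$ gives
\[
	\|\nabla(|u|^4u)\|_{L^{10/7}(J\times\mathbb{R}^3)} \leq C\,\|u\|_{L^{10}(J\times\mathbb{R}^3)}^{4}\,\|\nabla u\|_{L^{10/3}(J\times\mathbb{R}^3)}\leq C\,|||u|||_{J}^{5}.
\]
Combined with the triangle inequality $|||u|||_{J}\leq |||v|||_{J}+|||w|||_{J}$ and $|||v|||_{\mathbb{R}}\leq \delta_0$, this yields the key a~priori estimate
\[
	|||u|||_{J} \leq |||v|||_{\mathbb{R}} + C\,|||u|||_{J}^{5}.
\]

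To conclude, I would run a continuity argument in the length of $J$. Define $M(t)=|||u|||_{J_t}$ where $J_t$ denotes the interval from $T$ to $t$ (done separately for $t>T$ and $t<T$). By the local well-posedness of \eqref{eq 15} in Strichartz spaces, $M$ is continuous and $M\to 0$ as $t\to T$, and the inequality above gives $M(t)\leq |||v|||_{\mathbb{R}}+CM(t)^{5}$ for every $t$ in the maximal interval of existence. Choose $\delta_{0}>0$ small enough that $C(2\delta_{0})^{4}\leq \tfrac{1}{2}$; then whenever $M(t)\leq 2|||v|||_{\mathbb{R}}\leq 2\delta_0$ one actually has $M(t)\leq |||v|||_{\mathbb{R}}+\tfrac{1}{2}M(t)$, hence $M(t)\leq 2|||v|||_{\mathbb{R}}$ strictly. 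A standard connectedness argument then shows $M(t)\leq 2|||v|||_{\mathbb{R}}$ for all $t\in\mathbb{R}$, which upon passing to $t\to\pm\infty$ yields $|||u|||_{\mathbb{R}}\leq 2|||v|||_{\mathbb{R}}$.

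The main delicate point I expect will not be the Hölder/Strichartz algebra (which is routine for an energy-critical nonlinearity), but rather the handling of the case $T=\pm\infty$: there one must interpret $v(T)=u(T)$ in the scattering sense, meaning $\|\nabla(u(t)-v(t))\|_{L^2}\to 0$ as $t\to T$, and bootstrap starting from a finite time $T_0$ chosen so that $|||v|||_{(-\infty,T_0]}$ (or $[T_0,+\infty)$) is arbitrarily small and $\|\nabla(u-v)(T_0)\|_{L^2}$ is arbitrarily small. Once this is set up carefully, the same estimate $|||u|||\leq |||v|||+C|||u|||^{5}$ closes the argument globally.
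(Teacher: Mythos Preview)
Your proposal is correct and follows essentially the same route as the paper: write $w=u-v$, apply Lemma~\ref{prop4.4} (Strichartz) to control $|||w|||_J$ by the $L^{10/7}$ norm of the nonlinearity, estimate the latter via H\"older, and close with a continuity/bootstrap argument, treating $T=\pm\infty$ in the scattering sense. The only cosmetic difference is that the paper expands the nonlinearity as $|w+v|^{4}(w+v)$ into cross terms in $w$ and $v$ and invokes the abstract bootstrap Lemma~\ref{lemma4.7}, whereas you bound $\|\nabla(|u|^{4}u)\|_{L^{10/7}}\le C|||u|||^{5}$ directly and run the continuity argument on $|||u|||$ itself; both lead to the same conclusion.
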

		\begin{proof} Suppose  $\|\nabla(u-v)(x, -\infty)\|_{L^{2}}=0$ (the other cases can be analogously handled). Let $(T_{n})$ be a sequence of numbers converging to $+ \infty$ as $n \rightarrow +\infty$. Set $J_{n} = [-T_{n}, T_{n}].$
			The difference $w=u - v$ satisfies
			\begin{equation*}
				\left\{
				\begin{array}{ll}
					i\partial_{t} w + \Delta w = |w+v|^{4}(w+v),\\ 
					w (-T_{n})= (u-v)(-T_{n}).
				\end{array}
				\right.
			\end{equation*}
			From Lemma \ref{prop4.4}, it follows that
			\begin{eqnarray*}
				|||w|||_{J_{n}} &\leq &  C\Big( ||\nabla (u-v)(-T_{n})||_{L^{2}} + \|\nabla(w+v)^{5}\|_{L^{\frac{10}{7}}(J_{n}\times \mathbb{R}^{3})}\Big) \\
				& \leq & C\Big(|\nabla (u-v)(-T_{n})||_{L^{2}} + \|w\|^{4}_{L^{10}(J_{n}\times \mathbb{R}^{3})}\|\nabla w\|_{L^{\frac{10}{3}}(J_{n}\times \mathbb{R}^{3})} \\
				&    & \mbox{   } + \|w\|^{4}_{L^{10}(J_{n}\times \mathbb{R}^{3})}\|\nabla v\|_{L^{\frac{10}{3}}(J_{n}\times \mathbb{R}^{3})} + \|v\|^{4}_{L^{10}(J_{n}\times \mathbb{R}^{3})}\|\nabla w\|_{L^{\frac{10}{3}}(J_{n}\times \mathbb{R}^{3})} \\
				&    & \mbox{    }+ \|v\|^{4}_{L^{10}(J_{n}\times \mathbb{R}^{3})}\|\nabla v\|_{L^{\frac{10}{3}}(J_{n}\times \mathbb{R}^{3})}\Big).
			\end{eqnarray*}
Now, let $\delta_{0}>0$ such that  $\delta_{0}^{4} < \frac{1}{2C}$, $\delta_{0}^{5} < \frac{a_{0}}{2}$ and $\delta_{0}<1$, where $a_{0}$ is the constant from Lemma \ref{lemma4.7}. Therefore,
			\begin{eqnarray*}
				\|w\|_{L^{10}(J_{n}\times \mathbb{R}^{3})} + \|\nabla w\|_{L^{\frac{10}{3}}(J_{n}\times \mathbb{R}^{3})} & \leq &  2C\Big(\|\nabla (u-v)(-T_{n})||_{L^{2}} +|||w|||^{5}_{J_{n}} \\
				&   & \mbox{   } + \|w\|^{4}_{L^{10}(J_{n}\times \mathbb{R}^{3})}\|\nabla v\|_{L^{\frac{10}{3}}(J_{n}\times \mathbb{R}^{3})}  + |||v|||^{5}_{J_{n}}\Big)\\
				& \leq &  2C\Big(\|\nabla (u-v)(-T_{n})||_{L^{2}} +|||w|||^{5}_{J_{n}} \\
				&   & \mbox{   } + |||w|||^{4}_{L^{10}(J_{n}\times \mathbb{R}^{3})} + |||v|||^{5}_{J_{n}}\Big).
			\end{eqnarray*}
			Using the fact that $\|\nabla(u-v)(x, -T_{n})\|_{L^{2}} \rightarrow 0$ as $n$ goes to infinity, we get, for $n$ large,
			$$\|\nabla(u-v)(x, -T_{n})\|_{L^{2}}  + |||v|||_{J_{n}}^{5} \leq a_{0}.$$
			Thus, for $n$ large, the function $M: s \mapsto |||w|||_{[-T_{n},s]}$ satisfies the conditions of
			Lemma \ref{lemma4.7} on $[-T_{n} , T_{n}]$, so that
			$$M(T_{n}) = |||w|||_{J_{n}} \leq 4C(\|\nabla(u-v)(x, -T_{n})\|_{L^{2}}  + |||v|||_{J_{n}}^{5})$$
			for large $n$. Taking $n \rightarrow \infty$, we obtain
			$$|||w|||_{\mathbb{R}} \leq 4C |||v|||_{\mathbb{R}}^{5}.$$ 
			Hence
$$
				|||u|||_{\mathbb{R}}\leq |||w|||_{\mathbb{R}} + |||v|||_{\mathbb{R}}  \leq  (4C |||v|||^{4}_{\mathbb{R}} +1)|||v|||_{\mathbb{R}}.
$$
			Since $2C\delta_{0}^{4}  < 1$, this proves the lemma. 
		\end{proof}
	\begin{proposition} \label{prop4.5}
		There exists $C > 0$ such that
		\begin{equation} \label{eq 74}
			\limsup_{n \rightarrow \infty} |||W_{n}^{(l)} + w_{n}^{(l)}|||_{I} \leq C 
		\end{equation}
		for all $l \geq 1$. 
	\end{proposition}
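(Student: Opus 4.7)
The plan is to estimate $|||W_n^{(l)}|||_I$ and $|||w_n^{(l)}|||_I$ separately and combine them by the triangle inequality. For the linear remainder, $w_n^{(l)}$ is a free Schrödinger solution with initial datum $\Phi_n^{(l)}$; the almost-orthogonality identity \eqref{eq 29} gives $\|\nabla\Phi_n^{(l)}\|_{L^2}\leq\|\nabla\varphi_n\|_{L^2}+o(1)$, so standard Strichartz estimates yield
$$\limsup_{n\to\infty}|||w_n^{(l)}|||_I \;\leq\; C\limsup_{n\to\infty}\|\nabla\varphi_n\|_{L^2} \;\leq\; C',$$
uniformly in $l$.

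For the nonlinear part $W_n^{(l)}=\sum_{j=1}^{l}q_n^{(j)}$, each summand $q_n^{(j)}$ is the rescaling of a global nonlinear solution $\overline{f}^{(j)}$. Since $\|\nabla\varphi^{(j)}\|_{L^\infty L^2}\leq\limsup_n\|\varphi_n\|_{\dot H^1}<\lambda_0$, Definition \ref{def3} guarantees $\overline{f}^{(j)}\in L^{10}(\mathbb{R}^{4})$ and $\nabla\overline{f}^{(j)}\in L^{10/3}(\mathbb{R}^{4})$, and the associated Strichartz bootstrap yields
$$\|\overline{f}^{(j)}\|_{L^{10}(\mathbb{R}^{4})}+\|\nabla\overline{f}^{(j)}\|_{L^{10/3}(\mathbb{R}^{4})} \;\leq\; \Psi\bigl(\|\nabla\varphi^{(j)}\|_{L^\infty L^2}\bigr),$$
with $\Psi:[0,\lambda_0)\to\mathbb{R}_+$ continuous and $\Psi(s)=O(s)$ as $s\to 0$ in the small-data regime. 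The pairwise orthogonality of the scale-cores $[\underline h^{(j)},\underline x^{(j)},\underline t^{(j)}]$ makes \eqref{lemma3.15} applicable to $V^{(j)}:=\overline{f}^{(j)}$ and \eqref{obs7} to $\nabla\overline{f}^{(j)}$, producing
$$\limsup_{n\to\infty}\|W_n^{(l)}\|_{L^{10}(\mathbb{R}^{4})}^{10}=\sum_{j=1}^{l}\|\overline{f}^{(j)}\|_{L^{10}(\mathbb{R}^{4})}^{10},\quad \limsup_{n\to\infty}\|\nabla W_n^{(l)}\|_{L^{10/3}(\mathbb{R}^{4})}^{10/3}=\sum_{j=1}^{l}\|\nabla\overline{f}^{(j)}\|_{L^{10/3}(\mathbb{R}^{4})}^{10/3};$$
restriction to $I\times\mathbb{R}^{3}$ only decreases these norms.

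To control the partial sums uniformly in $l$, I invoke $\sum_{j}\|\nabla\varphi^{(j)}\|_{L^\infty L^2}^{2}\leq M^{2}:=\limsup_n\|\varphi_n\|_{\dot H^1}^{2}$ (consequence of \eqref{eq 29}): only finitely many indices $j$ satisfy $\|\nabla\varphi^{(j)}\|_{L^\infty L^2}\geq\varepsilon$ and they contribute a bounded quantity by continuity of $\Psi$, while the remaining tail is handled via $\Psi(s)\leq Cs$ together with
$$\sum_{j}\|\nabla\varphi^{(j)}\|_{L^\infty L^2}^{10}\leq\bigl(\sup_{j}\|\nabla\varphi^{(j)}\|_{L^\infty L^2}\bigr)^{8}\sum_{j}\|\nabla\varphi^{(j)}\|_{L^\infty L^2}^{2}\leq\lambda_0^{8}M^{2},$$
and analogously for the $L^{10/3}$ sum. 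Combining the bounds on $W_n^{(l)}$ and $w_n^{(l)}$ through the triangle inequality delivers \eqref{eq 74}. The main technical point is the legitimacy of applying \eqref{lemma3.15} and \eqref{obs7} to the nonlinear profiles $\overline{f}^{(j)}$ rather than their linear counterparts: this is valid because both identities rely only on the orthogonality of the scale-cores and on the integrability of $V^{(j)}$ in the relevant Lebesgue spaces, and both ingredients are supplied here by the subcritical scattering theory underlying Definition \ref{def3}.
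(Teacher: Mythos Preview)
Your proof is correct and follows essentially the same route as the paper: a triangle-inequality split into $w_n^{(l)}$ (handled by Strichartz and \eqref{eq 29}) and $W_n^{(l)}$ (handled by the orthogonality identities \eqref{lemma3.15}--\eqref{obs7} applied to the nonlinear profiles, with the resulting series controlled via small-data theory for the tail and Definition~\ref{def3} for the finitely many large profiles).

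One caveat on phrasing: you invoke a \emph{continuous} function $\Psi:[0,\lambda_0)\to\mathbb{R}_+$ uniformly bounding the spacetime norm of the nonlinear solution in terms of the energy of the data. The existence of such a uniform bound is itself one of the nontrivial consequences of the profile-decomposition machinery (this is, e.g., Corollary~1.10 in Keraani) and is not a priori available from Definition~\ref{def3} alone, so appealing to it here risks circularity. Fortunately your argument does not actually need it: for the finitely many indices $j$ with $\|\nabla\varphi^{(j)}\|_{L^2}\geq\varepsilon$ you only use that each $|||\psi^{(j)}|||_{\mathbb{R}}$ is \emph{finite}, which Definition~\ref{def3} supplies directly (there are finitely many such $j$, independently of $l$, and each contributes a fixed finite quantity). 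The paper makes the same split but phrases the tail estimate via Lemma~\ref{lemma4.6}, comparing $|||\psi^{(j)}|||_{\mathbb{R}}$ to the \emph{linear} Strichartz norm $|||\varphi^{(j)}|||_{\mathbb{R}}$ rather than to the energy; this is equivalent to your $\Psi(s)=O(s)$ after one application of the linear Strichartz inequality.
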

		\begin{proof}
		First of all, observe that, using \eqref{eq 29},
		$$\limsup_{n \rightarrow \infty} |||w_{n}^{(l)}|||_{I} \leq C \limsup_{n \rightarrow \infty} \|\nabla w_{n}^{(l)}(0)\|_{L^{2}}  \leq C \limsup_{n \rightarrow \infty} \|\nabla v_{n}(0)\|_{L^{2}} \leq C$$
		for all $l \geq 1$. Thereby, to obtain \eqref{eq 74}, it suffices to prove that 
		$$\limsup_{n \rightarrow \infty}|||W_{n}^{(l)}|||_{I} \leq C,$$
		for all $l \geq 1$. Now, rescaling $p_{n}^{(j)}$ and $q_{n}^{(j)}$ by
		$$p_{n}^{(j)}(t,x) = \frac{1}{\sqrt{h_{n}^{(j)}}} \varphi^{(j)}\Big(\frac{t-t_{n}^{(j)}}{(h_{n}^{(j)})^{2}},\frac{x-x_{n}^{(j)}}{h_{n}^{(j)}}\Big)$$
		and
		$$q_{n}^{(j)}(t,x) = \frac{1}{\sqrt{h_{n}^{(j)}}} \psi^{(j)}\Big(\frac{t-t_{n}^{(j)}}{(h_{n}^{(j)})^{2}},\frac{x-x_{n}^{(j)}}{h_{n}^{(j)}}\Big),$$
respectively, with $\varphi$ and $\psi$ belonging to $L^{\infty}(\mathbb{R};\dot{H}^{1}(\mathbb{R}^{3}))$, \eqref{lemma3.15} and \eqref{obs7} ensure that 
		$$\|W_{n}^{(l)}\|^{10}_{L^{10}(I \times \mathbb{R}^{3})} \rightarrow \sum_{j=1}^{l} \|\psi^{(j)}\|_{L^{10}( \mathbb{R} \times \mathbb{R}^{3})}^{10}, \ \ \|\nabla W_{n}^{(l)}\|^{\frac{10}{3}}_{L^{\frac{10}{3}}(I \times \mathbb{R}^{3})} \rightarrow \sum_{j=1}^{l} \|\nabla\psi^{(j)}\|_{L^{\frac{10}{3}}( \mathbb{R} \times \mathbb{R}^{3})}^{\frac{10}{3}},$$
		as $n \rightarrow \infty$, for every $l$.
		Let us prove that the series $\sum_{j \geq 1} |||\psi^{(j)}|||^\frac{10}{3}_{ \mathbb{R}}$ are convergent. 
		To this end, first note that \eqref{eq 29} and Lemma \ref{lemma3.3} imply
		\begin{equation} \label{eq 75} 
			\sum_{j\geq 1} |||\varphi^{(j)}|||^{\frac{10}{3}}_{ \mathbb{R}}  = 	\sum_{j\geq 1} |||p_{n}^{(j)}|||^{\frac{10}{3}}_{I}\leq  C \sum_{j \geq 1} \|\nabla p_{n}^{(j)}(0)\|_{L^{2}_{x}}^{\frac{10}{3}} \leq C ,
		\end{equation}
		where we have used that the series $ \sum_{j \geq 1} \|\nabla p_{n}^{(j)}(0)\|_{L^{2}_{x}}^{\frac{10}{3}}$ is
		convergent. Thus, if
\begin{equation} \label{desigualdade1}
			|||\psi^{(j)}|||_{\mathbb{R}} \leq C |||\varphi^{(j)}|||_{\mathbb{R}},
\end{equation}
		for large enough $j$,  then $\sum_{j \geq 1} |||\psi^{(j)}|||^\frac{10}{3}_{ \mathbb{R}}$ is convergent. But from \eqref{eq 75}, one has that $|||\varphi^{(j)}|||_{\mathbb{R}} \leq \delta_{0}$,  for large enough $j$ large enough, since $|||\varphi^{(j)}|||_{\mathbb{R}} $ is the general term of a convergent series, where $\delta_{0}$ is as in Lemma \ref{lemma4.6}. Moreover,  $$\|\nabla (\psi^{(j)}-\varphi^{(j)})(-t_{n}^{(j)}/(h_{n}^{(j)})^{2})\|_{L^{2}_{x}}= \|\nabla (q_{n}^{(j)}-p_{n}^{(j)})(0)\|_{L^{2}_{x}} = 0.$$ Consequently, $\psi^{(j)}$ and $\varphi^{(j)}$ satisfy the conditions of Lemma \ref{lemma4.6} for large $j$, and, therefore, inequality \eqref{desigualdade1} holds. This finishes the proof. 
\end{proof} 
	\begin{proposition} \label{prop37}
		For every $\varepsilon > 0$, there exists an n-dependent finite partition 
		\begin{equation}\label{partition}
			[0,T] = \bigcup_{i=1}^{p} I_{n}^{i}
		\end{equation}
		such that
		\begin{equation}\label{eq 78}
			\limsup_{n \rightarrow \infty} \|W_{n}^{(l)} + w_{n}^{(l)}\|_{L^{10}(I_{n}^{i} \times \mathbb{R}^{3})} \leq \varepsilon,
		\end{equation}
		for all $1 \leq i \leq p$, $ l \geq 1$.
	\end{proposition}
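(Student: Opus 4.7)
The strategy is to fix a large cutoff $l_{0}$, handle the first $l_{0}$ profiles via an explicit $n$-dependent partition adapted to their concentration cores, and absorb the tail $j>l_{0}$ uniformly in $l\geq 1$ by the almost-orthogonality identity \eqref{lemma3.15}. The proof of Proposition~\ref{prop4.5} yields $\sum_{j\geq 1}|||\varphi^{(j)}|||_{\mathbb{R}}^{10/3}<\infty$ and $\sum_{j\geq 1}|||\psi^{(j)}|||_{\mathbb{R}}^{10/3}<\infty$; since the general term of a convergent series is eventually $\leq 1$, both $\sum_{j}\|\varphi^{(j)}\|_{L^{10}_{t,x}}^{10}$ and $\sum_{j}\|\psi^{(j)}\|_{L^{10}_{t,x}}^{10}$ also converge. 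Combining with the Strichartz convergence \eqref{eq 28} of Theorem~\ref{lineardecomp}, I would choose $l_{0}$ large enough so that each of the three quantities
$$
\Bigl(\sum_{j>l_{0}}\|\varphi^{(j)}\|_{L^{10}_{t,x}}^{10}\Bigr)^{1/10},\qquad \Bigl(\sum_{j>l_{0}}\|\psi^{(j)}\|_{L^{10}_{t,x}}^{10}\Bigr)^{1/10},\qquad \limsup_{n\to\infty}\|w_{n}^{(l_{0})}\|_{L^{10}([0,T]\times\mathbb{R}^{3})}
$$
is at most $\varepsilon/4$.

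Next I would construct the partition. For each $j\leq l_{0}$, since $\psi^{(j)},\varphi^{(j)}\in L^{10}(\mathbb{R}\times\mathbb{R}^{3})$, an absolute continuity argument on the time line produces a finite partition $\mathbb{R}=\bigcup_{k=1}^{K_{j}}J^{(j)}_{k}$ with $\|\psi^{(j)}\|_{L^{10}(J^{(j)}_{k}\times\mathbb{R}^{3})}+\|\varphi^{(j)}\|_{L^{10}(J^{(j)}_{k}\times\mathbb{R}^{3})}\leq \varepsilon/(4l_{0})$. Pulling back by the rescaling of Definition~\ref{def5}, the cells $I^{(j,k)}_{n}=\bigl(t_{n}^{(j)}+(h_{n}^{(j)})^{2}J^{(j)}_{k}\bigr)\cap[0,T]$ then satisfy $\|q_{n}^{(j)}\|_{L^{10}(I^{(j,k)}_{n}\times\mathbb{R}^{3})}+\|p_{n}^{(j)}\|_{L^{10}(I^{(j,k)}_{n}\times\mathbb{R}^{3})}\leq \varepsilon/(4l_{0})$ by change of variables. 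Letting $[0,T]=\bigcup_{i=1}^{p} I_{n}^{i}$ be the common refinement of these $l_{0}$ partitions, the number $p\leq \prod_{j=1}^{l_{0}}K_{j}$ is independent of $n$, and on every cell $I_{n}^{i}$ both $q_{n}^{(j)}$ and $p_{n}^{(j)}$ have $L^{10}$-norm at most $\varepsilon/(4l_{0})$ for each $j\leq l_{0}$.

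To verify \eqref{eq 78}, I would use the identities $w_{n}^{(l)}=w_{n}^{(l_{0})}+\sum_{j=l+1}^{l_{0}}p_{n}^{(j)}$ when $l\leq l_{0}$ and $w_{n}^{(l)}=w_{n}^{(l_{0})}-\sum_{j=l_{0}+1}^{l}p_{n}^{(j)}$ when $l>l_{0}$ to decompose
$$
W_{n}^{(l)}+w_{n}^{(l)}=\sum_{j=1}^{\min(l,l_{0})}q_{n}^{(j)}+w_{n}^{(l_{0})}+\Delta_{n}^{(l)},
$$
with $\Delta_{n}^{(l)}=\sum_{j=l+1}^{l_{0}}p_{n}^{(j)}$ for $l\leq l_{0}$, and $\Delta_{n}^{(l)}=\sum_{j=l_{0}+1}^{l}\bigl(q_{n}^{(j)}-p_{n}^{(j)}\bigr)$ for $l>l_{0}$. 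On each $I_{n}^{i}$ the first sum is bounded by $l_{0}\cdot\varepsilon/(4l_{0})=\varepsilon/4$ via the triangle inequality, and the middle term by $\varepsilon/4$ asymptotically by the choice of $l_{0}$. The main technical step is the tail $\Delta_{n}^{(l)}$: to obtain a bound independent of $l$ one cannot use the triangle inequality, but instead must invoke \eqref{lemma3.15} applied to the pairwise orthogonal scales-cores of Theorem~\ref{lineardecomp}, which yields $\limsup_{n\to\infty}\|\Delta_{n}^{(l)}\|_{L^{10}_{t,x}}^{10}\leq \sum_{j>l_{0}}\|\varphi^{(j)}\|_{L^{10}}^{10}+\sum_{j>l_{0}}\|\psi^{(j)}\|_{L^{10}}^{10}\leq 2(\varepsilon/4)^{10}$. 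Collecting the three contributions and letting $n\to\infty$ gives $\limsup_{n}\|W_{n}^{(l)}+w_{n}^{(l)}\|_{L^{10}(I_{n}^{i}\times\mathbb{R}^{3})}\leq \varepsilon$, uniformly in $l\geq 1$ and $1\leq i\leq p$, which is the desired \eqref{eq 78}.
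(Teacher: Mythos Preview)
Your argument is correct and in fact slightly more streamlined than the paper's, though the overall architecture is the same: fix a cutoff index, build the partition from the first finitely many profiles, and absorb the tail via the orthogonality identity \eqref{lemma3.15}.

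The paper constructs each partial partition for $j\leq l_{3}$ by first splitting $[0,T]=I_{n}^{1,\Lambda}\cup I_{n}^{2,\Lambda}\cup I_{n}^{3,\Lambda}$ around the concentration time and then invoking Theorem~\ref{teo4.1} (scattering of the nonlinear concentrating solution) together with Lemma~\ref{lemma3.10} (non-reconcentration of the linear one) to make $q_{n}^{(j)}$ small on the two outer pieces; absolute continuity of $\|\psi^{(j)}\|_{L^{10}}$ is used only on the compact middle interval $[-\Lambda,\Lambda]$. You bypass this detour entirely by using directly that $\psi^{(j)},\varphi^{(j)}\in L^{10}(\mathbb{R}\times\mathbb{R}^{3})$ (which is indeed available from Proposition~\ref{prop4.5} and Strichartz) and partitioning all of $\mathbb{R}$ by absolute continuity; this is more elementary and avoids any appeal to Theorem~\ref{teo4.1}. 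You also partition simultaneously for $\varphi^{(j)}$, which lets you treat the case $l<l_{0}$ explicitly (the paper's write-up only spells out $l\geq l_{3}$), so in this respect your proof is a bit more complete.

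One small correction in the tail estimate: applying \eqref{lemma3.15} to $\Delta_{n}^{(l)}=\sum_{j=l_{0}+1}^{l}(q_{n}^{(j)}-p_{n}^{(j)})$ gives $\limsup_{n}\|\Delta_{n}^{(l)}\|_{L^{10}}^{10}=\sum_{j=l_{0}+1}^{l}\|\psi^{(j)}-\varphi^{(j)}\|_{L^{10}}^{10}$, not the sum of the two separate series; you then need $(a+b)^{10}\leq 2^{9}(a^{10}+b^{10})$, so your bound $2(\varepsilon/4)^{10}$ is missing a factor $2^{9}$. Alternatively, and more cleanly, apply \eqref{lemma3.15} separately to $\sum_{j>l_{0}}q_{n}^{(j)}$ and $\sum_{j>l_{0}}p_{n}^{(j)}$ and use the triangle inequality, which gives $\limsup_{n}\|\Delta_{n}^{(l)}\|_{L^{10}}\leq 2\cdot\varepsilon/4$. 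Either way this is harmless upon replacing $\varepsilon/4$ by $\varepsilon/C$ at the outset.
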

	\begin{proof}
	Since $$ \limsup_{n \rightarrow \infty} \|w_{n}^{(l)}\|_{L^{10}(I \times \mathbb{R}^{3})} \longrightarrow 0 \mbox{ as } l\rightarrow \infty,$$
	given $\varepsilon >0$, there exists $l_{1} \geq 1$ such that
	\begin{equation} \label{eq 79}
		\limsup_{n \rightarrow \infty} \|w_{n}^{(l)}\|_{L^{10}(I \times \mathbb{R}^{3})} \leq \frac{\varepsilon}{2}
	\end{equation}
	if $l \geq l_{1}$. Moreover, by \eqref{lemma3.15}, there exists $l_2\geq 1$ such that 
	\begin{equation*} 
		\limsup_{n \rightarrow \infty} \|W_{n}^{(l)}\|_{L^{10}(I \times \mathbb{R}^{3})} \leq 	\limsup_{n \rightarrow \infty} \|W_{n}^{(l_{2})}\|_{L^{10}(I \times \mathbb{R}^{3})} + \Big( \sum_{j=l_{2}+1}^{l} \|\psi^{(j)}\|^{10}_{L^{10}(\mathbb{R}\times \mathbb{R}^{3})}\Big)^{\frac{1}{10}}
	\end{equation*} for all $l \geq l_{2} \geq 1$,
	Since the series $\sum_{j\geq 1} \|\psi^{(j)}\|^{10}_{L^{10}(\mathbb{R} \times \mathbb{R}^{3})}$ is convergent, we may choose $l_{2}$ such that
	\begin{equation}\label{eq 81}
		\Big(\sum_{j\geq l_{2}} \|\psi^{(j)}\|^{10}_{L^{10}(\mathbb{R} \times \mathbb{R}^{3})}\Big)^{\frac{1}{10}} \leq \frac{\varepsilon}{4}.
	\end{equation}
	Putting together \eqref{eq 79} and \eqref{eq 81}, it follows that
	\begin{equation*} 
	\begin{split}
		\limsup_{n \rightarrow \infty} \|W_{n}^{(l)} + w_{n}^{(l)}\|_{L^{10}(I \times \mathbb{R}^{3})}  \leq & 
		\limsup_{n \rightarrow \infty} \|W_{n}^{(l)}\|_{L^{10}(I \times \mathbb{R}^{3})} + \limsup_{n \rightarrow \infty} \|w_{n}^{(l)}\|_{L^{10}(I\times \mathbb{R}^{3})} \\
		 \leq & \limsup_{n \rightarrow \infty} \|W_{n}^{(l_{2})}\|_{L^{10}(I\times \mathbb{R}^{3})} + \Big( \sum_{j=l_{2}+1}^{l} \|\psi^{(j)}\|^{10}_{L^{10}(\mathbb{R}\times \mathbb{R}^{3})}\Big)^{\frac{1}{10}} + \frac{\varepsilon}{2}\\
		 \leq &  \limsup_{n \rightarrow \infty} \|W_{n}^{(l_{3})}\|_{L^{10}(I \times \mathbb{R}^{3})}  + \frac{3\varepsilon}{4},
		\end{split}
	\end{equation*}
	for every $l \geq l_{3} = \sup(l_{1} , l_{2})$. Considering the natural number $l_3$, we must construct  $l_3$ partial finite portion of $I$ for every $1  \leq j \leq  l_{3}$, and the global decomposition is obtained by intersecting all the partial ones. Note that the partition \eqref{partition} is needed for $n$ large, then in the next construction, we take $n$ large enough. 
	
	For $j=1$, we split the interval $[0,T] = I_{n}^{1,\Lambda} \cup I_{n}^{2,\Lambda} \cup I_{n}^{3,\Lambda}$ according to Theorem \ref{teo4.1}.
	\begin{itemize}
		\item[i.] For $(I_{n}^{1,\Lambda})$:
		Using Theorem \ref{teo4.1} and Lemma \ref{lemma3.10}, there exists a linear concentrating solution $p_{n}^{(1)}$ such that 
		$$\limsup_{n \rightarrow \infty} \|q_{n}^{(1)}\|_{L^{10}(I_{n}^{1,\Lambda}\times \mathbb{R}^{3})} \leq \|q_{n}^{(1)} - p_{n}^{(1)}\|_{L^{10}(I_{n}^{1,\Lambda}\times \mathbb{R}^{3})} + \|p_{n}^{(1)}\|_{L^{10}(I_{n}^{1,\Lambda}\times \mathbb{R}^{3})} \leq \frac{\varepsilon}{4l_{3}}.$$
		\item[ii.] For $(I_{n}^{3,\Lambda})$:
		 Analogously,  
		$$\limsup_{n \rightarrow \infty} \|q_{n}^{(1)}\|_{L^{10}(I_{n}^{3,\Lambda}\times \mathbb{R}^{3})} \leq \|q_{n}^{(1)} - p_{n}^{(1)}\|_{L^{10}(I_{n}^{3,\Lambda}\times \mathbb{R}^{3})} + \|p_{n}^{(1)}\|_{L^{10}(I_{n}^{3,\Lambda}\times \mathbb{R}^{3})} \leq \frac{\varepsilon}{4l_{3}}.$$
		\item[iii.] For $(I_{n}^{2,\Lambda})$:
		We have $I_{n}^{2,\Lambda} = [t_{n}^{(1)} - (h_{n}^{(1)})^{2}\Lambda, t_{n}^{(1)} + (h_{n}^{(1)})^{2}\Lambda ].$
		Therefore,
		$$\|q_{n}^{(1)}\|_{L^{10}(I_{n}^{2,\Lambda}\times \mathbb{R}^{3})} = \|\psi^{(1)}\|_{L^{10}([-\Lambda,\Lambda]\times \mathbb{R}^{3})}.$$
			\end{itemize}
		Once $\Lambda$ is fixed,
		divide $[-\Lambda,\Lambda]$ in a finite number of intervals $I^{(i),\Lambda}$ such that $$\|\psi^{(1)}\|_{L^{10}(I^{(i),\Lambda} \times \mathbb{R}^{3})} \leq \frac{\varepsilon}{4l_{3}}.$$ Therefore
		$$ \|q_{n}^{(1)}\|_{L^{10}(I_{n}^{(i),\Lambda}\times \mathbb{R}^{3})} = \|\psi^{(1)}\|_{L^{10}(I^{(i),\Lambda}\times \mathbb{R}^{3})} \leq  \frac{\varepsilon}{4l_{3}}.$$
This gives the decomposition for $j=1$. Analogously, we construct a partial decomposition for every $j=2, ..., l_{3}$. Finally, the global decomposition is obtained by intersecting all the partial ones. Hence,
		\begin{eqnarray*}
			\limsup_{n \rightarrow \infty} \|W_{n}^{(l)} + w_{n}^{(l)}\|_{L^{10}(I_{n}^{i} \times \mathbb{R}^{3})} & \leq &  \limsup_{n \rightarrow \infty} \|W_{n}^{(l_{3})}\|_{L^{10}(I_{n}^{i}  \times \mathbb{R}^{3})}  + \frac{3\varepsilon}{4} \\
			& \leq & \limsup_{n \rightarrow \infty} \sum_{j=1}^{l_{3}} \|q_{n}^{(j)}\|_{L^{10}(I_{n}^{i}  \times \mathbb{R}^{3})} + \frac{3\varepsilon}{4} \\
			& \leq &  \sum_{j=1}^{l_{3}}  \frac{\varepsilon}{4l_{3}} + \frac{3\varepsilon}{4} = \varepsilon,
		\end{eqnarray*}
proving \eqref{eq 78}.
\end{proof} 
	\begin{lemma} \label{lemma4.10}
		Let $\mathcal{B}$ be a compact set of $\mathbb{R} \times \mathbb{R}^{3}$. For every $\varepsilon > 0$, there exists a constant $C(\varepsilon)$ such that
		\begin{equation} \label{eq 90}
			\|\nabla v\|_{L^{2}(\mathcal{B})} \leq C(\varepsilon) \|v\|_{L^{10}(\mathbb{R} \times \mathbb{R}^{3})}  + \varepsilon \|\nabla v(0)\|_{L^{2}(\mathbb{R}^{3})},
		\end{equation}
		for all solutions $v$ of the linear Schr\"{o}dinger equation.
	\end{lemma}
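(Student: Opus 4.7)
The plan is to argue by contradiction, combining the linear profile decomposition of Theorem \ref{lineardecomp} with a compactness argument. Suppose the conclusion fails: there exist $\varepsilon_0>0$ and linear Schr\"odinger solutions $(v_n)_{n\geq 1}$ such that, after normalizing $\|\nabla v_n(0)\|_{L^2}=1$, one has $\|v_n\|_{L^{10}(\mathbb{R}\times\mathbb{R}^3)}\to 0$ while $\|\nabla v_n\|_{L^2(\mathcal{B})}\geq\varepsilon_0$ for all $n$.

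The first step is to apply Theorem \ref{lineardecomp} to $\varphi_n:=v_n(0,\cdot)$, producing a decomposition $v_n=\sum_{j=1}^{l} p_n^{(j)}+w_n^{(l)}$ into linear concentrating profiles associated to data $\varphi^{(j)}$. By the almost-orthogonality of $L^{10}$ norms of concentrating sums \eqref{lemma3.15}, combined with $\|v_n\|_{L^{10}}\to 0$ and $\|w_n^{(l)}\|_{L^{10}_{t,x}}\to 0$ as $l\to\infty$, one extracts $\sum_j\|\varphi^{(j)}\|_{L^{10}(\mathbb{R}\times\mathbb{R}^3)}^{10}=0$, so every $\varphi^{(j)}$ vanishes identically and each $p_n^{(j)}\equiv 0$. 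Hence $v_n\equiv w_n^{(l)}$ for every $l$, and passing $l\to\infty$ in \eqref{eq 28} yields $\|v_n\|_{L^\infty_t L^6_x\cap L^{10}_{t,x}}\to 0$ as $n\to\infty$.

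The main obstacle is to upgrade this smallness into the target $\|\nabla v_n\|_{L^2(\mathcal{B})}\to 0$, since $L^6$-smallness of $v_n$ does not by itself control $L^2$-smallness of $\nabla v_n$ on compact sets. For this I will invoke Kato's local smoothing estimate for the linear Schr\"odinger flow in $\mathbb{R}^3$: for any $T,R>0$,
$$\|e^{it\Delta}f\|_{L^2((-T,T);\,H^{1/2}(B_R))}\leq C(T,R)\|f\|_{L^2(\mathbb{R}^3)}.$$
Applied with $f=\nabla\varphi_n$, this provides a uniform bound for $\nabla v_n$ in $L^2((-T,T);H^{1/2}(B_R))$ whenever $\mathcal{B}\subset[-T,T]\times B_R$, while the equation supplies $\partial_t\nabla v_n=i\Delta\nabla v_n$ uniformly bounded in $L^\infty((-T,T);H^{-2}(B_R))$ via conservation of $\|\nabla v_n(0)\|_{L^2}$. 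The Aubin--Lions lemma along the compact chain $H^{1/2}(B_R)\hookrightarrow L^2(B_R)\hookrightarrow H^{-2}(B_R)$ then ensures relative compactness of $(\nabla v_n)$ in $L^2(\mathcal{B})$; along a subsequence $\nabla v_n\to g$ strongly. Since $v_n\to 0$ in $L^{10}(\mathbb{R}\times\mathbb{R}^3)$ implies $\nabla v_n\to 0$ in the sense of distributions on $\mathcal{B}$, one must have $g=0$, contradicting $\|\nabla v_n\|_{L^2(\mathcal{B})}\geq\varepsilon_0$.

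The hardest ingredient is Kato's local smoothing estimate, which is external to this paper. An alternative, purely internal route would decompose $v_n$ in frequency scales using Proposition \ref{prop3.11} and bound $\|\nabla v_n\|_{L^2(\mathcal{B})}$ in the style of \eqref{eq 48}--\eqref{eq 49}, controlling the high-frequency contribution through the Besov smallness afforded by Proposition \ref{prop3.5}; this would however require a substantially more delicate frequency-localization analysis.
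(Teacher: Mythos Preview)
Your argument is correct, but it takes a different and somewhat heavier route than the paper's. The paper also argues by contradiction, but it normalizes by $\|\nabla v_m\|_{L^2(\mathcal{B})}=1$ and avoids the profile decomposition entirely: from $\|\tilde v_m\|_{L^{10}}\to 0$ and the Strichartz bound on $\|\nabla\tilde v_m\|_{L^{10/3}}$ it invokes a lemma of Merle--Vega \cite[Lemma~3.23]{merle} to get $\nabla\tilde v_m\rightharpoonup 0$ weakly in $L^{10/3}$, whence $\psi_m:=\nabla\tilde v_m(0)\rightharpoonup 0$ in $L^2(\mathbb{R}^3)$, and then asserts that this contradicts $\|e^{it\Delta}\psi_m\|_{L^2(\mathcal{B})}=1$. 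That last step is exactly the compactness you spell out via Kato local smoothing and Aubin--Lions; the paper states it in one line. Your invocation of Theorem~\ref{lineardecomp}, on the other hand, is superfluous: the contradiction hypothesis already gives $\|v_n\|_{L^{10}}\to 0$, and the $L^\infty_tL^6_x$ smallness you extract from the decomposition is never used afterwards---your compactness argument only needs $v_n\to 0$ in $\mathcal{D}'$ to identify the limit, and $L^{10}$ convergence already provides that. Stripping out the profile step leaves an argument close in spirit to the paper's, with the merit that you make the compactness mechanism explicit.
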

	\begin{proof}
		We argue by contradiction. Suppose that \eqref{eq 90} does not hold. Then, there exist an $\varepsilon > 0$ and a sequence $(v_{m})$ of solutions of the linear Schr\"{o}dinger equation such that
		$$\|\nabla v_{m}\|_{L^{2}(\mathcal{B})}  > m \|v_{m}\|_{L^{10}(\mathbb{R} \times \mathbb{R}^{3})}  + \varepsilon \|\nabla v_{m}(0)\|_{L^{2}(\mathbb{R}^{3})}.$$
		Define $\tilde{v}_{m} := v_{m}/\|\nabla v_{m}\|_{L^{2}(\mathcal{B})}$. One has 
		$$1> m\|\tilde{v}_{m}\|_{L^{10}(\mathbb{R} \times \mathbb{R}^{3})}  + \varepsilon \|\nabla \tilde{v}_{m}(0)\|_{L^{2}(\mathbb{R}^{3})}.$$
Note that $ \|\nabla \tilde{v}_{m}(0)\|_{L^{2}(\mathbb{R}^{3})}$ is bounded and
$$
		m\|\tilde{v}_{m}\|_{L^{10}(\mathbb{R} \times \mathbb{R}^{3})}  <  -\varepsilon \|\nabla \tilde{v}_{m}(0)\|_{L^{2}(\mathbb{R}^{3})} +  1,
$$
thus	
		\begin{equation}\label{eq 91}
			\|\tilde{v}_{m}\|_{L^{10}(\mathbb{R} \times \mathbb{R}^{3})}  \longrightarrow 0, \ m\rightarrow \infty.
		\end{equation}
By Strichartz estimates, $\|\nabla \tilde{v}_{m}\|_{L^{\frac{10}{3}}(\mathbb{R} \times \mathbb{R}^{3})} \leq C  \|\nabla \tilde{v}_{m}(0)\|_{L^{2}(\mathbb{R}^{3})}.$ So, we conclude that  $\|\nabla \tilde{v}_{m}\|_{L^{\frac{10}{3}}(\mathbb{R} \times \mathbb{R}^{3})}$ is also bounded. In view of \eqref{eq 91} and \cite[Lemma 3.23]{merle}, there exists a subsequence of $(\tilde{v}_{m})$, which we keep denoted by the same index, such that
$$
			\nabla \tilde{v}_{m} \rightharpoonup 0 \ \ \mbox{ weakly in } \  L^{\frac{10}{3}}(\mathbb{R} \times \mathbb{R}^{3}).
$$
Setting $\psi_{m} = \nabla \tilde{v}_{m}(0,\cdot)$, we get
$$
			\|e^{it\Delta} \psi_{m}\|_{L^{2}(\mathcal{B})} = 	\|e^{it\Delta} \nabla \tilde{v}_{m}(0)\|_{L^{2}(\mathcal{B})} =\dfrac{\|\nabla v_{m}(0)\|_{L^{2}(\mathcal{B})}}{\|\nabla v_{m}(t)\|_{L^{2}(\mathcal{B})}} = 1.
$$
But, up to a subsequence, $\psi_{m} \rightharpoonup 0 \ \mbox{in} \ L^{2}( \mathbb{R}^{3})$, which is a contradiction. Therefore, \eqref{eq 90} holds.
	\end{proof}
	
The previous lemma gives the following proposition, which guarantees the smallness, for large $n$ and $l$, of 
\begin{equation*} 
	\delta_{n}^{(l)} = \Big\|\nabla\Big[\beta(W_{n}^{(l)} + w_{n}^{(l)}) - \beta(W_{n}^{(l)})\Big] \Big\|_{L^{\frac{10}{7}}(I\times \mathbb{R}^{3})} +  \Big\| \nabla \Big( \sum_{j=1}^{l} \beta(q_{n}^{(j)}) - \beta(W_{n}^{(l)})\Big) \Big\|_{L^{\frac{10}{7}}(I\times \mathbb{R}^{3})}.
\end{equation*}
\begin{proposition} \label{prop4.9}We have that
	\begin{equation}\label{eq 83}
		\limsup_{n \rightarrow \infty} \delta_{n}^{(l)} \longrightarrow 0 \quad \text{as}\quad l\rightarrow \infty.
	\end{equation}
\end{proposition}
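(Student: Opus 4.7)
The approach is to split $\delta_n^{(l)}$ into two pieces that are controlled by fundamentally different mechanisms. Setting $u_n = W_n^{(l)}$ and $v_n = w_n^{(l)}$, I would write $\delta_n^{(l)} = A_n^{(l)} + B_n^{(l)}$ where
$$A_n^{(l)} = \|\nabla[\beta(u_n+v_n) - \beta(u_n)]\|_{L^{10/7}(I \times \mathbb{R}^3)}, \qquad B_n^{(l)} = \Big\|\nabla\Big(\sum_{j=1}^l \beta(q_n^{(j)}) - \beta(u_n)\Big)\Big\|_{L^{10/7}(I \times \mathbb{R}^3)}.$$
The ``mixing'' piece $B_n^{(l)}$ will vanish for each fixed $l$ as $n \to \infty$ thanks to the orthogonality of the scale-cores, while the ``remainder'' piece $A_n^{(l)}$ will vanish as $l \to \infty$ (after taking $\limsup_n$) thanks to the smallness of $w_n^{(l)}$ in $L^{10}$ combined with the partition of Proposition \ref{prop37}.

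\textbf{Step 1: Treating $B_n^{(l)}$.} Writing $W_n^{(l)} = W_n^{(l-1)} + q_n^{(l)}$ and telescoping, the difference $\sum_{j=1}^l \beta(q_n^{(j)}) - \beta(W_n^{(l)})$ reduces to a finite sum of cross terms of the form $\beta(a+b) - \beta(a) - \beta(b)$. Using $|\beta'(z)| \lesssim |z|^4$ and $|\beta''(z)| \lesssim |z|^3$, the gradient of each cross term is pointwise bounded by linear combinations of products involving at least two distinct profiles $q_n^{(j)}, q_n^{(k)}$, $j \neq k$. By the pairwise orthogonality $(\underline{h}^{(j)},\underline{x}^{(j)},\underline{t}^{(j)}) \perp (\underline{h}^{(k)},\underline{x}^{(k)},\underline{t}^{(k)})$ from Theorem \ref{lineardecomp}, combined with a rescaling argument in the spirit of Keraani's Lemma 2.7 (cf. also Lemma \ref{lemma3.16}), each such mixed $L^{10/7}$-norm tends to $0$ as $n \to \infty$. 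Since $l$ is fixed and the number of cross terms is finite, one concludes $\limsup_n B_n^{(l)} = 0$ for every $l$.

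\textbf{Step 2: Treating $A_n^{(l)}$.} Starting from the pointwise estimate
$$|\nabla[\beta(u+v) - \beta(u)]| \lesssim |v|(|u|^3 + |v|^3)|\nabla u| + (|u|^4 + |v|^4)|\nabla v|,$$
I would apply Hölder's inequality in space-time with exponents $(10,10,10,10,10/3)$. Three of the four summands contain a factor $\|v_n\|_{L^{10}(I \times \mathbb{R}^3)}^\sigma$ with $\sigma \geq 1$; by \eqref{eq 28} these tend to $0$ as $l \to \infty$, the remaining factors being uniformly bounded by Proposition \ref{prop4.5}. The delicate contribution is $\|u_n\|_{L^{10}}^4 \|\nabla v_n\|_{L^{10/3}}$, in which neither factor is small. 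For this term, fix $\varepsilon > 0$ and invoke the partition $[0,T] = \bigcup_{i=1}^{p(\varepsilon)} I_n^i$ from Proposition \ref{prop37}: for $l$ large enough that $\limsup_n \|v_n\|_{L^{10}(I)} \leq \varepsilon$, one gets $\limsup_n \|u_n\|_{L^{10}(I_n^i)} \leq 2\varepsilon$. Applying Hölder piecewise yields $\|u_n\|_{L^{10}(I_n^i)}^4 \|\nabla v_n\|_{L^{10/3}(I_n^i)} \leq (2\varepsilon)^4 \|\nabla v_n\|_{L^{10/3}(I_n^i)}$, and summing the $(10/7)$-th powers over the $p(\varepsilon)$ pieces together with the uniform bound on $\|\nabla v_n\|_{L^{10/3}(I)}$ produces a control of the form $C\varepsilon^4 p(\varepsilon)^{2/5}$ on $\limsup_n A_n^{(l)}$, which is sent to $0$ by first letting $\varepsilon \to 0$ (at a rate compatible with $p(\varepsilon)$) and then $l \to \infty$.

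\textbf{Main obstacle.} The hard point is precisely the $|u_n|^4|\nabla v_n|$ contribution in Step 2: it cannot be controlled by a single application of Hölder, since $\|u_n\|_{L^{10}}$ is only bounded and $\nabla v_n$ has no smallness in $L^{10/3}$. The localization in Proposition \ref{prop37} supplies the missing smallness $\|u_n\|_{L^{10}(I_n^i)} \leq 2\varepsilon$, but the subsequent discrete summation of $L^{10/7}$-norms introduces a combinatorial factor depending on the number of sub-intervals $p(\varepsilon)$; managing the interplay between this factor and $\varepsilon^4$ is the bookkeeping that drives the proof.
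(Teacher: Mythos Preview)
Your split $\delta_n^{(l)}=A_n^{(l)}+B_n^{(l)}$ and your Step~1 for $B_n^{(l)}$ coincide with the paper's Part~1. For $A_n^{(l)}$, the three terms carrying a factor $\|w_n^{(l)}\|_{L^{10}}$ are handled exactly as in the paper. The issue is your treatment of the hard term $\||W_n^{(l)}|^4\nabla w_n^{(l)}\|_{L^{10/7}}$.

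Your bound $C\varepsilon^4 p(\varepsilon)^{2/5}$ is arithmetically correct, but it cannot be sent to zero: the construction in Proposition~\ref{prop37} gives \emph{no quantitative control} on $p(\varepsilon)$. The number of sub-intervals depends on the cutoff $l_3(\varepsilon)$ in the profile series, on the $\Lambda_j$'s coming from Theorem~\ref{teo4.1}, and on how finely each $[-\Lambda_j,\Lambda_j]$ must be partitioned so that $\|\psi^{(j)}\|_{L^{10}}\le \varepsilon/(4l_3)$; none of these is polynomial in $\varepsilon^{-1}$ in general. The phrase ``at a rate compatible with $p(\varepsilon)$'' assumes a trade-off that is simply not available here, so the argument does not close. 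The time partition buys smallness of $\|W_n^{(l)}\|_{L^{10}(I_n^i)}$ only at the cost of an uncontrolled number of pieces, and the subadditivity loss in reassembling the $L^{10/7}$-norm eats the gain.

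The paper proceeds by a different mechanism. It isolates $\|W_n^{(l)}\nabla w_n^{(l)}\|_{L^{5/2}}$, splits $W_n^{(l)}$ into a small tail $\sum_{j>l_0}q_n^{(j)}$ (small in $L^{10}$ by summability of $\|\psi^{(j)}\|_{L^{10}}^{10}$) and a \emph{fixed} finite head $\sum_{j\le l_0}q_n^{(j)}$. For each $j\le l_0$ it rescales to the frame of $[\underline h^{(j)},\underline x^{(j)},\underline t^{(j)}]$, reducing to $\|\psi^{(j)}\nabla\tilde w_n^{(l)}\|_{L^{5/2}}$ with $\psi^{(j)}$ compactly supported, and then applies Lemma~\ref{lemma4.10}: for linear solutions, $\|\nabla v\|_{L^2(\mathcal B)}\le C(\varepsilon)\|v\|_{L^{10}}+\varepsilon\|\nabla v(0)\|_{L^2}$. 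This spatial-compactness lemma converts the global $L^{10}$-smallness of $w_n^{(l)}$ into local smallness of $\nabla\tilde w_n^{(l)}$ on $\operatorname{supp}\psi^{(j)}$, which is exactly the missing ingredient. The correct mechanism is spatial localization to the support of each profile, not time partitioning.
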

\begin{proof}
	We split the proof into two parts. The first one is devoted to proving that for every $l \geq 1$, one has
	\begin{equation}\label{eq 84}
		\Big\|\nabla \Big(\sum_{j=1}^{l} \beta(q_{n}^{(j)}) - \beta(W_{n}^{(j)})\Big) \Big\|_{L^{\frac{10}{7}}(I \times \mathbb{R}^{3})} \longrightarrow 0 \mbox{ as } n\rightarrow \infty.
	\end{equation}
	In the second part, we will show that
	\begin{equation} \label{eq 85}
		\limsup_{n \rightarrow \infty}\|\nabla(\beta(W_{n}^{(l)} + w_{n}^{(l)}) - \beta(W_{n}^{(l)})) \|_{L^{\frac{10}{7}}(I \times \mathbb{R}^{3})}  \longrightarrow 0 \mbox{ as }l\rightarrow \infty.
	\end{equation}

	\noindent\textit{\textbf{Part 1.}}  Note that
	$$\Big\|\nabla \Big(\sum_{j=1}^{l} \beta(q_{n}^{(j)}) - \beta(W_{n}^{(j)})\Big) \Big\|_{L^{\frac{10}{7}}(I \times \mathbb{R}^{3})} \leq C D_{n},$$
	where $D_{n}=\|\nabla (q_{n}^{(j_{1})} q_{n}^{(j_{2})} q_{n}^{(j_{3})} q_{n}^{(j_{4})} q_{n}^{(j_{5})})\|_{L^{\frac{10}{7}}(I\times \mathbb{R}^{3})}$, with at least two differents $j_k$, for k=1,2,3,4,5. 
	Now, we want to prove that
	$$D_{n} \longrightarrow 0, \ n\rightarrow \infty.$$
Assuming, for example, that $j_{1} \neq j_{2}$, we have
	\begin{equation}\label{86}
	\begin{split}
		D_{n}^{\frac{10}{7}} = &\int_{\mathbb{R}} \int_{\mathbb{R}^{3}} |\nabla q_{n}^{(j_{1})}q_{n}^{(j_{2})}(q_{n}^{(j_{k})})^{3}|^{\frac{10}{7}} \ dxdt \\ 
		\leq & C \int_{\mathbb{R}} \int_{\mathbb{R}^{3}} |\nabla q_{n}^{(j_{1})}q_{n}^{(j_{2})} |^{\frac{10}{7}} |q_{n}^{(j_{k})}|^{\frac{30}{7}}\ dxdt \\    & \mbox{    } + C\int_{\mathbb{R}} \int_{\mathbb{R}^{3}}|q_{n}^{(j_{1})}q_{n}^{(j_{2})} |^{\frac{10}{7}} |\nabla (q_{n}^{(j_{k})})^{3}|^{\frac{10}{7}}  dxdt.
		\end{split}
	\end{equation}
	To bound the first integral on the right-hand side of the inequality above, use Hölder's inequality to get
	\begin{eqnarray*}
		\int_{\mathbb{R}} \int_{\mathbb{R}^{3}} |\nabla q_{n}^{(j_{1})}q_{n}^{(j_{2})} |^{\frac{10}{7}} |q_{n}^{(j_{k})}|^{\frac{30}{7}}\ dxdt  
		& \leq & C \|q_{n}^{(j_{k})}\|^{3}_{L^{10}(\mathbb{R}^{4})} \Biggr(\int_{\mathbb{R}} \int_{\mathbb{R}^{3}}|\nabla q_{n}^{(j_{1})}q_{n}^{(j_{2})} |^{\frac{10}{4}} \ dxdt\Biggr)^{\frac{4}{7}}\\
		& \leq & C \Biggr(\int_{\mathbb{R}} \int_{\mathbb{R}^{3}}|\nabla q_{n}^{(j_{1})}q_{n}^{(j_{2})} |^{\frac{5}{2}} \ dxdt\Biggr)^{\frac{4}{7}}.
	\end{eqnarray*}
	This last term can be written as
	\begin{eqnarray*}
		\frac{1}{(h_{n}^{(j_{1})} h_{n}^{(j_{2})})^{\frac{5}{7}}}\Biggr(\int_{\mathbb{R}} \int_{\mathbb{R}^{3}}\Bigg|\nabla_{x} \psi^{(j_{1})}\Bigg(\frac{t-t_{n}^{(j_{1})}}{(h_{n}^{(j_{1})})^{2}},\frac{x-x_{n}^{(j_{1})}}{h_{n}^{(j_{1})}}\Bigg)\psi^{(j_{2})} \Bigg(\frac{t-t_{n}^{(j_{2})}}{(h_{n}^{(j_{2})})^{2}},\frac{x-x_{n}^{(j_{2})}}{h_{n}^{(j_{2})}}\Bigg)\Bigg|^{\frac{5}{2}} \ dxdt\Biggr)^{\frac{4}{7}}.\\
	\end{eqnarray*}
	Since $[h_{n}^{(j_{1})},\underline{x}^{(j_{1})},\underline{t}^{(j_{1})}],[h_{n}^{(j_{2})},\underline{x}^{(j_{2})},\underline{t}^{(j_{2})}]$  are orthogonal, assume  $\psi^{j_{1}}, \psi^{j_{2}}$ to be continuous and compactly supported an analyze the possible cases: 
	\begin{itemize}
		\item[$\bullet$] If $ \frac{h_{n}^{(j_{1})}}{h_{n}^{(j_{2})}} + \frac{h_{n}^{(j_{2})}}{h_{n}^{(j_{1})}} \longrightarrow +\infty,$ assume, for example, that $\frac{h_{n}^{(j_{1})}}{h_{n}^{(j_{2})}} \longrightarrow + \infty$ (the other case is analogous).
	\end{itemize}
	Using the change of variables $t=s(h_{n}^{(j_{2})})^{2} + t_{n}^{(j_{2})},$ $x=yh_{n}^{(j_{2})} + x_{n}^{(j_{2})},$ we have 
	\begin{equation*}
	\begin{split}
		&\frac{1}{(h_{n}^{(j_{1})} h_{n}^{(j_{2})})^{\frac{5}{7}}}\Biggr( \int_{\mathbb{R}^{4}}\Bigg|\nabla_{x} \psi^{(j_{1})}\Bigg(\frac{t_{n}^{(j_{2})}-t_{n}^{(j_{1})}}{(h_{n}^{(j_{1})})^{2}} + s\frac{(h_{n}^{(j_{2})})^{2}}{(h_{n}^{(j_{1})})^{2}},\frac{x_{n}^{(j_{2})}-x_{n}^{(j_{1})}}{h_{n}^{(j_{1})}} + y \frac{h_{n}^{(j_{2})}}{h_{n}^{(j_{1})}}\Bigg)\psi^{(j_{2})} (s,y)\Bigg|^{\frac{5}{2}} \ dyds (h_{n}^{(j_{2})})^{5}\Biggr)^{\frac{4}{7}}\\
		=& \frac{(h_{n}^{(j_{2})})^{\frac{5}{7}}}{(h_{n}^{(j_{1})})^{\frac{5}{7}}}\Biggr( \int_{\mathbb{R}^{4}}\Bigg|\nabla_{y} \psi^{(j_{1})}\Bigg(\frac{t_{n}^{(j_{2})}-t_{n}^{(j_{1})}}{(h_{n}^{(j_{1})})^{2}} + s\frac{(h_{n}^{(j_{2})})^{2}}{(h_{n}^{(j_{1})})^{2}},\frac{x_{n}^{(j_{2})}-x_{n}^{(j_{1})}}{h_{n}^{(j_{1})}} + y \frac{h_{n}^{(j_{2})}}{h_{n}^{(j_{1})}}\Bigg)\psi^{(j_{2})} (s,y)\Bigg|^{\frac{5}{2}} \ dyds \Biggr)^{\frac{4}{7}}
		 \rightarrow  0.
		\end{split}
	\end{equation*}
	\begin{itemize}
		\item[$\bullet$] If $h_{n}^{(j_{1})} =  h_{n}^{(j_{2})}$, using the same change of variables as above, we get
	\end{itemize}
	\begin{eqnarray*}
		\Biggr(\int_{\mathbb{R}} \int_{\mathbb{R}^{3}}\Bigg|\nabla_{y} \psi^{(j_{1})}\Bigg(\frac{t_{n}^{(j_{2})}-t_{n}^{(j_{1})}}{(h_{n}^{(j_{1})})^{2}} + s\frac{(h_{n}^{(j_{2})})^{2}}{(h_{n}^{(j_{1})})^{2}},\frac{x_{n}^{(j_{2})}-x_{n}^{(j_{1})}}{h_{n}^{(j_{1})}} + y \frac{h_{n}^{(j_{2})}}{h_{n}^{(j_{1})}}\Bigg)\psi^{(j_{2})} (s,y)\Bigg|^{\frac{5}{2}} \ dyds \Biggr)^{\frac{4}{7}}.\\
	\end{eqnarray*}
	Since $ \Big| \frac{t_{n}^{(j_{1})}-t_{n}^{(j_{2})}}{{h_{n}^{(j_{1})}}^{2}}\Big| + \Big|\frac{x_{n}^{(j_{1})}-x_{n}^{(j_{2})}}{h_{n}^{(j_{1})}}\Big| \longrightarrow + \infty$ as  $n\rightarrow \infty$, this integral tends to 0, which ensures that the first integral on the right-hand side of \eqref{86} converges to 0.
	
	Now, we examine the second integral on the right-hand side of \eqref{86}. Again, Hölder's inequality ensures that\
	\begin{eqnarray*}
		\int_{\mathbb{R}} \int_{\mathbb{R}^{3}}|q_{n}^{(j_{1})}q_{n}^{(j_{2})} |^{\frac{10}{7}} |\nabla (q_{n}^{(j_{k})})^{3}|^{\frac{10}{7}}  dxdt 
		& \leq & C\|q_{n}^{(j_{k})}\|^{\frac{20}{7}}_{L^{10}(\mathbb{R}^{4})}\|\nabla q_{n}^{(j_{k})}\|_{L^{\frac{10}{3}}(\mathbb{R}^{4})}^{\frac{10}{7}}\Big(\int_{\mathbb{R}} \int_{\mathbb{R}^{3}}|q_{n}^{(j_{1})}q_{n}^{(j_{2})} |^{5} \ dxdt\Big)^{\frac{2}{7}}\\
		& \leq & C\Big(\int_{\mathbb{R}} \int_{\mathbb{R}^{3}}|q_{n}^{(j_{1})}q_{n}^{(j_{2})} |^{5} \ dxdt\Big)^{\frac{2}{7}}
	\end{eqnarray*}
	and
	\begin{eqnarray*}
		&  & \Big(\int_{\mathbb{R}} \int_{\mathbb{R}^{3}}|q_{n}^{(j_{1})}q_{n}^{(j_{2})} |^{5} \ dxdt\Big)^{\frac{2}{7}} \\
		&=& \frac{1}{(h_{n}^{(j_{1})}h_{n}^{(j_{2})})^{\frac{5}{7}}}\Bigg(\int_{\mathbb{R}} \int_{\mathbb{R}^{3}} \Bigg| \psi^{(j_{1})}\Bigg(\frac{t-t_{n}^{(j_{1})}}{(h_{n}^{(j_{1})})^{2}},\frac{x-x_{n}^{(j_{1})}}{h_{n}^{(j_{1})}}\Bigg)\psi^{(j_{2})} \Bigg(\frac{t-t_{n}^{(j_{2})}}{(h_{n}^{(j_{2})})^{2}},\frac{x-x_{n}^{(j_{2})}}{h_{n}^{(j_{2})}}\Bigg)\Bigg|^{5} \ dxdt\Bigg)^{\frac{2}{7}}.
	\end{eqnarray*}
	Analogously to the previous case, one concludes that the second integral on the right-hand side of \eqref{86} converges to 0 as well, which shows the convergence \eqref{eq 84}.
	
	\vspace{0.2cm}
	
	\noindent\textit{\textbf{Part 2.}} By Leibnitz formula and H\"{o}lder inequality, we get
	\begin{eqnarray*}
		\|\nabla (\beta(W_{n}^{(l)} + w_{n}^{(l)}) - \beta(W_{n}^{(l)}))\|_{L^{\frac{10}{7}}(I\times \mathbb{R}^{3})} & \leq & C \Big(\|w_{n}^{(l)}\|_{L^{10}(I\times \mathbb{R}^{3})} |||W_{n}^{(l)} + w_{n}^{(l)}|||^{4}_{I} \\
		& & \mbox{ } + |||W_{n}^{(l)}|||^{3}_{I} \|W_{n}^{(l)} \nabla w_{n}^{(l)}\|_{L^{\frac{5}{2}}(I\times \mathbb{R}^{3})} \Big).
	\end{eqnarray*}
	Since that \eqref{eq 28} and \eqref{eq 74} hold, if we prove that
$$
		\limsup_{n \rightarrow \infty} \|W_{n}^{(l)} \nabla w_{n}^{(l)}\|_{L^{\frac{5}{2}}(I \times \mathbb{R}^{3})} \longrightarrow 0 \mbox{ as } l\rightarrow \infty,
$$
	then the proof of \eqref{eq 85} is complete. Indeed, the convergence of the series $\sum_{j \geq 1} \|\psi^{(j)}\|^{10}_{L^{10}(\mathbb{R} \times \mathbb{R}^{3})}$ implies that, for every $\varepsilon > 0$, there exists $l(\varepsilon)$ such that 
$$
		\sum_{j \geq l(\varepsilon)}\|\psi^{(j)}\|^{10}_{L^{10}(\mathbb{R}\times \mathbb{R}^{3})} \leq \varepsilon^{10}.
$$
	In particular, using H\"{o}lder's inequality,
	\begin{eqnarray*}
		\limsup_{n \rightarrow \infty} \Big\| \Big(\sum_{j=l(\varepsilon)}^{l} q_{n}^{(j)}\Big) \nabla w_{n}^{(l)} \Big\|^{10}_{L^{\frac{5}{2}}(I \times \mathbb{R}^{3})} & = & \limsup_{n \rightarrow \infty} \Big\|\sum_{j=l(\varepsilon)}^{l} q_{n}^{(j)} \Big\|^{10}_{L^{10}(I\times \mathbb{R}^{3})} \limsup_{n \rightarrow \infty} \|\nabla w_{n}^{(l)}\|^{10}_{L^{\frac{10}{3}}(I \times \mathbb{R}^{3})} \\
		& \leq & \sum_{j\geq l(\epsilon)} \|\psi^{(j)}\|^{10}_{L^{10}(\mathbb{R} \times \mathbb{R}^{3})}  \limsup_{n \rightarrow \infty} \|\nabla w_{n}^{(l)}\|^{10}_{L^{\frac{10}{3}}(I \times \mathbb{R}^{3})} \\
		& \leq & C \varepsilon^{10},
	\end{eqnarray*}
	where the last inequality follows from the fact that $\|\nabla w_{n}^{(l)}\|^{10}_{L^{\frac{10}{3}}(I\times \mathbb{R}^{3})} $ is uniformly bounded by Strichartz estimates. Therefore,
	\begin{equation*}
	\begin{split}
		\limsup_{n \rightarrow \infty} \|W_{n}^{(l)} \nabla w_{n}^{(l)}\|_{L^{\frac{5}{2}}(I \times \mathbb{R}^{3})}   =  & 	\limsup_{n \rightarrow \infty} \Big\|\Big(\sum_{j=1}^{l}q_{n}^{(j)}\Big) \nabla w_{n}^{(l)}\Big\|_{L^{\frac{5}{2}}(I \times \mathbb{R}^{3})} \\
		\leq & \limsup_{n \rightarrow \infty} \Big\|\Big(\sum_{j=1}^{l(\varepsilon)}q_{n}^{(j)}\Big) \nabla w_{n}^{(l)}\Big\|_{L^{\frac{5}{2}}(I \times \mathbb{R}^{3})}  \\&+  \limsup_{n \rightarrow \infty} \Big\|\Big(\sum_{j=l(\varepsilon)}^{l}q_{n}^{(j)}\Big) \nabla w_{n}^{(l)}\Big\|_{L^{\frac{5}{2}}(I \times \mathbb{R}^{3})} \\
		 \leq & 
		\limsup_{n \rightarrow \infty} \|W_{n}^{(l(\varepsilon))} \nabla w_{n}^{(l)}\|_{L^{\frac{5}{2}}(I \times \mathbb{R}^{3})} + C \varepsilon,
	\end{split}
	\end{equation*}
	for $l\geq l(\varepsilon)$.  Hence, our problem is reduced to prove that
	$$\limsup_{n \rightarrow \infty} \|W_{n}^{(l_{0})} \nabla w_{n}^{(l)}\|_{L^{\frac{5}{2}}(I\times \mathbb{R}^{3})} ,\longrightarrow 0 \mbox{ as } l\rightarrow \infty$$
	for every fixed $l_{0} \geq 1$.  Since $W_{n}^{(l_{0})} = \sum_{j=1}^{l_{0}} q_{n}^{(j)}$, we have to show that
$$
		\limsup_{n \rightarrow \infty} \|q_{n}^{(j)} \nabla w_{n}^{(l)} \|_{L^{\frac{5}{2}}(I \times \mathbb{R}^{3})} \longrightarrow 0 \mbox{ as } l\rightarrow \infty,
$$
	for every $l_{0} \geq j \geq 1$, i.e., 
	$$\limsup_{n \rightarrow \infty} \Big\|\frac{1}{\sqrt{h_{n}^{(j)}}} \psi^{(j)} \Big(\frac{t-t_{n}^{(j)}}{(h_{n}^{(j)})^{2}},\frac{x-x_{n}^{(j)}}{h_{n}^{(j)}}\Big) \nabla w_{n}^{(l)} \Big\|_{L^{\frac{5}{2}}(I \times \mathbb{R}^{3})} \longrightarrow 0 \mbox{ as }l\rightarrow \infty.$$
	To this end, change variables to get 
	$$\|q_{n}^{(j)} \nabla w_{n}^{(l)} \|_{L^{\frac{5}{2}}(I \times \mathbb{R}^{3})} = \|\psi^{(j)} \nabla \tilde{w}_{n}^{(l)} \|_{L^{\frac{5}{2}}(\mathbb{R}\times \mathbb{R}^{3})}$$
	where
	$$\tilde{w}_{n}^{(l)}(s,y) = \sqrt{h_{n}^{(j)}} w_{n}^{(l)}(t_{n}^{(j)} + (h_{n}^{(j)})^{2}s, x_{n}^{(j)} + h_{n}^{(j)}y).$$
	Observe that, by Lemma \ref{lemma3.3},
	$$\|w_{n}^{(l)}\|_{L^{10}(I \times \mathbb{R}^{3})}  = \|\tilde{w}_{n}^{(l)}\|_{L^{10}(\mathbb{R}\times \mathbb{R}^{3})} \ \mbox{ and } \ \ \|\nabla w_{n}^{(l)}\|_{L^{\frac{10}{3}}(I\times \mathbb{R}^{3})} = \|\nabla \tilde{w}_{n}^{(l)}\|_{L^{\frac{10}{3}}(\mathbb{R}\times \mathbb{R}^{3})}. $$
	By density, we can take $\psi^{(j)} \in C_{0}^{\infty}(\mathbb{R}^{4})$. Using H\"{o}lder's inequality, one sees that it is enough to prove that
	\begin{equation} \label{eq 89}
		\limsup_{n \rightarrow \infty} \|\nabla \tilde{w}_{n}^{(l)}\|_{L^{2}(\mathcal{B})} \longrightarrow 0 \mbox{ as } l\rightarrow \infty,
	\end{equation}
	where $\mathcal{B}$ is a fixed compact of $\mathbb{R} \times \mathbb{R}^{3}$. To this end, let $\nu_{n}^{l}$ be the function defined by 
		\begin{equation*}
	\nu_{n}^{l}(t,x)=	\left\{
		\begin{array}{ll}
		w_{n}^{(l)}(t,x), &\mbox{  if  } (t,x) \in \mathcal{B} ,  \\
			0, & \mbox{otherwise}. 
		\end{array}
		\right.
	\end{equation*}
	Then, $\nu_{n}^{l}$ is a solution for the linear Schrödinger equation, and we get, by Strichartz estimates, that
	\begin{eqnarray*}
		\limsup_{n \rightarrow \infty}\|\psi^{(j)} \nabla \tilde{w}_{n}^{(l)} \|_{L^{\frac{5}{2}}(\mathbb{R}\times \mathbb{R}^{3})} & \leq & 	\limsup_{n \rightarrow \infty}\|\psi^{(j)} \nabla \tilde{w}_{n}^{(l)} \|_{L^{\frac{5}{2}}(\mathcal{B})}\\ 
		& \leq & \limsup_{n \rightarrow \infty}	\|\psi^{(j)} \|_{L^{10}(\mathcal{B})} \|\nabla \tilde{w}_{n}^{(l)}\|_{L^{\frac{10}{3}}(\mathcal{B})}\\
		& \leq &  \limsup_{n \rightarrow \infty}	\|\psi^{(j)} \|_{L^{10}(\mathcal{B})} \|\nabla \tilde{\nu}_{n}^{(l)}\|_{L^{\frac{10}{3}}(\mathbb{R}^{4})} \\
		& \leq &  \limsup_{n \rightarrow \infty}	\|\psi^{(j)} \|_{L^{10}(\mathcal{B})} \|\nabla \tilde{\nu}_{n}^{(l)}\|_{L^{2}(\mathcal{B})} \\
		& \leq &  \limsup_{n \rightarrow \infty}	\|\psi^{(j)} \|_{L^{10}(\mathcal{B})} \|\nabla \tilde{w}_{n}^{(l)}\|_{L^{2}(\mathcal{B})}.
	\end{eqnarray*}
Applying Lemma \ref{lemma4.10} to $\tilde{w}_{n}^{(l)}$ gives
	$$	\|\nabla \tilde{w}_{n}^{(l)}\|_{L^{2}(\mathcal{B})} \leq C(\varepsilon) \|\tilde{w}_{n}^{(l)}\|_{L^{10}(\mathbb{R}\times \mathbb{R}^{3})}  + \varepsilon \|\nabla \tilde{w}_{n}^{(l)}(0)\|_{L^{2}(\mathbb{R}^{3})}.$$
	The invariance of the $L^{10}$ and $\dot{H}^{1}$ norms by the change of variables gives
	$$	\|\nabla \tilde{w}_{n}^{(l)}\|_{L^{2}(\mathcal{B})} \leq C(\varepsilon) \|w_{n}^{(l)}\|_{L^{10}(I\times \mathbb{R}^{3})}  + \varepsilon \|\nabla w_{n}^{(l)}(0)\|_{L^{2}(\mathbb{R}^{3})}.$$
	So, it follows that
	$$\limsup_{l \rightarrow \infty} \|\nabla \tilde{w}_{n}^{(l)}\|_{L^{2}(\mathcal{B})}\leq C \varepsilon.$$
	Since $\varepsilon$ is arbitrary, \eqref{eq 89} holds. This concludes the proof of Proposition \ref{prop4.9}.
\end{proof}
			
\subsection{Proof of the decomposition} We finally prove Theorem \ref{nonlineardec} following the ideas introduced by S. Keraani in \cite{keraani}. 

First of all, note that the nonlinear profile $q_{n}^{(j)}$ is globally well-defined. Indeed, 
for a bounded sequence $(\varphi_{n})\in\dot{H}(\mathbb{R}^{3})$ such that $\limsup_{n \rightarrow \infty}\|\varphi_{n}\|_{\dot{H}^{1}} < \lambda_{0},$ with $\lambda_{0}$ given by Definition \ref{def3}, and $(v_{n})$ (respectively $(u_{n}))$ the sequence of solutions for the linear equation (respectively nonlinear) with initial data $\varphi_{n}$,  Theorem \ref{lineardecomp} provides a decomposition of $v_{n}$ into linear concentrating solutions $p_{n}^{(j)}$. Thus, if we consider $u_{n}$ a sequence of solutions for the nonlinear equation with the same initial data of $v_{n}$ at $t=0$ and $q_{n}^{(j)}$ the nonlinear concentrating solutions associated to $p_{n}^{(j)}$ for every $j\geq 1$,  we have
	$$\|\nabla q_{n}^{(j)}(0)\|^{2}_{L^{2}} = \|\nabla p_{n}^{(j)}(0)\|^{2}_{L^{2}} \leq \limsup_{n \rightarrow \infty} \|\nabla v_{n}(0)\|_{L^{2}}^{2} \leq \|\nabla \varphi_{n}\|_{L^{2}}^{2} \leq \lambda_{0}^{2},$$
due to the almost orthogonality identity \eqref{eq 29}. With this in hand, we are in a position to prove the Theorem \ref{nonlineardec}.
	\begin{proof}[Proof of Theorem \ref{nonlineardec}]
Let us consider 
	$r_{n}^{(l)}(t,x) = u_{n}(t,x) - \sum_{j=1}^{l} q_{n}^{(j)}(t,x) - w_{n}^{(l)}(t,x).$
	We need to prove the convergence
		$$	\limsup_{n \rightarrow \infty} (\|\nabla r_{n}^{(l)}\|_{L^{\frac{10}{3}}([0,T];L^{\frac{10}{3}}(\mathbb{R}^{3}))}  + \|r_{n}^{(l)}\|_{L^{10}([0,T];L^{10}(\mathbb{R}^{3}))} + \|r_{n}^{(l)}\|_{L^{\infty}([0,T];\dot{H}^{1}(\mathbb{R}^{3}))} ) \longrightarrow 0 \mbox{ as } \ l\rightarrow \infty.$$
To this end, let $\beta(z) = |z|^{4}z$, $W_{n}^{(l)} = \sum_{j=1}^{l} q_{n}^{(j)},$ and 
	$$f_{n}^{(l)} = \sum_{j=1}^{l} \beta(q_{n}^{(j)}) - \beta \Big( \sum_{j=1}^{l} q_{n}^{(j)} + w_{n}^{(l)} + r_{n}^{(l)}\Big).$$
		The function $r_{n}^{(l)}$ satisfies
	\begin{equation*}
		\left\{
		\begin{array}{lr}
			i\partial_{t} r_{n}^{(l)} + \Delta r_{n}^{(l)}  = f_{n}^{(l)}, \\
			r_{n}^{(l)}(0)= \sum_{j=1}^{l}(p_{n}^{(j)}-q_{n}^{(j)})(0)=0.
		\end{array}
		\right.
	\end{equation*}
	Introduce the norm
	$$|||g|||_{I} = \|g\|_{L^{10}(I \times \mathbb{R}^{3})} + \|\nabla g\|_{L^{\frac{10}{3}}(I \times \mathbb{R}^{3})} .$$
Note that, by Strichartz estimates, for any $v$ solution of the linear Schr\"{o}dinger equation with initial data $\varphi \in \dot{H}^{1}$, one has
$$
		|||v|||_{I} = \|v\|_{L^{10}_{t}L^{10}_{x}} +\|\nabla v\|_{L^{\frac{10}{3}}_{t}L^{\frac{10}{3}}_{x}}  \leq  C \|\nabla e^{it\Delta }\varphi\|_{L^{2}_{x}}	\leq C \|\nabla \varphi\|_{L^{2}_{x}}.
$$
From now on, $\gamma_{n}^{(l)}(a) = \|\nabla r_{n}^{(l)}(a)\|_{L^{2}_{x}},$ for every $a \in [0, T]$. Applying Lemma \ref{prop4.4} to $r_{n}^{(l)}$ on $I=[0,T]$, we obtain
	\begin{equation}\label{rrrrrrraa} 
	|||r_{n}^{(l)}|||_{I} + \sup_{t \in I} \|\nabla r_{n}^{(l)}(t)\|_{L^{2}} \leq C \Big(  \|\nabla f_{n}^{(l)}\|_{L^{\frac{10}{7}}(I\times \mathbb{R}^{3})}\Big).
	\end{equation}
	We estimate the right-hand side of inequality \eqref{rrrrrrraa} by
	\begin{equation} \label{eq 70}
	\begin{split}
		\|\nabla f_{n}^{(l)}\|_{L^{\frac{10}{7}}(I\times \mathbb{R}^{3})} \leq & \Big\| \nabla \Big( \sum_{j=1}^{l} \beta(q_{n}^{(j)}) - \beta(W_{n}^{(l)})\Big) \Big\|_{L^{\frac{10}{7}}(I\times \mathbb{R}^{3})} \\
		 & \mbox{ } + \Big\|\nabla\Big[\beta(W_{n}^{(l)} + w_{n}^{(l)}) - \beta(W_{n}^{(l)})\Big] \Big\|_{L^{\frac{10}{7}}(I\times \mathbb{R}^{3})}  \\
		& \mbox{ } + \Big\|\nabla\Big[\beta(W_{n}^{(l)} + w_{n}^{(l)} + r_{n}^{(l)}) - \beta(W_{n}^{(l)} + w_{n}^{(l)})\Big] \Big\|_{L^{\frac{10}{7}}(I\times \mathbb{R}^{3})} .
		\end{split}
	\end{equation}
	Furthermore, a combination of Leibnitz formula and H\"{o}lder inequality gives that \eqref{eq 70} can be bounded as
	\begin{equation} \label{eq 71}
		\begin{split}
		&\Big\|\nabla\Big[\beta(W_{n}^{(l)} + w_{n}^{(l)} + r_{n}^{(l)}) - \beta(W_{n}^{(l)} + w_{n}^{(l)})\Big] \Big\|_{L^{\frac{10}{7}}(I\times \mathbb{R}^{3})} \leq\\& C\Big(|||W_{n}^{(l)} + w_{n}^{(l)}|||^{3}_{I} \| W_{n}^{(l)} + w_{n}^{(l)}\|_{L^{10}(I \times \mathbb{R}^{3})} |||r_{n}^{(l)}|||_{I} + \sum_{\alpha =2}^{5}|||W_{n}^{(l)} + w_{n}^{(l)}|||^{5-\alpha}_{I} |||r_{n}^{(l)}|||^{\alpha}_{I}\Big).
		\end{split}
	\end{equation}
	Denote
	\begin{equation} \label{eq 72}
		\delta_{n}^{(l)} = \Big\|\nabla\Big[\beta(W_{n}^{(l)} + w_{n}^{(l)}) - \beta(W_{n}^{(l)})\Big] \Big\|_{L^{\frac{10}{7}}(I\times \mathbb{R}^{3})} +  \Big\| \nabla \Big( \sum_{j=1}^{l} \beta(q_{n}^{(j)}) - \beta(W_{n}^{(l)})\Big) \Big\|_{L^{\frac{10}{7}}(I\times \mathbb{R}^{3})}.
	\end{equation}
	Using \eqref{eq 70}, \eqref{eq 71} and \eqref{eq 72} into \eqref{rrrrrrraa}, it follows that
		\begin{equation}\label{eq 73}
			\begin{split}
		|||r_{n}^{(l)}|||_{I} + \sup_{t \in I} \|\nabla r_{n}^{(l)}(t)\|_{L^{2}}  \leq & C \Big(  \delta_{n}^{(l)} + \sum_{\alpha =2}^{5}|||W_{n}^{(l)} + w_{n}^{(l)}|||^{5-\alpha}_{I} |||r_{n}^{(l)}|||^{\alpha}_{I}\\
		& \mbox{ } +  |||W_{n}^{(l)} + w_{n}^{(l)}|||^{3}_{I} \| W_{n}^{(l)} + w_{n}^{(l)}\|_{L^{10}(I \times \mathbb{R}^{3})} |||r_{n}^{(l)}|||_{I} \Big).
			\end{split}
	\end{equation}
In view of bound \eqref{eq 73} and Proposition \ref{prop4.5}, we get
\begin{equation} \label{eq 77}
	|||r_{n}^{(l)}|||_{I} + \sup_{t \in I} \|\nabla r_{n}^{(l)}(t)\|_{L^{2}} 
	\leq  C \Big( \gamma_{n}^{(l)}(a) + \delta_{n}^{(l)} + \sum_{\alpha =2}^{5}|||r_{n}^{(l)}|||^{\alpha}_{I} + \| W_{n}^{(l)} + w_{n}^{(l)}\|_{L^{10}(I \times \mathbb{R}^{3})} |||r_{n}^{(l)}|||_{I} \Big)
\end{equation}
for all $l\geq1$ and $n \geq N(l)$.
Applying \eqref{eq 77} on an interval $I_{n}^{i}$,  provided by Proposition \ref{prop37}, one gets 
\begin{equation*}
	|||r_{n}^{(l)}|||_{I_{n}^{i}} + \sup_{t \in I_{n}^{i}} \|\nabla r_{n}^{(l)}(t)\|_{L^{2}} 
	\leq  C \Big( \gamma_{n}^{(l)}(a_{n}^{i}) + \delta_{n}^{(l)} + \sum_{\alpha =2}^{5}|||r_{n}^{(l)}|||^{\alpha}_{I_{n}^{i}} + 2\varepsilon |||r_{n}^{(l)}|||_{I_{n}^{i}} \Big),
\end{equation*}
for all $l\geq1$ and $n\geq N(l)$. So, choosing $\varepsilon$ so that $C \epsilon < \frac{1}{4}$, we obtain
\begin{equation} \label{eq 93}
	|||r_{n}^{(l)}|||_{I_{n}^{i}} + \sup_{t \in I_{n}^{i}} \|\nabla r_{n}^{(l)}(t)\|_{L^{2}} 
	\leq  C \Big( \gamma_{n}^{(l)}(a_{n}^{i}) + \delta_{n}^{(l)} + \sum_{\alpha =2}^{5}|||r_{n}^{(l)}|||^{\alpha}_{I_{n}^{i}}\Big).
\end{equation}
Now, we use an iterative process to achieve the result. For $i=1$, \eqref{eq 93} reads
$$
	|||r_{n}^{(l)}|||_{I_{n}^{1}} + \sup_{t \in I_{n}^{1}} \|\nabla r_{n}^{(l)}(t)\|_{L^{2}} 
	\leq  C \Big( \gamma_{n}^{(l)}(0) + \delta_{n}^{(l)} + \sum_{\alpha =2}^{5}|||r_{n}^{(l)}|||^{\alpha}_{I_{n}^{1}}\Big).
$$
Recall that, in view of the definition of $\gamma_{n}^{(l)}$, we have
\begin{equation} \label{eq 95}
	\gamma_{n}^{(l)}(0) = \|\nabla r_{n}^{(l)}(0)\|_{L^{2}} = \Big\|\nabla \Big(\sum_{j=1}^{l}(p_{n}^{(j)} - q_{n}^{(j)})(0)\Big)\Big\|_{L^{2}} =0 
\end{equation}
for all $l \geq 1$. Due to \eqref{eq 83} and \eqref{eq 95}, it follows that, for all large enough $l$, there exists $N(l)$, such that if $n \geq N(l)$, then
$
	\gamma_{n}^{(l)}(0) + \delta_{n}^{(l)} \leq a_{0}(c).
$
 Denote by $M_{n}^{l}$ the function defined on $I_{n}^{1}=[0, a_{n}^{1}]$
by
$$M_{n}^{l}(s) = |||r_{n}^{(l)}|||_{[0,s]} + \frac{s}{a_{n}^{1}} \sup_{t \in [0,s]} \|\nabla r_{n}^{(l)}(t)\|_{L^{2}}.$$
It is clear that \eqref{eq 93} still holds if we replace $I_{n}^{1}=[0, a_{n}^{1}]$ by $[0, s]$ for every
$s\in I_{n}^{1}$ .Thus,
$$M_{n}^{l}(s) \leq C \Big( \gamma_{n}^{(l)}(0) + \delta_{n}^{(l)} + \sum_{\alpha =2}^{5} (M_{n}^{l})^{\alpha}(s)\Big).$$
Hence, the function $M_{n}^{l} $ satisfies the conditions of Lemma \ref{lemma4.7} for large $l$ and $n \geq N(l)$. So
\begin{equation} \label{eq 97}
	M_{n}^{l}(a_{n}^{1}) = |||r_{n}^{(l)}|||_{I_{n}^{1}} + \sup_{t \in I_{n}^{1}} \|\nabla r_{n}^{(l)}(t)\|_{L^{2}} \leq 2c(\gamma_{n}^{(l)}(0) + \delta_{n}^{(l)}),
\end{equation}
for large $l$ and $n \geq N(l)$. 
Using \eqref{eq 83}, \eqref{eq 95} and \eqref{eq 97}, one obtains
$$\limsup_{n \rightarrow \infty}\Big( |||r_{n}^{(l)}|||_{I_{n}^{1}} + \sup_{t \in I_{n}^{1}}\|\nabla r_{n}^{(l)}(t)\|_{L^{2}}\Big)\longrightarrow 0 \mbox{ as } l\rightarrow \infty.$$
On the other hand, we have
$$\gamma_{n}^{(l)}(a_{n}^{1}) \leq \sup_{t \in I_{n}^{1}} \|\nabla r_{n}^{(l)}(t)\|_{L^{2}},$$
which gives
$$\limsup_{n \rightarrow \infty} \gamma_{n}^{(l)}(a_{n}^{1}) \longrightarrow 0 \mbox{ as } l\rightarrow \infty.$$
This allows us to repeat the same argument on the interval $I_{n}^{2}=[a_{n}^{1} , a_{n}^{2}]$. We get
$$	 |||r_{n}^{(l)}|||_{I_{n}^{2}} + \sup_{t \in I_{n}^{2}} \|\nabla r_{n}^{(l)}(t)\|_{L^{2}} \leq c(\gamma_{n}^{(l)}(a_{n}^{1}) + \delta_{n}^{(l)}).$$
Thus
$$\limsup_{n \rightarrow \infty}\Big( |||r_{n}^{(l)}|||_{I_{n}^{2}} + \sup_{t \in I_{n}^{2}}\|\nabla r_{n}^{(l)}(t)\|_{L^{2}}\Big)\longrightarrow 0 \mbox{ as } l\rightarrow \infty.$$
Iterating this process, we get
$$\limsup_{n \rightarrow \infty}\Big( |||r_{n}^{(l)}|||_{I_{n}^{i}} + \sup_{t \in I_{n}^{i}}\|\nabla r_{n}^{(l)}(t)\|_{L^{2}}\Big)\longrightarrow 0 \mbox{ as } l\rightarrow \infty$$
for all $1 \leq i \leq p$.
Since $p$ does not depend on $n$ and $l$, we get
$$\limsup_{n \rightarrow \infty}\Big( |||r_{n}^{(l)}|||_{[0,T]} + \sup_{t \in [0,T]}\|\nabla r_{n}^{(l)}(t)\|_{L^{2}}\Big)\longrightarrow 0 \mbox{ as }  l\rightarrow \infty,$$
which concludes the proof.
\end{proof}
	\subsection{Profile decomposition of the limit energy}
For $u$ a solutions of the nonlinear Schr\"{o}dinger equation \eqref{eq 100}, we denote its nonlinear energy density by
$$\mathcal{E}(t)(t,x) = \frac{1}{2} |\nabla u(t,x)|^{2} + \frac{1}{6}|u(t,x)|^{6}.$$
For a sequence $u_{n}$ of solution with initial data bounded in $\dot{H}^{1}(\mathbb{R}^{3})$, the corresponding nonlinear energy density is bounded in $L^{\infty}([0,T],L^{1})$ and so in the space of bounded measures on $[0 , T ] \times \mathbb{R}^{3}$. This allows us to consider, up to a subsequence, its weak* limit. 
The following theorem shows that the energy limit follows the same profile decomposition as $u_{n}$. This will be a crucial result that will allow the use of a microlocal defect measure on each profile and then apply the linearization argument.

\begin{theorem} \label{energydec} Let $u_{n}$ be a sequence of solutions to
	\begin{equation} \label{eq 100}
		i\partial_{t} u_{n} + \Delta u_{n} -|u_{n}|^{4}u_{n} = 0,
	\end{equation}
	with $u_{n}(0)$  convergent to 0 in ${L}^{2}(\mathbb{R}^{3})$.
	The nonlinear energy density limit of $u_{n}$ (up to a subsequence) is
	$$\mathcal{E}(t,x) = \sum_{j=1}^{\infty}  e^{(j)}(t,x) + e_{f}(t,x),$$
	where $ e^{(j)}$ is the energy density limit of $q_{n}^{(j)}$ (following the notation of Theorem \ref{nonlineardec}) and
	$$e_{f} = \lim_{l \rightarrow \infty} \lim_{n\rightarrow \infty} e(w_{n}^{(l)}),$$
	where the limits are considered up to a subsequence and in the weak* sense.
		In particular, $e_{f}$ can be written as
	$$e_{f}(t,x) = \int_{\xi \in S^{2}} \mu(t,x,d\xi).$$
	Moreover, $\mathcal{E}$ is also the limit of the linear energy density
	$$\mathcal{E}_{lim}(u_{n})(t,x) = \frac{1}{2} |\nabla u_{n}(t,x)|^{2}.$$
\end{theorem}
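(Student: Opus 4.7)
The plan is to push the nonlinear profile decomposition $u_n = \sum_{j=1}^l q_n^{(j)} + w_n^{(l)} + r_n^{(l)}$ of Theorem \ref{nonlineardec} through the quadratic and sextic densities and track cross terms carefully. After expanding $|\nabla u_n|^2$ and $|u_n|^6$, three kinds of unwanted contributions must be shown to vanish weakly as measures on $[0,T]\times\mathbb{R}^3$: the mixed-profile pieces $\nabla q_n^{(j)}\cdot\overline{\nabla q_n^{(k)}}$ and the analogous sextic interactions for $j\neq k$, which are controlled by the pairwise orthogonality of scale-cores through a change-of-variables argument modeled on Lemma \ref{lemma3.12}; the couplings between $w_n^{(l)}$ and each $q_n^{(j)}$, which vanish thanks to Lemma \ref{lemma3.16} since $D_{h_n}^{(j)}w_n^{(l)}\rightharpoonup 0$; and the $r_n^{(l)}$ contributions, which disappear in the double limit by the smallness statement \eqref{eq 68}, combined with H\"older's inequality and the global control provided by Proposition \ref{prop4.5}.

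Next I would identify the weak-$\ast$ limit of the nonlinear density for a single profile $q_n^{(j)}(t,x)=h_n^{-1/2}\psi^{(j)}((t-t_n)/h_n^2,(x-x_n)/h_n)$ by explicit scaling. Testing against a spacetime function $\varphi\in C_c$ and changing variables shows that $|q_n^{(j)}|^6$ contributes a factor of order $h_n^2\|\psi^{(j)}\|_{L^6_{t,x}}^6\to 0$, while $|\nabla q_n^{(j)}|^2$ produces a spatial Dirac mass at $x_\infty^{(j)}$ weighted by $\lim_{s\to\pm\infty}\|\nabla\psi^{(j)}(s)\|_{L^2}^2$; by energy conservation for the nonlinear profile equation and the decay of the $L^6$-mass of $\psi^{(j)}$ under scattering, both one-sided limits coincide and equal $2E(\psi^{(j)})$, which identifies $e^{(j)}$.

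For the linear piece $w_n^{(l)}$ I would invoke the theory of microlocal defect measures. The hypothesis $u_n(0)\to 0$ in $L^2$, combined with the scaling identity $\|p_n^{(j)}(0)\|_{L^2}=h_n^{(j)}\|\varphi^{(j)}(-t_n^{(j)}/(h_n^{(j)})^2)\|_{L^2}\to 0$ for concentrating initial data, yields $w_n^{(l)}(0)\to 0$ in $L^2$; $L^2$-conservation for the linear Schr\"odinger equation propagates this uniformly in time. Together with the boundedness of $w_n^{(l)}$ in $L^\infty\dot H^1$, this places $\nabla w_n^{(l)}$ within the Gerard--Tartar framework, so to a subsequence one associates a nonnegative Radon measure $\mu^{(l)}$ on $[0,T]\times\mathbb{R}^3\times S^2$ such that $\tfrac12|\nabla w_n^{(l)}|^2\rightharpoonup\int_{S^2}\mu^{(l)}(t,x,d\xi)$. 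A diagonal extraction as $l\to\infty$ then yields the asserted $\mu$ and the representation $e_f(t,x)=\int_{S^2}\mu(t,x,d\xi)$; the sextic counterpart $|w_n^{(l)}|^6$ vanishes weakly because $\|w_n^{(l)}\|_{L^\infty L^6}\to 0$ by \eqref{eq 28}.

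The coincidence $\mathcal{E}=\lim\tfrac12|\nabla u_n|^2$ then follows, since every term generated by $|u_n|^6$ in the expansion converges weakly to zero. The main obstacle will be managing the interchange of the double limit $n\to\infty$ then $l\to\infty$ in the presence of the infinite sum $\sum_j e^{(j)}$: this requires the uniform-in-$l$ Strichartz control of Proposition \ref{prop4.5}, the summability of profile energies from the almost-orthogonality \eqref{eq 29}, and careful bookkeeping of how each summand behaves as a spacetime measure.
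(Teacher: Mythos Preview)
Your plan follows the paper's architecture closely, but there is a real gap in the treatment of the cross terms $\nabla q_n^{(j)}\cdot\overline{\nabla w_n^{(l)}}$ and $\nabla q_n^{(j)}\cdot\overline{\nabla q_n^{(k)}}$ for $j\neq k$. Lemma~\ref{lemma3.16} (and likewise Lemma~\ref{lemma3.12}/\ref{lemma3.14}) gives $D_{h_n}^{(j)}w_n^{(l)}\rightharpoonup 0$ only for each \emph{fixed} rescaled time $s$. When you test the spacetime density against $\varphi(t,x)$ and change variables, the rescaled time $s=(t-t_n^{(j)})/(h_n^{(j)})^2$ ranges over an interval of length $T/(h_n^{(j)})^2$, and nothing in your argument lets you pass to the limit inside that integral. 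The paper's remedy is to first invoke Theorem~\ref{teo4.1}: after excising an arbitrarily small neighbourhood of $t_\infty^{(j)}$, the nonlinear concentrating solution $q_n^{(j)}$ is replaced, up to an error vanishing in $L^\infty\dot H^1$, by a \emph{linear} concentrating solution sharing the same scale-core. Once both factors are solutions of the free Schr\"odinger equation, the $\dot H^1$ pairing $\langle\nabla p_n^{(j)}(t),\nabla w_n^{(l)}(t)\rangle_{L^2}$ is conserved in $t$, so it suffices to evaluate it at a single time (e.g.\ $s=0$), where Lemma~\ref{lemma3.16} applies directly. The same reduction through Theorem~\ref{teo4.1} is needed before Lemma~\ref{lemma3.14} can kill the profile--profile cross terms.

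Two secondary remarks. First, the paper handles the sextic density much more cheaply than by expanding $|u_n|^6$: since $L^2$-mass is conserved under \eqref{eq 100} and $u_n(0)\to 0$ in $L^2$, interpolation with the uniform $L^{10}_{t,x}$ bound gives $\|u_n\|_{L^6([0,T]\times\mathbb{R}^3)}\to 0$, so $|u_n|^6\rightharpoonup 0$ as a measure and $\mathcal E=\lim\tfrac12|\nabla u_n|^2$ follows at once. Your scaling claim that $|q_n^{(j)}|^6$ is $O\big((h_n^{(j)})^2\|\psi^{(j)}\|_{L^6_{t,x}}^6\big)$ is not justified, since $\psi^{(j)}\in L^6(\mathbb{R}^4)$ is not among the a priori bounds. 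Second, the explicit identification of $e^{(j)}$ as a weighted Dirac mass is not asserted by the theorem and is unnecessary for the proof; the statement only requires that $e^{(j)}$ be the weak-$\ast$ limit of the energy density of $q_n^{(j)}$.
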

\begin{proof} The proof of this result is a direct consequence of Theorem \ref{nonlineardec}. Indeed, noting that $\|u_{n}\|_{L^{10}([0,T]\times \mathbb{R}^{3})} \leq C$,  it follows, by an interpolation argument, that $$\|u_{n}\|_{L^{2}([0,T]\times \mathbb{R}^{3})} \rightarrow 0 \implies \|u_{n}\|_{L^{6}([0,T]\times \mathbb{R}^{3})} \rightarrow 0,\quad \text{as} \quad n\rightarrow \infty.$$ 
Therefore, $\mathcal{E}$ is the limit of $b(u_{n},u_{n})$, with  $b(f,g) = \nabla f(t,x) \cdot \overline{\nabla g(t,x)}.$

	Now, we have to compute the limit of $b(u_{n},u_{n})$ using the decomposition of Theorem \ref{nonlineardec}. We set  $s_{n}^{(l)} = \sum_{j=1}^{l} q_{n}^{(j)}$, for $l \in \mathbb{N}$, and so
	$$b(u_{n},u_{n}) = b(s_{n}^{(l)},s_{n}^{(l)}) + b(w_{n}^{(l)},w_{n}^{(l)}) + 2b(s_{n}^{(l)},w_{n}^{(l)}) + 2b(u_{n},r_{n}^{(l)}) - b(r_{n}^{(l)},r_{n}^{(l)}).$$
	The convergence \eqref{eq 68} gives $$\limsup_{n \rightarrow \infty} \|2b(u_{n},r_{n}^{(l)}) -b(r_{n}^{(l)},r_{n}^{(l)})\|_{L^{1}([0,T]\times \mathbb{R}^{3})}\to0 \mbox{ as } l \rightarrow \infty.$$  So, defining $e_{r}^{(l)} = w* \lim_{n\rightarrow \infty}(2b(u_{n},r_{n}^{(l)}) -b(r_{n}^{(l)},r_{n}^{(l)}))$, we have 
	$$e_{r}^{(l)} \longrightarrow 0 \mbox{ as } l\rightarrow \infty.$$
	Let $\varphi(t,x) = \varphi_{1}(t)\cdot\varphi_{2} (x) \in C_{0}^{\infty}((0 ,T) \times \mathbb{R}^{3})$. It remains to estimate
	$$\int_{0}^{T} \int_{\mathbb{R}^{3}} \varphi b(s_{n}^{(l)},w_{n}^{(l)}) = \sum_{j=1}^{l} \int_{0}^{T}\varphi_{1} \int_{\mathbb{R}^{3}} \varphi_{2} b(q_{n}^{(j)},w_{n}^{(l)}) $$ for each fixed $l$. 
	To this end, first note that, since $b(q_{n}^{(j)},w_{n}^{(l)})$ is bounded in $L^{\infty}((0,T),L^{1}( \mathbb{R}^{3})),$ we can assume, up to an arbitrary small error, that $\varphi_{1}$ is supported in $\{t < t_{\infty}^{(j)}\}$ or $ \{t > t_{\infty}^{(j)}\}$ (replace $\varphi_{1}$ by $(1-\Psi)(t)\varphi_{1}$ with $\Psi(t_{\infty}^{(j)}) = 1$ and $\|\Psi\|_{L^{1}(0,T)}$ small). On each interval, Theorem \ref{teo4.1} allows to replace $q_{n}^{(j)}$ by a linear concentrating solution. Then, by Lemma \ref{lemma3.16},  we get the weak convergence of $b(s_{n}^{(l)},w_{n}^{(l)})$ to zero, for each fixed $l$. Indeed, by Lemma \ref{lemma3.16}, $D_{h_{n}}^{(j)} w_{n}^{(l)} \rightharpoonup 0$, $1 \leq j \leq l$, which means,
	$$\sqrt{h_{n}^{(j)}} w_{n}^{(l)}(t_{n}^{(j)} + (h_{n}^{(j)})^{2}s, x_{n}^{(j)} + h_{n}^{(j)}y) \rightharpoonup 0.$$ It is enough to compute $\int_{\mathbb{R}^{3}} \nabla_{x}w_{n}^{(l)}(t,x) \cdot \nabla_{x} p_{n}^{(j)}(t,x) \ dx.$ We have
	\begin{equation*}
	\begin{split}
	&	\int_{\mathbb{R}^{3}} \nabla_{x}w_{n}^{(l)}(t,x) \cdot \nabla_{x} p_{n}^{(j)}(t,x) \ dx =  \int_{\mathbb{R}^{3}}  \nabla_{x}w_{n}^{(l)}(t,x) \cdot \nabla_{x} \frac{1}{\sqrt{h_{n}^{(j)}}} \varphi^{(j)} \Big(\frac{t-t_{n}^{(j)}}{(h_{n}^{(j)})^{2}}, \frac{x-x_{n}^{(j)}}{h_{n}^{(j)}}\Big) \ dx \\
		& =   \int_{\mathbb{R}^{3}}  \nabla_{x}w_{n}^{(l)}(t_{n}^{(j)} + (h_{n}^{(j)})^{2}s, x_{n}^{(j)} + h_{n}^{(j)}y) \cdot \nabla_{x} \frac{1}{\sqrt{h_{n}^{(j)}}} \varphi^{(j)}(s,y) \ (h_{n}^{(j)})^{3} dy \\
		& =  \int_{\mathbb{R}^{3}}  \nabla_{y} \sqrt{h_{n}^{(j)}}w_{n}^{(l)}(t_{n}^{(j)} + (h_{n}^{(j)})^{2}s, x_{n}^{(j)} + h_{n}^{(j)}y) \cdot \nabla_{y}  \varphi^{(j)}(s,y) \ dy \longrightarrow 0,
		\end{split}
	\end{equation*}
	as $n \rightarrow \infty$.
	
	Lemma \ref{lemma3.14} and the orthogonality of the cores of concentration yields
	$D_{h_{n}}^{(j)}p_{n}^{(j')}\rightharpoonup 0$,
	for $ j \neq j'$ and $p_{n}^{(j')}$ a concentrating solution at rate $[\underline{h}^{(j')},\underline{x}^{(j')},\underline{t}^{(j')}]$. Then,
	the same argument as before gives
	$$b(s_{n}^{(l)},s_{n}^{(l)}) \rightharpoonup \sum_{j=1}^{l} e^{(j)}.$$
So, we have proved that for any $l \in \mathbb{N}$,
	$$b(u_{n},u_{n}) \rightharpoonup \mathcal{E} = \sum_{j=1}^{l} e^{(j)} + e_{w}^{(l)} + e_{r}^{(l)} \mbox{ as } n\rightarrow \infty,$$
	where $e_{w}^{(l)}$ is the weak* limit of $b(w_{n}^{(l)},w_{n}^{(l)})$ and $e_{r}^{(l)}$ satisfies $e_{r}^{(l)} \rightarrow 0$ as  $l\rightarrow \infty.$ Since $e_{w}^{(l)}$ is the weak* limit of a sequence of solutions to the linear Schr\"{o}dinger equation with initial data convergent to zero in $L^{2}$, we can use Proposition \ref{prop6.1} to conclude that $b(w_{n}^{(l)},w_{n}^{(l)})$ converges (locally) to a positive measure $e_{f}$. Hence,
	$$\mathcal{E}= \sum_{j=1}^{\infty} e^{(j)} + e_{f}$$
	and the result is proven. 
\end{proof}

\section{Exponential stabilizability: Proof of Theorem  \ref{mainteo}} \label{sec4}
It is well-known in control theory that the energy associated with the system \eqref{eq 4} is exponentially stable if the observability inequality 
\begin{equation}\label{obser}
	E(u)(0) \leq C \int_{0}^{T}\int_{\mathbb{R}^{3}}|(1-\Delta)^{-\frac{1}{2}}a\partial_{t}u|^{2} \ dx dt
\end{equation}
is verified.  Here, we consider $a \in C^{\infty}(\mathbb{R}^{3})$ satisfying \eqref{eq 3}. So, $\omega : = \big(\mathbb{R}^{3} \backslash B_{R}(0)\big)$ satisfies the following geometric control condition: 
\begin{assumption}\label{assum5.1}
	There exists $T_{0} > 0$ such that every geodesic travelling at speed 1 meets $\omega$ in a time $t < T_{0}$.
\end{assumption}

Roughly speaking, the proof of the stabilizability consists of the analysis of possible sequences contradicting the observability estimate.
\subsection{Linearization argument}   The first step of the proof is to show that such a sequence is linearizable because its behavior is close to solutions of the linear equation.

\begin{lemma} \label{lemma5.3} Let $T >T_{0}$ and $u_{n}$ be a sequence of solutions to
	\begin{equation} \label{eq 101}
		\left\{
		\begin{array}{ll}
			i\partial_{t} u_{n} + \Delta u_{n} - u_{n} - |u_{n}|^{4}u_{n}  - a(1-\Delta)^{-1}a\partial_{t} u_{n} =0, &\ \mbox{ on } [0,T] \times \mathbb{R}^{3}, \\
			u_{n}(0)= u_{0,n},& \ \mbox{ in } {H}^{1}(\mathbb{R}^{3}) 
		\end{array}
		\right.
	\end{equation}
	satisfying
	\begin{equation} \label{eq 102}
		u_{0,n} \rightarrow0 \mbox{ in } {L}^{2}(\mathbb{R}^{3})  \text{ as } n\rightarrow \infty,
	\end{equation}
	and
	\begin{equation} \label{eq 103}
		\int_{0}^{T} \int_{\mathbb{R}^{3}} |(1-\Delta)^{-\frac{1}{2}}a \partial_{t}u_{n}|^{2} \ dxdt \longrightarrow 0 \mbox{ as }n\rightarrow \infty.
	\end{equation}
	 Consider the profile decomposition according to Theorem \ref{nonlineardec} in a subinterval $[t_{0}, t_{0} + L] \subset [0, T ]$ with $T_{0} < L $. Then, for any $0 < \varepsilon < L - T_{0}$, this decomposition does not contain any nonlinear concentrating solution with $t_{\infty}^{(j)} \in [t_{0} ,t_{0} + \varepsilon]$ and $u_{n}$ is linearizable in $[t_{0} ,t_{0} + \varepsilon ],$ i.e.,
	 $$\|u_{n} - v_{n}\|_{L^{10}([t_{0},t_{0} + \varepsilon]\times \mathbb{R}^{3})} + \|u_{n} -v_{n}\|_{L^{\infty}([t_{0},t_{0} + \varepsilon]; {H}^{1}(\mathbb{R}^{3}))} \longrightarrow 0 \mbox{ as } n\rightarrow \infty,$$
	 where $v_{n}$ is the solution of
	 \begin{equation*}
	 	\left\{
	 	\begin{array}{ll}
	 		i\partial_{t} v_{n} + \Delta v_{n} - v_{n}=0, &\ \mbox{ on } [0,T] \times \mathbb{R}^{3}, \\
	 		v_{n}(0)= u_{0,n},& \ \mbox{ in }  {H}^{1}(\mathbb{R}^{3}).
	 	\end{array}
	 	\right.
	 \end{equation*}
\end{lemma}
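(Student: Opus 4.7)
The plan is to apply the nonlinear profile decomposition of Theorem \ref{nonlineardec} to the sequence $u_n$ restricted to $[t_0,t_0+L]\subset[0,T]$. The energy identity for \eqref{eq 101} combined with $\|u_{0,n}\|_{L^2}\to 0$ first ensures that $u_n(t_0)$ stays bounded in $H^1(\mathbb{R}^3)$, so that, after extraction, one may write for every $l\in\mathbb{N}^*$
\[
u_n(t,x) = \sum_{j=1}^{l} q_n^{(j)}(t,x) + w_n^{(l)}(t,x) + r_n^{(l)}(t,x),
\]
with nonlinear concentrating profiles $q_n^{(j)}$ associated to $[\varphi^{(j)},\underline{h}^{(j)},\underline{x}^{(j)},\underline{t}^{(j)}]$ and remainders satisfying \eqref{eq 68}. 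I will then argue by contradiction, assuming some index $j_0$ with $t_\infty^{(j_0)}\in[t_0,t_0+\varepsilon]$.

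The next step will be to translate the damping hypothesis \eqref{eq 103} into information on the microlocal defect measure of $q_n^{(j_0)}$. Solving \eqref{eq 101} for $\partial_t u_n$ and applying $(1-\Delta)^{-1/2}a$, the $L^2$-smallness of $(1-\Delta)^{-1/2}a\partial_t u_n$ should translate, up to commutators and lower-order terms controlled by \eqref{eq 102}, into the vanishing of $\nabla(a u_n)$ in $L^2([t_0,t_0+L]\times\mathbb{R}^3)$. Passing to the nonlinear energy density limit via Theorem \ref{energydec} and exploiting the mutual orthogonality of the scale-cores, this will force the contribution $e^{(j_0)}(t,x)=\int_{S^2}\mu^{(j_0)}(t,x,d\xi)$ to vanish above $[t_0,t_0+L]\times\omega$.

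The third step will invoke the propagation theorem for the Schr\"odinger equation recalled in Appendix \ref{apB}, following \cite{DeGeLe,DeLeZu}. After the rescaling $(t,x)\mapsto(t_n^{(j_0)}+(h_n^{(j_0)})^2 s, x_n^{(j_0)}+h_n^{(j_0)}y)$, the bicharacteristics of the Schr\"odinger symbol become straight lines in $\mathbb{R}^3$ parametrised at unit speed, i.e. geodesics in the sense of Assumption \ref{assum5.1}. Since $t_\infty^{(j_0)}\in[t_0,t_0+\varepsilon]$ and $L-\varepsilon>T_0$, the propagation window $[t_0,t_0+L]$ is long enough to guarantee that every ray issued from $(x_\infty^{(j_0)},\xi)$, $\xi\in S^2$, meets $\omega$; invariance of $\mu^{(j_0)}$ along the flow then forces $\mu^{(j_0)}\equiv 0$ identically, contradicting the non-triviality of the concentration profile $\varphi^{(j_0)}$. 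Hence no $j_0$ with $t_\infty^{(j_0)}\in[t_0,t_0+\varepsilon]$ can exist.

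Once this is established, Theorem \ref{teo4.1} applies to each $q_n^{(j)}$ on $[t_0,t_0+\varepsilon]$: because $t_\infty^{(j)}$ lies outside this interval, for large $\Lambda$ and $n$ the whole subinterval is contained either in $I_n^{1,\Lambda}$ or in $I_n^{3,\Lambda}$, so $q_n^{(j)}$ agrees asymptotically in $L^{10}\cap L^\infty\dot H^1$ with a linear concentrating solution. Reassembling these linear pieces together with $w_n^{(l)}$ reconstructs the linear flow $v_n$ (up to the harmless phase $e^{\pm it}$ accounting for the $-u_n$ term); combining with \eqref{eq 68} for $r_n^{(l)}$ and \eqref{eq 28} for $w_n^{(l)}$ yields the desired
\[
\|u_n-v_n\|_{L^{10}([t_0,t_0+\varepsilon]\times\mathbb{R}^3)}+\|u_n-v_n\|_{L^\infty([t_0,t_0+\varepsilon];H^1(\mathbb{R}^3))}\longrightarrow 0.
\]
The hard part will be the second step: the clean extraction of the vanishing of each $\mu^{(j)}$ on $\omega$ from the single global bound \eqref{eq 103}, which requires handling the non-local operator $(1-\Delta)^{-1/2}$ at the scale $h_n^{(j)}$ of each profile and verifying that the damping contributions from distinct scale-cores do not interfere, using crucially their pairwise orthogonality.
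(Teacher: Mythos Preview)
Your outline has the right architecture but contains a genuine gap in the very first step. Theorem~\ref{nonlineardec} is proved for solutions of $i\partial_t u+\Delta u-|u|^4u=0$, not for the damped perturbed equation~\eqref{eq 101}. You cannot decompose $u_n$ directly. The paper fixes this by introducing the \emph{undamped} nonlinear solution $\tilde u_n$ with the same Cauchy data, uses Lemma~\ref{lemma4.15} (which relies precisely on \eqref{eq 103}) to show $u_n-\tilde u_n\to 0$ in $L^{10}\cap L^\infty H^1$, then sets $w_n=e^{it}\tilde u_n$ to land on the standard critical NLS, to which Theorems~\ref{nonlineardec} and~\ref{energydec} apply. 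Without this detour your decomposition formula is not justified, and the final ``reassembling'' step also breaks: the linear pieces do not rebuild $v_n$; instead one shows $\|w_n\|_{L^{10}([t_0,t_0+\varepsilon]\times\mathbb{R}^3)}\to 0$, so the nonlinearity $|\tilde u_n|^4\tilde u_n$ becomes negligible in Strichartz norms, whence $\tilde u_n$ is linearizable, and finally $u_n$ via Lemma~\ref{lemma4.15} again.

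Second, the ``hard part'' you anticipate---extracting $\mu^{(j)}\equiv 0$ on $\omega$ for each $j$ from the single bound \eqref{eq 103} while handling $(1-\Delta)^{-1/2}$ at scale $h_n^{(j)}$---is bypassed entirely in the paper. After Claim~2 establishes $u_n\to 0$ in $L^2_{loc}((0,L);H^1_{loc}(\omega))$ (hence the same for $w_n$), Theorem~\ref{energydec} gives the energy density limit as a sum of \emph{nonnegative} measures $e^{(j)}+e_f$; vanishing of the total on $(0,L)\times\omega$ forces each $e^{(j)}$ to vanish there, with no scale analysis needed. The propagation step is then applied not after your rescaling but directly to the \emph{linear} concentrating solution $p_n^{(j)}$ approximating $q_n^{(j)}$ on $(t_\infty^{(j)},L]$ (Theorem~\ref{teo4.1}); Proposition~\ref{prop6.1} and the geometric control on an interval of length $>T_0$ give $\mu^{(j)}\equiv 0$, and an explicit computation of the measure of $p_n^{(j)}(t_\infty^{(j)})$ then forces $\varphi^{(j)}=0$.
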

\begin{proof}
	With no loss of generality, we will consider the interval $[0,L]$ instead of $[t_{0},t_{0} + L]$ to keep the notation simple.
	
	\vspace{0.2cm}
	
\noindent\textbf{Claim 1:} The sequence $u_{n}$ is convergent to 0 in $L^{2}([0,T] \times \mathbb{R}^{3})$. 
	
		\vspace{0.2cm}
Indeed, multiplying the first equation of \eqref{eq 101} by $\overline{u}_{n}$ and taking its imaginary part, we obtain the estimate 
	\begin{eqnarray*}
		\frac{1}{2}\|u_{n}(t)\|_{L^{2}} \leq \frac{1}{2} \|u_{n}(0)\|_{L^{2}} + \int_{0}^{t} \|a(1-\Delta)^{-1}a\partial_{t}u_{n}\|_{L^{2}} \|u_{n}\|_{L^{2}} \ ds,
	\end{eqnarray*}
	and Claim 1 follows due to the convergences \eqref{eq 102} and \eqref{eq 103}.
\vspace{0.2cm}

\noindent\textbf{Claim 2:} The sequence $u_{n}$ is convergent to 0 in $ L^{2}_{loc}((0,L); {H}^{1}_{loc} (\omega))$. 

\vspace{0.2cm}

From now on, we will use several times the operator $Jv = (1-ia(x)(1 -\Delta )^{-1} a(x))v$ as a pseudodifferential operator of order 0 (see Theorem \ref{JJJ} for details about the properties of this operator). 

Since, by hypothesis, 
	$$\|(1-\Delta)^{-\frac{1}{2}}a \partial_{t}u_{n}\|_{L^{2}(([0,L];\mathbb{R}^{3})} \rightarrow 0 \mbox{ as } n\rightarrow \infty,$$
	one has
	$$  \|(1-\Delta)^{-\frac{1}{2}}a (-iJ^{-1}(I-\Delta) u_{n}  -iJ^{-1}|u_{n}|^{4}u_{n})\|_{L^{2}([0,L];\mathbb{R}^{3})} \rightarrow 0, \ n\rightarrow \infty.$$
Observe that
	\begin{equation*}
	\begin{split}
		\|(1-\Delta)^{-\frac{1}{2}}a iJ^{-1}(I-\Delta) u_{n}  \|_{L^{2}((0,L);\mathbb{R}^{3})} 
		 \leq &  \|(1-\Delta)^{-\frac{1}{2}}a (iJ^{-1}(1-\Delta) u_{n}  +iJ^{-1}|u_{n}|^{4}u_{n})\|_{L^{2}([0,L];\mathbb{R}^{3})}  \\
		&   + \|(1-\Delta)^{-\frac{1}{2}}a iJ^{-1}|u_{n}|^{4}u_{n})\|_{L^{2}([0,L];\mathbb{R}^{3})} \\
		 \leq &  \|(1-\Delta)^{-\frac{1}{2}}a (iJ^{-1}(1-\Delta) u_{n}  + iJ^{-1}|u_{n}|^{4}u_{n})\|_{L^{2}([0,L];\mathbb{R}^{3})}  \\
		&    + \|u_{n}^{5}\|_{L^{2}([0,L];H^{-1}(\mathbb{R}^{3}))}  \rightarrow 0 \mbox{ as } n\rightarrow \infty,
		\end{split}
	\end{equation*}
due to the converge
	\begin{eqnarray*}
		\|u_{n}^{5}\|^{2}_{L^{2}((0,L);H^{-1}(\mathbb{R}^{3}))} 
		 \leq  \sup_{t \in [0,L]} \|u_{n}(t)\|^{\frac{5}{3}}_{L^{2}} \int_{0}^{L}\|u_{n}(t)\|^{\frac{25}{3}}_{L^{10}} \ dt
		 \leq  \sup_{t \in [0,L]} \|u_{n}(t)\|^{\frac{5}{3}}_{L^{2}} \|u_{n}\|^{\frac{3}{25}}_{L^{\frac{25}{3}}_{t}L^{10}_{x}} 
		 \rightarrow 0,
	\end{eqnarray*} 
by interpolation arguments. 
	Hence, taking into account the convergence in  $L^{2}([0,T] \times \mathbb{R}^{3})$ obtained in Claim 1, for every $\chi \in C_{0}^{\infty}((0,L)\times \mathbb{R}^{3}),$ we have
	$$ \|(1-\Delta)^{-\frac{1}{2}}a J^{-1}(1-\Delta) \chi u_{n}  \|_{L^{2}(0,L)\times \mathbb{R}^{3})} \rightarrow 0,$$
	which is equivalent to 
	$$\Big\langle (1-\Delta)(J^{-1})^{*}a(1-\Delta)^{-1}aJ^{-1}(1-\Delta) \chi u_{n}, \chi u_{n} \Big\rangle_{L^{2}((0,L)\times \mathbb{R}^{3})} \rightarrow 0. $$
	This means, using Proposition \ref{prop6.1} (Appendix), that
	$$\int_{(0,L)\times\mathbb{R}^{3}\times S^{3}} \dfrac{(1 + |\xi|^{2})a^{2}}{1 + |\xi|^{2}} (1 + |\xi|^{2})\ d \mu(t,x,\xi) = 0.$$
Thus	$$ \int_{(0,L)\times \omega \times S^{3}} 1+ |\xi|^{2} \ d \mu(t,x,\xi) = 0,$$
	i.e.,
	$$u_{n} \longrightarrow 0 \ \mbox{in} \ L^{2}_{loc}((0,L); {H}^{1}_{loc} (\omega)) \mbox{ as }n\rightarrow \infty,$$
	showing the claim. 

\vspace{0.2cm}
	
	Now, let $\widetilde{u}_{n}$ be a solution to 
	\begin{equation} \label{pertu}
		\left\{
		\begin{array}{lr}
			i\partial_{t} \widetilde{u}_{n} + \Delta \widetilde{u}_{n} - \widetilde{u}_{n} - |\widetilde{u}_{n}|^{4}\widetilde{u}_{n}  =0, \ \mbox{ on } [0,T] \times \mathbb{R}^{3},\\
			\widetilde{u}_{n}(0)= u_{0,n} \in {H}^{1}(\mathbb{R}^{3}) .
		\end{array}
		\right.
	\end{equation}
	By the convergence \eqref{eq 103} and Lemma \ref{lemma4.15}, we get 
	$$\widetilde{u}_{n} \longrightarrow 0 \ \mbox{in} \ L^{2}_{loc}((0,L); {H}^{1}_{loc} (\omega)) \mbox{ as } n\rightarrow \infty.$$
Let $w_{n} = e^{it}\widetilde{u}_{n}$. It satisfies 
	\begin{equation} \label{origin}
		\left\{
		\begin{array}{lr}
			i\partial_{t} w_{n} + \Delta w_{n}  - |w_{n}|^{4}w_{n}  =0, \ \mbox{ on } [0,T] \times \mathbb{R}^{3},\\
			w_{n}(0)= u_{0,n} 
		\end{array}
		\right.
	\end{equation}
	and 
	$$w_{n} \longrightarrow 0 \ \mbox{in} \ L^{2}_{loc}((0,L); \dot{H}^{1}_{loc} (\omega)) \mbox{ as }n\rightarrow \infty $$
and so 
	$$ |\nabla w_{n}(t)|^{2} \longrightarrow 0 \ \mbox{in } L^{1}L^{1},$$
Using the notation of Theorem \ref{energydec}, this gives $e = 0$ on $(0, L) \times \omega$ (locally). Since all the measures in the decomposition of $\mathcal{E}$ are positive, we get the same result for any nonlinear concentrating solution in the decomposition of $w_{n}$, that is, $e_{j} = 0$ in $(0,L) \times \omega$ (locally), and
	$$|\nabla q_{n}^{(j)}|^{2} \rightharpoonup 0 \mbox{ in } L^{1}_{loc}((0,L)\times \omega)$$
	which gives us
	$$\int_{0}^{L}\int_{\omega} \varphi |\nabla q_{n}^{(j)}|^{2} \longrightarrow 0,$$
for all $\varphi \in C_{0}^{\infty}$. Therefore, $$ q_{n}^{(j)} \longrightarrow 0  \ \mbox{in} \ L^{2}_{loc}((0,L);\dot{H}_{loc}^{1} (\omega)) \mbox{ as } n\rightarrow \infty$$
	and if $\mu^{(j)}$ is the microlocal defect measure of $q_{n}^{(j)}$, we have
	\begin{equation} \label{eq 104}
		\mu^{(j)} \equiv 0 \ \mbox{in } (0,L) \times \omega \times S^{3}.
	\end{equation}

It is important to point out that above, from the moment we return to the original system \eqref{origin}, we begin to consider the norm in the space $\dot{H}^{1}$, since the energy of this system is related to this space. When we recover the perturbed system \eqref{pertu} later on, we return to using the norms in the space $H^{1}$.

	Assume that $t_{\infty}^{(j)} \in [0,\varepsilon]$ for some $j\in \mathbb{N}$, so that the interval $(t_{\infty}^{(j)},L]$ has lenght greater than $T_{0}$. Denote by $p_{n}^{(j)}$ the linear concentrating solution approaching $q_{n}^{(j)}$ in the interval $I_{n}^{3,\Lambda} = (t_{n}^{(j)} + \Lambda (h_{n}^{(j)})^{2},L]$ according to the notation of Theorem \ref{teo4.1}, so that, for any $t_{\infty}^{(j)} <t <L$, we have
	$$\|q_{n}^{(j)} - p_{n}^{(j)}\|_{L^{10}([t,L]\times \mathbb{R}^{3})} + \|q_{n}^{(j)} -p_{n}^{(j)}\|_{L^{\infty}([t,L]; \dot{H}^{1}(\mathbb{R}^{3}))} \longrightarrow 0 \mbox{ as } n\rightarrow \infty.$$
In particular, $\mu^{(j)}$ is also attached to $p_{n}^{(j)}$ in the time interval $(t_{\infty}^{(j)},L]$. 

\vspace{0.2cm}

\noindent\textbf{Claim 3:}  $p_{n}^{(j)}$ is bounded in $\dot{H}^{1}(\mathbb{R}^{3})$ and $\|p_{n}^{(j)}(t)\|_{L^{2}} \rightarrow 0$ as $n\rightarrow \infty$.

\vspace{0.2cm}

Remember that $p_{n}^{(j)}$ is a solution of the linear Schr\"{o}dinger equation. If $p_{n}^{(j)}$ is a linear concentrating solution, we can consider $$p_{n}^{(j)}(t,x) = \frac{1}{\sqrt{h_{n}^{(j)}}} \varphi^{(j)}\Big(\frac{t-t_{n}^{(j)}}{(h_{n}^{(j)})^{2}},\frac{x-x_{n}^{(j)}}{h_{n}^{(j)}}\Big),$$ and so 
	\begin{equation*}
	\begin{split}
		\|p_{n}^{(j)}(t)\|_{L^{2}} =  \frac{1}{\sqrt{h_{n}^{(j)}}} \Big(\int_{\mathbb{R}^{3}} |\varphi^{(j)}(s,y)|^{2} (h_{n}^{(j)})^{3} \ dy\Big)^{\frac{1}{2}} = h_{n}^{(j)}\|\varphi^{(j)}(s)\|_{L^{2}} \leq C h_{n}^{(j)}\|\varphi^{(j)}(s)\|_{L^{6}} \rightarrow 0,
		\end{split}
	\end{equation*}
	as $n \rightarrow \infty$, since we can assume $\varphi^{(j)}(s) \in C_{0}^{\infty}(\mathbb{R}^{3})$. Thus, $p_{n}^{(j)}$'s measure propagates along the geodesics of the $\mathbb{R}^{3}$ and we have
	$\mu^{(j)} \equiv 0 \mbox{  in  } (t^{(j)}_{\infty},L) \times \mathbb{R}^{3} \times S^{3},$ since $|L - t_{\infty}^{(j)}| > T_{0}$ ensure that the geometric control condition is still verified in the interval $[t_{\infty}^{(j)}, L]$  when combined with \eqref{eq 104}. This means that 
	$$p_{n}^{(j)} \rightarrow 0 \mbox{  in  } L^{2}_{loc}((t_{\infty}^{(j)},L); H_{loc}^{1}(\mathbb{R}^{3})),$$
	showing Claim 3.
	
	Finally, solving the equation satisfied by $p_{n}^{(j)}$ with initial data $p_{n}^{(j)}(t_{0})$, where $t_{0} \in (t_{\infty}^{(j)}, L)$ is such that $\|p_{n}^{(j)}(t_{0})\|_{H^{1}}\rightarrow 0$, one has the strong convergence $p_{n}^{(j)}\rightarrow0$ in the space $L^{\infty}([t^{(j)}_{\infty}, L], H_{loc}^{1}(\mathbb{R}^{3}))$. 
	In particular, $p_{n}^{(j)}(t_{\infty}^{(j)}) \rightarrow 0 \mbox{  in  } \dot{H}^{1}_{loc}(\mathbb{R}^{3})$, so the measure $\mu^{(j,\infty)}$ associated to $p_{n}^{(j)}(t_{\infty}^{(j)})$ satisfies $\mu^{(j,\infty)} \equiv 0 \mbox{  in  } \mathbb{R}^{3} \times S^{2}.$ On other hand, since $p_{n}^{(j)}(t^{(j)}_{\infty}) = \frac{1}{\sqrt{h_{n}}} \varphi^{(j)} \Big(\frac{x-x_{\infty}^{(j)}}{h_{n}}\Big)$, we can compute $\mu^{(j,\infty)}$ directly. To this end, note that
	\begin{equation*}
	\begin{split}
		\langle A(x,D_{x}) &\nabla p_{n}^{(j)}(t^{(j)}_{\infty}), \nabla p_{n}^{(j)}(t^{(j)}_{\infty}) \rangle_{L^{2}}  \\=&  \frac{1}{(2\pi)^{3}} \int_{\mathbb{R}^{3}}\int_{\mathbb{R}^{3}}\int_{\mathbb{R}^{3}} a(x, \xi) e^{i(x-y)\xi} |\xi|^{2} p_{n}^{(j)}(t^{(j)}_{\infty})(y) \overline{p_{n}^{(j)}(t^{(j)}_{\infty})(x)} \ dydxd\xi \\
		 = & \frac{1}{(2\pi)^{3}}\frac{1}{h_{n}} \int_{\mathbb{R}^{3}}\int_{\mathbb{R}^{3}}\int_{\mathbb{R}^{3}} a(x, \xi) e^{i(x-y)\xi} |\xi|^{2} \varphi^{(j)} \Big(\frac{y-x_{\infty}^{(j)}}{h_{n}}\Big) \overline{\varphi^{(j)} \Big(\frac{y-x_{\infty}^{(j)}}{h_{n}}\Big)} \ dydxd\xi \\
		 = & \frac{h_{n}^{5}}{(2\pi)^{3}} \int_{\mathbb{R}^{3}}\int_{\mathbb{R}^{3}}\int_{\mathbb{R}^{3}} a(h_{n}\tilde{x} + x_{\infty}^{(j)}, \xi) e^{ih_{n}(\tilde{x}-\tilde{y})\xi} |\xi|^{2} \varphi^{(j)} (\tilde{y}) \overline{\varphi^{(j)} (\tilde{x})} \ d\tilde{y}d\tilde{x}d\xi \\ 
		 =  &\frac{h_{n}^{2}}{(2\pi)^{3}} \int_{\mathbb{R}^{3}}\int_{\mathbb{R}^{3}}\int_{\mathbb{R}^{3}} a(h_{n}\tilde{x} + x_{\infty}^{(j)}, \frac{\tilde{\xi}}{h_{n}})  e^{i(\tilde{x}-\tilde{y})\tilde{\xi}} \Big|\frac{\tilde{\xi}}{h_{n}} \Big|^{2} \varphi^{(j)} (\tilde{y}) \overline{\varphi^{(j)} (\tilde{x})} \ d\tilde{y}d\tilde{x}d\tilde{\xi}\\ 
		 = & \frac{1}{(2\pi)^{3}} \int_{\mathbb{R}^{3}}\int_{\mathbb{R}^{3}}\int_{\mathbb{R}^{3}} a(h_{n}\tilde{x} + x_{\infty}^{(j)}, \tilde{\xi})  e^{i(\tilde{x}-\tilde{y})\tilde{\xi}}|\tilde{\xi}|^{2} \varphi^{(j)} (\tilde{y}) \overline{\varphi^{(j)} (\tilde{x})} \ d\tilde{y}d\tilde{x}d\tilde{\xi}\\ 
		 =&  \frac{1}{(2\pi)^{3}} \int_{\mathbb{R}^{3}} a(h_{n}\tilde{x} + x_{\infty}^{(j)}, \tilde{\xi}) |\tilde{\xi}|^{2} |\widehat{\varphi^{(j)} }(\tilde{\xi})|^{2} \ d\tilde{\xi}\rightarrow  \frac{1}{(2\pi)^{3}} \int_{\mathbb{R}^{3}} a( x_{\infty}^{(j)}, \tilde{\xi}) |\tilde{\xi}|^{2} |\widehat{\varphi^{(j)} }(\tilde{\xi})|^{2} \ d\tilde{\xi}.
		\end{split}
	\end{equation*}
Using polar coordinates, we get
	$$\mu^{(j,\infty)} = \delta_{x-x_{\infty}^{(j)}} \otimes \Phi(\theta) \ d\theta,$$
	where $\Phi(\theta) = \frac{1}{(2\pi)^{3} }\int_{-\infty}^{\infty} |r\theta|^{2} |\widehat{\varphi^{(j)}}(r\theta)|^{2} r^{2} \ dr.$
	Therefore, $p_{n}^{(j)}(t_{\infty}^{(j)}) \equiv 0$, and the conservation of the energy yields
	$$\|p_{n}^{(j)}(t)\|_{\dot{H}^{1}(\mathbb{R}^{3})} = \|p_{n}^{(j)}(t_{\infty}^{(j)})\|_{\dot{H}^{1}(\mathbb{R}^{3})} = 0, \forall t \in (t_{\infty}^{(j)}, L].$$		
	Moreover, 
	$$\|q_{n}^{(j)}(t)\|_{\dot{H}^{1}(\mathbb{R}^{3})} \rightarrow 0, \forall t \in (t_{\infty}^{(j)}, L].$$
	Arguing in the same way as before, one obtains $q_{n}^{(j)} \equiv 0 \mbox{ in } (t_{\infty}^{(j)},L]$ as expected, since $q_{n}^{(j)}(t_{\infty}^{(j)}) = \frac{1}{\sqrt{h_{n}}} \psi^{(j)}\Big(\frac{x-x_{\infty}^{(j)}}{h_{n}}\Big).$
	Then, for the profile decomposition of $w_{n}$ in the interval $[0,L]$, namely, 
	$$w_{n} = \sum_{j=1}^{l} q_{n}^{(j)} + w_{n}^{(l)} + r_{n}^{(l)},$$
	we have proved that $t_{n}^{(j)} \in (\varepsilon, L],$ since $t_{n}^{(j)} \in [0,\varepsilon]$ implies $q_{n}^{(j)} \equiv 0$. Thus, Theorem \ref{teo4.1} provides a linear concentrating solution $p_{n}^{(j)}$ such that 
	$$\limsup_{n\rightarrow \infty} \Big(\|q_{n}^{(j)} - p_{n}^{(j)}\|_{L^{10}([0,\varepsilon]\times \mathbb{R}^{3})} + \|q_{n}^{(j)} -p_{n}^{(j)}\|_{L^{\infty}([0,\varepsilon]; \dot{H}^{1}(\mathbb{R}^{3}))} \Big) = 0,$$ while Lemma \ref{lemma3.10} gives
	$$\limsup_{n \rightarrow \infty} \|p_{n}^{(j)}\|_{L^{10}([0,\varepsilon]\times \mathbb{R}^{3})} =0.$$
	Moreover, Theorems \ref{lineardecomp} and \ref{nonlineardec} ensure	$$\limsup_{n \rightarrow \infty}\|w_{n}^{(l)} + r_{n}^{(l)}\|_{L^{10}([0,\varepsilon]\times \mathbb{R}^{3})} \longrightarrow 0 \mbox{ as } l\rightarrow \infty.$$
Therefore,
	$$\limsup_{n \rightarrow \infty}\|w_{n}\|_{L^{10}([0,\varepsilon]\times \mathbb{R}^{3})} = 0$$
	and, hence,
	$$\limsup_{n \rightarrow \infty}\|\widetilde{u}_{n}\|_{L^{10}([0,\varepsilon]\times \mathbb{R}^{3})} = 0.$$
	Thus,
	$$\|\nabla |\widetilde{u}_{n}|^{4}\widetilde{u}_{n}\|_{L^{2}([0,\varepsilon];L^{\frac{6}{5}} (\mathbb{R}^{3}))} \longrightarrow 0, \ n\rightarrow \infty.$$
Since
	\begin{eqnarray*}
			\|\nabla |\widetilde{u}_{n}|^{4}\widetilde{u}_{n}\|_{L^{2}([0,\varepsilon];L^{\frac{6}{5}} (\mathbb{R}^{3}))} & \leq & \|\widetilde{u}_{n}\|^{4}_{L^{10}([0,\varepsilon]\times \mathbb{R}^{3})}\|\nabla \widetilde{u}_{n}\|_{L^{10}([0,\varepsilon];L^{\frac{30}{13}}( \mathbb{R}^{3}))},
	\end{eqnarray*}
we have that $\widetilde{u}_{n}$ is linearizable on $[0, \varepsilon]$. Indeed, using Remark \ref{obs8}, note that
	\begin{equation} \label{eq 106}
	\begin{split}
		\|\widetilde{u}_{n}-v_{n}\|_{L^{10}([0,\varepsilon]\times \mathbb{R}^{3})} + \|\widetilde{u}_{n} -v_{n}\|_{L^{\infty}([0,\varepsilon]; {H}^{1}(\mathbb{R}^{3}))}  \leq & \|\nabla |\widetilde{u}_{n}|^{4}\widetilde{u}_{n}\|_{L^{\frac{10}{7}}([0,\varepsilon];L^{\frac{10}{7}} (\mathbb{R}^{3})} \\&+ \|\widetilde{u}_{n}^{5}\|_{L^{1}([0,\varepsilon];L^{2}(\mathbb{R}^{3}))} \\
		 \leq & C \|\widetilde{u}_{n}\|^{4}_{L^{10}([0,\varepsilon]\times \mathbb{R}^{3})}\|\nabla \widetilde{u}_{n}\|_{L^{\frac{10}{3}}([0,\varepsilon]\times \mathbb{R}^{3})} \\&+ C\|\widetilde{u}_{n}\|^{5}_{L^{10}([0,\varepsilon]\times \mathbb{R}^{3})} \\\rightarrow& 0,
		 \end{split}
	\end{equation}
	as $n \rightarrow \infty.$
	It follows that
	\begin{equation*}
	\begin{split}
		\|u_{n}-v_{n}\|_{L^{10}([0,\varepsilon]\times \mathbb{R}^{3})} + \|u_{n} -v_{n}\|_{L^{\infty}([0,\varepsilon]; {H}^{1}(\mathbb{R}^{3}))}  &\leq 	\|u_{n} - \widetilde{u}_{n}\|_{L^{10}([0,\varepsilon]\times \mathbb{R}^{3})} + \|u_{n} -\widetilde{u}_{n}\|_{L^{\infty}([0,\varepsilon]; {H}^{1}(\mathbb{R}^{3}))} \\
		& + 	\|\widetilde{u}_{n}-v_{n}\|_{L^{10}([0,\varepsilon]\times \mathbb{R}^{3})} + \|\widetilde{u}_{n} -v_{n}\|_{L^{\infty}([0,\varepsilon]; {H}^{1}(\mathbb{R}^{3}))} \\
		&\rightarrow  0,
		 \end{split}
	\end{equation*}
	as $n \rightarrow \infty$, due to \eqref{eq 103}, \eqref{eq 106} and Lemma \ref{lemma4.15}.
\end{proof}

With this in mind, the next proposition gives that a sequence of solutions for the nonlinear system is close to the solutions for the linear system.

\begin{proposition} \label{prop5.2}
	Under the assumptions of Lemma \ref{lemma5.3}, we have that $u_{n}$ is linearizable on $[0,t]$ for any $t<T - T_{0}$, that is,
	$$\|u_{n} - v_{n}\|_{L^{10}([0,t]\times \mathbb{R}^{3})} + \|u_{n} -v_{n}\|_{L^{\infty}([0,t]; {H}^{1}(\mathbb{R}^{3}))} \longrightarrow 0 \mbox{ as }n\rightarrow \infty,$$
	where $v_{n}$ is the solution of
	\begin{equation*} 
		\left\{
		\begin{array}{ll}
			i\partial_{t} v_{n} + \Delta v_{n} - v_{n}=0, &\ \mbox{ on } [0,T] \times \mathbb{R}^{3}, \\
			v_{n}(0)= u_{0,n},& \ \mbox{ in }  {H}^{1}(\mathbb{R}^{3}).
		\end{array}
		\right.
	\end{equation*}
\end{proposition}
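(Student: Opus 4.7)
The plan is to iterate Lemma \ref{lemma5.3} finitely many times along a partition of $[0,t]$. Since $t < T - T_0$, I can choose a small $\varepsilon > 0$ and an integer $N\ge 1$ with $t \le N\varepsilon < T - T_0$; set $t_k := k\varepsilon$ for $k = 0, \ldots, N$. For every $0 \le k \le N-1$, the window $L_k := T - t_k$ satisfies $L_k > T_0 + \varepsilon$ and $[t_k, t_k + L_k] \subset [0, T]$, so the size requirements of Lemma \ref{lemma5.3} are met at every step. The rightmost condition, namely $t_{N-1} + L_{N-1} \le T$ together with $L_{N-1} > T_0 + \varepsilon$, is precisely where the hypothesis $t < T - T_0$ is consumed.

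I would then prove by induction on $k$ that
\[
\|u_n - v_n\|_{L^{10}([0, t_k] \times \mathbb{R}^3)} + \|u_n - v_n\|_{L^\infty([0, t_k]; H^1(\mathbb{R}^3))} \longrightarrow 0, \qquad n \to \infty.
\]
The case $k = 0$ is trivial, and the case $k = N$ yields the proposition since $t_N \ge t$. For the inductive step, assuming the claim at level $k$, in particular $u_n(t_k) - v_n(t_k) \to 0$ in $H^1$, I need to check the two hypotheses of Lemma \ref{lemma5.3} relative to the new initial time $t_k$: the observability integral over $[t_k, t_k + L_k]$ tends to zero by positivity of the integrand and \eqref{eq 103}, while $u_n(t_k) \to 0$ in $L^2$ follows uniformly from Claim 1 inside the proof of Lemma \ref{lemma5.3} applied on all of $[0, T]$. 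Invoking the time-translated version of Lemma \ref{lemma5.3} on $[t_k, t_k + L_k]$, I obtain
\[
\|u_n - \tilde v_n^{(k)}\|_{L^{10}([t_k, t_{k+1}] \times \mathbb{R}^3)} + \|u_n - \tilde v_n^{(k)}\|_{L^\infty([t_k, t_{k+1}]; H^1)} \longrightarrow 0,
\]
where $\tilde v_n^{(k)}$ is the linear solution of $i\partial_t w + \Delta w - w = 0$ with initial datum $u_n(t_k)$ at time $t_k$.

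The final step of the induction is to replace $\tilde v_n^{(k)}$ by $v_n$. Their difference $\tilde v_n^{(k)} - v_n$ solves the same linear Schr\"odinger equation with data $u_n(t_k) - v_n(t_k)$, which tends to zero in $H^1$ by the induction hypothesis. Conservation of the $H^1$-norm gives $\|\tilde v_n^{(k)} - v_n\|_{L^\infty([t_k, t_{k+1}]; H^1)} \to 0$, and Strichartz estimates applied to this linear equation supply $\|\tilde v_n^{(k)} - v_n\|_{L^{10}([t_k, t_{k+1}] \times \mathbb{R}^3)} \to 0$. The triangle inequality together with the induction hypothesis close the induction at level $k+1$, and after $N$ steps the result is obtained on $[0, t_N] \supset [0, t]$.

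The entire technical weight of the argument is already packed into Lemma \ref{lemma5.3}; the main obstacle for the present proposition is the bookkeeping of the geometric-control ``buffer'' $T_0$. Crucially, $T_0$ is paid only once globally (at the rightmost endpoint of the iteration) rather than at every step, because at step $k$ I choose the window $L_k = T - t_k$ rather than a fixed length; the cumulative loss is therefore exactly $T_0$, matching the hypothesis $t < T - T_0$. No issue arises from accumulated errors since $N$ depends only on $\varepsilon$ and $t$ (not on $n$), and at each step the error transferred to the next is quantified by the $H^1$-closeness $u_n(t_k) - v_n(t_k) \to 0$.
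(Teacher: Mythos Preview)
Your proof is correct and follows essentially the same approach as the paper: both iterate Lemma \ref{lemma5.3} to push linearizability forward, then replace the local linear solution $\tilde v_n^{(k)}$ by the global one $v_n$ using $H^1$-conservation and Strichartz estimates. The only cosmetic difference is that the paper packages the iteration as a supremum/contradiction argument (defining $t_* = \sup\{s : u_n \text{ is linearizable on } [0,s]\}$ and showing $t_* \ge T - T_0$), whereas you use an explicit finite partition; the mathematical content is identical.
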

\begin{proof}
	Let $$t_{*} = \sup\{s \in [0,T]; \lim_{n} \|u_{n} - v_{n}\|_{L^{10}([0,s]\times \mathbb{R}^{3})} + \|u_{n} -v_{n}\|_{L^{\infty}([0,s]; {H}^{1}(\mathbb{R}^{3}))} = 0\}.$$ We claim that $t_{*} \geq T-T_{0}.$
Indeed, suppose, by contradiction, that this does not hold, so we can find an interval $[t_{*} - \varepsilon, t_{*} - \varepsilon + L] \subset [0,T]$ with $T_{0} < L $ and $ 0< 2 \varepsilon < L-T_{0}$ (if $t_{*}=0$, take the interval $[0,L]\subset [0,T]$). It follows from Lemma \ref{lemma5.3} that $u_{n}$ is linearizable on $[t_{*} - \varepsilon, t_{*} + \varepsilon]$. The definition of $t_{*}$ gives $\lim_{n\rightarrow \infty}  \|u_{n} - v_{n}\|_{L^{10}([0,t_{*}- \varepsilon]\times \mathbb{R}^{3})} + \|u_{n} -v_{n}\|_{L^{\infty}([0,t_{*} - \varepsilon]; {H}^{1}(\mathbb{R}^{3}))} = 0$. So, we have proved that $\lim_{n\rightarrow\infty}  \|u_{n} - \tilde{v}_{n} \|_{L^{10}([t_{*}- \varepsilon,t_{*}+ \varepsilon]\times \mathbb{R}^{3})} + \|u_{n} -\tilde{v}_{n} \|_{L^{\infty}([t_{*}- \varepsilon,t_{*} + \varepsilon]; {H}^{1}(\mathbb{R}^{3}))} = 0$, where $\tilde{v}_{n}$ is a solution of
	$$i\partial_{t} \tilde{v}_{n} + \Delta \tilde{v}_{n}  - \tilde{v}_{n} = 0, \ \ \tilde{v}_{n}(t_{*} - \varepsilon) = u_{n}(t_{*} - \varepsilon).$$
	This yields $\lim_{n\rightarrow\infty}  \|u_{n} - v_{n}\|_{L^{10}([0,t_{*}+ \varepsilon]\times \mathbb{R}^{3})} + \|u_{n} -v_{n}\|_{L^{\infty}([0,t_{*} + \varepsilon]; {H}^{1}(\mathbb{R}^{3}))} = 0$. Indeed, we have 
	\begin{equation*}
	\begin{split}
		\sup_{t \in [0,t_{*} + \varepsilon]}\|u_{n}(t)-v_{n}(t)\|_{{H}^{1}(\mathbb{R}^{3})}  \leq & 	\sup_{t \in [0,t_{*} - \varepsilon]}\|u_{n}(t)-v_{n}(t)\|_{{H}^{1}(\mathbb{R}^{3})} \\&+ \sup_{t \in [t_{*} - \varepsilon,t_{*} + \varepsilon]}\|u_{n}(t)-v_{n}(t)\|_{{H}^{1}(\mathbb{R}^{3})},
		\end{split}
	\end{equation*}
	where the first term of the right-hand side converges to 0 as $n$ tends to $\infty$. For the second term, we have 
	\begin{equation*}
	\begin{split}
		\sup_{t \in [t_{*} - \varepsilon,t_{*} + \varepsilon]}\|u_{n}(t)-v_{n}(t)\|_{{H}^{1}(\mathbb{R}^{3})}  \leq & 		 \sup_{t \in [t_{*} - \varepsilon,t_{*} + \varepsilon]}\|u_{n}(t)-\widetilde{v}_{n}(t)\|_{{H}^{1}(\mathbb{R}^{3})} \\&+ \sup_{t \in [t_{*} - \varepsilon,t_{*} + \varepsilon]}\|\widetilde{v}_{n}(t)-v_{n}(t)\|_{{H}^{1}(\mathbb{R}^{3})} \\
	  \leq & \sup_{t \in [t_{*} - \varepsilon,t_{*} + \varepsilon]}\|u_{n}(t)-\widetilde{v}_{n}(t)\|_{{H}^{1}(\mathbb{R}^{3})} \\&+ \|u_{n}(t_{*} - \varepsilon)-v_{n}(t_{*} - \varepsilon)\|_{{H}^{1}(\mathbb{R}^{3})} \\
		 \leq & \sup_{t \in [t_{*} - \varepsilon,t_{*} + \varepsilon]}\|u_{n}(t)-\widetilde{v}_{n}(t)\|_{{H}^{1}(\mathbb{R}^{3})} \\&+ 	\sup_{t \in [0,t_{*} - \varepsilon]}\|u_{n}(t)-v_{n}(t)\|_{{H}^{1}(\mathbb{R}^{3})}
		\rightarrow 0,
		\end{split}
	\end{equation*}
	as $n \rightarrow \infty$. Now, we estimate the $L^{10}$ norm as 
$$
		\|u_{n}-v_{n}\|^{10}_{L^{10}([0,t_{*}+\varepsilon] \times \mathbb{R}^{3})}  \leq  \int_{0}^{t_{*}-\varepsilon} 	\|u_{n}-v_{n}\|^{10}_{L^{10}(\mathbb{R}^{3})} \ dt + \int_{t_{*}-\varepsilon}^{t_{*}+\varepsilon} 	\|u_{n}-v_{n}\|^{10}_{L^{10}(\mathbb{R}^{3})} \ dt,	
$$
	where the first term of the right-hand side converges to 0 as $n$ tends to $+ \infty$. For the second term, we have 
	\begin{eqnarray*}
		\|u_{n}-v_{n}\|_{L^{10}([t_{*}-\varepsilon,t_{*}+\varepsilon] \times \mathbb{R}^{3})} & \leq & 	\|u_{n}-\widetilde{v}_{n}\|_{L^{10}([t_{*}-\varepsilon,t_{*}+\varepsilon] \times \mathbb{R}^{3})} + 	\|\widetilde{v}_{n}-v_{n}\|_{L^{10}([t_{*}-\varepsilon,t_{*}+\varepsilon] \times \mathbb{R}^{3})} \\
		& \leq & 	\|u_{n}-\widetilde{v}_{n}\|_{L^{10}([t_{*}-\varepsilon,t_{*}+\varepsilon] \times \mathbb{R}^{3})} + 	\|u_{n}(t_{*}-\varepsilon)-v_{n}(t_{*}-\varepsilon)\|_{H^{1}(\mathbb{R}^{3})} \\
		& \leq & 	\|u_{n}-\widetilde{v}_{n}\|_{L^{10}([t_{*}-\varepsilon,t_{*}+\varepsilon] \times \mathbb{R}^{3})} + 	\sup_{t \in [0,t_{*} - \varepsilon]}\|u_{n}(t)-v_{n}(t)\|_{H^{1}(\mathbb{R}^{3})} \\
		& \rightarrow & 0,
	\end{eqnarray*}
	as $n \rightarrow \infty$, using Strichartz estimates, which contradicts the definition of $t_{*}$.
\end{proof}

\subsection{Weak observability estimate} The desired observability estimate \eqref{obser} is a consequence of the following weak observability estimates. 
\begin{theorem}\label{weakobs}
	Let $T > T_{0}$ and $\lambda_{0}>0$ from Definition \ref{def3}. There exists $C>0$ such that any solution 
	$u$ of the system
	\begin{equation} \label{eq 107}
		\left\{
		\begin{array}{lr}
			i\partial_{t} u + \Delta u -u - |u|^{4}u - a(1-\Delta)^{-1}a\partial_{t} u=0, \ \mbox{ on } [0,T] \times \mathbb{R}^{3}, \\
			u(0)= u_{0} \in {H}^{1}(\mathbb{R}^{3}), \\
			\|u_{0}\|_{H^{1}} \leq \lambda_{0},
		\end{array}
		\right.
	\end{equation}
	satisfies 
	\begin{equation}\label{eq 108}
		E(u)(0) \leq C \Big(\int_{0}^{T}\int_{\mathbb{R}^{3}} |(1-\Delta)^{-\frac{1}{2}}a\partial_{t}u|^{2} \ dxdt + \|u_{0}\|_{H^{-1}(\mathbb{R}^{3})}E(u)(0)\Big).
	\end{equation}
\end{theorem}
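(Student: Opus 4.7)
The plan is to argue by contradiction, reducing the weak observability for the damped quintic NLS \eqref{eq 107} to its linear counterpart via the linearization of Proposition \ref{prop5.2}, and then obtaining this linear observability from propagation of microlocal defect measures under Assumption \ref{assum5.1}. Suppose \eqref{eq 108} fails. Then one extracts a sequence $(u_n)$ of $H^1$-solutions of \eqref{eq 107} with $\|u_{0,n}\|_{H^1}\leq\lambda_0$ such that
\[
n\Big(\mathcal O_n+\|u_{0,n}\|_{H^{-1}}E(u_n)(0)\Big) < E(u_n)(0),\qquad \mathcal O_n:=\int_0^T\|(1-\Delta)^{-\frac12}a\partial_tu_n\|_{L^2}^2\,dt.
\]
Since $H^1(\mathbb{R}^3)\hookrightarrow L^6(\mathbb{R}^3)$, the energy $E(u_n)(0)$ is uniformly bounded in terms of $\lambda_0$, so (discarding the trivial case $u_n\equiv 0$) both $\mathcal O_n$ and $\|u_{0,n}\|_{H^{-1}}$ tend to zero. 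Interpolating $\|u_{0,n}\|_{L^2}^2\leq\|u_{0,n}\|_{H^{-1}}\|u_{0,n}\|_{H^1}$ yields $u_{0,n}\to 0$ in $L^2(\mathbb{R}^3)$, which is precisely the hypothesis of Proposition \ref{prop5.2}.

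I would then invoke that proposition: $u_n$ is linearizable on every $[0,t]$ with $t<T-T_0$, with linear companion $v_n$ solving $i\partial_tv_n+\Delta v_n-v_n=0$ and $v_n(0)=u_{0,n}$ such that $\|u_n-v_n\|_{L^\infty([0,t];H^1)\cap L^{10}([0,t]\times\mathbb{R}^3)}\to 0$. Using $\partial_t v_n=i(\Delta-1)v_n$, I would transfer $\mathcal O_n\to 0$ to the sequence $(v_n)$ by controlling the nonlinear residue $|u_n|^4u_n$ and the damping term, exactly as in Claim 2 in the proof of Lemma \ref{lemma5.3} --- relying on the operator $J=1-ia(1-\Delta)^{-1}a$ and the pseudo-differential calculus of Theorem \ref{JJJ}. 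Then any limit microlocal defect measure $\mu$ of $(v_n)$ on $(0,t)\times\mathbb{R}^3\times S^2$, obtained from Proposition \ref{prop6.1}, vanishes on $\{a(x)>0\}\supset\omega\times S^2$.

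Since $v_n$ obeys the linear Schr\"odinger equation, $\mu$ is invariant under the bicharacteristic flow $(s,x+2s\xi,\xi)$ (Appendix \ref{apB}). Assumption \ref{assum5.1} guarantees that every geodesic of $\mathbb R^3$ meets $\omega$ within time $T_0$; combined with the semiclassical rescaling of the Schr\"odinger propagation, this forces $\mu\equiv 0$ on the whole of $(0,t)\times\mathbb{R}^3\times S^2$. Consequently $v_n\to 0$ in $L^2_{\mathrm{loc}}((0,t);H^1_{\mathrm{loc}})$, and together with $v_{0,n}\to 0$ in $L^2$ and the $H^1$-conservation of the linear flow, one obtains $v_n\to 0$ in $L^\infty([0,t];H^1)$. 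Pulling this back via the linearization, $\|u_n\|_{L^\infty([0,t];H^1)}\to 0$, so $E(u_n)(t)\to 0$; the dissipation identity then propagates back to $t=0$, contradicting the lower bound on $E(u_n)(0)$ implicit in the failure of \eqref{eq 108}.

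The main obstacle is the transfer step: the observation $\mathcal O_n$ sees $\partial_tu_n$, which carries both the quintic nonlinearity and the damping term itself, so one must argue carefully that $\|(1-\Delta)^{-\frac12}a\partial_tv_n\|_{L^2([0,t]\times\mathbb R^3)}$ also vanishes in the limit. The invertibility of $J$ and the elliptic regularization by $(1-\Delta)^{-1/2}$ are the key tools. A secondary difficulty is the reconciliation of the time thresholds $T_0$ (GCC) and $T-T_0$ (linearization window), together with the subtlety in handling the case $E(u_n)(0)\to 0$ versus the case bounded below --- both should be absorbed by an appropriate rescaling of the normalization within the same propagation-microlocalization scheme.
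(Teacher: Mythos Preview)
Your outline captures Case~1 of the paper's dichotomy (when $E(u_n)(0)$ stays bounded away from zero) essentially correctly, but it has a genuine gap in the complementary case. From the contradiction hypothesis you only get $\mathcal O_n\to 0$ and $\|u_{0,n}\|_{H^{-1}}\to 0$; there is \emph{no} lower bound on $E(u_n)(0)$ implicit in the failure of \eqref{eq 108}. Your argument, if it goes through, yields $E(u_n)(0)\to 0$, which is not a contradiction. You flag this at the end but the remedy you suggest (``an appropriate rescaling within the same scheme'') is not enough: after rescaling $w_n=u_n/\alpha_n$ with $\alpha_n^2=E(u_n)(0)\to 0$, the new unknown solves
\[
i\partial_t w_n+\Delta w_n-w_n-\alpha_n^4|w_n|^4w_n-a(1-\Delta)^{-1}a\,\partial_t w_n=0,
\]
which is \emph{not} the damped quintic NLS, so Proposition~\ref{prop5.2} and the profile machinery are unavailable. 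The paper instead compares $w_n$ directly to the solution $\tilde w_n$ of the \emph{damped linear} equation, passes to a weak limit $w$, and then invokes a unique continuation property (after observing $\partial_t w=0$ on $\omega$) to force $w\equiv 0$. This unique continuation step is a new ingredient your outline does not mention, and without it the rescaled case does not close.

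A secondary issue, already in Case~1: the microlocal defect measure argument of Proposition~\ref{prop6.1} yields only $v_n\to 0$ in $L^2_{\mathrm{loc}}((0,t);H^1_{\mathrm{loc}})$. To upgrade to $v_n(t_0)\to 0$ in the full $H^1(\mathbb R^3)$ (needed before invoking conservation of the linear flow) the paper separately establishes $u_n\to 0$ in $L^2([0,T];H^1(\mathbb R^3\setminus B_{R+1}))$ using that $a\equiv 1$ outside $B_{R+1}$ and the operator $J$. Your sketch jumps over this loss-of-compactness-at-infinity point.
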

\begin{proof}
Remember that 
	$$E(u)(t) = \frac{1}{2}\|u(t)\|^{2}_{L^{2}} + \frac{1}{2}\|\nabla u(t)\|^{2}_{L^{2}} + \frac{1}{6}\|u(t)\|^{6}_{L^{6}}.$$
We argue by contradiction. Suppose that \eqref{eq 108} does not holds, so there exists a sequence $\{u_{n}\}$ of solutions to system \eqref{eq 107} such that 
	\begin{equation}\label{eq 109}
		\Big(\int_{0}^{T}\int_{\mathbb{R}^{3}} |(1-\Delta)^{-\frac{1}{2}}a\partial_{t}u_{n}|^{2} \ dxdt + \|u_{0,n}\|_{H^{-1}(\mathbb{R}^{3})}E(u_{n})(0)\Big) \leq \frac{1}{n}E(u_{n})(0).
	\end{equation}
Let $ \alpha_{n} = \big(E(u_{n})(0)\big)^{\frac{1}{2}}$. Sobolev's embedding for the $L^{6}$ norm ensures $\alpha_{n} \leq C(\lambda_{0})$. So, up to a subsequence, we may assume that $\alpha_{n} \rightarrow \alpha \geq 0$. We divide the analysis into the cases $\alpha > 0$ and $\alpha = 0$.  
	
	\
	
	\noindent$\bullet$ Case 1:  $\alpha_{n} \rightarrow \alpha>0$.
	
	\
	
	Note that $ \|u_{0,n}\|_{H^{-1}(\mathbb{R}^{3})} \rightarrow 0$. Using the inequality
	$$ \|u_{0,n}\|_{L^{2}(\mathbb{R}^{3})} \leq \|u_{0,n}\|^{\frac{1}{2}}_{H^{-1}(\mathbb{R}^{3})}\|u_{0,n}\|^{\frac{1}{2}}_{H^{1}(\mathbb{R}^{3})},$$ one obtains that $ \|u_{0,n}\|_{L^{2}(\mathbb{R}^{3})} \rightarrow 0$. Therefore, we apply Proposition \ref{prop5.2} and conclude that $\{u_{n}\}$ is linearizable in an interval $[0,L]$ with $L>T_{0}$, i.e., 
	$$\|u_{n}-v_{n}\|_{L^{10}([0,L]\times \mathbb{R}^{3})} + \|u_{n}-v_{n}\|_{L^{\infty}([0,L];H^{1}(\mathbb{R}^{3}))} \rightarrow 0 \mbox{ as } n \rightarrow \infty,$$
	where $v_{n}$ is a solution of
	\begin{equation*} 
		\left\{
		\begin{array}{lr}
			i\partial_{t} v_{n} + \Delta v_{n} -v_{n} =0, \ \mbox{ on } [0,T] \times \mathbb{R}^{3}, \\
			v_{n}(0)= u_{0,n}.
		\end{array}
		\right.
	\end{equation*}
Since $u_{0,n}\rightarrow0$ in $L^{2}(\mathbb{R}^{3})$, we get $\|u_{n}(t)\|_{L^{2}} \rightarrow 0$, $\forall t\in [0,T]$.
	Then, $\|v_{n}(t)\|_{L^{2}} \rightarrow 0$, $\forall t\in [0,L].$  Note that 
	$$\partial_{t}u_{n} =-iJ^{-1}\Big((1-\Delta)u_{n} + |u_{n}|^{4}u_{n}\Big) = -iJ^{-1}(I-\Delta)u_{n} - iJ^{-1}(|u_{n}|^{4}u_{n}),$$ where $J$ is given as in the proof of Theorem \ref{JJJ}, and
$$\|(1-\Delta)^{-\frac{1}{2}}a \partial_{t}u_{n}\|_{L^{2}((0,L);\mathbb{R}^{3})} \rightarrow 0\mbox{ as } n\rightarrow \infty.$$
This implies 
$$ \|(1-\Delta)^{-\frac{1}{2}}a (-iJ^{-1}(I-\Delta)u_{n} - iJ^{-1}(|u_{n}|^{4}u_{n}))\|_{L^{2}((0,L);\mathbb{R}^{3})} \rightarrow 0 \mbox{ as }  n\rightarrow \infty.$$ 
So, similarly to Claim 2 in the proof of Lemma \ref{lemma5.3}, we get
	$$u_{n} \rightarrow 0 \ in \ L^{2}_{loc}\big((0,L);H^{1}_{loc}(\omega)\big).$$
Additionally, we get
$$v_{n} \rightarrow 0 \ \mbox{  in  } L^{2}((0,L);H^{1}(\mathbb{R}^{3}\backslash B_{R+1}(0))).$$
Indeed, note that
	\begin{eqnarray*}
		\|v_{n}\|_{L^{2}([0,T];H^{1}(\mathbb{R}^{3}\backslash B_{R+1}(0)))} & \leq & \|v_{n} -u_{n}\|_{L^{2}([0,T];H^{1}(\mathbb{R}^{3}\backslash B_{R+1}(0)))} + \|u_{n}\|_{L^{2}([0,T];H^{1}(\mathbb{R}^{3}\backslash B_{R+1}(0)))}.
	\end{eqnarray*}
Now, we have
	\begin{eqnarray*}
		\|u_{n}\|_{L^{2}([0,T];H^{1}(\mathbb{R}^{3}\backslash B_{R+1}(0)))}   
		& \leq &  \Big\|[a,(1-\Delta)^{-1}J]J^{-1}(1-\Delta)u_{n} \Big\|_{L^{2}([0,T];{H}^{1}(\mathbb{R}^{3}))} \\
		&    & \mbox{    } +\Big\| (1-\Delta)^{-1}JaJ^{-1}(1-\Delta)u_{n}\Big\|_{L^{2}([0,T];{H}^{1}(\mathbb{R}^{3}))}\\
		& \leq & C \|u_{n}\|_{L^{2}([0,T]\times \mathbb{R}^{3})} + \Big\| (1-\Delta)^{-1}JaJ^{-1}(1-\Delta)u_{n}\Big\|_{L^{2}([0,T];{H}^{1}(\mathbb{R}^{3}))},
	\end{eqnarray*}
and
	\begin{equation*}
	\begin{split}
		\Big\| (1-\Delta)^{-1}JaJ^{-1}(1-\Delta)u_{n}\Big\|_{L^{2}([0,T];{H}^{1}(\mathbb{R}^{3}))} \leq & \Big\| (1-\Delta)^{-1}Ja\Big(i\partial_{t}u_{n}- J^{-1}|u_{n}|^{4}u_{n}\Big)\Big\|_{L^{2}([0,T];{H}^{1}(\mathbb{R}^{3}))}\\
		 \leq & \|(1-\Delta)^{-1}Ja\partial_{t}u_{n}\|_{L^{2}([0,T];{H}^{1}(\mathbb{R}^{3}))}  \\
		    & + \|(1-\Delta)^{-1}JaJ^{-1}|u_{n}|^{4}u_{n}\|_{L^{2}([0,T];{H}^{1}(\mathbb{R}^{3}))} \\
	 \leq & \|(1-\Delta)^{-1}J(1-\Delta)^{\frac{1}{2}}(1-\Delta)^{-\frac{1}{2}}a\partial_{t}u_{n}\|_{L^{2}([0,T];{H}^{1}(\mathbb{R}^{3}))} \\
		&   + C\|u_{n}^{5}\|_{L^{2}([0,T];{H}^{-1}(\mathbb{R}^{3}))}.
		\end{split}
	\end{equation*}
	So, these estimates together yield
	\begin{eqnarray*}
		\|u_{n}\|_{L^{2}([0,T];H^{1}(\mathbb{R}^{3}\backslash B_{R+1}(0)))}  
		& \leq & C \|(1-\Delta)^{-\frac{1}{2}}a\partial_{t}u_{n}\|_{L^{2}([0,T]\times \mathbb{R}^{3})} + C \|u_{n}\|_{L^{2}([0,T]\times \mathbb{R}^{3})}\\
		&   & \mbox{   } + C\|u_{n}^{5}\|_{L^{2}([0,T];{H}^{-1}(\mathbb{R}^{3}))}\\
		&\leq & C \|(1-\Delta)^{-\frac{1}{2}}a\partial_{t}u_{n}\|_{L^{2}([0,T]\times \mathbb{R}^{3})} + C \|u_{n}\|_{L^{2}([0,T]\times \mathbb{R}^{3})}\\
		&   & \mbox{   } + C\|u_{n}\|^{5}_{L^{10}([0,T];L^{6}(\mathbb{R}^{3}))},
	\end{eqnarray*}
and thus
	\begin{eqnarray*}
		\int_{0}^{L}	\|u_{n}(t)\|^{2}_{{H}^{1}(\mathbb{R}^{3}\backslash B_{R+1}(0))}   \ dt \longrightarrow 0
	\end{eqnarray*}
	as $n \rightarrow \infty$. Now, using the interpolation
	\begin{eqnarray*}
		\|u_{n}(t)\|_{L^{6}} \leq \|u_{n}(t)\|^{\frac{1}{6}}_{L^{2}}\|u_{n}(t)\|^{\frac{5}{6}}_{L^{10}}
	\end{eqnarray*} 
	and the bound
	\begin{eqnarray*}
		\int_{0}^{L}\|u_{n}(t)\|^{10}_{L^{6}} \ dt & \leq &\int_{0}^{L}\|u_{n}(t)\|^{\frac{5}{3}}_{L^{2}}\|u_{n}(t)\|^{\frac{25}{3}}_{L^{10}} \ dt \\
		& \leq & \sup_{t \in [0,L]} \|u_{n}(t)\|^{\frac{5}{3}}_{L^{2}} \|u_{n}\|^{\frac{3}{25}}_{L^{\frac{25}{3}}_{t}L^{10}_{x}} \\
		& \leq & \sup_{t \in [0,L]} \|u_{n}(t)\|^{\frac{5}{3}}_{L^{2}} \|u_{n}\|^{\frac{3}{25}}_{L^{10}_{t}L^{10}_{x}}  \rightarrow 0,
	\end{eqnarray*}  
we get the desired convergence
	$$v_{n} \rightarrow 0 \ \mbox{  in  } L^{2}_{loc}((0,L);H^{1}(\mathbb{R}^{3})).$$
	
	Finally, choosing $t_{0} \in (0,L)$ such that $\|v_{n}(t_{0})\|_{H^{1}(\mathbb{R}^{3})} \rightarrow 0$ and solving the equation satisfied by $v_{n}$, we obtain 	$$\|v_{n}(t)\|_{H^{1}(\mathbb{R}^{3})} = \|v_{n}(t_{0})\|_{H^{1}(\mathbb{R}^{3})}  \rightarrow 0,$$
	for all $t \in [0,L]$. So 
	$$v_{n} \rightarrow 0 \mbox{ in }  L^{\infty}([0,L];H^{1}(\mathbb{R}^{3}))$$
 which implies 
	$$v_{n}(0) \rightarrow 0  \mbox{ in } H^{1}(\mathbb{R}^{3}),$$ 
which is a contradiction.

	\
	
	\noindent$\bullet$ Case 2:  $\alpha_{n} \rightarrow 0$.
	
	\
	
Estimate \eqref{eq 109} ensures that 
	$$	\int_{0}^{T}\int_{\mathbb{R}^{3}} |(1-\Delta)^{-\frac{1}{2}}a\partial_{t}u_{n}|^{2} \ dxdt  \leq \frac{1}{n}E(u_{n})(0).$$
Define $w_{n} = \frac{u_{n}}{\alpha_{n}}$, where the sequence $\{w_{n}\}$ satisfies
	\begin{equation}\label{eq 110}
		i\partial_{t}w_{n} + \Delta w_{n} - w_{n} -\alpha_{n}^{4} |w_{n}|^{4}w_{n} - a(1-\Delta)^{-1}a\partial_{t}w_{n} = 0 
	\end{equation}
	and 
$$
		\int_{0}^{T}\int_{\mathbb{R}^{3}} |(1-\Delta)^{-\frac{1}{2}}a\partial_{t}w_{n}|^{2} \ dxdt  \leq \frac{1}{n}.
$$
 Now, note that there exists $C>0$, depending on $\lambda_0$, such that 
	$$\frac{1}{C}\|u_{n}(t)\|^{2}_{H^{1}} \leq E(u_{n})(t) \leq C \|u_{n}(t)\|^{2}_{H^{1}},$$
	for all $t \in [0,T]$. Consequently, we get
	\begin{equation*}
		\|w_{n}(t)\|_{H^{1}} = \dfrac{\|u_{n}(t)\|_{H^{1}}}{\sqrt{E(u_{n})(0)}} \leq C \dfrac{\sqrt{E(u_{n})(t)}}{\sqrt{E(u_{n})(0)}} \leq C,
	\end{equation*}
	and
	\begin{equation}\label{eq 112}
		\|w_{n}(0)\|_{H^{1}} = \dfrac{\|u_{n}(0)\|_{H^{1}}}{\sqrt{E(u_{n})(0)}} \geq \frac{1}{\sqrt{C}} \dfrac{\|u_{n}(0)\|_{H^{1}}}{\|u_{n}(0)\|_{H^{1}}} \geq \frac{1}{\sqrt{C}}.
	\end{equation}
So, $ \|w_{n}(0)\|_{H^{1}} \approx 1$ and $w_{n}$ is bounded in $L^{\infty}([0,T];H^{1}(\mathbb{R}^{3})).$ Due to the Strichartz estimates (see Proposition \ref{prop4.12}) for the solutions of the equation \eqref{eq 110}, there exists $C>0$, such that
	\begin{eqnarray*}
		\|\nabla w_{n}\|_{L^{10}([0,T];L^{\frac{30}{13}}(\mathbb{R}^{3}))} & \leq & C\Big(\|w_{n}(0)\|_{H^{1}} + \alpha_{n}^{4} \|\nabla w_{n}\|_{L^{10}([0,T];L^{\frac{30}{13}}(\mathbb{R}^{3}))} \|w_{n}\|^{4}_{L^{10}([0,T];L^{10}(\mathbb{R}^{3}))} \\
		&   &  \mbox{   } + \alpha_{n}^{4} \|w_{n}\|^{5}_{L^{10}([0,T];L^{10}(\mathbb{R}^{3}))}\Big) \\
		& \leq &C	\Big(1 + \alpha_{n}^{4} \|\nabla w_{n}\|^{5}_{L^{10}([0,T];L^{\frac{30}{13}}(\mathbb{R}^{3}))}  \Big).
	\end{eqnarray*}
	A bootstrap argument gives us that $ \|\nabla w_{n}\|_{L^{10}([0,T];L^{\frac{30}{13}}(\mathbb{R}^{3}))} $ is bounded and, thus, $\|w_{n}\|_{L^{10}([0,T];L^{10}(\mathbb{R}^{3}))}$ is bounded, due to the Sobolev embedding. Additionally, if we consider the sequence $\{ w_{n}\}$ satisfying the Cauchy problem
	\begin{equation} \label{rarara}
		\left\{
		\begin{array}{lr}
			i\partial_{t} \tilde{w}_{n} + \Delta \tilde{w}_{n} -\tilde{w}_{n} -a(1-\Delta)^{-1}a\partial_{t}\tilde{w}_{n} =0, \ \mbox{ on } [0,T] \times \mathbb{R}^{3}, \\
			\tilde{w}_{n}(0)= w_{n}(0),
		\end{array}
		\right.
	\end{equation}
	an application of Proposition \ref{prop4.12} gives
		\begin{equation*}
		\begin{split}
		\|w_{n}-\tilde{w}_{n}\|_{L^{10}([0,T];L^{10}(\mathbb{R}^{3}))} &+ \|w_{n}-\tilde{w}_{n}\|_{L^{\infty}([0,T];H^{1}(\mathbb{R}^{3}))} \\
		 \leq & C\Big(\alpha_{n}^{4}\|\nabla w_{n}\|_{L^{10}([0,T];L^{\frac{30}{13}}(\mathbb{R}^{3}))}\|w_{n}\|^{4}_{L^{10}([0,T];L^{10}(\mathbb{R}^{3}))} \\
		&   + \alpha_{n}^{4}\|w_{n}\|^{5}_{L^{10}([0,T];L^{10}(\mathbb{R}^{3}))}  \Big) \\
		 \leq & C\Big(\alpha_{n}^{4} \|\nabla w_{n}\|^{5}_{L^{10}([0,T];L^{\frac{30}{13}}(\mathbb{R}^{3}))}\Big)		 \rightarrow 0,
		\end{split}
	\end{equation*}
as $n\rightarrow \infty$.
	
	We have  \begin{equation}\label{ttttt}\|(1-\Delta)^{-\frac{1}{2}}a\partial_{t}\tilde{w}_{n}\|_{L^{2}([0,T];L^{2}(\mathbb{R}^{3}))} \rightarrow 0,\end{equation} as $n\rightarrow \infty.$ Indeed,  
	\begin{equation*}
	\begin{split}
		\|(1-\Delta)^{-\frac{1}{2}}a\partial_{t}\tilde{w}_{n}\|_{L^{2}([0,T];L^{2}(\mathbb{R}^{3}))} 
	 \leq & 	\|(1-\Delta)^{-\frac{1}{2}}a(\partial_{t}\tilde{w}_{n}-\partial_{t}w_{n})\|_{L^{2}([0,T];L^{2}(\mathbb{R}^{3}))} \\
		&    + 	\|(1-\Delta)^{-\frac{1}{2}}a\partial_{t}w_{n}\|_{L^{2}([0,T];L^{2}(\mathbb{R}^{3}))} \\
		 \leq & \|\partial_{t}\tilde{w}_{n}-\partial_{t}w_{n}\|_{L^{2}([0,T];H^{-1}(\mathbb{R}^{3}))} \\
		 &+ 	\|(1-\Delta)^{-\frac{1}{2}}a\partial_{t}w_{n}\|_{L^{2}([0,T];L^{2}(\mathbb{R}^{3}))},
		 \end{split}
	\end{equation*}
and 	\begin{equation*}
	\begin{split}
		\|\partial_{t}\tilde{w}_{n}&-\partial_{t}w_{n}\|_{L^{2}([0,T];H^{-1}(\mathbb{R}^{3}))}  \\
		\leq&  \|-iJ^{-1}(I-\Delta)\tilde{w}_{n} +iJ^{-1}(I-\Delta)w_{n} + iJ^{-1}\alpha_{n}^{4}|w_{n}|^{4}w_{n}\|_{L^{2}([0,T];H^{-1}(\mathbb{R}^{3}))} \\
		 \leq & \|J^{-1}(I-\Delta)(\tilde{w}_{n}-w_{n})\|_{L^{2}([0,T];H^{-1}(\mathbb{R}^{3}))}\\& + \alpha_{n}^{4}\|J^{-1}w_{n}^{5}\|_{L^{2}([0,T];H^{-1}(\mathbb{R}^{3}))} \\
		 \leq & C \|\tilde{w}_{n}-w_{n}\|_{L^{2}([0,T];H^{1}(\mathbb{R}^{3}))} + C \alpha_{n}^{4}\|w_{n}\|^{5}_{L^{10}([0,T];L^{6}(\mathbb{R}^{3}))} \rightarrow 0,
		 \end{split}
	\end{equation*}
as $n\rightarrow \infty$, and $J$ is given in Theorem \ref{JJJ}. 	

Now, since $\{\tilde{w}_{n}\}$ is bounded in $L^{\infty}([0,T];H^{1}(\mathbb{R}^{3}))$, we can extract a subsequence (still denoted by $\{\tilde{w}_{n}\}$) such that $\tilde{w}_{n}(t) \rightharpoonup w(t)$ weakly. Passing to the limit in \eqref{rarara}, and taking into account the convergence  \eqref{ttttt}, the function $w$ satisfies
	\begin{equation*} 
		\left\{
		\begin{array}{ll}
			i\partial_{t} w + \Delta w -w  =0,&\mbox{ on } (0,T) \times \mathbb{R}^{3}, \\
			\partial_{t} w = 0,& \mbox{ on } (0,T) \times \mathbb{R}^{3}\backslash B_{R+1}(0).
		\end{array}
		\right.
	\end{equation*}
Let $v=\partial_{t} w$. Taking the derivative with respect to time in the first equation of the system above, we have that $v$ satisfies
	\begin{equation*} 
		\left\{
		\begin{array}{ll}
			i\partial_{t} v + \Delta v -v  =0,& \mbox{ on } (0,T) \times \mathbb{R}^{3},\\
			v = 0, & \mbox{ on } (0,T) \times \mathbb{R}^{3}\backslash B_{R+1}(0).
		\end{array}
		\right.
	\end{equation*}
Since $v \in C^{\infty}\big((0,T) \times \mathbb{R}^{3}\big)$ (see, for instance,  \cite[Proposition 2.3]{RoZhaJDE}) and satisfies 
	\begin{equation*} 
		\left\{
		\begin{array}{ll}
			i\partial_{t} v + \Delta v -v  =0,& \mbox{ on } (0,T) \times \mathbb{R}^{3}, \\
			v = 0, & \mbox{ on } (0,T) \times \mathbb{R}^{3}\backslash B_{R+1}(0),
		\end{array}
		\right.
	\end{equation*}
	by an unique continuation property (see \cite{RoMe}), $v\equiv0$ on $(0,T) \times \mathbb{R}^{3}$. Therefore, $\partial_{t} w\equiv 0$ in $(0,T) \times \mathbb{R}^{3}$ and 
	$\Delta w - w = 0.$
	Multiplying this equation by $\overline{w}$ and integrating by parts, we get
	$$\int_{\mathbb{R}^{3}}|\nabla w|^{2} \ dx + \int_{\mathbb{R}^{3}}|w|^{2} \ dx =0,$$
which implies	$w \equiv 0.$ Therefore, $\tilde{w}_{n} \rightharpoonup 0$ in $H^{1}(\mathbb{R}^{3})$.
	
	Finally, let us prove that 
	$$\tilde{w}_{n} \rightarrow 0 \text{ in }  L^{2}_{loc}\big((0,T);H^{1}_{loc}(\mathbb{R}^{3}\backslash B_{R+1}(0))\big).$$	Due to the convergence
	$$\|(1-\Delta)^{-\frac{1}{2}}a\partial_{t}\tilde{w}_{n}\|_{L^{2}([0,T] \times \mathbb{R}^{3})} \longrightarrow 0$$
	we get	$$ \| (1-\Delta)^{-\frac{1}{2}}aJ^{-1}(I-\Delta)\chi \tilde{w}_{n}\|_{L^{2}([0,T] \times \mathbb{R}^{3})}\longrightarrow 0,$$ for $\chi \in C_{0}^{\infty}([0,T] \times \mathbb{R}^{3})$ and $J$ given as in the proof of Theorem \ref{JJJ}. Indeed,
	\begin{equation*}
	\begin{split}
		\| (1-\Delta)^{-\frac{1}{2}}aJ^{-1}(I-\Delta)\chi \tilde{w}_{n}\|_{L^{2}([0,T] \times \mathbb{R}^{3})}  = & \| [(1-\Delta)^{-\frac{1}{2}}aJ^{-1}(I-\Delta),\chi] \tilde{w}_{n}\|_{L^{2}([0,T] \times \mathbb{R}^{3})}\\
		   & + \|\chi (1-\Delta)^{-\frac{1}{2}}aJ^{-1}(I-\Delta) \tilde{w}_{n}\|_{L^{2}([0,T] \times \mathbb{R}^{3})} \\
		 \leq & \|\chi_{B} \tilde{w}_{n}\|_{L^{2}([0,T] \times \mathbb{R}^{3})} \\&+ \|\chi (1-\Delta)^{-\frac{1}{2}}aJ^{-1}(I-\Delta) \tilde{w}_{n}\|_{L^{2}([0,T] \times \mathbb{R}^{3})},
		\end{split}
	\end{equation*}
and this yields that 
	$$\Big\langle (1-\Delta)(J^{-1})^{*}a(1-\Delta)^{-1}aJ^{-1}(1-\Delta)\chi \tilde{w}_{n}, \chi \tilde{w}_{n} \Big\rangle_{L^{2}((0,T)\times \mathbb{R}^{3})}\longrightarrow0.$$
	Thus, Proposition \ref{prop6.1} gives us
	$$\int_{(0,T)\times \omega \times S^{2}} 1+|\xi|^{2} \ d\mu = 0.$$
Moreover, Corollary \ref{coro6.2} ensures that 
	\begin{equation}\label{eq 115}
		\tilde{w}_{n} \rightarrow0 \mbox{   in   } L^{2}_{loc}\big((0,T);H^{1}_{loc}(\mathbb{R}^{3})\big).
	\end{equation}
	On other hand, since $\|(1-\Delta)^{-\frac{1}{2}}a\partial_{t}\tilde{w}_{n}\|_{L^{2}([0,T] \times \mathbb{R}^{3})} \longrightarrow 0$, we get 
	$$\|a\partial_{t}\tilde{w}_{n}\|_{L^{2}([0,T];H^{-1}( \mathbb{R}^{3}))} \longrightarrow 0.$$
	Let $\chi_{\omega} \in C^{\infty}(\mathbb{R}^{3})$ such that $\chi_{\omega} =1$ on $\supp(a)$. Then
	\begin{equation*}
	\begin{split}
		\|ai\partial_{t}\tilde{w}_{n}\|_{L^{2}([0,T];H^{-1}( \mathbb{R}^{3}))}  = & \|aJ^{-1}(1-\Delta)\tilde{w}_{n}\|_{L^{2}([0,T];H^{-1}( \mathbb{R}^{3}))} \\
		 \geq & \eta \|\chi_{\omega}J^{-1}(1-\Delta)\tilde{w}_{n}\|_{L^{2}([0,T];H^{-1}( \mathbb{R}^{3}))} \\
		 \geq & \eta \|J^{-1}\chi_{\omega}(1-\Delta)\tilde{w}_{n}\|_{L^{2}([0,T];H^{-1}( \mathbb{R}^{3}))} \\
		 \geq & C \|(1-\Delta)\chi_{\omega}\tilde{w}_{n}\|_{L^{2}([0,T];H^{-1}( \mathbb{R}^{3}))} \\& - C\|[(1-\Delta),\chi_{\omega}]\tilde{w}_{n}\|_{L^{2}([0,T];H^{-1}( \mathbb{R}^{3}))} .
		 \end{split}
	\end{equation*}
	For $\chi_{B} \in C_{0}^{\infty}(\mathbb{R}^{3})$, we have
	\begin{eqnarray*}
		\|[(1-\Delta),\chi_{\omega}]\tilde{w}_{n}\|_{L^{2}([0,T];H^{-1}( \mathbb{R}^{3}))} & = & 	\|[(1-\Delta),\chi_{\omega}]\chi_{B}\tilde{w}_{n}\|_{L^{2}([0,T];H^{-1}( \mathbb{R}^{3}))} \\
		& \leq& C \|\chi_{B}\tilde{w}_{n}\|_{L^{2}([0,T];L^{2}( \mathbb{R}^{3}))} \rightarrow  0.
	\end{eqnarray*}
Hence, 
	\begin{equation*}
	\begin{split}
		\|(1-\Delta)\chi_{\omega}\tilde{w}_{n}\|_{L^{2}([0,T];H^{-1}( \mathbb{R}^{3}))}  \leq & C\|[(1-\Delta),\chi_{\omega}]\tilde{w}_{n}\|_{L^{2}([0,T];H^{-1}( \mathbb{R}^{3}))}\\& + 	C \|ai\partial_{t}\tilde{w}_{n}\|_{L^{2}([0,T];H^{-1}( \mathbb{R}^{3}))} \\
		&\rightarrow 0.
		\end{split}
	\end{equation*}
	Then,
	\begin{eqnarray*}
		\|\chi_{\omega}\tilde{w}_{n}\|_{L^{2}([0,T];H^{1}( \mathbb{R}^{3}))} & = & 	\|(1-\Delta)^{-1}(1-\Delta)\chi_{\omega}\tilde{w}_{n}\|_{L^{2}([0,T];H^{1}( \mathbb{R}^{3}))} \\
		& \leq & 	\|(1-\Delta)\chi_{\omega}\tilde{w}_{n}\|_{L^{2}([0,T];H^{-1}( \mathbb{R}^{3}))}  \rightarrow  0.
	\end{eqnarray*}
	This means that 
	\begin{equation}\label{eq 116}
		\tilde{w}_{n} \longrightarrow 0 \mbox{   in   } L^{2}\big([0,T];H^{1}(\mathbb{R}^{3}\backslash B_{R+1}(0))\big).
	\end{equation}
	By \eqref{eq 115} and \eqref{eq 116}, we conclude that 
	\begin{equation*}
		\tilde{w}_{n} \longrightarrow 0 \mbox{   in   } L^{2}_{loc}\big((0,T);H^{1}(\mathbb{R}^{3})\big).
	\end{equation*}
	So, choosing $t_{0} \in (0,T)$ such that $\|\tilde{w}_{n}(t_{0})\|_{H^{1}} \rightarrow0$ and solving the equation satisfied by $\tilde{w}_{n}$ with $\tilde{w}_{n}(t_{0})$ as initial data, we have 
	$$\tilde{w}_{n}(t) = e^{i(t-t_{0})(\Delta - I)}\tilde{w}_{n}(t_{0}) + \int_{t_{0}}^{t}e^{i(t-\tau)(\Delta - I)}a(1-\Delta)^{-1}a\partial_{t}\tilde{w}_{n} \ d\tau .$$
	Hence,
	\begin{eqnarray*}
		\|\tilde{w}_{n}(t)\|_{H^{1}} & \leq & c \|\tilde{w}_{n}(t_{0})\|_{H^{1}} + c\|a(1-\Delta)^{-1}a\partial_{t}\tilde{w}_{n}\|_{L^{1}([0,T];H^{1})}  \rightarrow 0.
	\end{eqnarray*}
	Therefore,
	$$\tilde{w}_{n} \longrightarrow 0 \mbox{   in   } L^{\infty}\big([0,T];H^{1}(\mathbb{R}^{3})\big),$$
	and 
	$$\|w_{n}(0)\|_{H^{1}} = \|\tilde{w}_{n}(0)\|_{H^{1}}\rightarrow 0,$$
which is a contradiction with \eqref{eq 112}.
\end{proof}

Now, we finally complete the proof of the Theorem \ref{mainteo}.
\begin{proof}[Proof of Theorem \ref{mainteo}]
	
	Fix $T>0$ such that Theorem \ref{weakobs} applies. Then, there exists $\varepsilon > 0$ such that for any $u_{0}$ satisfying
	\begin{equation}\label{desi}
		\|u_{0}\|_{H^{1}} \leq \lambda_{0}; \ \ \ \|u_{0}\|_{H^{-1}} \leq \varepsilon ,
	\end{equation}
	the strong observability estimate \eqref{obser} holds for any solution of the damped equation \eqref{eq 4}. This means that there exists $B>0$ such that any solution of the damped equation satisfying \eqref{desi} fulfills
	\begin{equation}\label{decay}
		E(u)(T) \leq (1-B)E(u)(0).
	\end{equation}
	Choose $N \in \mathbb{N}$ large enough such that $(1 - B)^{N} C(\lambda_{0}) \leq {\varepsilon^{2}}.$
Corollary \ref{coro4.40} allow us to choose $\delta$ small enough such that the assumptions $\|u_{0}\|_{H^{1}} \leq R_{0}$ and $\|u_{0}\|_{H^{-1}} \leq \delta$ imply $\|u(nT)\|_{H^{-1}} \leq \varepsilon$, for $ 0\leq n \leq N.$ So, with that choice, we have $E(u)(NT) \leq (1 - B)^{N} E(u)(0).$ Then, by the decreasing of energy, we have $\|u(t)\|^{2}_{H^{-1}}  \leq \varepsilon^{2}$ for all $t  \geq NT$.   Hence, the decay estimate \eqref{decay} holds in each interval $[nT,(n + 1)T],$ $n \in \mathbb{N}$, and 
$$E(u)(nT) \leq (1-B)^{n}E(u)(0),$$
giving the desired result. 
\end{proof}

\appendix

	\section{A review of the Cauchy problem}\label{apA}

	\subsection{Existence}  
		In this section, we review some results for the initial value problem
	\begin{equation} \label{eq 5}
		\left\{
		\begin{array}{ll}
			i\partial_{t} u + \Delta u -u -|u|^{4}u = g,& (t,x) \in \mathbb{R} \times \mathbb{R}^{3}, \\
			u(0) = u_{0} \in {H}^{1}(\mathbb{R}^{3}). 
		\end{array}
		\right.
	\end{equation}
	where $g \in L^{\infty}_{loc}(\mathbb{R},H^{1}(\mathbb{R}^{3}))$. 
	We begin with some definitions.
	\begin{definition} Let s $\in \mathbb{R}$. The homogeneous Sobolev space $\dot{H}^s\left(\mathbb{R}^d\right)$ is the space of tempered distributions $u$ over $\mathbb{R}^d$ which have Fourier transform belonging to $L_{l o c}^1\left(\mathbb{R}^d\right)$ and satisfy
		$$
		\|u\|_{\dot{H}^s}^2 \stackrel{\text { def }}{=} \int_{\mathbb{R}^d}|\xi|^{2 s}|\widehat{u}(\xi)|^2 d \xi<\infty .
		$$
		We note that the spaces $\dot{H}^s$ and $\dot{H}^{s^{\prime}}$ are not comparable for inclusion.
	\end{definition}
	
	\begin{definition} A pair $(q,r)$ is called $L^{2}$-admissible if $r \in [2,6)$ and q satisfies 
		\begin{equation*} 
			\frac{2}{q} + \frac{3}{r} = \frac{3}{2}. 
		\end{equation*}
		A pair $(q,r)$ is called $H^{1}$-admissible if $r \in [6,+\infty)$ and q satisfies 
		\begin{equation*} 
			\frac{2}{q} + \frac{3}{r} = \frac{1}{2}. 
		\end{equation*}
	\end{definition}
\begin{remark}
	If $(q,r)$ is a $L^{2}$-admissible pair, then $2 \leq q \leq \infty$. Note that the pair $(\infty,2)$ is always $L^{2}$-admissible. The pair $(2,\frac{2N}{N-2})$ is $L^{2}$- admissible if $N> 3$.
\end{remark}
	With these definitions in hand, we present two results that are paramount to prove that the Cauchy problem \eqref{eq 5} is well-posed. The first one gives the so-called \textit{Strichartz estimates} and the second one is a standard Sobolev embedding. These results can be found in \cite{cazenave_book, KeelTao}.
	\begin{lemma}\label{strichartz}
		Let $(q,r)$ be a $L^{2}$-admissible pair. We have
		\begin{equation}\label{item_i}
			\|e^{it\Delta}h\|_{L^{q}_{t}L^{r}_{x}} \leq c\|h\|_{L^{2}},
		\end{equation}
		\begin{equation}\label{item_ii}
			\Bigg\|\int_{-\infty}^{+\infty}e^{i(t-\tau)\Delta}g\ d\tau\Bigg\|_{L^{q}_{t}L^{r}_{x}} +\Bigg\|\int_{0}^{t}e^{i(t-\tau)\Delta}g\ d\tau\Bigg\|_{L^{q}_{t}L^{r}_{x}} \leq c\|g\|_{L^{q'}_{t}L^{r'}_{x}} ,
		\end{equation}
		and
		\begin{equation*}
			\Bigg\|\int_{-\infty}^{+\infty}e^{it\Delta}g(\tau)\ d\tau\Bigg\|_{L^{2}_{x}} \leq C\|g\|_{L^{q'}_{t}L^{r'}_{x}}.
		\end{equation*}
		Additionally, we have
		\begin{equation}\label{rmk1}
			\Bigg\|\int_{-\infty}^{+\infty}e^{i(t-\tau)\Delta}g(\tau)\ d\tau\Bigg\|_{L^{q}_{t}L^{r}_{x}} \leq C \|g\|_{L^{m'}_{t}L^{n'}_{x}}
		\end{equation}
		where $(q, r)$, $(m, n)$ are any $L^{2}$-admissible pair, wich is an generalization of \eqref{item_ii}.
	\end{lemma}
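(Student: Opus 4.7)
The statement to be proved is the classical Strichartz inequality for the free Schrödinger propagator on $\mathbb{R}^3$, so the plan is to follow the now-standard $TT^*$ route supplemented by Hardy--Littlewood--Sobolev, with the Keel--Tao machinery reserved only for the endpoint situation. First I would record the two building blocks: the $L^2$ conservation $\|e^{it\Delta}h\|_{L^2}=\|h\|_{L^2}$ and the dispersive estimate $\|e^{it\Delta}h\|_{L^\infty_x}\le C|t|^{-3/2}\|h\|_{L^1_x}$, the latter coming from an explicit computation of the Schrödinger kernel. Riesz--Thorin interpolation between these two then yields $\|e^{it\Delta}h\|_{L^{r}_x}\le C|t|^{-3(1/2-1/r)}\|h\|_{L^{r'}_x}$ for every $r\in[2,\infty]$.

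Next I would prove the homogeneous estimate \eqref{item_i} by the $TT^*$ method. Setting $Th=e^{it\Delta}h$, the bound $\|T h\|_{L^q_t L^r_x}\le C\|h\|_{L^2}$ is equivalent, by duality, to
\begin{equation*}
\Bigl\|\int_{\mathbb{R}}e^{i(t-s)\Delta}g(s)\,ds\Bigr\|_{L^q_tL^r_x}\le C\|g\|_{L^{q'}_tL^{r'}_x},
\end{equation*}
which I would reduce, via Minkowski and the interpolated dispersive bound, to the scalar inequality
\begin{equation*}
\Bigl\|\int_{\mathbb{R}}|t-s|^{-3(1/2-1/r)}\|g(s)\|_{L^{r'}_x}\,ds\Bigr\|_{L^q_t}\le C\|g\|_{L^{q'}_tL^{r'}_x}.
\end{equation*}
This is precisely Hardy--Littlewood--Sobolev with exponents forced by $\frac{2}{q}+\frac{3}{r}=\frac{3}{2}$, so it holds whenever $r\in[2,6)$, i.e.\ in the non-endpoint range required in the statement. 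The dual of \eqref{item_i} immediately gives the third, mass-type inequality.

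For the inhomogeneous estimate \eqref{item_ii} I would combine the bound just proved with its dual via the Christ--Kiselev lemma to convert the full-line convolution into the retarded one $\int_0^t e^{i(t-s)\Delta}g\,ds$; this is legitimate because the pairs involved satisfy $q>q'$ away from the endpoint $q=2$. The generalized mixed-admissible estimate \eqref{rmk1} follows by exactly the same $TT^*$/HLS argument once one notes that the composition $T_1 T_2^*$ with $T_i$ associated to two possibly different admissible pairs produces a kernel bound of the same dispersive form, after which HLS with the appropriate exponents (guaranteed by the two scaling relations) closes the estimate.

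The only genuinely delicate point, and what I expect to be the main obstacle, is the endpoint $r=6$ (equivalently $q=2$ in $\mathbb{R}^3$), where HLS fails and the Christ--Kiselev trick is unavailable because $q=q'$. Fortunately the statement restricts to $r\in[2,6)$, so this endpoint is excluded and the sketch above suffices; had the endpoint been needed, one would have had to invoke the bilinear dyadic decomposition of Keel--Tao to recover it. I would finish by quoting \cite{KeelTao, cazenave_book} for the details and for the endpoint case, as the authors do.
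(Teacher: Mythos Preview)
Your sketch is correct and follows the standard $TT^*$/dispersive/Hardy--Littlewood--Sobolev route, which is exactly the approach in the references the paper cites. Note, however, that the paper does not give its own proof of this lemma at all: it simply states the estimates and refers the reader to \cite{cazenave_book, KeelTao}, so there is nothing in the paper to compare your argument against beyond those citations.
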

Define the $S(I),$ $W(I)$ and $Z(I)$ norms for an interval $I$ by
	$$\|u\|_{S(I)} = \|u\|_{L^{10}(I;L^{10}(\mathbb{R}^{3}))}, \ \  \|u\|_{Z(I)} = \|u\|_{L^{10}(I;L^{\frac{30}{13}}(\mathbb{R}^{3}))} \ \mbox{and} \ \ \ \|u\|_{W(I)} = \|u\|_{L^{\frac{10}{3}}(I;L^{\frac{10}{3}}(\mathbb{R}^{3}))}.$$
	The first theorem gives us the existence of the solution to problem \eqref{eq 5}. The proof is similar to the proof of Theorem 2.4 from \cite{BrCaCaSa} and, thus, we will omit it. 
	\begin{theorem} 
		Let  $u_{0} \in H^{1}(\mathbb{R}^{3})$. , with $\|u_{0}\|_{H^{1}}\leq A$. If $\|u_{0}\|_{H^{1}}$ is small enough, there exists an unique $u \in C(\mathbb{R}_{+}, H^{1}(\mathbb{R}^{3}))$ solution of \eqref{eq 5} with
	\begin{equation*}
		\|u\|_{S([0,T])} < \infty , \ \  \|\nabla u\|_{W([0,T])} < \infty  \ \mbox{  and  } \ \|\nabla u\|_{Z([0,T])} < \infty
	\end{equation*} 
for all $T>0$.
	\end{theorem}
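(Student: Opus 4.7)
\medskip

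\noindent\textbf{Proof plan.} My approach is the standard Banach fixed-point argument for $H^1$-critical Schr\"odinger equations, adapted to incorporate the zero-order perturbation $-u$ and the forcing term $g$. First I would convert \eqref{eq 5} into the Duhamel integral equation
\begin{equation*}
\Phi(u)(t) = e^{it(\Delta - 1)} u_0 - i \int_0^t e^{i(t-s)(\Delta - 1)} \bigl(|u|^4 u + g\bigr)(s)\,ds,
\end{equation*}
noting that the multiplier $e^{-it}$ commutes with the Laplacian and so all Strichartz estimates of Lemma \ref{strichartz} apply to the propagator $e^{it(\Delta-1)}$ without modification. The natural working space on a finite interval $[0,T]$ is
\begin{equation*}
X_T = \bigl\{ u \in C([0,T]; H^1(\mathbb{R}^3)) : \|u\|_{S([0,T])} + \|\nabla u\|_{W([0,T])} + \|\nabla u\|_{Z([0,T])} < \infty\bigr\},
\end{equation*}
equipped with the obvious norm $\|\cdot\|_{X_T}$, and I would prove that for small enough data $\Phi$ is a contraction on a closed ball $B_\eta \subset X_T$.

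The key analytical ingredients are the following. Since both $(10/3,10/3)$ and $(10, 30/13)$ are $L^2$-admissible, the estimates \eqref{item_i}--\eqref{rmk1} together with Sobolev embedding $\dot W^{1,30/13}(\mathbb{R}^3) \hookrightarrow L^{10}(\mathbb{R}^3)$ (verified via $\tfrac{1}{30/13} - \tfrac{1}{3} = \tfrac{1}{10}$) control the three pieces of the $X_T$-norm of the linear evolution in terms of $\|u_0\|_{H^1}$. For the nonlinearity I would use the pointwise bound $|\nabla(|u|^4 u)| \lesssim |u|^4 |\nabla u|$ combined with H\"older in space and time at exponent $(10/7,10/7)$,
\begin{equation*}
\bigl\|\nabla(|u|^4 u)\bigr\|_{L^{10/7}_t L^{10/7}_x} \lesssim \|u\|_{S([0,T])}^4 \, \|\nabla u\|_{W([0,T])},
\end{equation*}
and an analogous bound (without the derivative) for $\||u|^4 u\|_{L^{10/7}_t L^{10/7}_x}$. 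Applying \eqref{rmk1} with the dual $L^2$-admissible pair then yields
\begin{equation*}
\|\Phi(u)\|_{X_T} \le C\|u_0\|_{H^1} + C \|u\|_{X_T}^5 + C T \|g\|_{L^\infty([0,T];H^1)},
\end{equation*}
with a parallel Lipschitz estimate for $\Phi(u) - \Phi(v)$ differing by a factor of $(\|u\|_{X_T}^4 + \|v\|_{X_T}^4)$.

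From here the contraction argument is routine: choose $\eta$ small compared to $(4C)^{-1/4}$, then choose $\|u_0\|_{H^1}$ and (via the factor $T$) the interval length so that the right-hand side maps $B_\eta$ into itself. A standard continuation/bootstrap argument then extends the local solution to all of $\mathbb{R}_+$, because the smallness of the $S([0,T])$-norm is preserved under iteration provided the data remain small in $H^1$. Uniqueness and continuity in time follow automatically from the fixed-point framework combined with Strichartz continuity.

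The main obstacle I anticipate is not the nonlinear estimate but rather the treatment of the forcing $g$: since $g$ is only assumed to lie in $L^\infty_{\mathrm{loc}}(\mathbb{R}; H^1)$, the contribution $\|\int_0^\cdot e^{i(t-s)(\Delta-1)} g(s)\,ds\|_{X_T}$ is bounded by $CT\|g\|_{L^\infty([0,T];H^1)}$, which grows with $T$. To obtain a \emph{global} solution with finite Strichartz norms on every $[0,T]$, one must iterate the local construction on successive intervals whose length is adjusted so that the running $X$-norm stays below $\eta$ at each restart; this is compatible with the hypothesis that $\|u_0\|_{H^1}$ is small because the $H^1$-norm of $u(t)$ can be controlled along the flow via the energy identity associated with \eqref{eq 5} (here the full $H^1$-energy, not just the critical $\dot H^1$-piece, enters because of the $-u$ term and the presence of $g$). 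The smallness condition on $\|u_0\|_{H^1}$ is precisely what is needed to keep this iteration closed.
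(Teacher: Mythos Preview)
Your proposal is correct and follows essentially the same approach as the paper. The paper in fact omits the proof of this theorem entirely, referring to \cite{BrCaCaSa}; however, your fixed-point scheme, choice of Strichartz spaces, and nonlinear estimate $\|\nabla(|u|^4u)\|_{L^{10/7}_{t,x}} \lesssim \|u\|_S^4\|\nabla u\|_W$ match exactly the argument the paper does spell out for the damped variant (Theorem~\ref{JJJ}), including the same local-in-time smallness constraints and the appeal to energy control for globalization.
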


 Now, we prove a result that ensures the existence of solutions for the ${H}^{1}$ critical nonlinear Schr\"{o}dinger equation with a damping term, that is, changing $g$ by $a(x)(1-\Delta)^{-1}a(x)\partial_{t}u$ in the system \eqref{eq 5}. 
	
	\begin{theorem}\label{JJJ}
		Let $T>0$, $u_{0} \in {H}^{1}(\mathbb{R}^{3})$, 
		 and $a(x)\in C^{\infty}(\mathbb{R}^{3})$ a non-negative real valued function. If $\|u_{0}\|_{H^{1}}$ is small enough, then there exists an unique $u \in C(\mathbb{R}_{+},{H}^{1}(\mathbb{R}^{3}))$, solution of the system
		\begin{equation} \label{eq 8}
			\left\{
			\begin{array}{ll}
				i\partial_{t} u + \Delta u -u -|u|^{4}u -a(x)(1-\Delta)^{-1}a(x)\partial_{t} u = 0, & (t,x) \in [0,T] \times \mathbb{R}^{3}, \\
				u(0) = u_{0}, &x \in \mathbb{R}^{3},
			\end{array}
			\right.
		\end{equation}
		with  $\|u\|_{S([0,T])} < \infty , \ \  \|\nabla u\|_{W([0,T])} < \infty  \ \mbox{  and  } \ \|\nabla u\|_{Z([0,T])} < \infty $
		for all $T <\infty$.
	\end{theorem}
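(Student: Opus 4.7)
The strategy is to reduce the damped equation \eqref{eq 8} to a standard semilinear Schrödinger equation by inverting the ``damping multiplier'' in the time derivative, and then to run a contraction argument in the Strichartz spaces used in the preceding theorem.

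First, I would introduce the operator $B = a(x)(1-\Delta)^{-1}a(x)$, which is self-adjoint and nonnegative on $L^2(\mathbb{R}^3)$ and, by symbolic calculus, is a pseudodifferential operator of order $-2$ bounded on every $H^s$. Rewriting \eqref{eq 8} as $(iI - B)\partial_t u = (1-\Delta)u + |u|^4u$ and setting $J = I + iB$ (so that $iJ = iI - B$), the spectrum of $iB$ lies on $i\mathbb{R}$, hence $J$ is invertible on all $H^s$, and $J$ and $J^{-1}$ are order-$0$ pseudodifferential operators, in particular bounded on every $L^p$, $1<p<\infty$, and on every Sobolev space. The equation becomes
\begin{equation*}
i\partial_t u + \Delta u - u - J^{-1}|u|^4 u = R u, \qquad R := (I - J^{-1})(1-\Delta),
\end{equation*}
where the key observation is that $R = -iJ^{-1}B(1-\Delta)$ is of order $0$ and hence bounded on $L^2$ and on $H^1$; the principal part of the linear operator is untouched, so the free group $S(t) := e^{it(\Delta-I)}$ still governs dispersion.

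Next, I would apply Duhamel with respect to $S(t)$:
\begin{equation*}
u(t) = S(t)u_0 + i\int_0^t S(t-s)\bigl[Ru(s) - J^{-1}(|u|^4 u)(s)\bigr]\,ds,
\end{equation*}
and set up a contraction on the complete metric space
\begin{equation*}
X_T = \Bigl\{u \in C([0,T];H^1(\mathbb{R}^3)) :\ \|u\|_{L^\infty_t H^1_x} + \|u\|_{S([0,T])} + \|\nabla u\|_{W([0,T])} + \|\nabla u\|_{Z([0,T])} \le \eta\Bigr\},
\end{equation*}
equipped with a suitable weaker metric. Applying the Strichartz estimates of Lemma \ref{strichartz} to each piece of the Duhamel formula, the free term is controlled by $C\|u_0\|_{H^1}$; the nonlinear term $J^{-1}(|u|^4u)$ is treated exactly as in the proof of Theorem A.4, because $J^{-1}$ is bounded on every $L^p$ and commutes (modulo order-$(-1)$ terms handled by the $H^1$ bound) with $\nabla$; and the new linear term $\int_0^t S(t-s)Ru(s)\,ds$ is estimated by $\|Ru\|_{L^1_t L^2_x} \le CT\|u\|_{L^\infty_t H^1_x}$, which is absorbed for small $T$. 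For small initial data, an energy identity obtained by multiplying \eqref{eq 8} by $\bar u$ (and by $\overline{\partial_t u}$) and integrating by parts yields the dissipative law with $\|(1-\Delta)^{-1/2}a\partial_t u\|_{L^2}^2$ on the right, so the $H^1$ norm does not grow; together with the contraction argument this promotes the local solution to a global one. Uniqueness follows from the contraction as usual.

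The main obstacle is controlling the extra linear term $Ru$ uniformly in $T$ so as to close the argument globally in time. On small time intervals this is immediate from the $CT$ factor above, but for global existence one has to iterate on time slabs $[nT_0,(n+1)T_0]$ and show that the $H^1$ norm of $u(nT_0)$ does not grow. This is precisely where the dissipative structure of the damping enters: the energy identity recorded after \eqref{eq 4} in the introduction gives $\|u(t)\|_{H^1} \lesssim \|u_0\|_{H^1}$ for small data, ensuring that the local Strichartz norms remain uniformly small and the contraction hypothesis is preserved across consecutive slabs. Combining this with the standard subcritical-in-each-slab contraction argument yields the claimed $u\in C(\mathbb{R}_+;H^1(\mathbb{R}^3))$ with $\|u\|_{S([0,T])} + \|\nabla u\|_{W([0,T])} + \|\nabla u\|_{Z([0,T])} < \infty$ for every $T<\infty$.
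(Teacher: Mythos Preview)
Your proposal is correct and follows essentially the same approach as the paper: both introduce $J$ as an invertible order-$0$ pseudodifferential operator to absorb the damping term, rewrite the equation as the free Schr\"odinger evolution plus an order-$0$ linear perturbation plus the ($J^{-1}$-twisted) quintic nonlinearity, run a Strichartz-based contraction on short intervals (the extra linear piece contributing a $CT$ factor), and globalize via the energy dissipation. The only difference is cosmetic---the paper first changes variables to $v=Ju$ and runs the fixed-point argument on $v$, while you apply $J^{-1}$ to the equation and work directly with $u$---but the resulting estimates are identical.
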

	\begin{proof}
		We claim that the operator $Jv = (1-ia(x)(1 -\Delta )^{-1} a(x))v$ is a pseudodifferential operator of order 0 which defines an isomorphism on the space $H^{s}(\mathbb{R}^{3})$, for $s \in \mathbb{R}$ and, in particular, on $L^{p}(\mathbb{R}^{3})$.
		Indeed, we can write $J$ as $J = I + J_{1}$, where $J_{1}$ is an anti-self-adjoint operator on $L^{2}(\mathbb{R}^{3})$. Thus $J$ is an isomorphism on $L^{2}(\mathbb{R}^{3})$ and, due to the ellipticity, on $H^{s}(\mathbb{R}^{3})$, for $s > 0$. Moreover, $J^{-1}$ (considered, for example, acting on $L^{2}([0,T]\times \mathbb{R}^{3})$) is a
		pseudodifferential operator of order 0 and satisfies $J^{-1} = 1-J_{1}J^{-1}$. 
		
		We denote $v=Ju$ and write the system \eqref{eq 8} as
		\begin{equation*}
			\left\{
			\begin{array}{ll}
				\partial_{t} v -i \Delta v -R_{0}v +i|u|^{4}u = 0,&(t,x) \in [0,T] \times \mathbb{R}^{3}, \\
				v(0) = v_{0}=Ju_{0}, &x \in \mathbb{R}^{3},
			\end{array}
			\right.
		\end{equation*}
		where $R_{0} = -i\Delta J_{1}J^{-1} + iJ^{-1}$ is a pseudodifferential operator of order 0.
		This Cauchy problem is equivalent to the integral equation 
		$$v(t) = e^{it\Delta}v_{0} + \int_{0}^{t}e^{i(t-\tau)\Delta}[R_{0}v - i|u|^{4}u] \ d\tau.$$
		Let $I=[0,T]$ and consider the set
		$X_{I}$
		with norm
		$$\|v\|_{X_{I}} = \sup_{t \in I} \|\nabla v(t)\|_{L^{2}} + \sup_{t \in I} \| v(t)\|_{L^{2}} +\|v\|_{S(I)} + \|\nabla v\|_{W(I)}.$$
		We now set $B_{R} = \Big\{v \in X_{I}; \ \ \|v\|_{X_{I}} \leq R\Big\}$, where $R>0$. By Duhamel's formula, we define the functional
		\begin{equation}\label{eq 9}
			\Phi_{u_{0}}(v)(t) = e^{it\Delta}v_{0} + \int_{0}^{t} e^{i(t-\tau)\Delta}R_{0}v\ d\tau - \int_{0}^{t} e^{i(t-\tau)\Delta} i|u|^{4}u \ d\tau 
		\end{equation}
		Our goal is to show that this functional has a fixed point, considering $\Phi_{u_{0}}$ in a suitable ball $B_{R} $. We first show that we can choose $R$ such that $\Phi(v): B_{R} \longrightarrow B_{R}$. 
		Indeed, by \eqref{eq 9}, we get
		\begin{eqnarray*}
			\|\nabla \Phi_{u_{0}}(v)\|_{L^{2}_{x}} & \leq & \|\nabla e^{it\Delta}v_{0}\|_{L^{2}} + \Big\| \int_{0}^{t} \nabla e^{i(t-\tau)\Delta }|u|^{4}u\ d\tau \Big\|_{L^{2}_{x}} +  \Big\| \int_{0}^{t} \nabla e^{i(t-\tau)\Delta }R_{0}v\ d\tau \Big\|_{L^{2}_{x}}\\
			& \leq &  \|\nabla v_{0}\|_{L^{2}} + C\|\nabla|u|^{4}u\|_{L^{\frac{10}{7}}_{t} L^{\frac{10}{7}}_{x}} + C \|\nabla R_{0}v\|_{L^{1}_{t}L^{2}_{x}}\\
			& \leq &\|\nabla v_{0}\|_{L^{2}} + 
			C \|u\|_{S(I)}^{4}\|\nabla u\|_{W(I)} + C\|[\nabla,R_{0}]v\|_{L^{1}_{t}L^{2}_{x}} + C\|R_{0}\nabla v\|_{L^{1}_{t}L^{2}_{x}}.
		\end{eqnarray*}
		On the other hand, observe that
		\begin{eqnarray*}
			\|\nabla u\|_{W(I)}
			& = & \|[\nabla,J^{-1}]v + J^{-1}\nabla v\|_{W(I)} \\
			& \leq & C \|v\|_{W(I)}  + C \|\nabla v\|_{W(I)}.
		\end{eqnarray*}
		Then, 
				\begin{equation*}
		\begin{split}
			\|\nabla \Phi_{u_{0}}(v)\|_{L^{2}_{x}} \leq &\|\nabla v_{0}\|_{L^{2}} + 
			C \|v\|_{S(I)}^{4}\big(  \|v\|_{W(I)}  +  \|\nabla v\|_{W(I)}\big) + C\|[\nabla,R_{0}]v\|_{L^{1}_{t}L^{2}_{x}} + \|R_{0}\nabla v\|_{L^{1}_{t}L^{2}_{x}} \\
			\leq & \| v_{0}\|_{H^{1}} + 
			C \|v\|_{S(I)}^{4} \|v\|_{W(I)}  + C \|v\|_{S(I)}^{4} \|\nabla v\|_{W(I)}    \\&+ CT\sup_{t \in I}\|v(t)\|_{L^{2}} + CT\sup_{t \in I}\|\nabla v(t)\|_{L^{2}}. 
			\end{split}
		\end{equation*} 
By interpolation, one has
		\begin{eqnarray*}
			\|v(t)\|_{L^{\frac{10}{3}}} & \leq & \|v(t)\|^{\frac{2}{5}}_{L^{2}}\|v(t)\|^{\frac{3}{5}}_{L^{6}},
		\end{eqnarray*}
which ensures
		\begin{eqnarray*}
			\int_{0}^{T}\|v(t)\|^{\frac{10}{3}}_{L^{\frac{10}{3}}} \ dt 
			 \leq T \sup_{t \in I} \|v(t)\|^{\frac{4}{3}}_{L^{2}}\sup_{t \in I} \|v(t)\|^{2}_{L^{6}} 
			 \leq  T \|v\|^{\frac{4}{3}}_{X_{I}}  \|v\|^{2}_{X_{I}} 
			 \leq  T \|v\|^{\frac{10}{3}}_{X_{I}} ,
		\end{eqnarray*}
implying that
		$$ \|v\|_{W(I)} \leq T^{\frac{3}{10}} \|v\|_{X_{I}}.$$
		Hence,
		\begin{eqnarray*}
			\|\nabla \Phi_{u_{0}}(v)\|_{L^{2}_{x}} 
			& \leq & \|\nabla v_{0}\|_{L^{2}} + 
			C T^{\frac{3}{10}} \|v\|_{S(I)}^{4} \|v\|_{X_{I}}  + C \|v\|_{S(I)}^{4} \|\nabla v\|_{W(I)} \\
			&   & \mbox{   }+ CT\sup_{t \in I}\|v(t)\|_{L^{2}} + CT\sup_{t \in I}\|\nabla v(t)\|_{L^{2}} \\
			& \leq & C \|v_{0}\|_{H^{1}} + CT^{\frac{3}{10}}\|v\|^{5}_{X_{I}} + C\|v\|^{5}_{X_{I}} + CT \|v\|_{X_{I}},
		\end{eqnarray*} 
		where, for these inequalities, we have used Lemma \ref{strichartz}, precisely, inequalities \ref{item_i}, with $(q,r) =\displaystyle{\Big(\frac{10}{3},\frac{10}{3}\Big)}$, and \ref{rmk1}, with $(q,r) =\displaystyle{\Big(\frac{10}{3},\frac{10}{3}\Big)}$ and $(m,n)= (\infty,2)$. Note that, 
		\begin{eqnarray*}
			\|\Phi_{u_{0}}(v)\|_{L^{2}_{x}} 
			& \leq & C \|v_{0}\|_{H^{1}} + CT\|v\|^{5}_{X_{I}} + CT \|v\|_{X_{I}}
		\end{eqnarray*}
and
		\begin{eqnarray*}
			\|\nabla \Phi_{u_{0}}(v)\|_{W(I)} 
			& \leq &\|\nabla v_{0}\|_{L^{2}} + 
			C \|u\|_{S(I)}^{4}\|\nabla u\|_{W(I)} + C\|[\nabla,R_{0}]v\|_{L^{1}_{t}L^{2}_{x}} + C\|R_{0}\nabla v\|_{L^{1}_{t}L^{2}_{x}}.
		\end{eqnarray*}
Similarly as before, one can get
		\begin{eqnarray*}
			\|\nabla \Phi_{u_{0}}(v)\|_{W(I)}
			& \leq & \|\nabla v_{0}\|_{L^{2}} + 
			C T^{\frac{3}{10}} \|v\|_{S(I)}^{4} \|v\|_{X_{I}}  + C \|v\|_{S(I)}^{4} \|\nabla v\|_{W(I)} \\
			&   & \mbox{   }+ CT\sup_{t \in I}\|v(t)\|_{L^{2}} + CT\sup_{t \in I}\|\nabla v(t)\|_{L^{2}}\\
			& \leq & C \|v_{0}\|_{H^{1}} + CT^{\frac{3}{10}}\|v\|^{5}_{X_{I}} + C\|v\|^{5}_{X_{I}} + CT \|v\|_{X_{I}}.
		\end{eqnarray*} 
		Finally,
		\begin{eqnarray*}
			\|\Phi_{u_{0}}(v)\|_{S(I)}
			& \leq & C \|v_{0}\|_{H^{1}} + CT^{\frac{3}{10}}\|v\|^{5}_{X_{I}} + C\|v\|^{5}_{X_{I}} + CT \|v\|_{X_{I}},
		\end{eqnarray*}
		where we used  Lemma \ref{strichartz}, inequalities \ref{item_i}, with $(q,r) =\displaystyle{\Big(10,\frac{30}{13}\Big)}$, and \ref{rmk1}, with $(q,r) =\Big(10,\frac{30}{13}\Big)$ and again $(m,n)= (\infty,2)$.  Putting all these pieces of information together means that 
		\begin{eqnarray*}
			\|\Phi_{u_{0}}(v)\|_{X_{I}} & \leq & C \| v_{0}\|_{H^{1}} 
			+ CT^{\frac{3}{10}}\|v\|^{5}_{X_{I}} + C\|v\|^{5}_{X_{I}} + CT \|v\|_{X_{I}}.
		\end{eqnarray*}
Now, choosing $T < \min\big\{1,\frac{1}{4C}\big\}$, $A< \frac{R}{8C}$ and $R < \frac{1}{(4C)^{\frac{1}{4}}}$, we conclude that $\Phi_{u_{0}}$ reproduces the ball $B_{R}$ into itself.
		
		Now, let us prove that  $\Phi(v)$ is a contraction. To this end, consider the two systems 
		\begin{equation*} 
			\left\{
			\begin{array}{ll}
				i\partial_{t} u + \Delta u -u - |u|^{4}u -a(1-\Delta)^{-1}a\partial_{t}u= 0,& (t,x) \in [0,T] \times \mathbb{R}^{3}, \\
				u(0) = u_{0}, & x \in \mathbb{R}^{3},
			\end{array}
			\right.
		\end{equation*}
		and 
		\begin{equation*} 
			\left\{
			\begin{array}{ll}
				i\partial_{t} z + \Delta z - z - |z|^{4}z -a(1-\Delta)^{-1}a\partial_{t}z= 0, & (t,x) \in [0,T] \times \mathbb{R}^{3}, \\
				z(0) = u_{0}, &x \in \mathbb{R}^{3}.
			\end{array}
			\right.
		\end{equation*}
Performing the same transformation carried out at the beginning of the proof, we have 
		\begin{equation*}
			\left\{
			\begin{array}{lll}
				\partial_{t} v -i \Delta v -R_{0}v + i |u|^{4}u = 0,& (t,x) \in [0,T] \times \mathbb{R}^{3},\\
				v=Ju,\\
				v(0) =v_{0}= Ju_{0}, & x \in \mathbb{R}^{3},
			\end{array}
			\right.
		\end{equation*}
		and 
		\begin{equation*}
			\left\{
			\begin{array}{lll}
				\partial_{t} w -i \Delta w - R_{0}w + i|z|^{4}z = 0, & (t,x) \in [0,T] \times \mathbb{R}^{3},\\
				w=Jz,\\
				w(0) = w_{0}=v_{0} = J u_{0}, &x \in \mathbb{R}^{3}.
			\end{array}
			\right.
		\end{equation*}
		Using Duhamel's formula, 
		$$\Phi_{u_{0}}(v) - \Phi_{u_{0}}(w) = \int_{0}^{t}e^{i(t-\tau)\Delta}R_{0}(v-w) \ d\tau - \int_{0}^{t} e^{i(t-\tau)\Delta}i\Big(|u|^{4}u -|z|^{4}z\Big)\ d\tau .$$
Computations which are analogous to the ones in the previous step ensure
		\begin{eqnarray*}
			\|\nabla \Phi_{u_{0}}(v)-\nabla \Phi_{u_{0}}(w)\|_{L^{2}_{x}}  & \leq & C T^{\frac{3}{10}}R^{4}\| v- w\|_{X_{I}} +  C R^{4}\| v- w\|_{X_{I}} + CT\| v- w\|_{X_{I}}
		\end{eqnarray*}
		and
		\begin{eqnarray*}
			\|\Phi_{u_{0}}(v)-\Phi_{u_{0}}(w)\|_{X_{I}} 
			& \leq & CT^{\frac{1}{2}}R^{4}\|v-w\|_{X_{I}}+ C T^{\frac{3}{10}}R^{4}\| v- w\|_{X_{I}} \\
			&    & \mbox{      }+  C R^{4}\| v- w\|_{X_{I}} + CT\| v- w\|_{X_{I}}.
		\end{eqnarray*}
		These give local existence as long as one chooses small enough constants $T, R$. Global existence is obtained via energy estimates, for details, see \cite[Remark 1]{BrCaCaSa}.
	\end{proof}
	
		\subsection{Auxiliary results} 
		We present two results that were used in this work. The first one ensures that the solution of the nonhomogeneous damped Schr\"{o}dinger equation satisfies a certain inequality:
		\begin{proposition}\label{prop4.12}
		Let $u \in C([a,b];{H}^{1}(\mathbb{R}^{3}))$ be a solution of the damped Schr\"{o}dinger equation 
		$$i\partial_{t}v + \Delta v -v -a(1-\Delta)^{-1}a\partial_{t}v = f,$$
		on $I=[a,b]$ with $\nabla f \in L^{2}(I;L^{\frac{6}{5}}  (\mathbb{R}^{3}))$ and $ f \in L^{1}(I; L^{2}( \mathbb{R}^{3})).$ Thus, the following inequality holds
		\begin{equation*}
			\|\nabla v\|_{L^{\frac{10}{3}}(I;L^{\frac{10}{3}}(\mathbb{R}^{3}))} + \|\nabla v\|_{L^{10}(I;L^{\frac{30}{13}}(\mathbb{R}^{3}))}
			+  \sup_{t \in I} \|v(t)\|_{L^{2}} + \sup_{t \in I} \|\nabla v(t)\|_{L^{2}} 
		\end{equation*}
		\begin{equation*}
			\leq C \Big(\| v(a)\|_{H^{1}} + \|\nabla f\|_{L^{2}(I;L^{\frac{6}{5}}(\mathbb{R}^{3}))} + \|f\|_{L^{1}(I; L^{2}( \mathbb{R}^{3}))} \Big).
		\end{equation*}
		
	\end{proposition}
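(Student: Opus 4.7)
The plan is to reduce the damped equation to an inhomogeneous linear Schrödinger equation modulo zeroth-order terms, and then apply the classical Strichartz estimates of Lemma \ref{strichartz} together with a Grönwall / absorption argument. The key ingredient is the same conjugation operator used in the proof of Theorem \ref{JJJ}, namely
\[
J = I + i\,a(x)(1-\Delta)^{-1}a(x),
\]
which is an elliptic pseudodifferential operator of order $0$, self-adjoint up to sign, and an isomorphism on every $H^s(\mathbb{R}^3)$. The defining identity is
\[
iJ\,\partial_t v = i\partial_t v - a(1-\Delta)^{-1}a\,\partial_t v,
\]
so that, applied to the equation for $v$, it removes the damping term and yields
\[
i\partial_t (Jv) + \Delta v - v = f.
\]
Writing $w = Jv$ and $v = J^{-1}w$, I would rewrite this as
\[
i\partial_t w + \Delta w - w = f + R_0 w,
\qquad w(a) = J v(a),
\]
where $R_0 := (\Delta - I)(I - J^{-1}) = -(\Delta - I)J^{-1}[J - I] = -(\Delta - I)J^{-1}\bigl(ia(1-\Delta)^{-1}a\bigr)$. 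The key point is that $R_0$ is a pseudodifferential operator of order $0$ (the factor $\Delta - I$ is killed by $(1-\Delta)^{-1}$), so it maps $L^2 \to L^2$ and $H^1 \to H^1$ boundedly, and similarly commutes with $\nabla$ up to a bounded remainder.

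Having reduced to the equation above, I would apply the Strichartz estimates of Lemma \ref{strichartz} for the propagator $e^{it(\Delta-1)}$ on the interval $I=[a,b]$. Using the two $L^2$--admissible pairs $(10/3,10/3)$ and $(10, 30/13)$ (via the Sobolev scaling $L^{30/13}$) and the dual endpoint $(1,2)$ for the $f$-term and $(2, 6/5)'$ for the $\nabla f$-term, I get
\[
\begin{split}
\|\nabla w\|_{L^{10/3}_t L^{10/3}_x} &+ \|\nabla w\|_{L^{10}_t L^{30/13}_x} + \|w\|_{L^\infty_t H^1_x} \\
&\leq C\Bigl(\|w(a)\|_{H^1} + \|\nabla f\|_{L^2_t L^{6/5}_x} + \|f\|_{L^1_t L^2_x} + \|R_0 w\|_{L^1_t H^1_x}\Bigr).
\end{split}
\]
The last term is controlled by $\|R_0 w\|_{L^1_t H^1_x} \leq C|I|\,\|w\|_{L^\infty_t H^1_x}$, since $R_0$ is a zeroth-order pseudodifferential operator. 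On a short enough subinterval the factor $C|I|$ is $\le 1/2$, so this term is absorbed into the left-hand side; on a general $I=[a,b]$ one covers $I$ by finitely many short subintervals (or runs Grönwall) and pieces the bounds together.

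Finally, I would translate back: since $J$ and $J^{-1}$ are bounded isomorphisms of $H^1$, $L^{10/3}$ and $L^{30/13}$ (they are $0$-order pseudodifferential operators), and commute with $\nabla$ modulo $0$-order errors that are again absorbed as above, the estimate for $w=Jv$ gives the claimed estimate for $v$, with $\|w(a)\|_{H^1} = \|J v(a)\|_{H^1} \le C \|v(a)\|_{H^1}$. The main (essentially only) obstacle is checking rigorously that $R_0$ and the commutators $[\nabla, J^{\pm 1}]$ are of order $0$, so that the transport of information from the linear Schrödinger Strichartz theory through the conjugation $v \mapsto Jv$ costs at most a factor $1+C|I|$; this is exactly the pseudodifferential calculus argument already used in the proof of Theorem \ref{JJJ}.
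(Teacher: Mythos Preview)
Your approach is correct and essentially equivalent to the paper's. The one organizational difference is that the paper does not conjugate by $J$: it writes the Duhamel formula for $v$ itself,
\[
v(t)=e^{it\Delta}v(a)+\int_a^t e^{i(t-\tau)\Delta}f\,d\tau+\int_a^t e^{i(t-\tau)\Delta}\bigl[v+a(1-\Delta)^{-1}a\,\partial_t v\bigr]\,d\tau,
\]
and then estimates each Strichartz norm directly, obtaining on the right a term $C_I\sup_{t\in I}\|v(t)\|_{H^1}$ coming from the bracket, which is absorbed for $|I|$ small and then iterated. Your conjugation $w=Jv$ is exactly the device the paper itself uses in the existence proof (Theorem~\ref{JJJ}); it simply makes explicit \emph{why} the damping contributes only a zeroth-order remainder, namely because $R_0=(\Delta-I)(I-J^{-1})$ is a pseudodifferential operator of order~$0$. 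Both routes reduce to the same absorption-plus-bootstrap, and your final step of transferring the estimate from $w$ back to $v$ via the $H^s$/$L^p$ boundedness of $J^{\pm1}$ and the order-$0$ commutators $[\nabla,J^{\pm1}]$ is fine (the commutator pieces are controlled by $\|w\|_{L^\infty_t H^1_x}$ through Sobolev, just as in your $R_0$ step). One cosmetic point: your sign convention $J=I+ia(1-\Delta)^{-1}a$ differs from the paper's $J=I-ia(1-\Delta)^{-1}a$, but your identity $iJ\partial_t v=i\partial_t v-a(1-\Delta)^{-1}a\,\partial_t v$ is consistent with your choice, so nothing is wrong.
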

	\begin{proof}
		The solution $v$ satisfies
		$$v(t) = e^{it\Delta}v(a) + \int_{a}^{t} e^{i(t-\tau)\Delta} f \ d\tau + \int_{a}^{t} e^{i(t-\tau)\Delta} [v + a(1-\Delta)^{-1}a\partial_{t}v] \ d\tau .$$
So, 
		\begin{eqnarray*}
			\|v(t)\|_{L^{2}} 
			& \leq &  C \|v(a)\|_{H^{1}}  + C \| f\|_{L^{1}(I; L^{2}( \mathbb{R}^{3}))} + C_{I}\sup_{t \in I} \|v(t)\|_{ L^{2}( \mathbb{R}^{3})},
		\end{eqnarray*}
		and
		\begin{eqnarray*}
			\|\nabla v(t)\|_{L^{2}} 
			& \leq & C \|v(a)\|_{H^{1}}  + C \|\nabla f\|_{L^{2}(I;L^{\frac{6}{5}}(\mathbb{R}^{3}))} + C_{I} \sup_{t \in I} \|\nabla v(t)\|_ {L^{2}( \mathbb{R}^{3})} + C_{I} \sup_{t \in I} \| v(t)\|_ {L^{2}( \mathbb{R}^{3})} \\
			&   & \mbox{   } +  C \|f\|_{L^{1}(I; L^{2}( \mathbb{R}^{3}))}.
		\end{eqnarray*}
Additionally, we get 
		\begin{eqnarray*}
			\|v\|_{L^{10}(I \times \mathbb{R}^{3})} 
			& \leq & C \|v(a)\|_{H^{1}}  + C \|\nabla f\|_{L^{2}(I;L^{\frac{6}{5}}(\mathbb{R}^{3}))} + C_{I} \sup_{t \in I} \|\nabla v(t)\|_ {L^{2}( \mathbb{R}^{3})} + C_{I} \sup_{t \in I} \| v(t)\|_ {L^{2}( \mathbb{R}^{3})} \\
			&   & \mbox{   } +  C \|f\|_{L^{1}(I; L^{2}( \mathbb{R}^{3}))}
		\end{eqnarray*}
and
		\begin{eqnarray*}
			\|\nabla v\|_{L^{\frac{10}{3}}_{t} L^{\frac{10}{3}}_{x}}
			& \leq & C \|v(a)\|_{H^{1}}  + C \|\nabla f\|_{L^{2}(I;L^{\frac{6}{5}}(\mathbb{R}^{3}))} + C_{I} \sup_{t \in I} \|\nabla v(t)\|_ {L^{2}( \mathbb{R}^{3})} + C_{I} \sup_{t \in I} \| v(t)\|_ {L^{2}( \mathbb{R}^{3})} \\
			&   & \mbox{   } +  C \|f\|_{L^{1}(I; L^{2}( \mathbb{R}^{3}))}.
		\end{eqnarray*}
		Putting together these inequalities, one obtains
				\begin{equation*}
			\|\nabla v\|_{L^{\frac{10}{3}}(I;L^{\frac{10}{3}}(\mathbb{R}^{3}))} + \|\nabla v\|_{L^{10}(I;L^{\frac{30}{13}}(\mathbb{R}^{3}))}
			+  \sup_{t \in I} \|v(t)\|_{L^{2}} + \sup_{t \in I} \|\nabla v(t)\|_{L^{2}} 
		\end{equation*}
		\begin{equation*}
			\leq C \| v(a)\|_{H^{1}} + C\|\nabla f\|_{L^{2}(I;L^{\frac{6}{5}}(\mathbb{R}^{3}))} + C\|f\|_{L^{1}(I; L^{2}( \mathbb{R}^{3}))} 	+  C_{I}\sup_{t \in I} \|v(t)\|_{L^{2}} + \sup_{t \in I} C_{I}\|\nabla v(t)\|_{L^{2}} ,
		\end{equation*}
and the estimate hold is the length of $I$ is small enough. The large-time result follows by a bootstrap argument. 
	\end{proof}
\begin{remark}\label{obs8} The same result is also true for the nonhomogeneous Schr\"{o}dinger equation $i\partial_{t}v + \Delta v -v = f,$ where $\nabla f \in L^{2}(I;L^{\frac{6}{5}}  (\mathbb{R}^{3}))$ and $ f \in L^{1}(I; L^{2}( \mathbb{R}^{3}))$.
\end{remark}

Finally, we state and prove a result obtained as a consequence of the existence of solutions in the Strichartz spaces. This result was shown by Cazenave and Weissler in \cite{cazenave}.
	\begin{proposition}\label{prop2.5}
		If $u$ is a solution of 
		\begin{equation*}
			\left\{
			\begin{array}{ll}
				i\partial_{t} u + \Delta u -|u|^{4}u  =0,& \mbox{ on } \mathbb{R} \times \mathbb{R}^{3},\\
				u(t_{0})= u_{0},& \mbox{ on } \mathbb{R},
			\end{array}
			\right.
		\end{equation*}
 such that $u \in L^{10}(\mathbb{R}^{4})$ and $u \in L^{10}(\mathbb{R};L^{\frac{30}{13}}(\mathbb{R}^{3}))$, where $u_{0} \in H^{1}(\mathbb{R}^{3})$, $\|u_{0}\|_{H^{1}} < \lambda$, $\lambda>0$ small enough,  there exists $u_{\pm} \in \dot{H}^{1}(\mathbb{R}^{3})$ such that
		$$\lim_{t \rightarrow \pm\infty} \|u(t) - e^{it\Delta}u_{\pm}\|_{\dot{H}^{1}} =0.$$
	\end{proposition}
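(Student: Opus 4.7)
The plan is to construct the scattering states $u_{\pm}$ directly from Duhamel's formula and show convergence in $\dot{H}^1$ by exploiting the global spacetime integrability hypotheses through Strichartz estimates. Writing
\begin{equation*}
u(t) = e^{it\Delta} u_{0} - i \int_{0}^{t} e^{i(t-s)\Delta} |u|^{4} u(s) \, ds,
\end{equation*}
the candidate scattering data are
\begin{equation*}
u_{+} := u_{0} - i \int_{0}^{+\infty} e^{-is\Delta} |u|^{4} u(s) \, ds, \qquad u_{-} := u_{0} + i \int_{-\infty}^{0} e^{-is\Delta} |u|^{4} u(s) \, ds,
\end{equation*}
provided that these improper Bochner integrals converge in $\dot{H}^{1}(\mathbb{R}^{3})$. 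Since $\|u(t) - e^{it\Delta} u_{\pm}\|_{\dot H^{1}} = \|\nabla \int_{t}^{\pm\infty} e^{-is\Delta}|u|^{4}u\,ds\|_{L^{2}}$, the whole proof reduces to controlling such tail integrals.

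First, I would establish the a priori bound $\nabla u \in L^{10/3}_{t,x}(\mathbb{R}^{1+3})$. Since $u\in L^{10}_{t,x}(\mathbb{R}^{1+3})$, one can partition $\mathbb{R}$ into a finite collection of intervals $\{J_{k}\}$ with $\|u\|_{L^{10}(J_{k}\times\mathbb{R}^{3})}^{4} \leq \eta$, where $\eta$ is small enough to absorb the nonlinear term. On each $J_{k}$ the Strichartz inequalities from Lemma \ref{strichartz} (applied to the admissible pairs $(\infty,2)$ and $(10/3,10/3)$, with dual $(10/7,10/7)$) combined with the Leibniz/Hölder estimate
\begin{equation*}
\|\nabla(|u|^{4}u)\|_{L^{10/7}_{t,x}(J_{k}\times\mathbb{R}^{3})} \leq C \|u\|_{L^{10}_{t,x}(J_{k}\times\mathbb{R}^{3})}^{4} \|\nabla u\|_{L^{10/3}_{t,x}(J_{k}\times\mathbb{R}^{3})}
\end{equation*}
yield $\|\nabla u\|_{L^{10/3}_{t,x}(J_{k}\times\mathbb{R}^{3})} \leq 2C\|\nabla u(t_{k})\|_{L^{2}}$. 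Summing over the finitely many $J_{k}$ gives the desired global bound, and simultaneously the global bound $\sup_{t}\|\nabla u(t)\|_{L^{2}} \leq C(u_{0})$.

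Second, I would prove the Cauchy property at infinity. Applying once more the dual Strichartz estimate and the same nonlinear Hölder estimate, for $T < T'$ one has
\begin{equation*}
\Big\| \int_{T}^{T'} e^{-is\Delta} \nabla(|u|^{4}u)(s) \, ds \Big\|_{L^{2}_{x}} \leq C \|u\|_{L^{10}_{t,x}([T,T']\times\mathbb{R}^{3})}^{4} \|\nabla u\|_{L^{10/3}_{t,x}([T,T']\times\mathbb{R}^{3})}.
\end{equation*}
By the dominated convergence theorem, both spacetime norms on the right tend to $0$ as $T,T'\to +\infty$, so the tail integrals form a Cauchy net in $\dot{H}^{1}$ and $u_{+}$ is well defined there. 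Taking $T'=+\infty$ gives precisely
\begin{equation*}
\|u(t) - e^{it\Delta}u_{+}\|_{\dot{H}^{1}} \leq C \|u\|_{L^{10}_{t,x}([t,\infty)\times\mathbb{R}^{3})}^{4} \|\nabla u\|_{L^{10/3}_{t,x}([t,\infty)\times\mathbb{R}^{3})} \longrightarrow 0, \quad t\to+\infty,
\end{equation*}
and the argument for $u_{-}$ is symmetric.

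The main technical obstacle is the first step, namely upgrading the hypothesis $u\in L^{10}_{t,x}(\mathbb{R}^{4})\cap L^{10}_{t}L^{30/13}_{x}(\mathbb{R}^{4})$ into a genuine global Strichartz bound $\nabla u\in L^{10/3}_{t,x}(\mathbb{R}^{4})$, since a naive bootstrap on the whole line would require small $L^{10}_{t,x}$ norm. This is handled by the finite subdivision of $\mathbb{R}$ into intervals of small $L^{10}_{t,x}$ norm described above; the smallness assumption $\|u_{0}\|_{H^{1}}<\lambda$ is what guarantees the local theory needed to run the bootstrap without blow-up of the Strichartz norms. Once this bound is secured the remainder of the argument is a direct application of Strichartz duality.
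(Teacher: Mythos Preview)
Your argument is correct and follows the standard Duhamel/tail-control strategy, which is also what the paper does. The difference is purely in the choice of Strichartz exponents and, consequently, in whether the second hypothesis is used.

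The paper works with the dual pair $(2,6/5)$ and the H\"older factorization
\[
\Big\|\nabla\!\int_{t}^{+\infty}\!e^{i(t-\tau)\Delta}|u|^{4}u\,d\tau\Big\|_{L^{2}_{x}}
\leq C\,\|u\|^{4}_{L^{10}((t,\infty)\times\mathbb{R}^{3})}\,\|\nabla u\|_{L^{10}((t,\infty);L^{30/13}(\mathbb{R}^{3}))},
\]
so that both factors are supplied directly by the hypotheses $u\in L^{10}_{t,x}$ and $(\nabla)u\in L^{10}_{t}L^{30/13}_{x}$; no bootstrap or interval subdivision is needed and the proof is three lines. You instead choose the dual pair $(10/7,10/7)$, which forces you to manufacture the bound $\nabla u\in L^{10/3}_{t,x}$ through the small-interval partition argument. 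That step is fine and is in fact the usual way to obtain global Strichartz control once $L^{10}_{t,x}$ is finite, but here it is superfluous: had you selected the pair $(2,6/5)$, your ``main technical obstacle'' disappears and the second hypothesis of the proposition does all the work. In short, both routes arrive at the same tail estimate; the paper's is shorter because it matches the Strichartz pair to the assumed integrability, while yours is more self-contained and would also apply under the weaker assumption $u\in L^{10}_{t,x}$ alone.
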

	\begin{proof}
		Note that 
		\begin{eqnarray*}
			\Big\| \int_{t}^{+\infty} \nabla e^{i(t-\tau)\Delta } |u|^{4}u \ d\tau \Big\|_{L^{2}} & \leq & C \|\nabla |u|^{4}u\|_{L^{2}(t,+\infty) L^{\frac{6}{5}}(\mathbb{R}^{3})} \\
			& \leq & C \|u\|^{4}_{L^{10}(t,+\infty) L^{10}(\mathbb{R}^{3})}\|\nabla u\|_{L^{10}(t,+\infty) L^{\frac{30}{13}}(\mathbb{R}^{3})} \rightarrow 0
		\end{eqnarray*} 
		as $t \rightarrow + \infty$. Then, with
		$$u(t) = e^{i(t-t_{0})\Delta}u_{0} + \int_{t_{0}}^{t}e^{i(t-\tau)\Delta}  |u|^{4}u \ d\tau,$$
		taking
		$$u_{+}= e^{-it_{0}\Delta} u_{0} + \int_{t_{0}}^{+\infty} e^{-i\tau\Delta} |u|^{4}u \ d\tau\quad \text{and}\quad u_{-}= e^{-it_{0}\Delta} u_{0} - \int_{-\infty}^{t_{0}} e^{-i\tau\Delta} |u|^{4}u \ d\tau,$$
the result holds.
	\end{proof}

	\section{Propagation results for the linear Schr\"{o}dinger equation}\label{apB}
	In this appendix, we collect some results of propagation for solutions of the linear Schr\"{o}dinger equation following the ideas contained in \cite{DeGeLe}. The results presented here are essential to prove our main result, that is, the exponential stabilizability result.
	
	\begin{lemma} \label{lemma4.15}
	Let $u_{n}, \tilde{u}_{n}$ be two sequences of solutions for
	\begin{equation*} 
		\left\{
		\begin{array}{ll}
			i\partial_{t} u_{n} + \Delta u_{n} - u_{n} - |u_{n}|^{4}u_{n} = a(1-\Delta)^{-1}a\partial_{t} u_{n}, \ \mbox{ on } [0,T] \times \mathbb{R}^{3},\\
			u_{n}(0)= u_{0,n},  \mbox{ bounded in } {H}^{1}(\mathbb{R}^{3}) \mbox{ with } \|u_{0,n}\|_{H^{1}} < \lambda_{0},
		\end{array}
		\right.
	\end{equation*}
	and
	\begin{equation*} 
		\left\{
		\begin{array}{lr}
			i\partial_{t} \tilde{u}_{n}+ \Delta \tilde{u}_{n} -\tilde{u}_{n} - |\tilde{u}_{n}|^{4}\tilde{u}_{n} = 0, \ \mbox{ on } [0,T] \times \mathbb{R}^{3},\\
			\tilde{u}_{n}(0)= \tilde{u}_{0,n}, \ \mbox{ bounded in } {H}^{1}(\mathbb{R}^{3}) \mbox{ with } \|\tilde{u}_{0,n}\|_{H^{1}} < \lambda_{0},
		\end{array}
		\right.
	\end{equation*}
	respectively, with $\|u_{n,0} - \tilde{u}_{n,0}\|_{{H}^{1}} \rightarrow 0$ and $\|(1-\Delta)^{-\frac{1}{2}}a \partial_{t}u_{n}\|_{L^{2}([0,T];L^{2}(\mathbb{R}^{3}))} \rightarrow 0$ as $n\rightarrow \infty.$ Then,
	\begin{equation*}
		\|u_{n}-\tilde{u}_{n}\|_{L^{10}([0,T]\times \mathbb{R}^{3})} + \|\nabla (u_{n} -\tilde{u}_{n})\|_{L^{\frac{10}{3}}_{t}L^{\frac{10}{3}}_{x}} + \sup_{t \in [0,T]} \|\nabla (u_{n} -\tilde{u}_{n})\|_{L^{2}} +  \sup_{t \in [0,T]} \|u_{n} -\tilde{u}_{n}\|_{L^{2}} \rightarrow 0
	\end{equation*} as $n\rightarrow \infty$. 
\end{lemma}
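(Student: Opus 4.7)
The plan is to set $w_n:=u_n-\tilde u_n$ and bootstrap a Strichartz estimate for $w_n$, which solves the inhomogeneous linear equation
\begin{equation*}
i\partial_t w_n+\Delta w_n-w_n \;=\; F_n+G_n,\qquad w_n(0)=u_{0,n}-\tilde u_{0,n},
\end{equation*}
with nonlinear difference $F_n:=|u_n|^4 u_n-|\tilde u_n|^4\tilde u_n$ and damping remnant $G_n:=a(1-\Delta)^{-1}a\partial_t u_n$. Since $\|w_n(0)\|_{H^1}\to 0$ by hypothesis, the task is to absorb $F_n$ into a Strichartz norm of $w_n$ and to show that $G_n$ vanishes in a dual Strichartz space. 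The natural norm to track (matching Proposition~\ref{prop4.12}) is
\begin{equation*}
\|w_n\|_{X(I)}:=\|w_n\|_{L^{10}(I\times\mathbb R^3)}+\|\nabla w_n\|_{L^{10/3}(I\times\mathbb R^3)}+\|\nabla w_n\|_{L^{10}(I;L^{30/13})}+\sup_{t\in I}\|w_n(t)\|_{H^1}.
\end{equation*}

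First I would show $\|G_n\|_{L^1([0,T];H^1)}\to 0$. Factoring $G_n=a\,(1-\Delta)^{-1/2}h_n$ with $h_n:=(1-\Delta)^{-1/2}a\,\partial_t u_n$, the hypothesis gives $\|h_n\|_{L^2_tL^2_x}\to 0$. Since $a\in C_b^\infty$ and $\nabla(1-\Delta)^{-1/2}$ is bounded on $L^2$, a commutator argument yields $\|G_n\|_{L^2([0,T];H^1)}\leq C\|h_n\|_{L^2 L^2}$, and Cauchy--Schwarz in time concludes. Next, the pointwise bound
\begin{equation*}
|\nabla F_n|\leq C\bigl(|u_n|^4+|\tilde u_n|^4\bigr)|\nabla w_n|+C\bigl(|u_n|^3+|\tilde u_n|^3\bigr)\bigl(|\nabla u_n|+|\nabla\tilde u_n|\bigr)|w_n|,
\end{equation*}
combined with H\"older's inequality in the Strichartz exponents encoded in $X(I)$, gives on any subinterval $I\subseteq[0,T]$
\begin{equation*}
\|\nabla F_n\|_{L^2(I;L^{6/5})}+\|F_n\|_{L^1(I;L^2)}\leq C\bigl(\|u_n\|_{L^{10}(I\times\mathbb R^3)}^4+\|\tilde u_n\|_{L^{10}(I\times\mathbb R^3)}^4\bigr)\|w_n\|_{X(I)}.
\end{equation*}

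The core of the argument is then a time-splitting bootstrap. Because $\|u_{0,n}\|_{H^1},\|\tilde u_{0,n}\|_{H^1}<\lambda_0$ and the damping only dissipates the energy, the subcritical scattering theory recalled via Definition~\ref{def3} supplies a bound $\|u_n\|_{L^{10}([0,T]\times\mathbb R^3)}+\|\tilde u_n\|_{L^{10}([0,T]\times\mathbb R^3)}\leq M$ independent of $n$. Choosing $\varepsilon>0$ small enough that $C\varepsilon\leq 1/2$, one partitions $[0,T]=\bigcup_{k=1}^N I_k$ (with $N=N(M,\varepsilon)$ independent of $n$) so that $\|u_n\|_{L^{10}(I_k\times\mathbb R^3)}^4+\|\tilde u_n\|_{L^{10}(I_k\times\mathbb R^3)}^4<\varepsilon$ on every slice. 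Applying Proposition~\ref{prop4.12} on each $I_k$ and absorbing the nonlinear term on the left gives
\begin{equation*}
\|w_n\|_{X(I_k)}\leq C\|w_n(t_{k-1})\|_{H^1}+C\|G_n\|_{L^1(I_k;H^1)},
\end{equation*}
and iterating through the $N$ intervals propagates the smallness of $\|w_n(0)\|_{H^1}+\|G_n\|_{L^1([0,T];H^1)}$ to the full interval $[0,T]$, yielding the claim. The main obstacle I expect to wrestle with is making the partition $\{I_k\}$ uniform in $n$: this demands the uniform-in-$n$ Strichartz a priori bound on both sequences, for which the subcritical size $\|u_{0,n}\|_{H^1}<\lambda_0$ together with the dissipative character of the damping are the decisive ingredients.
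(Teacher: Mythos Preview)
Your proposal is correct and follows essentially the same route as the paper: write the equation for the difference $w_n=u_n-\tilde u_n$, control the damping forcing $G_n$ in $L^1_tH^1_x$ via the hypothesis on $(1-\Delta)^{-1/2}a\partial_t u_n$, estimate the nonlinear difference $F_n$ by H\"older in Strichartz spaces, then time-split so the coefficient in front of $\|w_n\|_{X(I_k)}$ is at most $1/2$ and iterate. Two minor remarks: since $w_n$ satisfies the \emph{undamped} linear equation with forcing $F_n+G_n$, you want the Strichartz estimate of Remark~\ref{obs8} rather than Proposition~\ref{prop4.12} itself; and on the partition issue you flag, the paper simply allows the endpoints $a_{i,n}$ to depend on $n$ while keeping the number $N$ of subintervals uniform in $n$, which is all the iteration needs and follows from the uniform $L^{10}_{t,x}$ bound you already invoke.
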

\begin{proof}
Let $r_{n} = u_{n} - \tilde{u}_{n}$. It satisfies the system
	\begin{equation*} 
		\left\{
		\begin{array}{ll}
			i\partial_{t} r_{n} + \Delta r_{n} - r_{n} - |u_{n}|^{4}u_{n}+ |\tilde{u}_{n}|^{4}\tilde{u}_{n} = a(1-\Delta)^{-1}a\partial_{t} u_{n},&\mbox{ in } [0,T] \times \mathbb{R}^{3},\\
			r_{n}(0) = u_{0,n} - \tilde{u}_{0,n}.
		\end{array}
		\right.
	\end{equation*}
	Denote $$|||.|||_{[0,T]} = 	\|\cdot\|_{L^{10}([0,T]\times \mathbb{R}^{3})} + \|\nabla \cdot\|_{L^{\frac{10}{3}}_{t}L^{\frac{10}{3}}_{x}} + \|\nabla \cdot\|_{L^{10}_{t}L^{\frac{30}{13}}_{x}}.$$ 
Strichartz's estimates give us that
	\begin{equation}\label{jajajajajaj}
	\begin{split}
		|||r_{n}|||_{[0,T]} &+ \sup_{t \in [0,T]} \|\nabla r_{n}(t)\|_{L^{2}} + \sup_{t \in [0,T]} \|r_{n}(t)\|_{L^{2}} 
		 \leq  	\| r_{n}(0)\|_{H^{1}}  + \|\nabla (u_{n}^{5} - \tilde{u}_{n}^{5})\|_{L^{2}_{t}L^{\frac{6}{5}}_{x}}  \\
		&   + \| a(1-\Delta)^{-1}a\partial_{t} u_{n}\|_{L^{1}_{t}H^{1}_{x}} + \|u_{n}^{5} - \tilde{u}_{n}^{5}\|_{L^{1}_{t}L^{2}_{x}}.
	\end{split}
	\end{equation}
	Thus, on the one hand, we have
	\begin{eqnarray*} 
		\| a(1-\Delta)^{-1}a\partial_{t} u_{n}\|_{L^{1}_{t}H^{1}_{x}}
		& \leq & C \| a(1-\Delta)^{-1}a\partial_{t} u_{n}\|_{L^{2}_{t}H^{1}_{x}} \nonumber \\
		& \leq & C \|(1-\Delta)^{-\frac{1}{2}}a\partial_{t} u_{n}\|_{L^{2}_{t}L^{2}_{x}} \rightarrow 0, 
	\end{eqnarray*}
	as $n\rightarrow \infty$. On the other hand, 
	\begin{equation*}
	\begin{split}
		\|\nabla (u_{n}^{5} - \tilde{u}_{n}^{5})\|_{L^{2}_{t}L^{\frac{6}{5}}_{x}} 
		 \leq &  \|u_{n}\|_{{L^{10}_{t}L^{10}_{x}}}^{4}\|\nabla u_{n} - \nabla \widetilde{u}_{n}\|_{L^{10}_{t}L^{\frac{30}{13}}_{x}} \\&+ \|u_{n}-\widetilde{u}_{n}\|_{{L^{10}_{t}L^{10}_{x}}}\| \nabla \widetilde{u}_{n}\|_{L^{10}_{t}L^{\frac{30}{13}}_{x}}  \|u_{n}\|^{3}_{{L^{10}_{t}L^{10}_{x}}} \\ &  + \|u_{n}-\widetilde{u}_{n}\|_{{L^{10}_{t}L^{10}_{x}}} \| \nabla \widetilde{u}_{n}\|_{L^{10}_{t}L^{\frac{30}{13}}_{x}}  \|\widetilde{u}_{n}\|^{3}_{{L^{10}_{t}L^{10}_{x}}} \\
		 \leq & C\|\nabla r_{n}\|_{L^{10}_{t}L^{\frac{30}{13}}_{x}} \Big( \| \nabla \widetilde{u}_{n}\|^{4}_{L^{10}_{t}L^{\frac{30}{13}}_{x}} + \| \nabla {u}_{n}\|^{4}_{L^{10}_{t}L^{\frac{30}{13}}_{x}}\\&  + \| \nabla \widetilde{u}_{n}\|_{L^{10}_{t}L^{\frac{30}{13}}_{x}}\|\nabla u_{n}\|^{3}_{L^{10}_{t}L^{\frac{30}{13}}_{x}}\Big)
		\end{split}
	\end{equation*}
	and
	\begin{eqnarray*}
		\| u_{n}^{5} - \tilde{u}_{n}^{5}\|_{L^{1}_{t}L^{2}_{x}} 
		& \leq & 	\| u_{n} - \tilde{u}_{n}\|_{L^{5}_{t}L^{10}_{x}} \Big(\|u_{n}\|^{4}_{L^{5}_{t}L^{10}_{x}} + \|\tilde{u}_{n}\|^{4}_{L^{5}_{t}L^{10}_{x}} \Big) \\
		& \leq & C\|\nabla r_{n}\|_{L^{10}_{t}L^{\frac{30}{13}}_{x}} \Big( \| \nabla \widetilde{u}_{n}\|^{4}_{L^{10}_{t}L^{\frac{30}{13}}_{x}} + \| \nabla {u}_{n}\|^{4}_{L^{10}_{t}L^{\frac{30}{13}}_{x}} \Big).
	\end{eqnarray*}
So, dividing the interval $[0 , T ]$ in a finite number of intervals $I_{i,n} = [a_{i,n},a_{i+1,n}]$, $1 \leq i \leq N$, such that $$ C\Big(\|\nabla  u_{n}\|^{4}_{L^{10}_{t}L^{\frac{30}{13}}_{x}}  + \| \nabla \widetilde{u}_{n}\|^{4}_{L^{10}_{t}L^{\frac{30}{13}}_{x}} + \| \nabla \widetilde{u}_{n}\|_{L^{10}_{t}L^{\frac{30}{13}}_{x}}\|\nabla u_{n}\|^{3}_{L^{10}_{t}L^{\frac{30}{13}}_{x}}\Big) \leq \frac{1}{2},$$ the terms of \eqref{jajajajajaj} can be controlled. We iterate this estimate $N$ times, which gives the result.
\end{proof}
\begin{lemma} \label{lemma4.39}
	Let $T>0$. There exists $C>0$ such that any solution $u$ to 
	\begin{equation} \label{sis}
		\left\{
		\begin{array}{ll}
			i\partial_{t} u + \Delta u -u - |u|^{4}u = a(1-\Delta)^{-1}a\partial_{t} u, &\ \mbox{ on } [0,T] \times \mathbb{R}^{3}, \\
			u(0)= u_{0}, \quad  \|u_{0}\|_{H^{1}} \leq \lambda_{0},
		\end{array}
		\right.
	\end{equation}
with $\lambda_0$ given by \eqref{def3}, satisfies 
	$\|u\|_{L^{\infty}([0,T];L^{2}(\mathbb{R}^{3}))} \leq C \|u_{0}\|_{L^{2}(\mathbb{R}^{3})}.$
\end{lemma}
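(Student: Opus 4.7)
The plan is to derive an $L^{2}$-energy identity by pairing the equation with $\bar u$ in $L^{2}(\mathbb{R}^{3})$ and extracting the imaginary part. Since $I-\Delta$ is self-adjoint and the nonlinear contribution $|u|^{4}u\cdot\bar u=|u|^{6}$ is real-valued, all these terms drop out, leaving only the damping term:
$$
\frac{1}{2}\frac{d}{dt}\|u(t)\|_{L^{2}}^{2}=\mathrm{Im}\!\int_{\mathbb{R}^{3}}a(1-\Delta)^{-1}a\,\partial_{t}u\,\bar u\,dx.
$$
Factoring the damping operator as $B^{*}B$ with $B=(1-\Delta)^{-1/2}a$ bounded on $L^{2}$ and satisfying $\|Bv\|_{L^{2}}\le\|av\|_{L^{2}}\le\|v\|_{L^{2}}$, Cauchy-Schwarz yields $|\mathrm{Im}\langle B^{*}B\partial_{t}u,u\rangle|\le\|B\partial_{t}u\|_{L^{2}}\|u\|_{L^{2}}$, and therefore the differential inequality
$$
\frac{d}{dt}\|u(t)\|_{L^{2}}\;\le\;\|(1-\Delta)^{-1/2}a\,\partial_{t}u(t)\|_{L^{2}}.
$$

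Next I would integrate in $t$, apply Cauchy-Schwarz in time, and combine with the dissipation identity from the introduction, $E(u)(t)-E(u)(0)=-2\int_{0}^{t}\|(1-\Delta)^{-1/2}a\,\partial_{s}u\|_{L^{2}}^{2}\,ds$, which provides $\int_{0}^{T}\|(1-\Delta)^{-1/2}a\,\partial_{s}u\|_{L^{2}}^{2}\,ds\le\tfrac{1}{2}E(u_{0})$. This gives
$$
\|u(t)\|_{L^{2}}\;\le\;\|u_{0}\|_{L^{2}}+\sqrt{T}\,\bigl(\tfrac{1}{2}E(u_{0})\bigr)^{1/2},
$$
which already furnishes a uniform bound on $\|u\|_{L^{\infty}_{t}L^{2}_{x}}$ depending only on $T$ and $\lambda_{0}$, using $E(u_{0})\le C\|u_{0}\|_{H^{1}}^{2}\le C\lambda_{0}^{2}$ (Sobolev for the sextic term).

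To upgrade this additive bound to the stated multiplicative scaling $\|u\|_{L^{\infty}_{t}L^{2}_{x}}\le C\|u_{0}\|_{L^{2}}$, I would apply the same $L^{2}$ identity together with Strichartz inequalities (Proposition \ref{prop4.12}) to the difference $w=u-\tilde u$ of two solutions with data in $\{\|u_{0}\|_{H^{1}}\le\lambda_{0}\}$. The source term $|u|^{4}u-|\tilde u|^{4}\tilde u$ is pointwise dominated by $(|u|^{4}+|\tilde u|^{4})|w|$, and, using the global $L^{10}_{t,x}([0,T]\times\mathbb{R}^{3})$ control available in the sub-critical regime $\|u_{0}\|_{H^{1}}<\lambda_{0}$, a Strichartz-plus-Gronwall argument closes the estimate $\|w\|_{L^{\infty}_{t}L^{2}_{x}}\le C(T,\lambda_{0})\|w_{0}\|_{L^{2}}$. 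Specializing to $\tilde u\equiv 0$ (which solves the system by uniqueness) produces the claim. The main obstacle is precisely this last step: controlling the nonlinear difference in the Strichartz scheme so that the eventual constant depends only on $T$ and $\lambda_{0}$ and not on a lower bound for $\|u_{0}\|_{L^{2}}$, which is exactly where the sub-critical $L^{10}_{t,x}$ a priori bound is essential.
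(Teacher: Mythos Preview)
Your first display is correct: multiplying by $\bar u$ and taking the imaginary part kills $-u$, $\Delta u$, and $|u|^{4}u$, leaving only the damping contribution. Your additive bound $\|u(t)\|_{L^{2}}\le\|u_{0}\|_{L^{2}}+\sqrt{T/2}\,E(u_{0})^{1/2}$ is therefore valid. The trouble is that this is \emph{not} the stated conclusion, and your proposed upgrade does not close.

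The gap is in how you control the damping term. After Cauchy--Schwarz you face $\|(1-\Delta)^{-1/2}a\,\partial_{t}u\|_{L^{2}}$, and the only bound you invoke for it is the dissipation inequality, which gives a constant of size $\sqrt{E(u_{0})}\sim\lambda_{0}$, not something proportional to $\|u\|_{L^{2}}$. In the Strichartz step you propose for $w=u-\tilde u$ (specialized to $\tilde u=0$), the damping term $a(1-\Delta)^{-1}a\,\partial_{t}u$ is still present as a forcing, and Proposition~\ref{prop4.12} is stated only at the $H^{1}$ level; its $L^{2}$ output is controlled by $\|w_{0}\|_{H^{1}}$, not $\|w_{0}\|_{L^{2}}$. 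So neither the energy-dissipation route nor the black-box use of Proposition~\ref{prop4.12} yields the linear $L^{2}$ scaling.

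The paper's fix is to \emph{not} apply Cauchy--Schwarz to the damping. Instead one substitutes $\partial_{t}u=-iJ^{-1}\bigl[(1-\Delta)u+|u|^{4}u\bigr]$ directly into the pairing. The linear part produces $\langle a(1-\Delta)^{-1}aJ^{-1}(1-\Delta)u,\,u\rangle$, and the point is that $a(1-\Delta)^{-1}aJ^{-1}(1-\Delta)$ is a pseudodifferential operator of order zero, hence bounded on $L^{2}$: this contributes $C\|u\|_{L^{2}}^{2}$. The nonlinear part is estimated by $C\||u|^{4}\|_{L^{3}}\|u\|_{L^{2}}^{2}$ via H\"older and the smoothing of $(1-\Delta)^{-1}$. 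Writing $V=|u|^{4}$ one obtains
\[
\|u\|_{L^{\infty}([0,t];L^{2})}^{2}\;\le\;\|u_{0}\|_{L^{2}}^{2}+2C\bigl(t+\|V\|_{L^{1}([0,t];L^{3})}\bigr)\|u\|_{L^{\infty}([0,t];L^{2})}^{2},
\]
and since $\|V\|_{L^{1}([0,T];L^{3})}=\|u\|_{L^{4}([0,T];L^{12})}^{4}$ is bounded a priori by a constant depending only on $\lambda_{0}$ and $T$, one partitions $[0,T]$ into finitely many subintervals on which the bracket is $<1/2$, absorbs, and iterates. The substitution of $\partial_{t}u$ and the recognition of the order-zero structure are exactly what your Cauchy--Schwarz split throws away.
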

\begin{proof}
	First, notice that $u \in L^{7}([0,T];L^{14}(\mathbb{R}^{3}))$. By a Sobolev embedding, Strichartz estimates, and an interpolation argument, we get $u \in L^{4}([0,T];L^{12}(\mathbb{R}^{3}))$. Observe that $V=|u|^{4} \in L^{1}([0,T];L^{3}(\mathbb{R}^{3}))$. 	Multiplying the first equation of \eqref{sis} by $\overline{u}$, integrating and taking the imaginary part yields 
		\begin{eqnarray*}
		\|u\|^{2}_{L^{\infty}([0,t];L^{2})} \leq 2C(t + \|V\|_{L^{1}([0,t];L^{3})} )\|u\|^{2}_{L^{\infty}([0,t];L^{2})} + \|u(0)\|^{2}_{L^{2}}.
	\end{eqnarray*}
	We can divide the interval $[0,T]$ into a finite number of intervals $[a_{i},a_{i+1}]$, $i=1,...,N$, such that $2C(t + \|V\|_{L^{1}([a_{i},a_{i+1}];L^{3})} )<1/4$. In each of these intervals, we have
	$	\|u\|^{2}_{L^{\infty}([a_{i},a_{i+1}];L^{2})} \leq C \|u(a_{i})\|^{2}_{L^{2}}.$
	We obtain the expected result by iteration. The final constant $C$ only depends on $\lambda_{0}$ and $T$.
\end{proof}
As a consequence of the previous result, we have the following corollary.
\begin{corollary}\label{coro4.40}
	Let $T>0$. For any $\varepsilon>0$, there exists $\delta>0$ such that any solution $u$  satisfying \eqref{eq 104} and 
	$\|u_{0}\|_{H^{-1}} \leq \delta$ satisfies $\|u(T)\|_{H^{-1}} \leq \varepsilon.$
\end{corollary}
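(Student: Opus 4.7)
The plan is to deduce this corollary from Lemma~\ref{lemma4.39} by means of a simple interpolation inequality combined with the continuous embedding $L^2(\mathbb{R}^3) \hookrightarrow H^{-1}(\mathbb{R}^3)$. The guiding idea is that smallness in $H^{-1}$ automatically upgrades to smallness in $L^2$ as soon as one has an $H^1$--ceiling on the datum, and Lemma~\ref{lemma4.39} then propagates that $L^2$--smallness up to the final time $T$.

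More precisely, I would first record the elementary interpolation bound
\begin{equation*}
\|u_{0}\|_{L^{2}}^{2} \leq \|u_{0}\|_{H^{-1}}\,\|u_{0}\|_{H^{1}},
\end{equation*}
which follows by writing $|\widehat{u_{0}}(\xi)|^{2}=(1+|\xi|^{2})^{-1/2}|\widehat{u_{0}}(\xi)|\cdot(1+|\xi|^{2})^{1/2}|\widehat{u_{0}}(\xi)|$ and applying the Cauchy--Schwarz inequality in the frequency variable together with Plancherel. Under the ambient assumption $\|u_{0}\|_{H^{1}}\leq\lambda_{0}$ of this section (which is the regime in which Lemma~\ref{lemma4.39} applies) and the hypothesis $\|u_{0}\|_{H^{-1}}\leq\delta$, this inequality yields $\|u_{0}\|_{L^{2}}\leq\lambda_{0}^{1/2}\,\delta^{1/2}$. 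Lemma~\ref{lemma4.39} then supplies a constant $C=C(T,\lambda_{0})>0$ such that
\begin{equation*}
\|u(T)\|_{H^{-1}} \leq \|u(T)\|_{L^{2}} \leq \|u\|_{L^{\infty}([0,T];L^{2}(\mathbb{R}^{3}))} \leq C\,\|u_{0}\|_{L^{2}} \leq C\,\lambda_{0}^{1/2}\,\delta^{1/2},
\end{equation*}
where the first inequality uses the embedding $L^{2}\hookrightarrow H^{-1}$. Choosing $\delta\leq\varepsilon^{2}/(C^{2}\lambda_{0})$ would then close the argument.

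There is essentially no obstacle to overcome in this proof: all of the dynamical and nonlinear work is already packaged into Lemma~\ref{lemma4.39}, so that the corollary reduces to a purely functional-analytic observation. The only point worth monitoring is that the constant $C$ provided by Lemma~\ref{lemma4.39} depends on $T$ and on $\lambda_{0}$ but not on the particular datum $u_{0}$, so that the resulting threshold for $\delta$ is uniform over the admissible class of initial data. This is precisely what is needed for the iterative application of the corollary in the proof of Theorem~\ref{mainteo}, where one must ensure that $\|u(nT)\|_{H^{-1}}\leq\varepsilon$ for all $0\leq n\leq N$.
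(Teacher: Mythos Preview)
Your proof is correct and follows essentially the same approach as the paper: both arguments combine Lemma~\ref{lemma4.39} with the interpolation inequality $\|u_{0}\|_{L^{2}}\leq\|u_{0}\|_{H^{-1}}^{1/2}\|u_{0}\|_{H^{1}}^{1/2}$ and the embedding $L^{2}\hookrightarrow H^{-1}$, and arrive at the same threshold $\delta=\varepsilon^{2}/(C^{2}\lambda_{0})$.
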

\begin{proof}
	By Lemma \ref{lemma4.39}, we have
	$\|u(T)\|_{H^{-1}} \leq C \|u(0)\|_{L^{2}}.$
	However, by an interpolation argument between $H^{s}(\mathbb{R}^{3})$ spaces, $s\in \mathbb{R}$, we have
$$
		\|u(0)\|_{L^{2}}  \leq   \|u(0)\|^{\frac{1}{2}}_{H^{-1}}\|u(0)\|^{\frac{1}{2}}_{H^{1}}  \leq  \lambda_{0}^{\frac{1}{2}} \|u(0)\|^{\frac{1}{2}}_{H^{-1}}.
$$
	Then, 
	$$\|u(T)\|_{H^{-1}}  \leq C \|u(0)\|_{L^{2}} \leq C \lambda_{0}^{\frac{1}{2}} \|u(0)\|^{\frac{1}{2}}_{H^{-1}}.$$
	Taking $\delta = \frac{\varepsilon^{2}}{C^{2}\lambda_{0}}$, we conclude that $\|u(T)\|_{H^{-1}}  \leq  \varepsilon.$
\end{proof}

The next proposition gives us the propagation of compactness. 
	\begin{proposition} \label{prop6.1}
		Let $L = i\partial_{t} + \Delta + R_{0}$ where $R_{0}(t, x, D_{x})$ is a tangential pseudodifferential operator of order 0 and $\{u_{n}\}$ a sequence of functions satisfying,		\begin{equation} \label{eq 121}
			\sup_{t \in [0,T]} \|\chi u_{n}(t)\|_{H^{1}(\mathbb{R}^{3})} \leq C, \ \ \sup_{t \in [0,T]}\|\chi u_{n}(t)\|_{L^{2}(\mathbb{R}^{3})} \rightarrow 0 \ \ \mbox{and } \int_{0}^{T}\|Lu_{n}(t)\|^{2}_{L^{2}} \ dt \rightarrow 0,
		\end{equation}
for every $\chi \in C_{0}^{\infty}(\mathbb{R}^{3})$, with $\chi(x) = 1$ when $ x \in supp(\chi)=K$. There exist a subsequence $\{u_{n'}\}$ of $\{u_{n}\}$ and a positive measure $\mu$ on $(0,T) \times \mathbb{R}^{3} \times S^{3}$ such that, for every tangential pseudodifferential operator $A = A(t, x, D_{x})$ of order 2 with principal symbol $\sigma(A) = a_{2}(t,x,\xi),$ one has
		\begin{equation} \label{eq 122}
			\langle A(t, x, D_{x}) \chi u_{n'},\chi u_{n'}\rangle_{L^{2}} \longrightarrow \int_{(0,T) \times \mathbb{R}^{3} \times S^{3}} a_{2}(t,x,\xi) \ d\mu(t,x,\xi).
		\end{equation}
		Moreover, if $G_{s}$ denotes the geodesic flow on $\mathbb{R}^{3} \times S^{2}$, one has, for every $s \in \mathbb{R}$,
		\begin{equation} \label{eq 123}
			G_{s}(\mu) = \mu.
		\end{equation}
		In other words, $\mu$ is invariant by the geodesic flow ``at fixed t."
	\end{proposition}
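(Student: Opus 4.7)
The plan is to establish the result in two stages: first, extract a subsequence and construct the positive measure $\mu$ satisfying \eqref{eq 122}; second, derive its invariance under the geodesic flow \eqref{eq 123} by testing the equation against a well-chosen commutator.

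For the first stage, I would follow the Gérard--Tartar construction of microlocal defect measures. For any $\chi\in C_c^\infty(\mathbb{R}^3)$, the sequence $\chi u_n$ is bounded in $L^\infty([0,T];H^1(\mathbb{R}^3))$ and converges to zero in $L^\infty([0,T];L^2)$ by \eqref{eq 121}; by complex interpolation it tends to zero in $L^\infty([0,T];H^s)$ for every $s<1$. A diagonal extraction along a countable dense family of tangential order-$2$ symbols $a_2\in C_c^\infty((0,T)\times\mathbb{R}^3\times S^2)$ (homogeneously extended in $\xi$) yields convergence of $\langle A\chi u_n,\chi u_n\rangle_{L^2((0,T)\times\mathbb{R}^3)}$ to a linear functional in $a_2$. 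Positivity follows from the sharp Gårding inequality: for $a_2\ge 0$, $\langle A\chi u_n,\chi u_n\rangle\ge -C\|\chi u_n\|_{H^{1/2}}^2\to 0$. By the Riesz representation theorem, the limit is integration against a positive Radon measure $\mu$ on $(0,T)\times\mathbb{R}^3\times S^2$, which is \eqref{eq 122}.

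For the second stage, let $\phi\in C_c^\infty((0,T))$ and $B=B(t,x,D_x)$ be a self-adjoint tangential $\psi$DO of order $1$ with real principal symbol $b(t,x,\xi)$ homogeneous of degree $1$ in $\xi$. Using the equation $\partial_t u_n=i\Delta u_n+iR_0u_n-iLu_n$ and integrating by parts in $t$,
\[
-\int_0^T\phi'(t)\langle Bu_n,u_n\rangle\,dt=\int_0^T\phi(t)\Big(\langle\dot Bu_n,u_n\rangle+i\langle[B,\Delta]u_n,u_n\rangle+i\langle(BR_0-R_0^*B)u_n,u_n\rangle\Big)\,dt+\mathcal E_n,
\]
where $\mathcal E_n$ collects the contributions of $Lu_n$. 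As $n\to\infty$: the left-hand side vanishes because an order-$1$ operator defines a continuous bilinear form on $H^{1/2}$ and $u_n\to 0$ in $H^{1/2}_{\mathrm{loc}}$; the $\dot B$ term vanishes for the same reason; the expression $BR_0-R_0^*B$ is of order $0$ (its leading symbols cancel, after symmetrising $R_0$ if necessary) hence bounded on $L^2$, and the bilinear form tends to zero because $\|u_n\|_{L^2_{\mathrm{loc}}}\to 0$; finally, $|\mathcal E_n|\le C\|Lu_n\|_{L^2_{t,x}}\|u_n\|_{L^2_tH^1_x}\to 0$ by Cauchy--Schwarz and the third bound in \eqref{eq 121}. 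The surviving commutator $[B,\Delta]$ is of order $2$ with principal symbol $i\{b,|\xi|^2\}$. Applying \eqref{eq 122} and using $i\cdot i=-1$,
\[
0=-\int\phi(t)\{b,|\xi|^2\}\,d\mu=-\int\phi(t)\,H_{|\xi|^2}b\,d\mu,
\]
where $H_{|\xi|^2}=2\xi\cdot\nabla_x$ generates the geodesic flow on $T^*\mathbb{R}^3\setminus 0$. Arbitrariness of $\phi$ and $b$ gives the invariance \eqref{eq 123}.

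The main technical obstacle I anticipate is the careful symbolic bookkeeping in the non-compact setting: verifying the order reduction of $BR_0-R_0^*B$ from a priori order $1$ down to order $0$, and controlling all commutators of $\chi$-cutoffs with $B$, $\Delta$ and $R_0$ so that the localisation does not produce uncontrolled boundary contributions. The tangentiality of $R_0$ is crucial: it forbids time-derivative terms in the commutators with $\Delta$ and keeps the analysis reducible to standard $\psi$DO calculus on $\mathbb{R}^3$ with parameter $t$.
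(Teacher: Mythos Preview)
Your proposal is correct and follows essentially the same route as the paper: construction of $\mu$ via G\'erard's defect-measure machinery plus G\aa rding, then propagation by pairing the equation against a first-order test operator and isolating the commutator $[B,\Delta]$ (the paper writes this as $(Lu_n,A_\varepsilon^*u_n)-(A_\varepsilon u_n,Lu_n)$ with a smoothing $A_\varepsilon=\varphi B e^{\varepsilon\Delta}$ to justify the intermediate pairings, which you should also do since $B\Delta u_n$ lives only in $H^{-2}$). One minor slip: $BR_0-R_0^*B$ has principal symbol $b(r_0-\overline{r_0})$ and is therefore of order $1$ in general, not $0$; but your conclusion is unaffected, since an order-$1$ bilinear form is controlled by $\|u_n\|_{H^{1/2}_{\mathrm{loc}}}^2\to 0$, exactly the interpolation argument you already invoke for the $\phi'B$ and $\dot B$ terms.
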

	\begin{proof}
		The construction of the tangential microlocal defect measures $\mu$ satisfying \eqref{eq 122} is classical and can be found in \cite{Gerard}. The first estimate in \eqref{eq 121} combined with a separability argument allows to find a subsequence $\{u_{n'}\}$ such that the left-hand side of \eqref{eq 122} converges for all $A$. Then, the second estimate in \eqref{eq 121} and the Gårding inequality imply the existence of some positive measure $\mu$ such that \eqref{eq 122} holds.
		
		For the propagation, i.e., property \eqref{eq 123}, we consider $\varphi = \varphi(t) \in C_{0}^{\infty}(0,T)$, $B(x,D_{x})$ a pseudo-differential operator of order $1$ with principal symbol $b_{1}$, $A(t, x, D_{x}) = \varphi(t)B(x,D_{x})$ and, for $\varepsilon > 0$, $A_{\varepsilon} = \varphi B_{\varepsilon} = Ae^{\varepsilon \Delta}$. Moreover, denote
		$$\alpha_{n}^{\varepsilon} = \big(Lu_{n},A^{*}_{\varepsilon}u_{n}\big)_{L^{2}([0,T]\times \mathbb{R}^{3})} - \big(A_{\varepsilon}u_{n},Lu_{n}\big)_{L^{2}([0,T]\times \mathbb{R}^{3})}. $$
		By the assumption \eqref{eq 121}, $\sup_{\varepsilon} \alpha_{n}^{\varepsilon} \rightarrow 0$ if $n \rightarrow \infty$. On the other hand,
		\begin{eqnarray*}
			\alpha_{n}^{\varepsilon} & = & \big(i\partial_{t} u_{n}+ \Delta u_{n} + R_{0}u_{n},A^{*}_{\varepsilon}u_{n}\big)_{L^{2}([0,T]\times \mathbb{R}^{3})} - \big(A_{\varepsilon}u_{n},i\partial_{t} u_{n}+ \Delta u_{n} + R_{0}u_{n}\big)_{L^{2}([0,T]\times \mathbb{R}^{3})} \\
			& = &  i\big((\partial_{t}A_{\varepsilon}) u_{n},u_{n}\big)_{L^{2}([0,T]\times \mathbb{R}^{3})} + \big( A_{\varepsilon}\Delta u_{n} ,u_{n}\big)_{L^{2}([0,T]\times \mathbb{R}^{3})} + \big( A_{\varepsilon}R_{0}u_{n},u_{n}\big)_{L^{2}([0,T]\times \mathbb{R}^{3})} \\
			&   &  \mbox{  } -i\big((\partial_{t}A_{\varepsilon}) u_{n},u_{n}\big)_{L^{2}([0,T]\times \mathbb{R}^{3})} - \big(\Delta A_{\varepsilon}u_{n},  u_{n} \big)_{L^{2}([0,T]\times \mathbb{R}^{3})} - \big(R^{*}_{0}A_{\varepsilon}u_{n}, u_{n}\big)_{L^{2}([0,T]\times \mathbb{R}^{3})} \\
			& = & \big([A_{\varepsilon},\Delta]u_{n},u_{n}\big)_{L^{2}([0,T]\times \mathbb{R}^{3})} + \big([A_{\varepsilon}R_{0}-R^{*}_{0}A_{\varepsilon}]u_{n},u_{n}\big)_{L^{2}([0,T]\times \mathbb{R}^{3})}.
		\end{eqnarray*}
		Observe that taking $\big([A_{\varepsilon}R_{0}-R^{*}_{0}A_{\varepsilon}]u_{n},u_{n}\big)_{L^{2}([0,T]\times \mathbb{R}^{3})} = \beta^{\varepsilon}_{n}$, we have $\sup_{\varepsilon} \beta_{n}^{\varepsilon} \rightarrow 0$ as $n \rightarrow \infty$. Finally, passing to the limit as $\varepsilon \rightarrow 0$, we
		obtain
		$$\big(\varphi[B,\Delta]u_{n},u_{n}\big)_{L^{2}([0,T]\times \mathbb{R}^{3})} \rightarrow 0$$
		as $n \rightarrow \infty$. This means that, for any $\chi \in C_{0}^{\infty}(\mathbb{R}^{3})$, one has
		\begin{equation}\label{eq 124}
			\big(\chi \varphi[B,\Delta]u_{n},\chi u_{n}\big)_{L^{2}([0,T]\times \mathbb{R}^{3})} \rightarrow 0
		\end{equation}
		as $n \rightarrow \infty$. Let $D:= \varphi[B,\Delta]$. Note that $D$ is a pseudodifferential operator of order two and, moreover, 
		\begin{eqnarray*}
			\big( \varphi[B,\Delta]\chi u_{n},\chi u_{n}\big)_{L^{2}([0,T]\times \mathbb{R}^{3})} & = &	\big( D\chi u_{n},\chi u_{n}\big)_{L^{2}([0,T]\times \mathbb{R}^{3})}\\
			& = & \big( [D,\chi] u_{n},\chi u_{n}\big)_{L^{2}([0,T]\times \mathbb{R}^{3})} + \big( \chi Du_{n},\chi u_{n}\big)_{L^{2}([0,T]\times \mathbb{R}^{3})} \\
			& \rightarrow & 0
		\end{eqnarray*}
		as $n \rightarrow \infty$, using \eqref{eq 124} and 
		\begin{eqnarray*}
			\big( [D,\chi] u_{n},\chi u_{n}\big)_{L^{2}([0,T]\times \mathbb{R}^{3})}  \leq \|[D,\chi] u_{n}\|_{L^{2}} \|\chi u_{n}\|_{L^{2}}
			 \leq C \|u_{n}\|_{H^{1}} \|\chi u_{n}\|_{L^{2}}. 	
		\end{eqnarray*}
		In view of \eqref{eq 122}, one has
		\begin{equation*}
			\int_{(0,T)\times \mathbb{R}^{3} \times S^{3}} \varphi\{|\xi|^{2}_{x},b_{1}\} \ d\mu(t,x,\xi) = 0.
		\end{equation*}
		This identity expresses property \eqref{eq 123} and completes the proof.
	\end{proof}
	With the propagation of compactness in hands, the propagation of regularity holds. 
	\begin{corollary} \label{coro6.2}
		Assume that $\omega \subset \mathbb{R}^{3}$ satisfies Assumption \ref{assum5.1}. Let $\{u_{n}\}$ be a sequence of functions bounded in $L^{\infty}([0,T], H^{1}(\mathbb{R}^{3}))$, converging to 0 in $L^{2}$, and satisfying
		\begin{equation} \label{eq 125}
			\left\{
			\begin{array}{lr}
				i\partial_{t} u_{n} + \Delta u_{n} \rightarrow 0 \mbox{ in } L^{2}([0,T],H^{1}(\mathbb{R}^{3})),\\
				u_{n} \rightarrow 0\ \mbox{ in }L^{2}([0,T],H^{1}_{loc}(\omega)).
			\end{array}
			\right.
		\end{equation}
		Then, $\{u_{n}\}$ strongly converges to 0 in $L^{\infty}([0,T], H^{1}_{loc}(\mathbb{R}^{3}))$.
	\end{corollary}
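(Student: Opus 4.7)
The strategy is to attach a microlocal defect measure $\mu$ to the sequence $\{u_n\}$ via Proposition \ref{prop6.1}, show it must vanish identically using the local convergence on $\omega$ together with the Geometric Control Condition, and then translate this vanishing back into strong convergence.

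\emph{Step 1: Extraction of the measure.} I would first check that the hypotheses of Proposition \ref{prop6.1} are met with $R_{0}\equiv 0$: indeed $u_{n}$ is bounded in $L^{\infty}([0,T],H^{1})$, tends to $0$ in $L^{2}$, and $Lu_{n}=i\partial_{t}u_{n}+\Delta u_{n}\to 0$ in $L^{2}([0,T],L^{2})$ (by hypothesis, even in $L^{2}H^{1}$). After extracting a subsequence, this produces a positive measure $\mu$ on $(0,T)\times\mathbb{R}^{3}\times S^{2}$ with $\langle A\chi u_{n},\chi u_{n}\rangle\to\int a_{2}\,d\mu$ for every order-$2$ tangential operator, and $\mu$ is invariant under the spatial geodesic flow $G_{s}$ at each fixed $t$.

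\emph{Step 2: Vanishing on $\omega$ and propagation.} Testing \eqref{eq 122} against symbols of the form $\varphi(t)\chi(x)^{2}|\xi|^{2}$ with $\chi\in C_{0}^{\infty}(\omega)$, the hypothesis $u_{n}\to 0$ in $L^{2}([0,T],H^{1}_{\mathrm{loc}}(\omega))$ forces $\mu=0$ on $(0,T)\times\omega\times S^{2}$. Next I would invoke Assumption \ref{assum5.1}: any $(x,\xi)\in\mathbb{R}^{3}\times S^{2}$ admits some $s<T_{0}$ with $G_{s}(x,\xi)\in\omega\times S^{2}$, and the $G_{s}$-invariance of $\mu$ combined with its vanishing at the endpoint yields $\mu\equiv 0$ on $(0,T)\times\mathbb{R}^{3}\times S^{2}$.

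\emph{Step 3: Strong convergence from $\mu=0$.} Specializing \eqref{eq 122} to $A=-\Delta$ paired with any spatial cutoff $\chi\in C_{0}^{\infty}(\mathbb{R}^{3})$, the vanishing of $\mu$ gives $\|\chi\nabla u_{n}\|_{L^{2}([0,T]\times\mathbb{R}^{3})}\to 0$, which together with the hypothesis $u_{n}\to 0$ in $L^{2}$ yields $u_{n}\to 0$ in $L^{2}([0,T],H^{1}_{\mathrm{loc}}(\mathbb{R}^{3}))$.

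\emph{Step 4: Upgrade to $L^{\infty}_{t}$.} To pass from $L^{2}_{t}$ to $L^{\infty}_{t}$, I would fix $\chi\in C_{0}^{\infty}(\mathbb{R}^{3})$, set $v_{n}=\chi u_{n}$, and observe that $v_{n}$ satisfies
\begin{equation*}
(i\partial_{t}+\Delta)v_{n}=\chi f_{n}+[\Delta,\chi]u_{n}=:g_{n},
\end{equation*}
where $f_{n}=(i\partial_{t}+\Delta)u_{n}\to 0$ in $L^{2}_{t}H^{1}_{x}$ and, thanks to Step 3, $[\Delta,\chi]u_{n}$ (supported in the compact set $\operatorname{supp}\nabla\chi$) tends to $0$ in $L^{2}_{t,x}$. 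Picking a good time $t_{0}$ at which $v_{n}(t_{0})\to 0$ in $H^{1}$ (which exists by Step 3 up to a further subsequence) and applying Duhamel's formula to the Schrödinger equation for $v_{n}$ yields the pointwise-in-time decay we want; in particular one combines this with the uniform $H^{1}$ bound and uses a localized energy identity $\partial_{t}\|\nabla v_{n}\|_{L^{2}}^{2}=2\operatorname{Im}\langle\nabla v_{n},\nabla g_{n}\rangle$ to close the argument.

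\emph{Main obstacle.} Steps 1--3 are the conceptual heart of the proof and follow closely the propagation of compactness philosophy of \cite{DeGeLe}. The delicate point is Step 4: the microlocal defect argument naturally yields only $L^{2}_{t}H^{1}_{\mathrm{loc}}$ convergence, and the infinite propagation speed of the Schrödinger flow makes local-in-space $H^{1}$ estimates awkward to propagate in time with a single cutoff. The cleanest way around this is to exploit that the commutator $[\Delta,\chi]u_{n}$ really only sees a fixed compact region (where Step 3 already gives $H^{1}$-smallness in $L^{2}_{t}$), combined with the equation-driven continuity of $t\mapsto v_{n}(t)$ in $H^{1}$, which is what will carry the slice $t_{0}$-smallness to all $t\in[0,T]$ uniformly.
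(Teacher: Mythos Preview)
Your Steps 1--3 coincide with the paper's proof line by line: attach the defect measure via Proposition \ref{prop6.1}, kill it over $\omega$ using the second hypothesis of \eqref{eq 125}, propagate along geodesics via Assumption \ref{assum5.1} to obtain $\mu\equiv 0$, and deduce $u_n\to 0$ in $L^2_{\mathrm{loc}}((0,T);H^1_{\mathrm{loc}})$. The paper's Step 4 is exactly your ``pick a good time $t_0$ and solve the equation from there'', stated in a single sentence without further justification; so you have located the only non-routine point and are in fact more careful about it than the paper itself.

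There is, however, a gap in your proposed commutator fix. The forcing $g_n=\chi f_n+[\Delta,\chi]u_n$ has $[\Delta,\chi]u_n=2\nabla\chi\cdot\nabla u_n+(\Delta\chi)u_n$, which from the $H^1$ bound on $u_n$ lies only in $L^2_x$, not in $H^1_x$. Hence neither the $H^1$ Duhamel estimate nor the energy identity $\partial_t\|\nabla v_n\|_{L^2}^2=\pm 2\operatorname{Im}\langle\nabla v_n,\nabla g_n\rangle$ closes as written: the pairing demands $\nabla g_n\in L^2_x$, i.e.\ second derivatives of $u_n$ on $\operatorname{supp}\nabla\chi$, which you do not control. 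In the paper's actual applications (both cases of Theorem \ref{weakobs}) the upgrade to $L^\infty_t$ is performed only \emph{after} the local result is combined with a separate estimate $u_n\to 0$ in $L^2([0,T];H^1(\mathbb{R}^3\setminus B_{R+1}))$ coming from the damping; this yields a $t_0$ with $\|u_n(t_0)\|_{H^1(\mathbb{R}^3)}\to 0$ \emph{globally}, after which the free Schr\"odinger propagator (which preserves the full $H^1$ norm) handles the rest. So the infinite-speed obstruction you flag is real, and in practice is bypassed by the extra structure of the damped problem rather than by a purely local cutoff argument.
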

	\begin{proof}
		By Proposition \ref{prop6.1}, we can attach to the sequence $(u_{n})$ a microlocal defect measure in $L^{2}((0, T), H^{1}(\mathbb{R}^{3}))$ that propagates with infinite speed along the geodesics of $\mathbb{R}^{3}$. Using the second equation of \eqref{eq 125}, we can deduce that
		$$\mu = 0 \mbox{  on  } (0,T) \times \omega \times S^{3},$$
		which yields, by the propagation \eqref{eq 123} and Assumption \ref{assum5.1}, $\mu=0$ on $(0,T)\times \mathbb{R}^{3} \times S^{3}$. This means that $u_{n} \rightarrow 0$ in $L^{2}_{loc}((0,T);H^{1}_{loc}(\mathbb{R}^{3}))$.
		Finally, solving the first equation of \eqref{eq 125} with initial data $u_{n}(t_{0})$, where $t_{0} \in (0, T)$ is such that $\|u_{n}(t_{0})\|_{H^{1}_{loc}} \rightarrow 0$, this implies the strong convergence $u_{n}(t)\rightarrow0$ in the space $L^{\infty}([0, T], H^{1}_{loc}(\mathbb{R}^{3}))$.
	\end{proof}

	\subsection*{Acknowledgment}
This work is part of Carvalho's Ph.D. thesis at the Department of Mathematics of the Universidade Federal de Pernambuco. It was mostly done while the third author was visiting Universit\'e de Lorraine. The third author thanks the host institution for its warm hospitality.


\begin{thebibliography}{99}
		
\bibitem{bahouri} H. Bahouri and P. G\'{e}rard, \textit{High frequency approximation of critical nonlinear wave equations}, Amer. J. Math. 121 (1999), 131--175.
		
\bibitem{BaPu} L. Baudouin and J.-P. Puel, \textit{Uniqueness and stability in an inverse problem for the Schr\"{o}dinger equation}, Inverse Problems, 18 (2001), 1537--1554.

\bibitem{BoCa} C. A. Bortot and M. Cavalcanti, \textit{Asymptotic Stability for the Damped Schr\"odinger Equation on Noncompact Riemannian Manifolds and Exterior Domains}, Communications in Partial Differential Equations, 39(9) (2014), 1791--1820.
		
\bibitem{bourgain} J. Bourgain, \textit{Global wellposedness of defocusing critical nonlinear Schr\"odinger equation in the radial case}, J. Amer. Math. Soc., 12:1 (1999), 145--171.
			
\bibitem{bourgain1} J. Bourgain, New global well-posedness results for nonlinear Schr\"oodinger equations, \textit{AMS Publications}, 1999.
			
\bibitem{BrCaCaSa}P. Braz e Silva, R. A. Capistrano-Filho, J. D. N. Carvalho, and D. dos Santos Ferreira, \textit{Control of the Schr\"odinger equation in $\mathbb{R}^3$: The critical case}, arXiv:2404.07749 [math.AP].

\bibitem{BurqMoyano}N. Burq and I.Moyano, \textit{A remark on the logarithmic decay of the damped wave and Schrödinger equations on a compact Riemannian manifold}, Port. Math. 80 (2023), 369--390.
			
\bibitem{CaPa} R. A. Capistrano--Filho and A. Pampu, \textit{The fractional Schr\"odinger equation on compact manifolds: Global controllability results}, Mathematische Zeitschrift, 301 (2022), 3817--3848.
		
\bibitem{CaGa} L. Cardoulis and P. Gaitan, \textit{Simultaneous identification of diffusion coefficient and the potential for the Schr\"{o}dinger operator with only one observation}, Inverse Problems, 26 (2010).
		
\bibitem{CaCriGa} L. Cardoulis, M. Cristofol, and P. Gaitan, \textit{Inverse problem for the Schr\"{o}dinger operator in an unbounded strip}, J. Inverse Ill-Posed Probl., 16 (2008), 127--146.

\bibitem{CaCaFuNa} M.M. Cavalcanti, V.N. Domingos Cavalcanti, R. Fukuoka, F. Natali, \textit{Exponential stability for the $2-D$ defocusing Schr\"odinger equation with locally distributed damping}, Differential Integral Equations, 22 (7--8) (2009), 617--636.

\bibitem{cazenave_book} T. Cazenave, Semilinear Schr\"{o}dinger Equations, \textit{Amer. Math. Soc.}, Providence, RI, 2003.
		
\bibitem{cazenave} T. Cazenave, F. B. Weissler, \textit{The Cauchy problem for the critical nonlinear Schr\"{o}dinger equation in $H^{s}$}, Nonlinear Anal.: Theory Methods Appl., 14 (1990), 807--836.
		
\bibitem{CaWe} T. Cazenave and F. B. Weissler, Some remarks on the nonlinear Schr\"odinger equation in the critical case, Nonlinear semigroups, Partial Differential Equations and Attractors, \textit{Lecture Notes in Math.}, (1989), 18--29.
		
\bibitem{Co} J. Colliander, M. Keel, G. Staffilani, H. Takaoka, and T. Tao, \textit{Global well-posedness and scattering for the energy-critical nonlinear Schr\"odinger equation in $\mathbb{R}^3$}, Annals of Mathematics, 167 (2008), 767--865.
		
\bibitem{DeGeLe} B. Dehman, P. G\'erard, G. Lebeau, S\textit{tabilization and control for the nonlinear Schr\"{o}dinger equation on a compact surface}, Mathematische Zeitschrift, 254 (2006) 729--749.
		
\bibitem{DeLeZu} B. Dehman, G. Lebeau, and E. Zuazua, \textit{Stabilization and control for the subcritical semilinear wave equation}, Annales scientifiques de l'École Normale Supérieure, 4:36 (2003), 525--551.
		
\bibitem{GalGe} I. Gallagher and P. G\'{e}rard, \textit{Profile decomposition for the wave equation outside a convex obstacle}, J. Math. Pures Appl., 80:1 (2001), 1--49.
		
\bibitem{GiVe} J. Ginibre and G. Velo, \textit{The global Cauchy problem for the nonlinear Schr\"odinger equation revisited}, Ann. Inst. H. Poincar\'e Anal. Non Lin\'eaire, 2 (1985), 309--327.
		
\bibitem{Give1} J. Ginibre and G. Velo, \textit{Scattering theory in the energy space for a class of nonlinear Schr\"odinger
equations}, J. Math. Pure. Appl., 64 (1985), 363--401.
		
\bibitem{Gri} M. Grillakis, \textit{On nonlinear Schr\"odinger equations}, Comm. Partial Differential Equations, 9-10:25 (2000), 1827--1844.
		
\bibitem{Gross} E. Gross, \textit{Hydrodynamics of a superfluid condensate}, J. Math. Phys. 4 (1963), 195--207.
		
\bibitem{Gerard} P. Gérard, \textit{Microlocal defect measures}, Commun. Partial Diff. Eq., 16 (1991), 1762--1794.
		
\bibitem{Kato} T. Kato, \textit{On nonlinear Schr\"odinger equations}, Ann. Inst. H. Poincare Phys. Theor., 46 (1987), 113--129.
		
		
\bibitem{KeelTao} M. Keel and T. Tao, \textit{Endpoint Strichartz estimates},  Am. J. Math., 120 (1998), 955--980.
		
\bibitem{KeMe} C. E. Kenig and F. Merle, \textit{Global well-posedness, scattering, and blow-up for the energy-critical, focusing, non-linear Schr\"{o}dinger equation in the radial case}, Invent. Math, 166 (2006), 645--675.
		
\bibitem{keraani}  S. Keraani, \textit{On the defect of compactness for the Strichartz estimates of the Schr\"{o}dinger equations}, J. Differential Equations, 175:2 (2001), 353--392. 

\bibitem{Laurent} C. Laurent, \textit{Global controllability and stabilization for the nonlinear Schr\"{o}dinger equation on an interval, ESAIM Control Optimisation and Calculus of Variations},  2 (2010), 356--379.
		
\bibitem{Laurent1} C. Laurent, \textit{Global controllability and stabilization for the nonlinear Schrödinger equation on some compact manifolds of dimension 3}, SIAM Journal on Mathematical Analysis, 42 (2010) 785--832.
		
\bibitem{Laurent2} C. Laurent, \textit{On stabilization and control for the critical Klein-Gordon equation on a 3-D compact manifold}, Journal of Functional Analysis 260, 5 (2011), 1304-1368.
		
\bibitem{Laurent3} C. Laurent, \textit{Internal control of the Schrödinger equation}, Mathematical Control and Related Fields, 4:2 (2014).
		
\bibitem{LaTriZhang} I. Lasiecka, R. Triggiani and X. Zhang, \textit{Carleman estimates at the $H^1(\Omega)$- and $L^2(\Omega)$-level for nonconservative Schr\"{o}dinger equations with unobserved Neumann B.C}, Arch. Inequal. Appl., 2 (2004), 215--338.
		
\bibitem{merle} F. Merle and L. Vega, \textit{Compactness at Blow-up time for $L^{2}$ solutions of the critical nonlinear Schrödinger equations in 2D}, Internat. Math. Res. Notices, 8 (1998), 399--425.
		
\bibitem{Miller} L. Miller, \textit{How violent are fast controls for Schr\"{o}dinger and plate vibrations?}, Arch. Rational Mech. Anal., 172 (2004), 429--456.
	
\bibitem{Pitaevski} L. P. Pitaevski, J. Exptl. Theoret. Phys. 13 (1961) p. 646. (Translation:  \textit{Vortex lines in an
imperfect Bose gas}, Soviet Phys. JETP 40 (1961), p. 451--454.
		
\bibitem{Phung} K.-D. Phung, \textit{Observability and controllability for Schr\"{o}dinger equations}, SIAM J. Control Optim., 40 (2001), 211--230.
		
\bibitem{RaTaTeTu} K. Ramdani, T. Takahashi, G. Tenenbaum, and M. Tucsnak, \textit{A spectral approach for the exact observability of infinite-dimensional systems with skew-adjoint generator,} J. Funct. Anal., 226 (2005), 193--229.

\bibitem{RoZhaSIAM} L. Rosier and B.-Y. Zhang, \textit{Local exact controllability and stabilizability of the nonlinear Schr\"{o}dinger equation on a bounded domain}, SIAM J. Control Optim., 48 (2009), 972--992.
		
\bibitem{SuSu} C. Sulem and P.-L. Sulem, The Nonlinear Schr\"odinger Equation: Self-Focusing and Wave Collapse. \textit{Springer-Verlag, Applied Mathematical Sciences}, (1999).
		
\bibitem{RoMe}	A. Mercado, A. Osses, and L. Rosier, \textit{Inverse problems for the Schr\"odinger equation via Carleman inequalities with degenerate weights}, Inverse Problems, 24 (2008), 1--18.
		
\bibitem{RoZhaJDE} L. Rosier and B.-Y. Zhang, \textit{Exact boundary controllability of the nonlinear Schr\"{o}dinger equation}, J. Differential Equations, 246 (2009), 4129--4153.
		
\bibitem{RoZhaMMM} L. Rosier and B.-Y. Zhang, \textit{Control and Stabilization of the Nonlinear Schr\"{o}dinger Equation on Rectangles,} Mathematical Models, and Methods in Applied Sciences, 12 (2010), 2293--2347.

\bibitem{YuYa} G. Yuan and M. Yamamoto, \textit{Carleman estimates for the Schr\"{o}dinger equation and applications to an inverse problem and an observability inequality}, Chinese Annals of Mathematics,  31:4 (2010), 555--578.
			
\bibitem{Zuazua} E. Zuazua, \textit{Remarks on the controllability of the Schr\"{o}dinger equation, Quantum Control: Mathematical and Numerical Challenges}, 193--211, CRM Proc. Lecture Notes, Vol. 33 (Amer. Math. Soc., 2003).
\end{thebibliography}
\end{document}